\documentclass[a4paper,11pt, oneside, reqno]{amsart}

\usepackage{inputenc,fontenc}
\usepackage{geometry}

\usepackage{tikz-cd}
\usetikzlibrary{babel}
\usepackage{hyperref}
\usepackage{amsthm}
\usepackage{amssymb}
\usepackage{amsmath}
\usepackage{enumitem}
\usepackage{mathrsfs}
\usepackage{aligned-overset}
\usepackage{scalerel}

\newcommand{\Z}{\mathbb{Z}}

\newcommand{\R}{\mathbb{R}}
\newcommand{\C}{\mathbb{C}}

\newcommand{\cotimes}[1]{\hspace{1pt}\scaleobj{0.8}{\square}_{#1}\hspace{1pt}}

\newcommand{\dgCoalg}{\mathbf{dgCoalg}}
\newcommand{\corad}{\mathrm{coRad}}

\newcommand{\End}{\mathrm{End}}
\newcommand{\Focc}{\mathbf{FOCC}}

\newcommand{\id}{\mathrm{id}}
\newcommand{\im}{\mathrm{im}}

\newcommand{\coinv}[1]{{}^{\mathrm{co}\hspace{2pt}#1}}

\theoremstyle{definition}
\newtheorem{definition}{Definition}[section]
\newtheorem{example}[definition]{Example}
\newtheorem{remark}[definition]{Remark}

\theoremstyle{plain}
\newtheorem{lemma}[definition]{Lemma}
\newtheorem{theorem}[definition]{Theorem}
\newtheorem*{theorem*}{Theorem}
\newtheorem{proposition}[definition]{Proposition}
\newtheorem{corollary}[definition]{Corollary}

\title{Codifferential Calculi on Quantum Homogeneous Spaces}

\author{Julius Benner}

\address{Mathematical Institute of Charles University, Sokolovsk\'a 83, Prague, Czech Republic}

\email{julius.benner@matfyz.cuni.cz}

\thanks{The author acknowledges support by MSCA-DN CaLiForNIA - 101119552, and is grateful for the hospitality offered by the Korteweg de-Vries Institute of Mathematics of the University of Amsterdam.}

\begin{document}
    
    \begin{abstract}
        We develop the theory of first- and higher-order codifferential calculi over coalgebras $C$ over fields $k$ with characteristic $\mathrm{char}(k)\neq 2$. For a given first-order codifferential calculus, we introduce its maximal prolongation by means of an explicit construction that associates to it a differential graded coalgebra, satisfying a universal property. For module coalgebras over a Hopf algebra $U$, we introduce the notion of an equivariant codifferential calculus. If $C$ is of the form $U\otimes_H k$ for a Hopf algebra $U$ and a right coideal subalgebra $H$ such that $U$ is faithfully flat as a left- and right $H$-module, we show that equivariant first-order codifferential calculi correspond to certain right coideals $T\subseteq \ker(\varepsilon\colon C\rightarrow k)$ called quantum tangent spaces. If $H$ is a sub bialgebra and the right $C$-coaction on $T$ is trivial, then the maximal prolongation is described in terms of a quadratic coalgebra. We further relate codifferential calculi to differential calculi and Cartan pairs over the dual algebra $C^\ast$, or more generally subalgebras thereof. We explicitly compute codifferential calculi on the coalgebra pre duals of the Podle\'s sphere and the quantized projective spaces. As an application, we give a new proof that the antiholomorphic Heckenberger--Kolb calculi on quantized projective spaces have classical dimension.
    \end{abstract}

    \maketitle

    \tableofcontents

    \section{Introduction}

    Noncommutative geometry studies noncommutative algebras $A$ by thinking of them as algebras of functions or observables on noncommutative spaces $X$. A smooth structure on $X$ corresponds to a choice of first-order differential calculus on $A$. First-order differential calculi and their higher-order counterparts are structures that have been intensively studied over the past decades. Notably, they are one of the central themes in the recent monograph \cite{BM20}. Differential calculi appear in areas as far-reaching as deep learning \cite{FZ23} and stochastics through the It\^{o} and Stratonovich calculi \cite[Section 1.9]{BM20}. Perhaps the field that has seen the most fruitful application of differential calculi, and which is at the same time the main motivation for this article, has been that of quantum groups and quantized flag manifolds. Besides the pioneering work of Woronowicz \cite{Wor89}, one of the main achievements in this area is the construction of direct $q$-deformations of the classical algebraic de Rham complexes on quantized irreducible flag manifolds. These deformations are named Heckenberger--Kolb differential calculi, after the authors of the celebrated article \cite{HK06}, and they share many important features with their classical counterpart. To add to the significance of this result, outside the irreducible setting, deformations like this have only recently appeared in the literature for the $A$-series full flag manifolds via a construction using Lusztig's root vectors \cite{BS25}. Differential calculi on quantized flag manifolds have further been used in the construction of Dolbeault--Dirac spectral triples \cite{DD10, BOBS20, WGOB22}, demonstrating that certain quantized flag manifolds fit into the framework of noncommutative geometry in the sense of Connes \cite{Con94}.

    If one takes the idea that a noncommutative manifold is a pair consisting of a noncommutative algebra $A$ and a first-order differential calculus over $A$ seriously, then there should also be the notion of noncommutative vector fields over $A$. Indeed, there is the notion of a Cartan pair introduced by Borowiec \cite{Bor96, Bor97, Bor24} which serves as a noncommutative algebraic substitute for vector fields. Unfortunately, Cartan pairs have seen very little attention elsewhere in the literature, and thus only few developments have been made. For instance, it is not known if combining a first-order differential calculus and a Cartan pair gives rise to a noncommutative version of a Cartan calculus (for an account of Cartan calculi for classical manifolds see \cite[Chapter 2.4]{AM78}).

    The passage from a first-order differential calculus to a Cartan pair is a dualization at the level of modules, see for instance the main theorem in \cite{Bor96}. If one turns to quantum groups, then there is another way of dualizing: Many examples of quantum groups --- for instance Drinfeld--Jimbo quantum groups --- arise naturally as a pair of dually paired Hopf algebras $(A,U)$. If one takes a first-order differential on $A$, then the corresponding dual structure on $U$ is neither a first-order differential calculus nor a Cartan pair, but a \emph{first-order codifferential calculus}.

    The definition of a first-order codifferential calculus essentially arises as a naive translation of the axioms of a first-order differential calculus into coalgebra language. More precisely, a first-order codifferential calculus over a coalgebra $C$ over a field $k$ of characteristic not equal to $2$ consists of a $C$-bicomodule $\mathscr{W}_1$ together with a linear map $\delta\colon \mathscr{W}_1\rightarrow C$, meaning that
    \[
        \Delta\circ \delta = (\id\otimes \delta)\circ {}_{\mathscr{W}_1}\Delta + (\delta\otimes \id)\circ \Delta_{\mathscr{W}_1}
    \]
    where ${}_{\mathscr{W}_1}\Delta$ and $\Delta_{\mathscr{W}_1}$ denote the left- and right coactions of $\mathscr{W}_1$ respectively, such that the map
    \[
        (\id\otimes \delta)\circ {}_{\mathscr{W}_1}\Delta\colon\mathscr{W}_1\rightarrow C\otimes C
    \]
    is injective (see Definition \ref{DefFOCC}).
    
    Somewhat surprisingly, the first appearance of a first-order codifferential calculus in the literature predates the work of Woronowicz \cite{Wor89}, namely in \cite{Doi81}, Doi introduced what is now called the universal first-order codifferential calculus. Other than this work, not many results about codifferential calculi can be found in the literature. The first definition of a first-order codifferential calculus appeared in \cite{BV00}, which however omits the injectivity axiom. The only account of a higher-order codifferential calculus appears in \cite{Kha97}, where --- in our terminology --- the maximal prolongation of the universal first-order codifferential calculus is constructed. The only more comprehensive study of first-order codifferential calculi, which uses a definition equivalent to the one in this article, was carried out by Borowiec and Mieszkalski in the recent preprint \cite{BM26}.

    The goal of this article is to expand the theory of codifferential calculi, by proving analogs of several results that are known for differential calculi, and in addition relate codifferential calculi to differential calculi and Cartan pairs. Firstly, we introduce higher-order codifferential calculi, and the maximal prolongation of a first-order codifferential calculus. The maximal prolongation is best understood by comparing it to the process of taking the 1-forms of a manifold, and extending them to the de Rham complex. The corresponding construction for differential calculi goes back to Schauenburg \cite{Schau96}. The precise result we obtain is Theorem \ref{ThmMaxProlongationUniversalProperty} which states that the maximal prolongation determines a functor, which is part of an adjunction between the category of first-order codifferential calculi and the category of dg coalgebras.
    
    Secondly, we investigate into cocalculi in the (left-) equivariant setting. More precisely, we consider first-order codifferential calculi $\mathscr{W}_1$ over coalgebras of the form $C = U/UH^+$, where $U$ is a Hopf algebra, $H$ is a right coideal subalgebra and $H^+$ denotes the kernel of the counit of $H$. Here, equivariance means that $\mathscr{W}_1$ is equipped with a compatible left $U$-action (in a sense made precise in Section \ref{SecEquivariantCocalculi}), and that the coderivation $\delta$ is $U$-linear. We show that if $U$ is faithfully flat over $H$, then $U$-equivariant first-order codifferential calculi on $C$ correspond to quantum tangent spaces, that is, subspaces $T\subseteq C^+ := \ker(\varepsilon\colon C\rightarrow k)$ such that
    \[
        \Delta(T) \subseteq (T\oplus k[1])\otimes C, \quad HT\subseteq T.
    \]
    The precise statement is found in Theorem \ref{ThmClassificationEquivariantFOCC}. A coalgebra as above can be thought of as a coalgebraic counterpart of a quantum homogeneous space, which for us are certain coideal subalgebras of Hopf algebras. We elucidate this interpretation in Section \ref{SecInducedQHS}. An analog of this result for a quantum homogeneous space $B\subseteq A$ goes back to Hermission \cite[Theorem 2]{Her02}. Notably, left covariant first-order differential calculi over $B$ also correspond to quantum tangent space, as shown by Heckenberger and Kolb \cite{HK03}. However, in this case one considers quantum tangent spaces inside the finitary dual $B^\circ$. If we let $H = k$, we obtain a first-order codifferential calculus-analog of a theorem by Woronowicz \cite[Theorem 1.6]{Wor89}. In the case of codifferential calculi over Hopf algebras, even more is known. See for instance the classification of bicovariant first-order codifferential calculi over Hopf algebras due to Borowiec and Mieszkalski \cite[Proposition 16]{BM26}.

    For an equivariant first-order codifferential calculus over a coalgebra $C$ as above, we further obtain a description of its maximal prolongation. Namely, if $H$ is a sub bialgebra, and the induced right $C$-coaction on the corresponding quantum tangent (see \eqref{eq:ReducedRightCoaction}) space is trivial, then its maximal prolongation can be written as $U\otimes_H \mathfrak{C}(T,\check{R})$. Here, $\mathfrak{C}(T,\check{R})$ is a quadratic coalgebra (see Appendix \ref{SecQuadraticCoalgebras}) where $\check{R} \subseteq T\otimes T$ is given in Proposition \ref{PropDegTwoRelationsSupZero}.
    
    It was pointed out in \cite{BM26} that a first-order codifferential calculus over $C$ induces a first-order differential calculus on the dual of $C$. We extend this observation to higher orders by showing that the maximal prolongation of the first-order codifferential calculus determines a prolongation of the dual calculus, which we call codifferential prolongation \ref{DefCodifferentialProlongation}. For a class of quantum homogeneous spaces, which quantized irreducible flag manifolds belong to, we show that the codifferential prolongation is isomorphic to the maximal prolongation (Theorem \ref{ThmCodiffProlongationisMaximalProlongation}), and from the proof of this statement, it follows that the maximal prolongation is determined by the quadratic dual algebra of $\mathfrak{C}(T,\check{R})$ (Corollary \ref{PorMaxProlongationQuadraticAlgebras}). To put this observation into context, this means that to compute the relations for the maximal prolongation of the dual first-order differential calculus, one might as well compute $\check{R}$. We demonstrate this for the antiholomorphic Heckenberger--Kolb calculus on $\mathcal{O}_q(\C P^n)$, which leads to a new proof that the latter has classical dimension.

    For the Podle\'s sphere $\mathcal{O}_q(S^2)$, we explicitly compute the maximal prolongation of a two-dimensional first-order codifferential calculus, and obtain a sequence of Verma-like modules for $U_q(\mathfrak{sl}_2)$. An identical sequence appeared in the introduction of \cite{HK072}, where it was noted that by taking finitary duals one recovers the unique covariant differential calculus of classical dimension on $\mathcal{O}_q(S^2)$ discovered by Podle\'s \cite{Pod92}. In \cite{HK072} it is more generally shown that the Heckenberger--Kolb calculi on arbitrary irreducible quantized flag manifolds can be recovered as finitary duals of sequences derived from quantum generalized parabolic Bernstein--Gelfand--Gelfand (BGG) resolutions introduced in their earlier paper \cite{HK071}. This marks the first indication, that codifferential calculi and BGG resolutions are related. In fact, the initial motivation for this article was to understand the construction in \cite{HK072} in a more conceptual way, and adapt it to more general quantum flag manifolds. The precise relation between codifferential calculi and BGG resolutions --- including the classical case --- is unknown up to this point and will be addressed in future work. We note that BGG resolutions have been used in the construction of spectral triples \cite{WGOB22}, and the noncommutative geometric counterparts of BGG resolutions --- BGG sequences --- have been used in the proof of the Baum--Connes conjecture for $SU_q(3)$ \cite{VY15}. Moreover, BGG sequences are one of the cornerstones of classical parabolic geometry \cite{CSS01}.\newline\phantom{a}

    \emph{Summary of the Paper.} The paper is organized as follows: In Section \ref{SecPrelim} we mainly fix notation and recall elementary facts about coalgebras and comodules over coalgebras. In addition, we recall the notion of a cotensor coalgebra of a bicomodule $M$ over a coalgebra $C$. The latter serves as a foundation for the construction of the maximal prolongation of a first-order codifferential calculus.

    In Section \ref{SecCodifferentialCalculi} we first recall the notion of a first-order codifferential calculus. Our definition slightly differs from the one given in \cite{BM26}, but we show that there are indeed equivalent (Proposition \ref{PropBicomodQuotients}). After a brief account on codifferentiable maps, we introduce the notion of a higher-order codifferential calculus, and detail the construction of the maximal prolongation of a first-order codifferential calculus.

    Section \ref{SecQHS} specializes to quantum homogeneous spaces. We recall two equivalences between categories of relative Hopf modules --- Takeuchi's equivalence and its dual counterpart named Schneider's equivalence --- as well as monoidal versions thereof. We show how a quotient coalgebra of the form $C = U/UH^+$ dualizes to a quantum homogeneous space, and end the section with an account of admissible pairings between relative Hopf modules.

    Section \ref{SecEquivariantCocalculi} contains the main results on codifferential calculi on quantum homogeneous spaces, which heavily rely on the result of Section \ref{SecSchneider}.

    In Section \ref{SecRelFOCCFODCFOVC} we construct differential calculi and Cartan pairs from first- and higher-order codifferential calculi, where we first consider the general case and then the case of quantum homogeneous spaces.

    In Section \ref{SecCodifferentialCalculionQuantizedFlags} we demonstrate our results by explicitly constructing codifferential calculi on the Podle\'s sphere and the quantized projective spaces.\newline\phantom{a}

    \emph{Acknowledgements.} I would like to thank Arnab Bhattacharjee, Andrzej Borowiec, Antonio Del Donno, Giovanni Gava, Keegan Flood, Mauro Mantegazza, and Jasper Stokman for helpful discussions. I also thank Niels Kowalzig for pointing me to the work of Khalkhali \cite{Kha97} and Ulrich Krähmer for suggesting me the paper of Masuoka \cite{Mas91}. In addition, I thank Thomas Weber for his course on Takeuchi's categorical equivalence, which has been a big source of inspiration, as well as helpful discussions. Many thanks to Réamonn \'O Buachalla for a number of discussions and guidance in writing this paper, including comments on earlier versions.

    \section{Preliminaries on coalgebras}\label{SecPrelim}

    For the entirety of this paper, $k$ denotes a field of characteristic $\mathrm{char}(k)\neq 2$, and every vector space will be considered over $k$. We start by recalling some general facts about coalgebras and comodules over coalgebras, while also laying out our notation. For the purpose of this section, we fix a coalgebra $C$.

    \subsection{Bicomodules over coalgebras}

    If $M$ is a left, respectively right comodule over $C$, we will write ${}_M\Delta$ respectively $\Delta_M$ for the left- and right $C$-coactions of $M$. We will make heavy use of Sweedler notation, that is we write $\Delta(c) = c_{(1)}\otimes c_{(2)}$ for the coproduct of an element $c \in C$, and ${}_M\Delta(m) =: m_{(-1)}\otimes m_{(0)}$, respectively $\Delta_M(m) =: m_{(0)}\otimes m_{(1)}$ for an element $m\in M$. For a detailed explanation of Sweedler notation, we refer the reader to Chapters 2 and 3 in \cite{Rad12}.

    \begin{definition}
        Let $C$ and $D$ be coalgebras. A \textit{$C$-$D$-bicomodule} is a $k$-vector space $M$ together with a left $C$-coaction and a right $D$-coaction such that
        \[
            m_{(-1)}\otimes m_{(0)(0)} \otimes m_{(0)(1)} = m_{(0)(-1)}\otimes m_{(0)(0)}\otimes m_{(1)}
        \]
        for all $m \in M$.
    \end{definition}
    
    Thus, in a $C$-$D$-bicomodule $M$, we can unambiguously write $m_{(-1)}\otimes m_{(0)}\otimes m_{(1)}$. We denote by $\mathcal{M}^C, {}^C\mathcal{M}$ and ${}^C\mathcal{M}^D$ the categories of left $C$-comodules, right $C$-comodules and $C$-$D$-bicomodules, with morphisms given by left or right $C$-colinear, respectively $C$-$D$-bicolinear maps. If $C = D$, we write \textit{$C$-bicomodule} and \textit{$C$-bicolinear} in place of $C$-$C$-bicomodule and $C$-$C$-bicolinear. We denote by ${}^C\mathrm{Hom}^D(M,N)$ the space of $C$-$D$-bicolinear maps between two $C$-$D$-bicomodules $M$ and $N$. Let $C^{op}$ be the opposite coalgebra of $C$. A $C$-$D$-bicomodule can be regarded as a right $D\otimes C^{op}$-comodule and vice versa. In one direction, this works as follows. For a $C$-$D$-bicomodule $M$ we define a right $D\otimes C^{op}$-coaction via $m \mapsto m_{(0)}\otimes m_{(1)}\otimes m_{(-1)}$.

    Next we recall the definition of a cotensor product \cite{Doi81, EM66}. For a right $C$-comodule $M$ and a left $C$-comodule $N$ the cotensor product $M\cotimes{C} N$ is defined as the kernel of the map
    \[
        M\otimes N \rightarrow M\otimes C \otimes N, \quad m\otimes n \mapsto m_{(0)}\otimes m_{(1)}\otimes n - m\otimes n_{(-1)}\otimes n_{(0)}.
    \]
    We denote elements of $M\cotimes{C} N$ symbolically by $m\cotimes{C} n$, where $m\cotimes{C} n$ is in general not an elementary tensor. If $D, E$ are other coalgebras, and if in addition $M$ is a left $D$-comodule and $N$ is a right $E$-comodule, then $M\cotimes{C} N$ is naturally a $D$-$E$-bicomodule with left $D$-coaction and right $E$-coactions given by $m\cotimes{C} n\mapsto m_{(-1)}\otimes m_{(0)}\cotimes{C} n$ and $m\cotimes{C} n \mapsto m\cotimes{C} n_{(0)}\otimes n_{(1)}$. Moreover, we obtain a functor
    \[
        -\cotimes{C}-\colon {}^D\mathcal{M}^C\times {}^C\mathcal{M}^E\rightarrow {}^D\mathcal{M}^E.
    \]
    Thus, for morphisms $f\in {}^D\mathcal{M}^C, g\in {}^C\mathcal{M}^E$, we can sense of the morphisms $f\cotimes{C}g$, which are defined as $m\cotimes{C} n\mapsto f(m)\cotimes{C} g(n)$.
    If $D = E = C$, this gives rise to a monoidal structure on ${}^C\mathcal{M}^C$, which is remarked in \cite[Section 2]{AMS07} in a more general setting (for more background on monoidal categories consult standard texts in category theory \cite{EGNO15, Mac98}). Here, the components of the left- and right unitors in ${}^C\mathcal{M}^C$ at a $C$-bicomodule $M$ are given as follows:
    \[
        C\cotimes{C} M\rightarrow M,\quad c\cotimes{C} m\mapsto \varepsilon(c)m,\quad M\cotimes{C} C\rightarrow M, \quad m\cotimes{C}c\mapsto \varepsilon(c)m.
    \]
    The inverses of these maps are given by the left- and right coactions. If we are given a morphism of coalgebras $\theta\colon D\rightarrow C$, then we get a \textit{corestriction functor} $\mathrm{coRes}_C^D\colon{}^D\mathcal{M}^D\rightarrow {}^C\mathcal{M}^C$ which sends a $D$-bicomodule $M$ to the same underlying vector space $M$ with left- and right $C$ actions given by $m \mapsto \theta(m_{(-1)})\otimes m_{(0)}$ and $m\mapsto m_{(0)}\otimes \theta(m_{(1)})$.

    Since we work over a field, we have the following well-known result at our disposal.

    \begin{lemma}
        \label{LemSubcomodGenByElt}
        Let $M$ be a right $C$-comodule, let $\{c_i\}_{i\in I}$ be a vector space basis of $C$ and let $m \in M$. Then there are unique elements $m_i \in M, i\in I$ (where $m_i = 0$ for all but finitely many $i\in I$) such that
        \[
            \Delta_M(m) = \sum_{i\in I} m_i\otimes c_i.
        \]
        Moreover, the subspace $\mathrm{span}_k\{m_i\mid i\in I\}$ is a $C$-subcomodule of $M$ containing $m$. In particular if $\Delta(m)\subseteq V\otimes C$ for some subspace $V\subseteq M$, then $V$ contains a $C$-subcomodule of $M$ containing $m$.
    \end{lemma}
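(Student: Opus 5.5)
The plan is to handle the three assertions in order: existence and uniqueness of the $m_i$, the subcomodule property, and the final consequence.

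\emph{Uniqueness and existence.} Since $\{c_i\}_{i\in I}$ is a basis of $C$, every element of $M\otimes C$ can be written uniquely as a finite sum $\sum_i m_i\otimes c_i$ with $m_i\in M$. The reason is that $M\otimes C\cong\bigoplus_{i\in I} M\otimes kc_i$ and $M\otimes kc_i\cong M$. Applying this to $\Delta_M(m)$ gives the $m_i$.

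\emph{Subcomodule property.} Set $N:=\mathrm{span}_k\{m_i\}$. First, counitality gives $m=\sum_i \varepsilon(c_i)m_i\in N$.

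Next I expand the coproduct in the basis as $\Delta(c_i)=\sum_{j,l}\lambda^i_{jl}\,c_j\otimes c_l$. Coassociativity of the coaction states
\[
\sum_i \Delta_M(m_i)\otimes c_i=\sum_i m_i\otimes \Delta(c_i)=\sum_{j,l}\Bigl(\sum_i\lambda^i_{jl}m_i\Bigr)\otimes c_j\otimes c_l .
\]
I then write $\Delta_M(m_i)=\sum_j m_{ij}\otimes c_j$ and compare coefficients of $c_j\otimes c_l$ in $M\otimes C\otimes C$, using uniqueness for the basis $\{c_j\otimes c_l\}$ of $C\otimes C$. This comparison gives $m_{il}=\sum_{i'}\lambda^{i'}_{li}m_{i'}$, up to relabelling which tensor slot is which. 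The key point is that the middle factor in the display is the one produced by $\Delta_M(m_i)$, and the last factor is indexed by $c_i$.

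From this, each coefficient of $\Delta_M(m_i)$ is a linear combination of the $m_{i'}$. Hence $\Delta_M(m_i)\in N\otimes C$, so $N$ is a subcomodule. The main obstacle is only this bookkeeping of which tensor slot corresponds to which index. A cleaner route avoids it: apply $\id\otimes\id\otimes c_l^\ast$, where $c_l^\ast$ is the dual functional of the basis element $c_l$. This yields $\Delta_M(m_l)=\sum_i m_i\otimes (\id\otimes c_l^\ast)\Delta(c_i)\in N\otimes C$ directly.

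\emph{Final assertion.} Suppose $\Delta_M(m)\in V\otimes C$. Write $\Delta_M(m)=\sum_i v_i\otimes c_i$ with $v_i\in V$, which is possible since $V\otimes C$ has the same kind of basis decomposition. By uniqueness in $M\otimes C$, $m_i=v_i\in V$ for all $i$. Therefore $N\subseteq V$ is a subcomodule containing $m$.
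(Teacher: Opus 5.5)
Your proof is correct. The paper gives no proof of this lemma; it quotes it as a well-known fact over a field. Your argument is the standard one: unique coefficients in $M\otimes C$, counitality to get $m\in N$, then applying $\id\otimes\id\otimes c_l^\ast$ to the coassociativity identity to obtain $\Delta_M(m_l)=\sum_i m_i\otimes(\id\otimes c_l^\ast)\Delta(c_i)\in N\otimes C$. The final claim follows from uniqueness of coefficients inside $V\otimes C\subseteq M\otimes C$. Your index bookkeeping in the first route is consistent, but the $c_l^\ast$ version is the cleaner one to write up.
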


    \subsection{Reduced coproducts}

    Recall that for any coalgebra $D$ that admits a coalgebra section $\iota\colon k\rightarrow D$ of its counit, we can define a reduced coproduct on $D^+ = \ker(\varepsilon)$ via
    \[
        \overline{\Delta}(d) = d_{(1)}\otimes d_{(2)} - \iota(1)\otimes d - d\otimes \iota(1)
    \]
    which remains coassociative.
    The following lemma shows that this observation generalizes to the case where we replace $\varepsilon$ by a morphism of coalgebras $\theta\colon D\rightarrow C$ and $\iota$ by a section of $\theta$.  We will use the reduced coproduct mainly in the proof of Theorem \ref{ThmMaxProlongationUniversalProperty} to simplify notation. 

    \begin{lemma}
        \label{LemRedCoprod}
        Let $\theta\colon D\rightarrow C$ be a morphism of coalgebras, let $\iota\colon C\rightarrow D$ be a morphism of coalgebras such that $\iota\circ \theta = \id_C$ and write $D^+ = \ker(\theta)$. Then the following assertions hold:
        \begin{enumerate}[label = (\arabic*)]
            \item For an element $d \in D$ write $d^+ := d - \iota(\theta(d))$. Then the subspace $ D^+$ is a $D$-bicomodule via
            \[
                \Delta_{D^+}(d) := d_{(1)}^+\otimes d_{(2)}, \quad {}_{D^+}\Delta(d) := d_{(1)}\otimes d_{(2)}^+.
            \]
            In particular $D^+$ is also a $C$-bicomodule, via $\mathrm{coRes}_D^C$. Moreover, the maps $D\rightarrow D^+, d\mapsto d^+$ and $D^+\rightarrow D, d\mapsto d$ are $C$-bicolinear.
            \item The map
            \[
                \overline{\Delta}\colon D^+\rightarrow  D^+\otimes  D^+, \quad d \mapsto d_{(1)}\otimes d_{(2)} - \iota(\theta(d_{(1)}))\otimes d_{(2)} - d_{(1)}\otimes \iota(\theta(d_{(2)}))
            \]
            turns $D^+$ into a non-counital coalgebra in $({}^C\mathcal{M}^C, \cotimes{C}, C)$. In addition, for all $d\in D^+$ it holds that
            \begin{equation}\label{eq:CoprodOfRedElt}
                \Delta(d) = d_{(-1)}\otimes d_{(0)} + d_{(0)}\otimes d_{(1)} + \overline{\Delta}(d)
            \end{equation}
            where the Sweedler notation refers to the left- and right $C$-actions on $D^+$.
        \end{enumerate}
    \end{lemma}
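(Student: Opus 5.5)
Throughout I read the hypothesis as $\theta\circ\iota=\id_C$, since this is the only composite that typechecks ($\theta\colon D\to C$, $\iota\colon C\to D$). I also read the terms $d_{(-1)}\otimes d_{(0)}$ and $d_{(0)}\otimes d_{(1)}$ in \eqref{eq:CoprodOfRedElt} as being pushed into $D\otimes D$ along $\iota$, so that the equation lives in $D\otimes D$. The whole proof runs through the projection
\[
p\colon D\to D^+,\qquad p(x)=x^+=x-\iota\theta(x).
\]
Since $\theta\iota=\id_C$, we have $\theta(p(x))=0$, so $p$ lands in $D^+$. Moreover $p\circ\iota\theta=0$, and $p$ restricts to the identity on $D^+$. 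Because $\iota\theta$ is a coalgebra map,
\[
\Delta(\iota\theta x)=\iota\theta(x_{(1)})\otimes\iota\theta(x_{(2)}).
\]
From this I get the key identity
\[
(p\otimes\id)\Delta(p(x))=(p\otimes\id)\Delta(x),\qquad (\id\otimes p)\Delta(p(x))=(\id\otimes p)\Delta(x),
\]
because the correction term $\iota\theta(x_{(1)})\otimes\iota\theta(x_{(2)})$ is killed by $p$ in the relevant leg.

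For (1), I define $\Delta_{D^+}=(p\otimes\id)\circ\Delta|_{D^+}$ and ${}_{D^+}\Delta=(\id\otimes p)\circ\Delta|_{D^+}$, so both land in the right spaces by construction.
\begin{itemize}
\item Counitality: $p(d_{(1)})\varepsilon(d_{(2)})=p(d)=d$ for $d\in D^+$.
\item Coassociativity of $\Delta_{D^+}$: it reduces to $(p\otimes\id\otimes\id)(\Delta\otimes\id)\Delta$ by the key identity together with coassociativity of $\Delta$. The left coaction is symmetric.
\item Bicomodule compatibility: $p$ acts on different tensor legs, so this is immediate.
\end{itemize}
Corestricting along $\theta$ gives the $C$-bicomodule structure. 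For $C$-bicolinearity of $p$ I compare $\theta(x_{(1)})\otimes p(x_{(2)})$ with $\theta(p(x)_{(1)})\otimes p(p(x)_{(2)})$. These agree by the key identity, after applying $\theta$ in the first leg and using $\theta\iota\theta=\theta$. The inclusion $D^+\hookrightarrow D$ is handled the same way, using $\theta(p(d_{(1)}))=0$.

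For (2), the first step is to observe that $\overline{\Delta}=(p\otimes p)\circ\Delta$ on $D^+$. Expanding $(p\otimes p)\Delta(d)$ produces the three displayed terms plus $\iota\theta(d_{(1)})\otimes\iota\theta(d_{(2)})=\Delta(\iota\theta(d))$, and this vanishes since $\theta(d)=0$. Hence $\overline{\Delta}$ lands in $D^+\otimes D^+$, and it lands in $D^+\cotimes{C}D^+$ for two reasons: $\Delta(d)\in D\cotimes{C}D$ (coassociativity pushed forward by $\theta$), and $p$ is $C$-bicolinear. Bicolinearity of $\overline{\Delta}$ follows from that of $\Delta$ and $p$. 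Coassociativity of $\overline{\Delta}$ follows by applying the key identity in each leg:
\[
(\overline{\Delta}\otimes\id)\overline{\Delta}=(p\otimes p\otimes p)(\Delta\otimes\id)\Delta=(\id\otimes\overline{\Delta})\overline{\Delta}.
\]

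Finally, \eqref{eq:CoprodOfRedElt} is a direct check. With the reading above, the first two terms are $\iota\theta(d_{(1)})\otimes p(d_{(2)})$ and $p(d_{(1)})\otimes\iota\theta(d_{(2)})$. Adding them to $\overline{\Delta}(d)$ gives $\Delta(d)-2\Delta(\iota\theta(d))=\Delta(d)$. The main obstacle is bookkeeping rather than any single step: pinning down the intended typing of the hypothesis and of the terms in \eqref{eq:CoprodOfRedElt}, and verifying that the images lie in the cotensor product. The key identity is what makes all of these checks short.
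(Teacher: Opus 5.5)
Your proof is correct. The paper states this lemma without proof, so there is no argument of the paper's to compare yours against. Your idempotent $p=\id-\iota\theta$, together with the identities $(p\otimes\id)\Delta\circ p=(p\otimes\id)\Delta$ and $(\id\otimes p)\Delta\circ p=(\id\otimes p)\Delta$, handles every required verification cleanly. This includes the coassociativity of $\overline{\Delta}=(p\otimes p)\circ\Delta|_{D^+}$ and the fact that $\overline{\Delta}$ lands in $D^+\cotimes{C}D^+$.

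Your two readings match how the lemma is used later in the paper:
\begin{enumerate}
\item The hypothesis should be $\theta\circ\iota=\id_C$; compare $\pi_0\circ\iota_0=\id_{D_0}$ in Example \ref{ExplGrcoalg}.
\item The first two terms of \eqref{eq:CoprodOfRedElt} are to be pushed into $D\otimes D$ along $\iota$.
\end{enumerate}

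There is one small slip. Colinearity of the inclusion $D^+\hookrightarrow D$ does not follow from $\theta\circ p=0$. The discrepancy terms are $\theta(d_{(1)})\otimes\iota\theta(d_{(2)})$ and $\iota\theta(d_{(1)})\otimes\theta(d_{(2)})$. They vanish because $\theta$ is a coalgebra map, so $\theta(d_{(1)})\otimes\theta(d_{(2)})=\Delta_C(\theta(d))=0$ for $d\in D^+$. With that justification corrected, the argument is complete.
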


    \begin{definition}
        \label{DefRedCoprod}
        In the situation of Lemma \ref{LemRedCoprod}, we call $\overline{\Delta}$ the \textit{reduced coproduct} and write $d_{[1]}\otimes d_{[2]} := \overline{\Delta}(d)$ for $d \in D^+$.
    \end{definition}

    \subsection{Cotensor coalgebras}

    Cotensor coalgebras were introduced by Nichols \cite{Nich78} and later generalized by Ardizzoni, Menini and \c{S}tefan to arbitrary coalgebras in Abelian monoidal categories \cite{AMS07}. Let us recall their definition.

    \begin{definition}
        \label{DefRelTensorCoalg}
        Let $M$ be a $C$-bicomodule. The \textit{cotensor coalgebra} over $C$ of $M$ is the graded vector space
        \[
            T_C^c(M) := C \oplus M \oplus \bigoplus_{n\ge 2}M^{\cotimes{C} n}, \quad \text{where } M^{\cotimes{C} n} = \underbrace{M\cotimes{C} \dots \cotimes{C} M}_{n\text{ times}}
        \]
        together with the following coproduct $\Delta$: For $c \in C$ the coproduct is given by the coproduct of $C$, for $m \in M$, we set $\Delta(m) = m_{(-1)}\otimes m_{(0)} + m_{(0)}\otimes m_{(1)}$ and for $\underline{m} = m^1\cotimes{C} \dots \cotimes{C} m^n \in M^{\cotimes{C} n}$ where $n\ge 2$, we set
        \[
            \Delta(\underline{m}) = \underline{m}_{(-1)}\otimes \underline{m}_{(0)} + \underline{m}_{(0)}\otimes \underline{m}_{(1)} + \sum_{i=1}^{n-1} (m^1\cotimes{C}\dots\cotimes{C}m^{i})\otimes(m^{i+1}\cotimes{C}\dots\cotimes{C}m^n). 
        \]
    \end{definition}

    Recall that the coradical $\corad(D)$ of a coalgebra $D$ is given by the sum of all simple subcoalgebras of $D$ \cite[Definition 3.4.1]{Rad12}. The cotensor coalgebra has the following mapping property \cite[Proposition 1.4.2]{Nich78}.

    \begin{proposition}[Nichols '78]
        \label{PropCotensorCoalgUniversal}
        Let $M$ be a $C$-bicomodule. If $D$ and $C'$ are coalgebras, $q\colon D\rightarrow C', r\colon C'\rightarrow C$ are morphisms of coalgebras and $h\colon D \rightarrow M$ is a $C$-bicolinear map such that $h(\mathrm{coRad}(D)) = 0$ then there exists a unique morphism of coalgebras $h^\flat\colon D \rightarrow T_C^c(M)$ such that the diagrams
        \[\begin{tikzcd}
	    {T_C^c(M)} & D & {T_C^c(M)} \\
	    C & {C'} & M & D
	    \arrow[from=1-1, to=2-1]
	    \arrow["{h^\flat}"', dashed, from=1-2, to=1-1]
	    \arrow["q", from=1-2, to=2-2]
	    \arrow[from=1-3, to=2-3]
	    \arrow["r"', from=2-2, to=2-1]
	    \arrow["{h^\flat}"', dashed, from=2-4, to=1-3]
	    \arrow["h"', from=2-4, to=2-3]
        \end{tikzcd}\]
        commute (here $T_C^c(M)\rightarrow C$ and $T_C^c(M)\rightarrow M$ denote the projections onto the degree 0 and 1 part). Explicitly, the map $h^\flat$ is given by
        \[
            h^\flat(d) = r(q(d)) + \sum_{n\ge 1}h(d_{(1)})\cotimes{C}\dots\cotimes{C}h(d_{(n)}).
        \]
    \end{proposition}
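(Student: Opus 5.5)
The plan is to show the explicit formula is well defined, check it is a coalgebra map with the two prescribed projections, and then prove uniqueness by induction on degree. Throughout, $D$ is regarded as a $C$-bicomodule via $r\circ q$, i.e.\ $d\mapsto rq(d_{(1)})\otimes d_{(2)}$ and $d\mapsto d_{(1)}\otimes rq(d_{(2)})$, and $h$ is bicolinear with respect to this structure.

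\emph{Well-definedness.} First I check that $h(d_{(1)})\otimes\dots\otimes h(d_{(n)})$ lies in $M^{\cotimes{C} n}$. For adjacent factors, right colinearity of $h$ gives $h(d_{(i)})_{(0)}\otimes h(d_{(i)})_{(1)} = h(d_{(i)(1)})\otimes rq(d_{(i)(2)})$. Left colinearity gives $h(d_{(i+1)})_{(-1)}\otimes h(d_{(i+1)})_{(0)} = rq(d_{(i+1)(1)})\otimes h(d_{(i+1)(2)})$. By coassociativity these two expressions agree after tensoring with the remaining factors. This places the element in the iterated kernel, using that over a field $M^{\cotimes{C} n}$ is the intersection of the kernels for each adjacent pair.

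Next I show that for fixed $d$ only finitely many terms are nonzero. Here I use the coradical filtration $D_0=\corad(D)$, $D_{m}=D_{m-1}\wedge D_0 = \Delta^{-1}(D\otimes D_0 + D_{m-1}\otimes D)$, with $D=\bigcup_m D_m$ (Radford, Ch.~4). By induction on $m$ one gets $\Delta^{(n-1)}(D_m)\subseteq \sum_{j} D\otimes\dots\otimes D_0\otimes\dots\otimes D$ whenever $n>m+1$, with $D_0$ in some slot $j$. Since $h(D_0)=0$, every term with $n>m+1$ vanishes on $D_m$. I expect this finiteness argument to be the main technical point; everything else is bookkeeping.

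\emph{Coalgebra map and projections.} Counit compatibility holds because the counit of $T^c_C(M)$ is $\varepsilon_C$ on degree $0$ and zero elsewhere, and $\varepsilon\circ r\circ q=\varepsilon_D$. The projections are immediate: the degree-$0$ part of $h^\flat$ is $rq$ and the degree-$1$ part is $h$.

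For $\Delta\circ h^\flat = (h^\flat\otimes h^\flat)\circ\Delta$, I compare the two sides degree by degree using Definition~\ref{DefRelTensorCoalg}.
\begin{itemize}
\item The $C\otimes C$ component is $\Delta(rq(d)) = rq(d_{(1)})\otimes rq(d_{(2)})$.
\item The $C\otimes M^{\cotimes{C} n}$ components, namely $\underline m_{(-1)}\otimes\underline m_{(0)}$, equal $rq(d_{(1)})\otimes h(d_{(2)})\otimes\dots$ by left colinearity of the first factor.
\item The $M^{\cotimes{C} n}\otimes C$ components are handled symmetrically, using right colinearity of the last factor.
\item The $M^{\cotimes{C} i}\otimes M^{\cotimes{C} (n-i)}$ components of $\Delta(h^\flat(d))$ come from splitting $h(d_{(1)})\cotimes{C}\dots\cotimes{C} h(d_{(n)})$. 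By coassociativity these match $h(d_{(1)})\cdots h(d_{(i)})\otimes h(d_{(i+1)})\cdots h(d_{(n)})$ on the right-hand side.
\end{itemize}

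\emph{Uniqueness.} Let $g$ be any coalgebra map with the given projections, and write $g_n=\pi_n\circ g$. Then $g_0=rq$ and $g_1=h$. For $n\ge 2$, project the identity $\Delta g = (g\otimes g)\Delta$ onto $M\otimes M^{\cotimes{C}(n-1)}$. On the left, the only contribution comes from $M^{\cotimes{C} n}$, via the term $i=1$, which is the canonical inclusion $\iota_n\colon M^{\cotimes{C} n}\hookrightarrow M\otimes M^{\cotimes{C}(n-1)}$. Hence
\[
\iota_n(g_n(d)) = h(d_{(1)})\otimes g_{n-1}(d_{(2)}).
\]
Since $\iota_n$ is injective, induction shows that $g_n$ agrees with the $n$-th term of $h^\flat$, so $g=h^\flat$.
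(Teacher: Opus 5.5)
Your proof is correct. The paper gives no argument of its own for this statement; it simply cites Nichols [Proposition 1.4.2]. Your proposal therefore supplies a self-contained verification rather than reproducing something from the text.

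Compared with the bare citation, your write-up makes several points explicit that the paper leaves implicit:
\begin{enumerate}
\item The $C$-bicomodule structure on $D$ is the one corestricted along $r\circ q$, so $C'$ and $q$ only ever enter through the composite $rq$.
\item Uniqueness holds for any coalgebra map with the prescribed degree-$0$ and degree-$1$ projections, with no hypothesis on $\corad(D)$. Projecting $\Delta\circ g=(g\otimes g)\circ\Delta$ onto $M\otimes M^{\cotimes{C}(n-1)}$ and using injectivity of the inclusion $M^{\cotimes{C}n}\hookrightarrow M\otimes M^{\cotimes{C}(n-1)}$ already forces the formula.
\item The condition $h(\corad(D))=0$ is needed only for existence, to make the sum finite on each element via the exhaustive coradical filtration.
\end{enumerate}

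That finiteness step is the same kind of wedge-filtration argument, via the spaces $\ker(p^{\otimes n}\circ\Delta^{(n-1)})$, that the paper runs in Example \ref{ExplGrcoalg}. There it is applied to the degree-zero part of a graded coalgebra rather than to the coradical, and it is used in the opposite direction: to check that the hypothesis $\pi_1(\corad(D_\bullet))=0$ holds, so that the proposition can be applied.
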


    We are mostly interested in the case where $D$ is a graded coalgebra.

    \begin{example}
        \label{ExplGrcoalg}
        Recall that a (non-negatively) graded coalgebra consists of a coalgebra $D_\bullet$ together with a decomposition $D_\bullet = \bigoplus_{n\ge 0} D_n$ such that
        \begin{equation}\label{eq:grcoalg}
            \Delta(D_n) \subseteq \bigoplus_{i=0}^n D_{n-i}\otimes D_i
        \end{equation}
        for all $n\ge 0$. This condition implies that $D_0$ is a subcoalgebra of $D_\bullet$. Moreover, the projection $\pi_0\colon D_\bullet \rightarrow D_0$ is a morphism of coalgebras, such that $\pi_0\circ \iota_0 = \id_{D_0}$, where $\iota_0\colon D_0\rightarrow D_\bullet$ is the inclusion. In the notation of Lemma \ref{LemRedCoprod} we get $D_\bullet^+ = \bigoplus_{n\ge 1} D_n$. Here, the left- and right $D_0$-coactions on $D_\bullet^+$ are given by $d \mapsto \pi_0(d_{(1)})\otimes d_{(2)}^+$ and $d\mapsto d_{(1)}^+\otimes \pi_0(d_{(2)})$, and it follows that every homogeneous component $D_n$ with $n\ge 0$ is a sub $D_0$-bicomodule of $D_\bullet$. Now consider the projection $\pi_1\colon D_\bullet\rightarrow D_1$. This map is $D_0$-bicolinear. We claim that $\pi_1(\corad (D_\bullet)) = 0$. Indeed, consider the quotient map $p\colon D_\bullet\rightarrow D_\bullet/D_0$. It follows from \eqref{eq:grcoalg} that
        \begin{equation}\label{eq:GrFiltration}
            D_\bullet = \bigcup_{n\ge 1}\ker(p^{\otimes n}\circ \Delta^{(n-1)})
        \end{equation}
        (here $\Delta^{(n-1)}$ denotes the iterated coproduct). If $S$ is a simple subcoalgebra of $C$, then either $S\subseteq D_0$ or $S\cap D_0 = 0$. Suppose we have $S\cap D_0 = 0$, then $(p^{\otimes n}\circ \Delta^{(n-1)})\vert_S$ is injective for all $n\ge 1$. However, by \eqref{eq:GrFiltration} and since simple coalgebras are finite dimensional we also have $S\subseteq \ker(p^{\otimes n}\circ \Delta^{(n-1)})$ for some $n\ge 1$ which is a contradiction. Thus, we have $S\subseteq D_0$ and the claim follows. This observation allows us to construct maps from $D_\bullet$ into cotensor coalgebras. Let $M$ be a $C$-bicomodule and let $r\colon D_0\rightarrow C$ be a morphism of coalgebras. If $h\colon D_\bullet\rightarrow M$ is a $C$-bicolinear map such that $h = h'\circ \pi_1$ for some $C$-bicolinear map $h'\colon D_1 \rightarrow M$, then $h$ induces a unique morphism of coalgebras $h^\flat\colon D_\bullet \rightarrow T_C^c(M)$ by Proposition \ref{PropCotensorCoalgUniversal}.
    \end{example}

    \section{Codifferential calculi}\label{SecCodifferentialCalculi}

    We now introduce the main objects of this article, namely codifferential calculi. We begin by discussing the first order.

    \subsection{First-order codifferential calculi}\label{SecFirstOrderCoddiferentialCalculi}

    Recall that a \textit{first-order differential calculus} \cite[Definition 1.1]{Wor89} over a unital associative $k$-algebra $A$ consists of an $A$-bimodule $\Omega^1$ together with a derivation $d\colon A\rightarrow \Omega^1$ such that $\Omega^1$ is generated as a left $A$-module by $\{d(a)\mid a \in A\}$. The latter condition is equivalent to the assertion that the map
    \[
        A\otimes A \rightarrow \Omega^1, \quad a\otimes b \mapsto ad(b)
    \]
    is surjective.

    We arrive at the definition of a first-order codifferential calculus by naively dualizing the above to coalgebras. For instance, instead of a derivation, we have a coderivation \cite[p. 44]{Doi81}.

    \begin{definition}
        Let $C$ be a coalgebra and let $W$ be a $C$-bicomodule. A \textit{coderivation} is a linear map $\delta\colon W\rightarrow C$ such that
        \[
            \Delta(\delta(w)) = \delta(w_{(0)})\otimes w_{(1)} + w_{(-1)}\otimes \delta(w_{(0)})
        \]
        for all $w \in W$. We denote by $\mathrm{Coder}(W, C)$ the $k$-vector space of all coderivations on $W$ with values in $C$.
    \end{definition}

    Since $\mathrm{char}(k) \neq 2$ we have the following simple-to-prove, but essential property for any coderivation $\delta$:
    \begin{equation}\label{eq:CounitOnCoderivation}
        \varepsilon\circ \delta = 0.
    \end{equation}

    For first-order codifferential calculi we demand that an appropriate dualization of the surjectivity condition of differential calculi holds.

    \begin{definition}
        \label{DefFOCC}
        Let $C$ be a coalgebra. A \textit{first-order codifferential calculus (FOCC)} over $C$ is a $C$-bicomodule $\mathscr{W}_1$ together with a coderivation $\delta\colon \mathscr{W}_1\rightarrow C$, such that the map
        \[
            \mathscr{W}_1\rightarrow C\otimes C, \quad w\mapsto w_{(-1)}\otimes \delta(w_{(0)})
        \]
        is injective.
    \end{definition}

    As a first example in this exposition, we will consider the universal first-order codifferential calculus. To motivate it from a categorical point of view, note that $\mathrm{Coder}(-,C)$ gives rise to a contravariant functor from ${}^C\mathcal{M}^C$ to the category of vector spaces. On morphisms $\varphi\colon V\rightarrow W$, this functor acts as follows:
    \[
        \varphi^\ast\colon\mathrm{Coder}(W,C)\rightarrow \mathrm{Coder}(V,C), \quad \delta \mapsto \delta\circ \varphi.
    \]
    In fact, as shown by Doi \cite{Doi81}, the functor $\mathrm{Coder}(-,C)$ is representable, and the representing object is called the universal first-order codifferential calculus.

    \begin{proposition}
        \label{PropUniversalFOCC}
        Let $C$ be a coalgebra and let $\mathscr{W}_1^u := C\otimes C/\im(\Delta)$, with $C$-bicomodule structure given by
        \[
            [c\otimes d] \mapsto c_{(1)}\otimes [c_{(2)}\otimes d], \quad [c\otimes d]\mapsto [c\otimes d_{(1)}]\otimes d_{(2)}
        \]
        and let $\delta^u$ be the map
        \[
            \mathscr{W}_1^u \rightarrow C, \quad [c\otimes d]\mapsto \varepsilon(c)d-\varepsilon(d)c.
        \]
        Then the pair $(\mathscr{W}_1^u, \delta^u)$ is a first-order codifferential calculus, and the following universal property holds: For any $C$-bicomodule $W$ and any coderivation $\delta\colon W \rightarrow C$, there exists a unique $C$-bicolinear map $\varphi\colon W \rightarrow \mathscr{W}_1^u$, such that the diagram
        \[\begin{tikzcd}
	    W & {\mathscr{W}_1^u} \\
	    & C
	    \arrow["{\varphi}", dashed, from=1-1, to=1-2]
	    \arrow["\delta"', from=1-1, to=2-2]
	    \arrow["{\delta^u}", from=1-2, to=2-2]
        \end{tikzcd}\]
        commutes. In particular, the natural bijection
        \[
            {}^C\mathrm{Hom}^C(W, \mathscr{W}_1^u) \rightarrow \mathrm{Coder}(W, C), \quad \varphi\mapsto \delta^u\circ \varphi
        \]
        exhibits $\mathscr{W}_1^u$ as a representing object of $\mathrm{Coder}(-,C)$.
    \end{proposition}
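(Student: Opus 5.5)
The plan has four parts: check that $\mathscr{W}_1^u$ and $\delta^u$ are well defined, prove that $\delta^u$ is a coderivation, prove the injectivity axiom, and establish the universal property by writing down $\varphi$ explicitly.

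\emph{Well-definedness.} The left coaction $c\otimes d\mapsto c_{(1)}\otimes c_{(2)}\otimes d$ sends $\im(\Delta)$ into $C\otimes\im(\Delta)$, because $\Delta(c)\mapsto c_{(1)}\otimes \Delta(c_{(2)})$ by coassociativity. The same argument applies to the right coaction, so $\mathscr{W}_1^u$ is a quotient bicomodule of $C\otimes C$ (which carries the obvious bicomodule structure). The map $\delta^u$ vanishes on $\im(\Delta)$, since $\varepsilon(c_{(1)})c_{(2)}-\varepsilon(c_{(2)})c_{(1)}=c-c=0$.

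\emph{Coderivation property.} This is a direct Sweedler computation. On one side,
\[
\Delta(\varepsilon(c)d-\varepsilon(d)c)=\varepsilon(c)d_{(1)}\otimes d_{(2)}-\varepsilon(d)c_{(1)}\otimes c_{(2)}.
\]
On the other side,
\[
\delta^u([c\otimes d_{(1)}])\otimes d_{(2)}+c_{(1)}\otimes\delta^u([c_{(2)}\otimes d])
=\varepsilon(c)d_{(1)}\otimes d_{(2)}-c\otimes d+c\otimes d-\varepsilon(d)c_{(1)}\otimes c_{(2)},
\]
and the two agree.

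\emph{Injectivity.} The map $[c\otimes d]\mapsto c_{(1)}\otimes\delta^u([c_{(2)}\otimes d])$ equals $[c\otimes d]\mapsto c\otimes d-\varepsilon(d)\Delta(c)$. I will show its kernel is zero. If $x=\sum c_i\otimes d_i$ satisfies $x=\Delta(\sum\varepsilon(d_i)c_i)$, then $x\in\im(\Delta)$, so $[x]=0$. Hence the map is injective on the quotient.

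\emph{Universal property.} Given a coderivation $\delta\colon W\to C$, set
\[
\varphi(w):=[w_{(-1)}\otimes\delta(w_{(0)})].
\]
Bicolinearity on the left is immediate. On the right, the coderivation property gives
\[
[w_{(-1)}\otimes\delta(w_{(0)})_{(1)}]\otimes\delta(w_{(0)})_{(2)}=[w_{(-1)}\otimes\delta(w_{(0)})]\otimes w_{(1)}+[w_{(-2)}\otimes w_{(-1)}]\otimes\delta(w_{(0)}).
\]
The last term is zero in the quotient because it lies in $\im(\Delta)\otimes C$. Next, using $\varepsilon\circ\delta=0$ from \eqref{eq:CounitOnCoderivation},
\[
\delta^u\varphi(w)=\varepsilon(w_{(-1)})\delta(w_{(0)})-\varepsilon(\delta(w_{(0)}))w_{(-1)}=\delta(w).
\]

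\emph{Uniqueness (main obstacle).} I expect uniqueness to be the hardest step. Let $\psi$ be any bicolinear map with $\delta^u\circ\psi=\delta$, and let $\iota$ denote the injective map from the previous step. Since $\psi$ is left colinear,
\[
\iota(\psi(w))=w_{(-1)}\otimes\delta^u(\psi(w_{(0)}))=w_{(-1)}\otimes\delta(w_{(0)}).
\]
This is independent of $\psi$, so injectivity of $\iota$ forces $\psi=\varphi$. The asserted natural bijection then follows, and naturality in $W$ is clear since the bijection is given by composition with $\delta^u$.
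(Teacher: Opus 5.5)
Your proof is correct. For the one part the paper actually proves, the injectivity axiom, it is the same argument. The paper shows $[c_{(1)}\otimes \delta^u([c_{(2)}\otimes d])] = [c\otimes d]$, i.e.\ your map $\iota$ is a section of the projection $C\otimes C\to\mathscr{W}_1^u$. This is equivalent to your observation that $x-\Delta((\id\otimes\varepsilon)(x))=0$ forces $x\in\im(\Delta)$.

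For well-definedness, the coderivation property and the universal property, the paper simply cites Doi's Proposition 13, whereas you verify them directly. Your uniqueness argument via injectivity of $\iota$ is the same mechanism the paper later uses for uniqueness of codifferentials. The only thing left implicit is the one-line check that $\delta^u\circ\psi$ is a coderivation for every bicolinear $\psi$. That check is needed for the map ${}^C\mathrm{Hom}^C(W,\mathscr{W}_1^u)\to\mathrm{Coder}(W,C)$ to be well defined.
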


    \begin{proof}
        Universality of the pair $(\mathscr{W}_1^u, \delta^u)$ follows from \cite[Proposition 13]{Doi81}. Thus, we only need to show that $[c\otimes d] \mapsto c_{(1)}\otimes \delta^u([c_{(2)}\otimes d])$ is injective. Observe that for all $c, d\in C$ we have
        \begin{align*}
            [c_{(1)}\otimes \delta^u([c_{(2)}\otimes d])] &= [\varepsilon(c_{(2)})c_{(1)}\otimes d - \varepsilon(d)c_{(1)}\otimes c_{(2)}]\\
            &= [c\otimes d].
        \end{align*}
        Thus, the map $[c\otimes d]\mapsto c_{(1)}\otimes \delta^u([c_{(2)}\otimes d])$ is a section of the projection $C\otimes C\rightarrow \mathscr{W}_1^u$. In particular, it is injective.
    \end{proof}

    If $\delta\colon W\rightarrow C$, is a coderivation, then the unique map $\varphi\colon W\rightarrow \mathscr{W}_1^u$ such that $\delta^u\circ \varphi = \delta$ is given by
    \begin{equation}\label{eq:CanMapToUniversalFOCC}
        \varphi(w) = [w_{(-1)}\otimes \delta(w_{(0)})]
    \end{equation}
    for all $w \in W$ (see for instance the proof of \cite[Proposition 13]{Doi81}).

    \begin{remark}
        \label{RemUniversalFOCCKhalkhali}
        The universal first-order codifferential calculus has an alternative description due to Khalkhali \cite{Kha97}: As the underlying $C$-bicomodule consider $C\otimes C^+$, where $C^+ = \ker(\varepsilon)$ with left- and right $C$-coactions
        \begin{gather*}
            {}_{C\otimes C^+}\Delta\colon C\otimes C^+ \rightarrow C\otimes C\otimes C^+, \quad c\otimes d \mapsto c_{(1)}\otimes c_{(2)}\otimes d\\
            \Delta_{C\otimes C^+}\colon C\otimes C^+\rightarrow C\otimes C^+\otimes C, \quad c\otimes d \mapsto c_{(1)}\otimes c_{(2)}\otimes d - c\otimes d_{(1)}\otimes d_{(2)}.
        \end{gather*}
        For the coderivation, we take
        \[
            \delta\colon C\otimes C^+\rightarrow C, \quad c\otimes d \mapsto \varepsilon(c)d.
        \]
    \end{remark}

    The universal first-order codifferential calculus gives us an alternative way to phrase the injectivity condition, which is the one used in \cite{BM26}.

    \begin{proposition}
        \label{PropBicomodQuotients}
        Let $\delta\colon W \rightarrow C$ be a coderivation, and let $\varphi\colon W \rightarrow \mathscr{W}_1^u$ be the unique $C$-bicolinear map as in Proposition \ref{PropUniversalFOCC}. Then $W/\ker(\varphi)$ together with the induced coderivation $\overline{\delta}\colon W/\ker(\varphi) \rightarrow C, [w]\mapsto \delta(w)$ is a first-order codifferential calculus. Moreover, the pair $(W,\delta)$ is a first-order codifferential calculus if and only if $\varphi$ is injective. 
    \end{proposition}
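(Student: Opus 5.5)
The key observation is that the injectivity condition in Definition \ref{DefFOCC} and injectivity of $\varphi$ are controlled by the same map. By \eqref{eq:CanMapToUniversalFOCC} we have
\[
\varphi(w) = [w_{(-1)}\otimes \delta(w_{(0)})],
\]
so $\varphi = p\circ \psi$, where $\psi\colon W\to C\otimes C$, $w\mapsto w_{(-1)}\otimes\delta(w_{(0)})$, and $p\colon C\otimes C\to \mathscr{W}_1^u$ is the projection. I will prove $\ker\varphi=\ker\psi$; both assertions then follow.

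\emph{Step 1: $\ker\varphi = \ker\psi$.} The inclusion $\ker\psi\subseteq\ker\varphi$ is clear. For the reverse inclusion, let $s\colon \mathscr{W}_1^u\to C\otimes C$ be the section $[c\otimes d]\mapsto c_{(1)}\otimes\delta^u([c_{(2)}\otimes d])$ from the proof of Proposition \ref{PropUniversalFOCC}. Since $\delta^u\circ\varphi=\delta$ and $\varphi$ is left colinear, we get
\[
s(\varphi(w)) = \varphi(w)_{(-1)}\otimes\delta^u(\varphi(w)_{(0)}) = w_{(-1)}\otimes\delta^u(\varphi(w_{(0)})) = w_{(-1)}\otimes\delta(w_{(0)}) = \psi(w).
\]
Hence $\psi=s\circ\varphi$, and $\ker\varphi\subseteq\ker\psi$.

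\emph{Step 2: the "moreover" part.} By Definition \ref{DefFOCC}, $(W,\delta)$ is a FOCC exactly when $\psi$ is injective. By Step 1 this holds if and only if $\varphi$ is injective.

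\emph{Step 3: the quotient.} Since $\varphi$ is bicolinear, $K:=\ker\varphi$ is a sub-bicomodule, so $W/K$ is a $C$-bicomodule and the projection $\pi\colon W\to W/K$ is bicolinear. We have $\delta=\delta^u\circ\varphi$, which vanishes on $K$, so $\overline{\delta}$ is well defined. It is a coderivation: apply $\Delta\circ\delta$ to a lift $w$ of a class and push the coaction formulas through $\pi$. The induced map $\overline{\varphi}\colon W/K\to\mathscr{W}_1^u$ is bicolinear, injective, and satisfies $\delta^u\circ\overline{\varphi}=\overline{\delta}$. By the uniqueness in Proposition \ref{PropUniversalFOCC}, $\overline{\varphi}$ is the canonical map attached to $\overline{\delta}$. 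Step 2 then shows that $(W/K,\overline{\delta})$ is a FOCC.

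The only step that is not pure bookkeeping is the identity $\psi=s\circ\varphi$ in Step 1. It relies on the left colinearity of $\varphi$ to move the coaction across $\varphi$, and this is where I would check the Sweedler indices most carefully.
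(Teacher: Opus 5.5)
Your proof is correct and follows the paper's argument. Your section $s$ is exactly the injective structure map $\mathscr{W}_1^u\to C\otimes C$ of the universal calculus, so the identity $\psi=s\circ\varphi$ is the commutative triangle in the paper's proof, and you handle the quotient through the induced injective bicolinear map $\overline{\varphi}$ just as the paper does. The only cosmetic difference is that the paper gets the coderivation property of $\overline{\delta}$ directly from $\overline{\delta}=\delta^u\circ\overline{\varphi}$, rather than by pushing coactions through $\pi$.
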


    \begin{proof}
        We have $\overline{\delta} = \delta^u\circ \overline{\varphi}$, where $\overline{\varphi}\colon W/\ker(\varphi)\rightarrow \mathscr{W}_1^u, [w] \mapsto \varphi(w)$ is the $C$-bicolinear map induced by $\varphi$. Thus, this map is a well-defined coderivation. Next observe that for any pair $(W, \delta)$ consisting of a $C$-bicomodule and a coderivation, the diagram
        \[\begin{tikzcd}
	    & {C\otimes C} \\
	    W & {\mathscr{W}_1^u}
	    \arrow[from=2-1, to=1-2]
	    \arrow["\varphi"', from=2-1, to=2-2]
	    \arrow[from=2-2, to=1-2]
        \end{tikzcd}\]
        commutes, where $\varphi$ is again the unique $C$-bicolinear map as in Proposition \ref{PropUniversalFOCC} and the vertical maps are as in Definition \ref{DefFOCC}. This shows at once that $W/\ker(\varphi)$ is a first-order codifferential calculus and that $(W,\delta)$ is a first-order codifferential calculus if and only $\varphi$ is injective.
    \end{proof}

    \begin{remark}
        As remarked in the introduction, codifferential calculi have not been thoroughly studied until recently. The work on this article started some time before the preprint \cite{BM26} appeared, after noticing that the degree 1 differential in the sequence of $U_q(\mathfrak{sl}_2)$-modules in the introduction of \cite{HK072} is a coderivation. This is why our approach and notation differ from the one used in \cite{BM26}.
    \end{remark}

    \subsection{Codifferentiable maps}\label{SecCodiffMap}

    In this section, we want to investigate how first-order codifferential calculi interact with morphisms of coalgebras. More generally, one can consider morphisms of coderivations \cite[Definition 1]{BM26}.

    \begin{definition}
        Let $C$ and $D$ be coalgebras, and let $(\mathscr{W}_1^C, \delta^C)$ and $(\mathscr{W}_1^D, \delta^D)$ be first-order codifferential calculi over $C$ and $D$ respectively. A morphism of coalgebras $\varphi\colon C\rightarrow D$ is \textit{codifferentiable} if there exists a $D$-bicolinear map $\varphi^\delta\colon \mathscr{W}_1^C \rightarrow \mathscr{W}_1^D$ such that the diagram
        \begin{equation}\label{eq:Codifferentiability}\begin{tikzcd}[column sep=large]
	    {\mathscr{W}_1^C} & {\mathscr{W}_1^D} \\
	    C & D
	    \arrow["{\varphi^\delta}", from=1-1, to=1-2]
	    \arrow["{\delta^C}"', from=1-1, to=2-1]
	    \arrow["{\delta^D}", from=1-2, to=2-2]
	    \arrow["\varphi"', from=2-1, to=2-2]
        \end{tikzcd}\end{equation}
        commutes. We refer to $\varphi^\delta$ as a \textit{codifferential} of $\varphi$
    \end{definition}

    If a codifferential exists, then it is necessarily unique.

    \begin{proposition}
        Let $\varphi\colon C\rightarrow D$ be a morphism of coalgebras, and let $(\mathscr{W}_1^C, \delta^C)$ and $(\mathscr{W}_1^D, \delta^D)$ be first-order codifferential calculi over $C$ and $D$ respectively. If $\varphi$ is codifferentiable, then there exists at most one codifferential of $\varphi$.
    \end{proposition}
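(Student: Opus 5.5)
The plan is to use the injectivity axiom of $\mathscr{W}_1^D$ from Definition \ref{DefFOCC}. Suppose $\varphi^\delta$ and $\psi^\delta$ are both codifferentials of $\varphi$, and set $\theta := \varphi^\delta - \psi^\delta$. Since both maps are $D$-bicolinear, so is $\theta$. Subtracting the two instances of \eqref{eq:Codifferentiability} gives $\delta^D\circ\theta = \varphi\circ\delta^C - \varphi\circ\delta^C = 0$.

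Next, fix $w\in \mathscr{W}_1^C$ and apply the injective map $\mathscr{W}_1^D\rightarrow D\otimes D$, $v\mapsto v_{(-1)}\otimes \delta^D(v_{(0)})$, to $\theta(w)$. The left $D$-coaction on $\mathscr{W}_1^C$ is the one through $\varphi$, namely $w\mapsto \varphi(w_{(-1)})\otimes w_{(0)}$. Left $D$-colinearity of $\theta$ therefore gives
\[
\theta(w)_{(-1)}\otimes \theta(w)_{(0)} = \varphi(w_{(-1)})\otimes \theta(w_{(0)}).
\]
Hence the image of $\theta(w)$ is $\varphi(w_{(-1)})\otimes \delta^D(\theta(w_{(0)})) = 0$, because $\delta^D\circ\theta=0$. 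Injectivity then forces $\theta(w)=0$, so $\varphi^\delta=\psi^\delta$.

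The only point needing care is making explicit that $\mathscr{W}_1^C$ is a $D$-bicomodule via corestriction along $\varphi$, so that $D$-bicolinearity makes sense. Only left colinearity is actually used. Equivalently, one could phrase the argument through Proposition \ref{PropBicomodQuotients}: the map $\mathscr{W}_1^D\rightarrow \mathscr{W}_1^{u}(D)$ is injective, and composing it with $\theta$ yields $[\varphi(w_{(-1)})\otimes \delta^D(\theta(w_{(0)}))]=0$. I expect no real obstacle.
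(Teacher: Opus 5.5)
Your proof is correct and is essentially the paper's argument. The paper computes $\varphi^\delta(w)_{(-1)}\otimes \delta^D(\varphi^\delta(w)_{(0)}) = \varphi(w_{(-1)})\otimes \varphi(\delta^C(w_{(0)}))$ directly from left colinearity and the commuting square, notes that the right-hand side does not involve $\varphi^\delta$, and concludes by the injectivity axiom; your difference $\theta$ packages exactly the same computation.
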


    \begin{proof}
        Let $\varphi^\delta$ be a codifferential of $\varphi$. Using \eqref{eq:Codifferentiability} and the fact that $\varphi^\delta$ is $D$-bicolinear, we get
        \[
            \varphi^\delta(w)_{(-1)}\otimes \delta^D(\varphi^\delta(w)_{(0)}) = \varphi(w_{(-1)})\otimes \varphi(\delta^C(w_{(0)}))
        \]
        for all $w \in \mathscr{W}_1^C$. The expression on the left-hand side is simply the canonical embedding $\mathscr{W}_1^D\hookrightarrow D\otimes D$ (see Definition \ref{DefFOCC}) applied to $\varphi^\delta(w)$. Since the right-hand side does not involve $\varphi^\delta$, uniqueness follows.
    \end{proof}

    Note that for this argument to apply, the injectivity axiom for first-order codifferential calculi is essential. We also observe that any coalgebra map $\varphi\colon C\rightarrow D$ is codifferentiable if we equip $C$ and $D$ with their corresponding universal first-order codifferential calculus. The codifferential of $\varphi$ is in this case given by $[c\otimes d]\mapsto [\varphi(c)\otimes \varphi(d)]$.

    \begin{definition}
        We denote by $\mathbf{FOCC}$ the category consisting of objects given by coalgebras $C$ together with a first-order codifferential calculus and morphisms given by codifferential maps. Let $U\colon \mathbf{FOCC}\rightarrow \mathbf{Coalg}_k$ be the functor that takes a first-order codifferential calculus $(\mathscr{W}_1, C)$ to the coalgebra $C$, and acts as the identity on morphisms. For a fixed coalgebra $C$ we write $\mathbf{FOCC}_C$ for the fiber category $U^{-1}(\{C\})$, that is, the category of first-order codifferential calculi over $C$, where a morphism $\mathscr{W}_1 \rightarrow \tilde{\mathscr{W}}_1$ is given by the unique codifferential of the identity $\id_C$.
    \end{definition}

    \begin{remark}
        It follows by Proposition \ref{PropUniversalFOCC}, that for any coalgebra $C$, the category $\mathbf{FOCC}_C$ is equivalent poset category of sub-bicomodules of the universal first-order codifferential calculus $\mathscr{W}_1^u$ over $C$. This classification of first-order codifferential calculi is stricter than the one in \cite{BM26}. The reason is that the category $\mathbf{FOCC}_C$ excludes isomorphisms in $\mathbf{FOCC}$ between two first-order codifferential calculi over $C$, that are given by coalgebra automorphisms $C\rightarrow C$.
    \end{remark}

    \subsection{Higher-order codifferential calculi}\label{SecDGCoalg}

    In this section we extend the notion of a first-order codifferential calculi to higher orders. First, recall the definition of a differential graded coalgebra.    

    \begin{definition}
        A \textit{differential graded coalgebra (dg coalgebra)} is a (non-negatively) graded coalgebra (see Example \ref{ExplGrcoalg}) $D_\bullet$ together with a \textit{differential} $\delta$, that is a linear map of degree -1 such that $\delta^2 = 0$ and
        \begin{equation}\label{eq:dgcoalg}
            \Delta(\delta(c)) = \delta(c_{(1)})\otimes c_{(2)} + (-1)^{|c_{(1)}|}c_{(1)}\otimes \delta(c_{(2)})
        \end{equation}
        for all homogeneous elements $c \in D_\bullet$.
    \end{definition}

    One can equivalently describe a dg coalgebra as a comonoid in the category of (non-negatively graded) chain complexes. With this description \eqref{eq:dgcoalg} corresponds to the condition $\Delta(\delta(d)) = \delta^{D_\bullet\otimes D_\bullet}(\Delta(d))$ where $\delta^{D_\bullet\otimes D_\bullet}$ is the differential of the tensor product of chain complexes $D_\bullet\otimes D_\bullet$.
    
    Let us now discuss how to verify \eqref{eq:dgcoalg} in practice, using this equivalent comonoidal description. Fix $d\in D_n$ for some $n \ge 1$, then both $\Delta(\delta(d))$ and $\delta^{D_\bullet\otimes D_\bullet}(\Delta(d))$ lie in $\bigoplus_{i=0}^{n-1}D_i\otimes D_{n-1-i}$. Thus, to show that they are equal, one has to compare their projections onto each $D_i\otimes D_{n-1-i}$ for $0\le i\le n-1$. Write
    \[
        \Delta(\delta(d)) = \sum_{i=0}^{n-1} \delta(d)_{(1)}^i\otimes \delta(d)_{(2)}^{n-1-i}, \quad \Delta(d) = \sum_{j=0}^n d_{(1)}^j\otimes d_{(2)}^{n-j}
    \]
    such that $\delta(d)_{(1)}^i\otimes \delta(d)_{(2)}^{n-1-i} \in D_i\otimes D_{n-1-i}$ for all $0\leq i \leq n-1$ and $d_{(1)}^j\otimes d_{(2)}^{n-j} \in D_j\otimes D_{n-j}$ for all $0\leq j \leq n$. In order to show that \eqref{eq:dgcoalg} holds, it is thus sufficient and necessary to show that
    \begin{equation}\label{eq:dgcoalgII}
        \delta(d)_{(1)}^i\otimes \delta(d)_{(2)}^{n-1-i} = \delta(d_{(1)}^{i+1}) \otimes d_{(2)}^{n-(i+1)} + (-1)^id_{(1)}^i\otimes \delta(d_{(2)}^{n-i})
    \end{equation}
    for all $d \in D_n$ with $n\ge 1$ and $0\le i \le n-1$.

    Write $C := D_0$ as noted in Example \ref{ExplGrcoalg}, the latter is a subcoalgebra of $D_\bullet$ and every homogeneous component $D_i$ for $i\ge 0$ is a $C$-bicomodule. If we explicitly write down the compatibility condition \eqref{eq:dgcoalgII} for an element $d \in D_1$, we get
    \[
        \Delta(\delta_1(d)) = \delta_1(d_{(0)})\otimes d_{(1)} + d_{(-1)}\otimes \delta_1(d_{(0)}).
    \]
    That is, $\delta_1$ is a coderivation, and thus by Proposition \ref{PropBicomodQuotients}, the quotient $D_1/\ker(\varphi)$, where $\varphi\colon D_1 \rightarrow \mathscr{W}_1^u$ is the unique $C$-bicolinear map from Proposition \ref{PropUniversalFOCC}, is a first-order codifferential calculus over $D_0$, which we denote by $\underline{\mathscr{W}_1}(D_\bullet)$. This construction behaves well with morphisms of dg coalgebras.
    
    \begin{proposition}
        \label{PropContractionII}
        Let $f \colon D_\bullet \rightarrow \tilde{D}_\bullet$ be a morphism of dg coalgebras. Then the restriction $f_0\colon D_0 \rightarrow \tilde{D}_0$ is codifferentiable with respect to the codifferential calculi $\underline{\mathscr{W}_1}(D_\bullet)$ and $\underline{\mathscr{W}_1}(\tilde{D}_\bullet)$. In particular, we get a functor
        \[
            \underline{\mathscr{W}_1}(-)\colon \dgCoalg \rightarrow \Focc.
        \]
    \end{proposition}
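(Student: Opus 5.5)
The plan is to take the codifferential to be the map induced by the degree-one component $f_1\colon D_1\rightarrow \tilde{D}_1$ on the quotients. Write $\varphi\colon D_1\rightarrow \mathscr{W}_1^u(D_0)$ and $\tilde{\varphi}\colon \tilde{D}_1\rightarrow \mathscr{W}_1^u(\tilde{D}_0)$ for the canonical maps of Proposition \ref{PropUniversalFOCC}, so that $\underline{\mathscr{W}_1}(D_\bullet)=D_1/\ker(\varphi)$ and $\underline{\mathscr{W}_1}(\tilde{D}_\bullet)=\tilde{D}_1/\ker(\tilde{\varphi})$. I will set
\[
f_0^\delta([d]) := [f_1(d)], \qquad d\in D_1 .
\]
Three things must then be checked: this is well defined, it is $\tilde{D}_0$-bicolinear, and it fits into the square \eqref{eq:Codifferentiability}.

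First I record how $f$ interacts with the bicomodule structures. A morphism of dg coalgebras preserves degree and commutes with the coproduct. Projecting $\Delta(f_1(d))=(f\otimes f)(\Delta(d))$ onto $\tilde{D}_0\otimes \tilde{D}_1$ and onto $\tilde{D}_1\otimes\tilde{D}_0$ uses the decomposition \eqref{eq:grcoalg} together with the description of the $D_0$-coactions in Example \ref{ExplGrcoalg}. It gives
\[
f_1(d)_{(-1)}\otimes f_1(d)_{(0)} = f_0(d_{(-1)})\otimes f_1(d_{(0)})
\]
and the analogous identity for the right coaction. So $f_1$ is $\tilde{D}_0$-bicolinear once $D_1$ carries the corestricted structure along $f_0$. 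Since $f$ commutes with the differentials, we also have $\tilde{\delta}_1\circ f_1 = f_0\circ \delta_1$. Together these yield the key identity
\[
f_1(d)_{(-1)}\otimes \tilde{\delta}_1(f_1(d)_{(0)}) = (f_0\otimes f_0)\bigl(d_{(-1)}\otimes \delta_1(d_{(0)})\bigr).
\]

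Next I identify the kernels. By the commuting triangle in the proof of Proposition \ref{PropBicomodQuotients}, the map $d\mapsto d_{(-1)}\otimes\delta_1(d_{(0)})$ factors as $\varphi$ followed by the map $\mathscr{W}_1^u\rightarrow D_0\otimes D_0$. That second map is injective, because $\mathscr{W}_1^u$ is a first-order codifferential calculus. Hence $\ker(\varphi)$ is exactly the kernel of $d\mapsto d_{(-1)}\otimes\delta_1(d_{(0)})$, and likewise for $\tilde{\varphi}$. The key identity then gives $f_1(\ker\varphi)\subseteq\ker\tilde{\varphi}$, so $f_0^\delta$ is well defined.

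Bicolinearity of $f_0^\delta$ is inherited from that of $f_1$, because both quotient maps are bicolinear. Commutativity of \eqref{eq:Codifferentiability} is the relation $\tilde{\delta}_1 f_1 = f_0\delta_1$ passed to the quotients. This shows that $f_0$ is codifferentiable.

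For functoriality, set $\underline{\mathscr{W}_1}(f):=f_0$. Codifferentials are unique when they exist, and $\id$ and $g_0^\delta\circ f_0^\delta$ are codifferentials of $\id$ and $(g\circ f)_0$ respectively. Hence identities and compositions are preserved.

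The step I expect to need the most care is the degree bookkeeping that extracts the bicomodule compatibility of $f_1$ from the coalgebra-morphism property. That step relies on $f$ being degree-preserving, so that $(f\otimes f)\Delta(d)$ lands in the same graded components as $\Delta(d)$ and the projections can be compared termwise.
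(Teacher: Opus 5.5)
Your proposal is correct and follows the paper's proof. The paper only asserts that $f_1$ is $\tilde{D}_0$-bicolinear, descends to a map $\underline{\mathscr{W}_1}(D_\bullet)\rightarrow\underline{\mathscr{W}_1}(\tilde{D}_\bullet)$, and intertwines the differentials; you supply the omitted checks, in particular identifying $\ker(\varphi)$ with the kernel of $d\mapsto d_{(-1)}\otimes\delta_1(d_{(0)})$ to get well-definedness.
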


    \begin{proof}
        Since morphisms of dg coalgebras commute with the respective differentials, the assertion follows directly by checking that $f_1 \colon D_1 \rightarrow \tilde{D}_1$ is $\tilde{D}_0$-bicolinear, and that it descends to a map $\underline{\mathscr{W}_1}(D_\bullet) \rightarrow \underline{\mathscr{W}_1}(\tilde{D}_\bullet)$.
    \end{proof}

    A (higher-order) codifferential calculus is a dg coalgebra $D_\bullet$, which embeds into the cotensor coalgebra $T_{D_0}^c(\underline{\mathscr{W}_1}(D_\bullet))$. In more detail:

    \begin{definition}
        \label{DefCodifferentialCalculus}
        A dg coalgebra $D_\bullet$ is called \textit{codifferential calculus} if the map
        \[
            D_\bullet \rightarrow T_{D_0}^c(\underline{\mathscr{W}_1}(D_\bullet))
        \]
        induced by the projection $D_\bullet\overset{\pi_1}{\rightarrow} D_1 \rightarrow \underline{\mathscr{W}_1}(D_\bullet)$ (see Example \ref{ExplGrcoalg}) is injective.
    \end{definition}

    \subsection{The maximal prolongation}\label{SecMaximalProlongation}

    For this section we fix a coalgebra $C$ and a first-order codifferential calculus $\mathscr{W}_1$ over $C$. In addition, we write $\varphi$ for the canonical map $\mathscr{W}_1\rightarrow \mathscr{W}_1^u$ from Proposition \ref{PropUniversalFOCC}. Note that since $\mathscr{W}_1$ is a first-order codifferential calculus, the map $\varphi$ is injective by Proposition \ref{PropBicomodQuotients}. The goal is to universally construct a codifferential calculus $\mathscr{W}_\bullet^{\mathrm{max}}$ such that $\underline{\mathscr{W}_1}(\mathscr{W}_\bullet^{\mathrm{max}}) = \mathscr{W}_1$. The key step for this construction is the following definition.
    
    \begin{definition}
        \label{DefDeg2Coforms}
        Consider the following map
        \[
            \check{\delta}_2\colon \mathscr{W}_1\cotimes{C} \mathscr{W}_1\rightarrow \mathscr{W}_1^u, \quad w_1\cotimes{C} w_2\mapsto [\delta(w_1)\otimes \delta(w_2)].
        \]
        We let $\mathscr{W}_2^\mathrm{max}$ be the largest sub $C$-bicomodule of $\mathscr{W}_1\cotimes{C} \mathscr{W}_1$ such that $\check{\delta}_2(\mathscr{W}_2^\mathrm{max})\subseteq \varphi(\mathscr{W}_1)$. Moreover, we let $\delta_2\colon \mathscr{W}_2^\mathrm{max}\rightarrow \mathscr{W}_1$ be the unique map such that the diagram
        \[\begin{tikzcd}\label{eq:DefDeg2Coforms}
	    {\mathscr{W}_1\cotimes{C}\mathscr{W}_1} & {\mathscr{W}_1^u} \\
	    {\mathscr{W}_2^\mathrm{max}} & {\mathscr{W}_1}
	    \arrow["{\check{\delta}_2}", from=1-1, to=1-2]
	    \arrow[hook, from=2-1, to=1-1]
	    \arrow["{\delta_2}", dashed, from=2-1, to=2-2]
	    \arrow["{\varphi}"', hook, from=2-2, to=1-2]
        \end{tikzcd}\]
        commutes.
    \end{definition}

    From the definition above it is not obvious which elements of $\mathscr{W}_1\cotimes{C}\mathscr{W}_1$ belong to $\mathscr{W}_2^\mathrm{max}$. However, we have the following sufficient condition.

    \begin{lemma}
        \label{LemEltsOfW2}
        If $w_1\cotimes{C}w_2 \in \mathscr{W}_1\cotimes{C}\mathscr{W}_1$ such that $\check{\delta}(w_1\cotimes{C}w_2) = 0$, then $w_1\cotimes{C}w_2 \in \mathscr{W}_2^\mathrm{max}$.
    \end{lemma}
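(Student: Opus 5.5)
Fix $x := w_1\cotimes{C}w_2$ with $\check{\delta}_2(x)=0$; here $\check{\delta}$ in the statement means the map $\check{\delta}_2$ of Definition \ref{DefDeg2Coforms}. The plan is to exhibit a sub $C$-bicomodule $N \subseteq \mathscr{W}_1\cotimes{C}\mathscr{W}_1$ with $x\in N$ and $\check{\delta}_2(N)\subseteq \varphi(\mathscr{W}_1)$. Maximality of $\mathscr{W}_2^{\mathrm{max}}$ then gives $N\subseteq \mathscr{W}_2^{\mathrm{max}}$, hence $x\in \mathscr{W}_2^{\mathrm{max}}$.

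The sum of all sub-bicomodules $N$ with $\check{\delta}_2(N)\subseteq\varphi(\mathscr{W}_1)$ is again such a sub-bicomodule, so $\mathscr{W}_2^{\mathrm{max}}$ is well defined and contains every such $N$. This is implicit in the definition. The natural candidate is $N := \ker(\check{\delta}_2)$, which certainly contains $x$ and satisfies $\check{\delta}_2(N)=0\subseteq\varphi(\mathscr{W}_1)$. It remains to check that $N$ is a sub $C$-bicomodule, and for this it suffices to show that $\check{\delta}_2$ is $C$-bicolinear, where $\mathscr{W}_1^u$ carries the bicomodule structure of Proposition \ref{PropUniversalFOCC}.

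Bicolinearity is a computation with the coderivation property. For left colinearity, write a general element as $\sum w_1\otimes w_2$ in the cotensor product. The left coaction on $\mathscr{W}_1^u$ gives
\[
[\delta(w_1)\otimes\delta(w_2)] \mapsto \delta(w_1)_{(1)}\otimes[\delta(w_1)_{(2)}\otimes\delta(w_2)].
\]
Expanding with the coderivation rule yields
\[
\delta(w_{1(0)})\otimes[w_{1(1)}\otimes\delta(w_2)] + w_{1(-1)}\otimes[\delta(w_{1(0)})\otimes\delta(w_2)].
\]
The second term is exactly $(\id\otimes\check{\delta}_2)$ applied to the left coaction of $w_1\cotimes{C}w_2$, so we need the first term to vanish. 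By the cotensor condition, $w_{1(0)}\otimes w_{1(1)}\otimes w_2 = w_1\otimes w_{2(-1)}\otimes w_{2(0)}$. Hence the first term equals
\[
\delta(w_1)\otimes[w_{2(-1)}\otimes\delta(w_{2(0)})].
\]
The coderivation rule also gives $\Delta(\delta(w_2)) = \delta(w_{2(0)})\otimes w_{2(1)} + w_{2(-1)}\otimes\delta(w_{2(0)})$. Rewriting with this, the first term becomes
\[
\delta(w_1)\otimes\bigl([\Delta(\delta(w_2))] - [\delta(w_{2(0)})\otimes w_{2(1)}]\bigr).
\]
Here $[\Delta(\cdot)]=0$ in $\mathscr{W}_1^u = C\otimes C/\im(\Delta)$, so we are left with
\[
-\delta(w_1)\otimes[\delta(w_{2(0)})\otimes w_{2(1)}].
\]

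This leftover term is the main obstacle. It is not obviously zero, and it suggests that $\check{\delta}_2$ may only be colinear up to correction terms. If full colinearity fails, I would change strategy. Instead of $\ker\check{\delta}_2$, take $N$ to be the subcomodule generated by $x$: apply Lemma \ref{LemSubcomodGenByElt} to $\mathscr{W}_1\cotimes{C}\mathscr{W}_1$ as a right $C\otimes C^{op}$-comodule. Then show directly that the coaction components of $x$ lie in $\ker\check{\delta}_2$. To do so, apply $\id\otimes\check{\delta}_2\otimes\id$ to the two-sided coaction of $x$. The computation above expresses this as $(\mathrm{coaction})(\check{\delta}_2(x))$, which is $0$, plus the correction terms. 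I expect the correction terms to cancel pairwise, by the symmetric computation for the right coaction combined with the cotensor condition. The same bookkeeping that produced the leftover term should show that the two extra terms from the left and right coactions are negatives of each other. Once the components are shown to lie in $\ker\check{\delta}_2$, the subcomodule generated by $x$ lies in $\ker\check{\delta}_2$, and maximality finishes the proof.
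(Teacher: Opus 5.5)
Your first route does fail, as you suspected: $\check{\delta}_2$ is not bicolinear (compare Lemma~\ref{LemDeg2Differential}), and your leftover term is genuinely nonzero. The real gap is in the second route. It rests on the expectation that the correction terms from the left and right coactions cancel, so that the coaction components of $x$ lie in $\ker\check{\delta}_2$. This is false.

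Take $C = kg_1\oplus kg_2$ with $g_1,g_2$ group-like, and $\mathscr{W}_1 = \mathscr{W}_1^u$, so $\varphi = \id$. Then $\mathscr{W}_1^u\cotimes{C}\mathscr{W}_1^u$ is spanned by $a := [g_1\otimes g_2]\otimes[g_2\otimes g_1]$ and $b := [g_2\otimes g_1]\otimes[g_1\otimes g_2]$. Their two-sided coactions are $a\mapsto g_1\otimes a\otimes g_1$ and $b\mapsto g_2\otimes b\otimes g_2$. Moreover $\check{\delta}_2(a) = \check{\delta}_2(b) = [g_1\otimes g_2]+[g_2\otimes g_1]\neq 0$. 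So $x := a-b$ satisfies $\check{\delta}_2(x)=0$, but its coaction components $a$ and $b$ do not lie in $\ker\check{\delta}_2$. In fact $\ker\check{\delta}_2 = kx$ contains no nonzero sub-bicomodule at all. Hence no argument that builds a sub-bicomodule inside $\ker\check{\delta}_2$ can succeed, even though here $\mathscr{W}_2^\mathrm{max}$ is everything.

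The missing idea is that the correction terms need not vanish; they only have to land in $\varphi(\mathscr{W}_1)$, and they do. By \eqref{eq:CanMapToUniversalFOCC} and $[\Delta(\delta(w))]=0$, one has $[w_{(-1)}\otimes\delta(w_{(0)})] = \varphi(w) = -[\delta(w_{(0)})\otimes w_{(1)}]$. For instance, your leftover term is exactly $\delta(w_1)\otimes\varphi(w_2)\in C\otimes\varphi(\mathscr{W}_1)$. Doing the two-sided computation with this in mind, and using that $\varphi$ is bicolinear, gives
\begin{align*}
(\id\otimes\check{\delta}_2\otimes\id)\bigl(x_{(-1)}\otimes x_{(0)}\otimes x_{(1)}\bigr) &= \check{\delta}_2(x)_{(-1)}\otimes\check{\delta}_2(x)_{(0)}\otimes\check{\delta}_2(x)_{(1)}\\
&\phantom{=}\ + w_{1(-1)}\otimes\varphi(w_{1(0)})\otimes\delta(w_2) - \delta(w_1)\otimes\varphi(w_{2(0)})\otimes w_{2(1)}.
\end{align*}
A fourth term, $\delta(w_{1(0)})\otimes[w_{1(1)}\otimes w_{2(-1)}]\otimes\delta(w_{2(0)})$, has been dropped. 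By the cotensor condition it equals $\delta(w_1)\otimes[\Delta(w_{2(-1)})]\otimes\delta(w_{2(0)}) = 0$.

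Since $\check{\delta}_2(x)=0$, the right-hand side lies in $C\otimes\varphi(\mathscr{W}_1)\otimes C$. That is, the two-sided coaction of $x$ lies in $C\otimes V\otimes C$ with $V := \check{\delta}_2^{-1}(\varphi(\mathscr{W}_1))$. Apply Lemma~\ref{LemSubcomodGenByElt} with this $V$ rather than $\ker\check{\delta}_2$. It gives a sub-bicomodule containing $x$ on which $\check{\delta}_2$ takes values in $\varphi(\mathscr{W}_1)$, and maximality finishes the proof. This is precisely the paper's argument.
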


    \begin{proof}
        We claim that $(\id\otimes \check{\delta}\otimes \id)(w_{1(-1)}\otimes w_{1(0)}\cotimes{C} w_{2(0)}\otimes w_{2(1)}) \in C\otimes \varphi(\mathscr{W}_1)\otimes C$. Indeed, using the coderivation property of $\delta$ gives
        \begin{align*}
            &\phantom{==} w_{1(-1)}\otimes[\delta(w_{1(0)})\otimes \delta(w_{2(0)})]\otimes w_{2(1)}\\
            &= \delta(w_{1})_{(1)}\otimes[\delta(w_1)_{(2)}\otimes \delta(w_{2(0)})]\otimes w_{2(1)} - \delta(w_{1(0)})\otimes [w_{1(1)}\otimes \delta(w_{2(0)})]\otimes w_{2(1)}\\
            &= \delta(w_1)_{(1)}\otimes[\delta(w_1)_{(2)}\otimes \delta(w_2)_{(1)}]\otimes \delta(w_2)_{(2)} - \delta(w_1)_{(1)}\otimes[\delta(w_1)_{(2)}\otimes w_{2(-1)}]\otimes \delta(w_{2(0)})\\
            &\phantom{==} - \delta(w_{1(0)})\otimes [w_{1(1)}\otimes \delta(w_2)_{(1)}] \otimes \delta(w_2)_{(2)} + \delta(w_{1(0)})\otimes [w_{1(1)}\otimes w_{2(-1)}]\otimes \delta(w_{2(0)}).
        \end{align*}
        Here, the first summand is zero, since $[\delta(w_1)\otimes \delta(w_2)] = 0$ by assumption, the second summand is equal to
        \begin{align*}
            -({}_{\mathscr{W}_1^u}\Delta\otimes \id)([\delta(w_1)\otimes w_{2(-1)}]\otimes \delta(w_{2(0)})) &= -({}_{\mathscr{W}_1^u}\Delta\otimes \id)([\delta(w_{1(0)})\otimes w_{1(1)}]\otimes \delta(w_2))\\
            &= ({}_{\mathscr{W}_1^u}\Delta\otimes \id)([w_{1(-1)}\otimes \delta(w_{1(0)})]\otimes \delta(w_2))\\
            &= ({}_{\mathscr{W}_1^u}\Delta\otimes \id)(\varphi(w_1)\otimes \delta(w_2))
        \end{align*}
        where we used that $w_1\otimes w_2 \in \mathscr{W}_1\cotimes{C}\mathscr{W}_1$, and the last step uses \eqref{eq:CanMapToUniversalFOCC}. Similarly, the third summand equals to
        \begin{align*}
            -(\id\otimes \Delta_{\mathscr{W}_1^u})(\delta(w_{1(0)})\otimes [w_{1(1)}\otimes \delta(w_2)]) = -(\id\otimes \Delta_{\mathscr{W}_1^u})(\delta(w_1)\otimes \varphi(w_2))
        \end{align*}
        and finally the last summand vanishes since
        \begin{align*}
            w_{1(0)}\otimes w_{1(1)}\otimes w_{2(-1)}\otimes w_{2(0)} &= w_{1(0)}\otimes w_{1(1)}\otimes {}_{\mathscr{W}_1}\Delta(w_2)\\
            &= w_1\otimes w_{2(-1)}\otimes {}_{\mathscr{W}_1}\Delta(w_{2(0)})\\
            &= w_1 \otimes w_{2(-1)(1)}\otimes w_{2(-1)(2)}\otimes w_{2(0)}.
        \end{align*}
        Thus, we conclude that $w_1\cotimes{C}w_2 \in \mathscr{W}_2^\mathrm{max}$ by Lemma \ref{LemSubcomodGenByElt}.
    \end{proof}

    The following assertion shows that $\delta_2$ has the properties of a differential of a dg coalgebra in degree 2.

    \begin{lemma}
        \label{LemDeg2Differential}
        The map $\delta_2$ from Definition \ref{DefDeg2Coforms} satisfies $\delta\circ \delta_2 = 0$. In addition, the identities
        \begin{align}
            {}_{\mathscr{W}_1}\Delta(\delta_2(w_1\cotimes{C}w_2)) &= w_{1(-1)}\otimes\delta_2(w_{1(0)}\cotimes{C}w_2) + \delta(w_1)\otimes w_2\\
            \Delta_{\mathscr{W}_1}(\delta_2(w_1\cotimes{C}w_2)) &= \delta_2(w_1\cotimes{C}w_{2(0)})\otimes w_{2(1)} - w_1\otimes \delta(w_2)
        \end{align}
        hold for every $w_1\cotimes{C}w_2 \in \mathscr{W}_2^\mathrm{max}$.
    \end{lemma}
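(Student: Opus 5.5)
The plan is to reduce all three claims to identities inside $\mathscr{W}_1^u$ and $C\otimes C$, using that $\varphi$ is an injective $C$-bicolinear map with $\delta^u\circ\varphi=\delta$, and that $\varphi\circ\delta_2=\check{\delta}_2$ on $\mathscr{W}_2^\mathrm{max}$.

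\emph{The identity $\delta\circ\delta_2=0$.} Since $\delta=\delta^u\circ\varphi$, we have
\[
\delta(\delta_2(w_1\cotimes{C}w_2))=\delta^u([\delta(w_1)\otimes\delta(w_2)])=\varepsilon(\delta(w_1))\delta(w_2)-\varepsilon(\delta(w_2))\delta(w_1),
\]
and this vanishes by \eqref{eq:CounitOnCoderivation}.

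\emph{The coaction identities.} It suffices to prove them after applying the injective map $\id\otimes\varphi$, respectively $\varphi\otimes\id$. By bicolinearity of $\varphi$, the left-hand side of the first identity becomes ${}_{\mathscr{W}_1^u}\Delta([\delta(w_1)\otimes\delta(w_2)])=\delta(w_1)_{(1)}\otimes[\delta(w_1)_{(2)}\otimes\delta(w_2)]$. Expand $\Delta(\delta(w_1))=\delta(w_{1(0)})\otimes w_{1(1)}+w_{1(-1)}\otimes\delta(w_{1(0)})$ using the coderivation property. The second summand gives $w_{1(-1)}\otimes\check{\delta}_2(w_{1(0)}\cotimes{C}w_2)$, which is $(\id\otimes\varphi)$ of $w_{1(-1)}\otimes\delta_2(w_{1(0)}\cotimes{C}w_2)$. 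This needs ${}_{\mathscr{W}_1\cotimes{C}\mathscr{W}_1}\Delta(w_1\cotimes{C}w_2)\in C\otimes\mathscr{W}_2^\mathrm{max}$, which holds because $\mathscr{W}_2^\mathrm{max}$ is a sub-bicomodule. The first summand is $\delta(w_{1(0)})\otimes[w_{1(1)}\otimes\delta(w_2)]$. Using the cotensor condition $w_{1(0)}\otimes w_{1(1)}\otimes w_2=w_1\otimes w_{2(-1)}\otimes w_{2(0)}$, it becomes $\delta(w_1)\otimes[w_{2(-1)}\otimes\delta(w_{2(0)})]=\delta(w_1)\otimes\varphi(w_2)$ by \eqref{eq:CanMapToUniversalFOCC}. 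This is exactly $(\id\otimes\varphi)(\delta(w_1)\otimes w_2)$.

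The right coaction is handled symmetrically. Expand
\[
[\delta(w_1)\otimes\delta(w_2)_{(1)}]\otimes\delta(w_2)_{(2)}
\]
via $\Delta(\delta(w_2))$. The term with $\delta(w_{2(0)})\otimes w_{2(1)}$ gives $\check{\delta}_2(w_1\cotimes{C}w_{2(0)})\otimes w_{2(1)}$. The term with $w_{2(-1)}\otimes\delta(w_{2(0)})$ gives $[\delta(w_1)\otimes w_{2(-1)}]\otimes\delta(w_{2(0)})$. Applying the cotensor condition turns this into $[\delta(w_{1(0)})\otimes w_{1(1)}]\otimes\delta(w_2)$.

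The point needing care, and the one expected to be the main obstacle, is identifying $[\delta(w_{1(0)})\otimes w_{1(1)}]$ with $-\varphi(w_1)$ in $\mathscr{W}_1^u$; the sign in the second identity comes from here. For this we use that $\im(\Delta)$ is zero in $\mathscr{W}_1^u$. Since $[\Delta(\delta(w_1))]=0$, the coderivation property gives
\[
[\delta(w_{1(0)})\otimes w_{1(1)}]=-[w_{1(-1)}\otimes\delta(w_{1(0)})]=-\varphi(w_1),
\]
which is the same step already used in the proof of Lemma \ref{LemEltsOfW2}. Hence the second term equals $-(\varphi\otimes\id)(w_1\otimes\delta(w_2))$. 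Since $\varphi\otimes\id$ is injective, the right coaction formula follows, and likewise for the left one.
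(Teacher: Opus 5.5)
Your proposal is correct and takes the same route as the paper. You check the coaction identities after applying the injective maps $\id\otimes\varphi$ and $\varphi\otimes\id$, expand with the coderivation property and the cotensor condition, and get the sign in the second identity from the vanishing of $[\Delta(\delta(w_1))]$ in $\mathscr{W}_1^u$, exactly as the paper does. Your remark that $\mathscr{W}_2^\mathrm{max}$ must be a sub-bicomodule for the terms $\delta_2(w_{1(0)}\cotimes{C}w_2)$ and $\delta_2(w_1\cotimes{C}w_{2(0)})$ to make sense is a point the paper uses only implicitly.
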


    \begin{proof}
        Let $w_1\cotimes{C}w_2 \in \mathscr{W}_2^\mathrm{max}$, then
        \[
            \delta(\delta_2(w_1\cotimes{C}w_2)) = \delta^u(\varphi(\delta_2(w_1\cotimes{C}w_2))) = \delta^u([\delta(w_1)\otimes\delta(w_2)]) = 0
        \]
        where the last equality follows from the definition of $\delta^u$ and \eqref{eq:CounitOnCoderivation}. This shows the first assertion. For the second assertion we compute
        \begin{align*}
            (\id\otimes\varphi)({}_{\mathscr{W}_1}\Delta(\delta_2(w_1\cotimes{C}w_2))) &= {}_{\mathscr{W}_1^u}\Delta(\varphi(\delta_2(w_1\cotimes{C}w_2))) = \delta(w_1)_{(1)}\otimes [\delta(w_1)_{(2)}\otimes\delta(w_2)]\\
            &= w_{1(-1)}\otimes[\delta(w_{1(0)})\otimes\delta(w_2)] + \delta(w_{1(0)})\otimes [w_{1(1)}\otimes \delta(w_2)]\\
            &= w_{1(-1)}\otimes [\delta(w_{1(0)})\otimes\delta(w_2)] + \delta(w_{1}) \otimes [w_{2(-1)}\otimes \delta(w_{2(0)})]\\
            &= (\id\otimes \varphi)(w_{1(-1)}\otimes \delta_2(w_{1(0)}\cotimes{C} w_2) + \delta(w_1)\otimes w_2)
        \end{align*}
        where we used \eqref{eq:CanMapToUniversalFOCC} in the last step. Thus, the first identity holds by injectivity of $(\id\otimes \varphi)$. Similarly, we compute
        \begin{align*}
            (\varphi\otimes \id)(\Delta_{\mathscr{W}_1}(\delta_2(w_1\cotimes{C}w_2))) &= [\delta(w_1)\otimes \delta(w_{2(0)})]\otimes w_{2(1)} + [\delta(w_{1(0)})\otimes w_{1(1)}]\otimes \delta(w_2)\\
            &= [\delta(w_1)\otimes \delta(w_{2(0)})]\otimes w_{2(1)}\\
            &\phantom{==} + [\delta(w_1)_{(1)}\otimes\delta(w_1)_{(2)}-w_{1(-1)}\otimes \delta(w_{1(0)})]\otimes \delta(w_2)\\
            &= (\varphi\otimes \id)(\delta_2(w_1\cotimes{C}w_{2(0)})\otimes w_{2(1)} - w_1\otimes \delta(w_2)).
        \end{align*}
        Hence, also the second identity holds.
    \end{proof}

    We now extend the above to a full dg coalgebra. For $n \ge 2$ we define the degree $n$ component as
    \begin{equation}\label{eq:MaximalProlongationDegn}
        \mathscr{W}_n^\mathrm{max} := \bigcap_{i+j+2 = n} \mathscr{W}_1^{\cotimes{C} i}\cotimes{C}\mathscr{W}_2^\mathrm{max}\cotimes{C}\mathscr{W}_1^{\cotimes{C} j}.
    \end{equation}
    By setting $\mathscr{W}_0^\mathrm{max} := C$ it follows that $\mathscr{W}_\bullet^\mathrm{max}$ is a subcoalgebra of $T_C^c(\mathscr{W}_1)$. We define a differential on $\mathscr{W}_\bullet^\mathrm{max}$ by setting $\delta_1 := \delta$ and letting
    \begin{equation}\label{eq:MaximalProlongationDifferential}
        \delta_n(w_1\cotimes{C}\dots \cotimes{C} w_n) := \sum_{i=1}^{n-1}(-1)^{i-1}w_1\otimes \dots \otimes \delta_2(w_i\cotimes{C}w_{i+1})\otimes \dots \otimes w_n
    \end{equation}
    where $w_1\cotimes{C}\dots\cotimes{C}w_n\in \mathscr{W}_n^\mathrm{max}$ for $n\ge 2$.

    \begin{proposition}
        \label{PropMaximalProlongation}
        The pair $(\mathscr{W}_\bullet^\mathrm{max}, \delta_\bullet)$ is codifferential calculus with $\underline{\mathscr{W}_1}(\mathscr{W}_\bullet^\mathrm{max}) = \mathscr{W}_1$.
    \end{proposition}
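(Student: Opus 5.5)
Several things have to be checked. First, $\mathscr{W}_\bullet^\mathrm{max}$ is a graded subcoalgebra of $T_C^c(\mathscr{W}_1)$. Second, each $\delta_n$ is well defined with values in $\mathscr{W}_{n-1}^\mathrm{max}$. Third, $\delta^2=0$. Fourth, the compatibility \eqref{eq:dgcoalgII} holds. Finally, the codifferential-calculus and $\underline{\mathscr{W}_1}$ assertions hold.

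For the subcoalgebra claim, note that $\mathscr{W}_2^\mathrm{max}$ is a sub-bicomodule. Hence each $\mathscr{W}_1^{\cotimes{C} i}\cotimes{C}\mathscr{W}_2^\mathrm{max}\cotimes{C}\mathscr{W}_1^{\cotimes{C} j}$ is a sub-bicomodule of $\mathscr{W}_1^{\cotimes{C} n}$, since we work over a field and cotensor products of subcomodules embed. The deconcatenation summands of an element of the intersection \eqref{eq:MaximalProlongationDegn} land in $\mathscr{W}_p^\mathrm{max}\otimes \mathscr{W}_{n-p}^\mathrm{max}$. To see this, I would use left exactness of $-\cotimes{C}-$: an element lying in every $\mathscr{W}_1^{\cotimes i}\cotimes{C}\mathscr{W}_2^\mathrm{max}\cotimes{C}\mathscr{W}_1^{\cotimes j}$, split after position $p$, lies in $(\bigcap_{i+j+2=p}\cdots)\otimes(\bigcap\cdots)$. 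This follows from the identity $(A\otimes B)\cap(A'\otimes B)=(A\cap A')\otimes B$ for subspaces.

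The differential $\delta_n$ is first defined on $\mathscr{W}_1^{\cotimes{C} (i)}\cotimes{C}\mathscr{W}_2^\mathrm{max}\cotimes{C}\mathscr{W}_1^{\cotimes{C} (n-i-2)}$ as $\id\otimes\delta_2\otimes\id$. Here one must check, using Lemma \ref{LemDeg2Differential}, that $\id\cotimes{C}\delta_2\cotimes{C}\id$ makes sense on cotensor products. By Lemma \ref{LemDeg2Differential}, $\delta_2$ is not bicolinear, but it is colinear "up to" the terms $\delta(w_1)\otimes w_2$ and $w_1\otimes\delta(w_2)$. I would handle this by viewing $\delta_2$ as a map into $\mathscr{W}_1^u$ via $\check\delta_2$. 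The sum of the correction terms from adjacent factors telescopes, and with the alternating signs in \eqref{eq:MaximalProlongationDifferential} this shows that the image is again in $\mathscr{W}_1^{\cotimes{C}(n-1)}$.

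Showing that the image lies in the intersection defining $\mathscr{W}_{n-1}^\mathrm{max}$ is the main obstacle. My plan is to verify first the case $n=3$: for $x\in\mathscr{W}_3^\mathrm{max}$, show $\delta_3(x)\in\mathscr{W}_2^\mathrm{max}$. I would compute $\check\delta_2(\delta_3 x)$, which equals $[\delta\delta_2(w_1\cotimes{C} w_2)\otimes\delta w_3]-[\delta w_1\otimes\delta\delta_2(w_2\cotimes{C} w_3)]$ up to corrections; the main terms vanish because $\delta\circ\delta_2=0$. The zero criterion of Lemma \ref{LemEltsOfW2} then applies. For general $n$ I would apply the $n=3$ case factorwise, using that $\mathscr{W}_2^\mathrm{max}$ is maximal, hence stable under anything producing a sub-bicomodule mapping into $\varphi(\mathscr{W}_1)$.

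The identity $\delta^2=0$ reduces to two facts. Pairs of non-adjacent contractions cancel by the signs. The adjacent contractions reduce to $\delta_2(\delta_2(w_1\cotimes{C} w_2)\cotimes{C} w_3)=\delta_2(w_1\cotimes{C}\delta_2(w_2\cotimes{C} w_3))$, which I check after applying the injective $\varphi$. In degree $2$, the identity is $\delta\circ\delta_2=0$, which is Lemma \ref{LemDeg2Differential}.

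For \eqref{eq:dgcoalgII}, I would compare the components of $\Delta\delta_n$ with the deconcatenation. The interior components match termwise. The boundary components involving $C$ are exactly the coaction formulas of Lemma \ref{LemDeg2Differential}, and the coderivation property handles $n=1$.

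For the remaining assertions, the degree-one part of the prolongation is $\mathscr{W}_1$ with coderivation $\delta$, so $\varphi$ is injective. Hence $\underline{\mathscr{W}_1}(\mathscr{W}_\bullet^\mathrm{max})=\mathscr{W}_1$ by Proposition \ref{PropBicomodQuotients}. The induced map to $T_C^c(\mathscr{W}_1)$ is then the inclusion, by the explicit formula in Proposition \ref{PropCotensorCoalgUniversal}, since iterated coproducts projected to degree one recover the tensor factors. This gives injectivity, so $\mathscr{W}_\bullet^\mathrm{max}$ is a codifferential calculus.
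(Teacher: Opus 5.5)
Your proposal is correct and follows essentially the same route as the paper. The steps match: the cotensor condition for $\delta_n$ comes from the coaction formulas of Lemma \ref{LemDeg2Differential}, with the correction terms from adjacent factors cancelling. Membership in $\mathscr{W}_2^\mathrm{max}$ in degree three follows from $\delta\circ\delta_2=0$ and Lemma \ref{LemEltsOfW2}, and higher degrees reduce to $n=3$. The identity $\delta^2=0$ comes from cancellation of non-adjacent contractions plus $\delta_2\circ\delta_3=0$, and the boundary components of \eqref{eq:dgcoalgII} come again from Lemma \ref{LemDeg2Differential}. Two minor points: $\check\delta_2(\delta_3 x)$ has no correction terms, and your adjacent-contraction identity should be stated as $\delta_2\circ\delta_3=0$, since $\delta_2(w_1\cotimes{C}w_2)\otimes w_3$ by itself need not lie in $\mathscr{W}_1\cotimes{C}\mathscr{W}_1$.
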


    \begin{proof}
        If we can show that $(\mathscr{W}_\bullet^\mathrm{max},\delta_\bullet)$ is a dg coalgebra, then it is clear by construction that $\underline{\mathscr{W}_1}(\mathscr{W}_\bullet^\mathrm{max}) = \mathscr{W}_1$ and that $\mathscr{W}_\bullet^\mathrm{max}\rightarrow T_C^c(\mathscr{W}_1)$ is an embedding. Thus, it is sufficient to prove the first assertion. Firstly, we show that $\delta_\bullet$ is well-defined. For $n = 2$ this follows by definition. If $n\ge 3$ we can write
        \begin{align*}
            \delta_n(w_1\cotimes{C}\dots \cotimes{C}w_n) &= \delta_i(w_1\cotimes{C}\dots\cotimes{C}w_i)\otimes (w_{i+1}\otimes w_{i+2}) \otimes \dots \otimes w_n\\
            &\phantom{==} + (-1)^{i-1}w_1\otimes \dots \otimes \delta_3(w_i\cotimes{C}w_{i+1}\cotimes{C}w_{i+2})\otimes \dots \otimes w_n\\
            &\phantom{==} + (-1)^{i+1}w_1\otimes\dots\otimes (w_i\otimes w_{i+1})\otimes \delta_{n-i-1}(w_{i+2}\cotimes{C}\dots\cotimes{C}w_n)
        \end{align*}
        for all $1\le i \le n-2$ and $w_1\cotimes{C}\dots\cotimes{C} w_n \in \mathscr{W}_n^\mathrm{max}$, where we emphasized the $i,i+1$ component. Thus, well-defined ness reduces to the case $n = 3$.
        Firstly, for any element $w_1\cotimes{C}w_2\cotimes{C}w_3 \in \mathscr{W}_3^\mathrm{max}$ we have $\delta_3(w_1\cotimes{C}w_2\cotimes{C}w_3) \in \mathscr{W}_1\cotimes{C}\mathscr{W}_1$ by applying Lemma \ref{LemDeg2Differential}:
        \begin{align*}
            &\phantom{==} (\Delta_{\mathscr{W}_1}\otimes \id)(\delta_2(w_1\cotimes{C} w_2)\otimes w_3 - w_1 \otimes \delta_2(w_2\cotimes{C} w_3))\\
            &= \delta_2(w_1\cotimes{C}w_{2(0)})\otimes w_{2(1)}\otimes w_3 - w_1 \otimes \delta(w_2)\otimes w_3 - w_{1(0)}\otimes w_{1(1)}\otimes \delta_2(w_2\cotimes{C}w_3)\\
            &= \delta_2(w_1\cotimes{C} w_2)\otimes w_{3(-1)}\otimes w_{3(0)} - w_1\otimes \delta(w_2)\otimes w_3 - w_1 \otimes w_{2(-1)}\otimes \delta_2(w_{2(0)}\cotimes{C}w_3)\\
            &= (\id \otimes {}_{\mathscr{W}_1}\Delta)(\delta_2(w_1\cotimes{C}w_2)\otimes w_3 - w_1\otimes \delta_2(w_2\cotimes{C}w_3)).
        \end{align*}
        Next, we also have $\delta_3(w_1\cotimes{C}w_2\cotimes{C}w_3) \in \mathscr{W}_2^\mathrm{max}$ as a consequence of Lemma \ref{LemEltsOfW2}, since
        \begin{equation}\label{eq:Deg3Differential}
            \check{\delta}(\delta_3(w_1\cotimes{C}w_2\cotimes{C}w_3)) = [\delta(\delta_2(w_1\cotimes{C} w_2)\otimes \delta(w_3))] - [\delta(w_1)\otimes \delta(\delta_2(w_2\cotimes{C}w_3))] = 0
        \end{equation}
        using the first assertion of Lemma \ref{LemDeg2Differential}. We proceed by showing that $\delta_{n-1}\circ \delta_n = 0$ for all $n\ge 2$. For $n = 2$ this follows from Lemma \ref{LemDeg2Differential} and for $n = 3$ this follows from \eqref{eq:Deg3Differential}. For $n \ge 4$ we get
        \begin{align*}
            &\phantom{==} \delta_{n-1}(\delta_n(w_1\cotimes{C}\dots \cotimes{C}w_n))\\
            &= \sum_{1\leq j<i-1\leq n}(-1)^{i+j}w_1\otimes \dots \otimes \delta_2(w_j\cotimes{C}w_{j+1})\otimes \dots \otimes \delta_2(w_i\cotimes{C}w_{i+1})\otimes \dots \otimes w_n\\
            &\phantom{==} + \sum_{1\le i < j\le n-2}(-1)^{i+j}w_1\otimes \dots \otimes \delta_2(w_i\cotimes{C}w_{i+1})\otimes \dots \otimes \delta_2(w_{j+1}\cotimes{C} w_{j+2}) \otimes \dots \otimes w_n\\
            &\phantom{==} + \sum_{i=1}^{n-2}w_1 \otimes \dots \otimes\delta_2(\delta_3(w_i\cotimes{C}w_{i+1}\cotimes{C}w_{i+2}))\otimes \dots \otimes w_n.
        \end{align*}
        The first two summands cancel after applying an index shift and the last summand is zero by the case $n = 3$.
        
        We are left to show that $\delta_\bullet$ satisfies \eqref{eq:dgcoalg}. For this fix $\underline{w} := w_1\cotimes{C}\dots\cotimes{C} w_n \in \mathscr{W}_n^\mathrm{max}$. As we noted previously, it is sufficient to show \eqref{eq:dgcoalgII} for all $0\le i\le n-1$. Consider first the case $i\neq 0,n-1$. In this case left-hand side of \eqref{eq:dgcoalgII} evaluates to
        \begin{gather*}
            \delta_{i+1}(w_1\cotimes{C}\dots\cotimes{C}w_{i+1})\otimes (w_{i+2}\cotimes{C} \dots \cotimes{C} w_n)\\ + (-1)^k(w_1\cotimes{C}\dots\cotimes{C}w_i)\otimes \delta_{n-i}(w_{i+1}\cotimes{C}\dots\cotimes{C}w_n).
        \end{gather*}
        It is also immediate that the right-hand side gives the same, since the coproduct is given by deconcatenation. Now let $i = 0$ the by using Lemma \ref{LemDeg2Differential} the left-hand side of \eqref{eq:dgcoalgII} becomes
        \begin{align*}
            &\delta_2(w_1\cotimes{C}w_2)_{(-1)}\otimes (\delta_2(w_1\cotimes{C}w_2)_{(0)}\otimes w_3\otimes \dots\otimes w_n)\\
            &\phantom{==} - w_{1(-1)}\otimes (w_{1(0)}\otimes \delta_{n-1}(w_2\cotimes{C}\dots \cotimes{C} w_n))\\
            &= w_{1(-1)}\otimes (\delta_2(w_{1(0)}\cotimes{C}w_2)\otimes w_3\otimes \dots \otimes w_n) + \delta(w_1)\otimes (w_2\cotimes{C}\dots\cotimes{C}w_n)\\
            &\phantom{==} - w_{1(-1)}\otimes (w_{1(0)}\otimes \delta_{n-1}(w_2\cotimes{C}\dots\cotimes{C}w_n))\\
            &= \delta(w_1)\otimes (w_2\cotimes{C}\dots\cotimes{C}w_n) + w_{1(-1)}\otimes\delta_n(w_{1(0)}\cotimes{C}w_2\cotimes{C}\dots\cotimes{C}w_n).
        \end{align*}
        Again, by definition of the coproduct on $T_C^c(\mathscr{W}_1)$ it is immediate that the right-hand side of \eqref{eq:dgcoalgII} is the same in this situation. The case $i = n-1$ is analogous.
    \end{proof}
    
    \begin{definition}
        \label{DefMaxProlongation}
        Let $\mathscr{W}_1$ be a first-order codifferential calculus. We call $(\mathscr{W}_\bullet^\mathrm{max}, \delta_\bullet)$ the \textit{maximal prolongation} of $\mathscr{W}_1$.
    \end{definition}

    This name is justified by the result below.

    \begin{theorem}
        \label{ThmMaxProlongationUniversalProperty}
        Let $D_\bullet$ be a dg coalgebra, and let $\mathscr{W}_1$ be a first-order codifferential calculus over $C$. Write $D := D_0$, such that $\underline{\mathscr{W}_1}(D_\bullet)$ is a first-order codifferential calculus over $D$. Then any codifferentiable map $f\colon D\rightarrow C$ (with respect to $\underline{\mathscr{W}_1}(D_\bullet)$ and $\mathscr{W}_1$) extends uniquely to a morphism of dg coalgebras $f^\flat\colon D_\bullet \rightarrow \mathscr{W}_\bullet^\mathrm{max}$. In particular, the maximal prolongation determines a functor $\mathbf{FOCC} \rightarrow \mathbf{dgCoalg}$ which is right-adjoint to $\underline{\mathscr{W}_1}(-)$.
    \end{theorem}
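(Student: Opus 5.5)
We construct $f^\flat$ via the cotensor coalgebra and then show that its image lands in $\mathscr{W}_\bullet^\mathrm{max}$. Let $f^\delta\colon \underline{\mathscr{W}_1}(D_\bullet)\rightarrow \mathscr{W}_1$ be the codifferential of $f$, and let $h\colon D_\bullet\rightarrow \mathscr{W}_1$ be the composite of $\pi_1$, the quotient $D_1\rightarrow \underline{\mathscr{W}_1}(D_\bullet)$ and $f^\delta$. This map is $C$-bicolinear once $D_\bullet$ is regarded as a $C$-bicomodule via $f\circ\pi_0$. By Example \ref{ExplGrcoalg} and Proposition \ref{PropCotensorCoalgUniversal}, applied with $q=\pi_0$ and $r=f$, we obtain a unique coalgebra map $h^\flat\colon D_\bullet\rightarrow T_C^c(\mathscr{W}_1)$ with $h^\flat(d)=f(\pi_0(d))+\sum_{n\ge1}h(d_{(1)})\cotimes{C}\cdots\cotimes{C}h(d_{(n)})$. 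Since $h$ vanishes outside $D_1$ and the coproduct respects the grading, $h^\flat$ is graded and sends $D_n$ into $\mathscr{W}_1^{\cotimes{C}n}$. The formula is most conveniently handled through the reduced coproduct of Lemma \ref{LemRedCoprod}: for $d\in D_n$ the degree $n$ part is $h^{\otimes n}$ applied to the $(n-1)$-fold iterated reduced coproduct, projected onto $D_1^{\otimes n}$.

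Next we show $h^\flat(D_n)\subseteq \mathscr{W}_n^\mathrm{max}$. By \eqref{eq:MaximalProlongationDegn} it suffices to treat $n=2$, since for larger $n$ coassociativity of $h^\flat$ reduces membership in each factor $\mathscr{W}_1^{\cotimes{C}i}\cotimes{C}\mathscr{W}_2^\mathrm{max}\cotimes{C}\mathscr{W}_1^{\cotimes{C}j}$ to the degree-$2$ case applied to the middle tensor factors of the graded coproduct. For $d\in D_2$, write $d^1_{(1)}\otimes d^1_{(2)}$ for the $D_1\otimes D_1$ component of $\Delta(d)$. Applying \eqref{eq:dgcoalgII} to $\delta(d)\in D_1$ and using $\delta^2=0$ shows that $\delta(d^1_{(1)})\otimes\delta(d^1_{(2)})$ is expressed through the $D_0\otimes D_1$ and $D_1\otimes D_0$ components of $\Delta(\delta(d))$. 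Applying $f$ and projecting to $\mathscr{W}_1^u$, this gives
\[
\check\delta_2(h^\flat(d))=[\delta(e)_{(1)}\otimes\delta(e)_{(2)}]\text{-type terms}=\varphi(f^\delta([\delta(d)]))\qquad(e=\delta(d)).
\]
Precisely, by \eqref{eq:CanMapToUniversalFOCC} and codifferentiability, $\varphi(f^\delta[w])=[f(w_{(-1)})\otimes f(\delta w_{(0)})]$, and the required identity is $\check\delta_2 h^\flat(d)=\varphi(h(\delta d))$. The set of elements of $\mathscr{W}_1\cotimes{C}\mathscr{W}_1$ whose image lies in $\varphi(\mathscr{W}_1)$ then contains the bicomodule $h^\flat(D_2)$, because $h^\flat$ is bicolinear on homogeneous parts. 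Hence $h^\flat(D_2)\subseteq\mathscr{W}_2^\mathrm{max}$, and by injectivity of $\varphi$ we get $\delta_2\circ h^\flat=h^\flat\circ\delta$ on $D_2$. I expect this step to be the main obstacle: it requires carefully tracking signs and the degree components in \eqref{eq:dgcoalgII}, and making sure the classes computed in $\mathscr{W}_1^u=C\otimes C/\im\Delta$ agree, where terms of the form $f(\delta(d)_{(1)})\otimes f(\delta(d)_{(2)})$ vanish.

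Compatibility with the differential in higher degrees then follows formally. The differential \eqref{eq:MaximalProlongationDifferential} is built from $\delta_2$ by a Leibniz rule, and $h^\flat(\delta d)$ can be computed from \eqref{eq:dgcoalgII} iterated over the deconcatenation components. In degree $1$ we have $\delta\circ h^\flat=h^\flat\circ\delta$, since $\delta\circ f^\delta=f\circ\delta$ and $\delta$ factors through $\underline{\mathscr{W}_1}(D_\bullet)$. In degree $n$, both $\delta_n h^\flat(d)$ and $h^\flat(\delta d)$ are determined by their images under $h^{\otimes(n-1)}$ of iterated coproduct components. The degree-$2$ identity applied factorwise, together with the sign $(-1)^{i-1}$ coming from \eqref{eq:dgcoalgII}, shows that these agree.

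Uniqueness holds because any dg coalgebra map $g\colon D_\bullet\rightarrow\mathscr{W}_\bullet^\mathrm{max}\subseteq T_C^c(\mathscr{W}_1)$ extending $f$ has degree-$1$ part equal to $f^\delta$ composed with the quotient. Indeed, $\delta\circ g_1=f\circ\delta$ forces $g_1$ to be a codifferential of $f$ after descending to $\underline{\mathscr{W}_1}(D_\bullet)$, and the codifferential is unique. The uniqueness in Proposition \ref{PropCotensorCoalgUniversal} then forces $g=h^\flat$. Functoriality and the adjunction follow formally: the bijection $\mathbf{dgCoalg}(D_\bullet,\mathscr{W}_\bullet^\mathrm{max})\cong\Focc(\underline{\mathscr{W}_1}(D_\bullet),\mathscr{W}_1)$ is given by restriction to degree $0$. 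It is natural by uniqueness, and the counit is the identity by Proposition \ref{PropMaximalProlongation}.
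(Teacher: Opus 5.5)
Your proposal is correct and follows essentially the same route as the paper. You construct $f^\flat$ from Proposition \ref{PropCotensorCoalgUniversal} using $h$, the composite of $\pi_1$, the quotient onto $\underline{\mathscr{W}_1}(D_\bullet)$ and $f^\delta$. You reduce membership in $\mathscr{W}_\bullet^\mathrm{max}$ to the degree-two identity $\check{\delta}_2(h^\flat(d))=\varphi(h(\delta(d)))$, obtain the higher degrees and the compatibility with the differential through the iterated coproduct, and get uniqueness from uniqueness of codifferentials together with Proposition \ref{PropCotensorCoalgUniversal}. The step you flag as the main obstacle is easier than you expect: apply $\id\otimes\delta$ to the $i=0$ case of \eqref{eq:dgcoalgII} for $d\in D_2$ (not to $\delta(d)$) and use $\delta^2=0$. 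This gives $\delta(d)_{(-1)}\otimes\delta(\delta(d)_{(0)})=\delta(d_{[1]})\otimes\delta(d_{[2]})$ already in $D\otimes D$, so the identity holds before passing to $\mathscr{W}_1^u$ and nothing needs to cancel modulo $\im(\Delta)$.
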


    \begin{proof}
        Let $f\colon D \rightarrow C$ be codifferentiable.
        We start by showing uniqueness. Let $f^\flat\colon D_\bullet \rightarrow \mathscr{W}_\bullet^\mathrm{max}$ be a morphism of dg coalgebras which extends $f$. By the same logic as in the proof of Proposition \ref{PropContractionII}, the degree 1 component $f_1 \colon D_1 \rightarrow \mathscr{W}_1$ descends to a $C$-bicolinear map $\underline{\mathscr{W}_1}(D_\bullet)\rightarrow \mathscr{W}_1$, and this map is precisely the codifferential $f^\delta$ of $f$. Next view $f^\flat$ as a morphism of coalgebras $D_\bullet \rightarrow T_C^c(\mathscr{W}_1)$ --- which we can do since $\mathscr{W}_\bullet^\mathrm{max}$ is a subcoalgebra of $T_C^c(\mathscr{W}_1)$ --- and observe that the diagrams
        \begin{equation}\label{eq:ExtensionCodifferentiableMap}
        \begin{tikzcd}
	    {T_C^c(\mathscr{W}_1)} & {D_\bullet} \\
	    C & D
	    \arrow[two heads, from=1-1, to=2-1]
	    \arrow["{f^\flat}"', from=1-2, to=1-1]
	    \arrow["{\pi_0}", two heads, from=1-2, to=2-2]
	    \arrow["f"', from=2-2, to=2-1]
        \end{tikzcd}\quad
        \begin{tikzcd}
	    {T_C^c(\mathscr{W}_1)}\\
	    {\mathscr{W}_1} & {D_\bullet}
	    \arrow[two heads, from=1-1, to=2-1]
	    \arrow["{f^\flat}"', from=2-2, to=1-1]
	    \arrow["h"', from=2-2, to=2-1]
        \end{tikzcd}
        \end{equation}
        commute, where $h$ is given by the projection $D_\bullet \rightarrow \underline{\mathscr{W}_1}(D_\bullet)$ followed by $f^\delta$. As in Example \ref{ExplGrcoalg}, the coradical of $D_\bullet$ gets mapped to zero by $h$, thus it follows that $f^\flat$ is unique by \ref{PropCotensorCoalgUniversal}. For existence, note that the same result also gives existence of a morphism of coalgebras $f^\flat\colon D_\bullet\rightarrow T_C^c(\mathscr{W}_1)$ such that the diagrams in \eqref{eq:ExtensionCodifferentiableMap} commute. Thus, we only need to show that $f^\flat$ restricts to a morphism of dg coalgebras to $\mathscr{W}_\bullet^\mathrm{max}$. Recall that $f^\flat$ is given by the formula
        \begin{equation}\label{eq:Deffflat}
            f^\flat(d) = f(\pi_0(d)) + \sum_{n\ge 0}h(d_{(1)})\cotimes{C} \dots \cotimes{C} h(d_{(n)}), \quad d\in D_\bullet.
        \end{equation}
        It follows that $f^\flat(d) \in \mathscr{W}_\bullet^\mathrm{max}$ for every $d \in D_n$ if $n = 0,1$. Also note that by definition of $h$ we have $\delta(h(d)) = f(\delta_1(d))$ for all $d \in D_1$, and that we can replace the coproduct in \eqref{eq:Deffflat} by the reduced coproduct since $h(d) = 0$ for all $d\in D$. We first show that $f^\flat(D_2)\subseteq \mathscr{W}_2^\mathrm{max}$. Since $f^\flat$ is $C$-bicolinear it suffices to show that for all $d\in D_2$ we have $\check{\delta}(f^\flat(d)) = \check{\delta}(h(d_{(1)})\cotimes{C}h(d_{(2)})) = [\delta(h(d_{(1)}))\otimes \delta(h(d_{(2)}))] = [w_{(-1)}\otimes \delta(w_{(0)})]$ for some $w \in \mathscr{W}_1$. We claim that the latter equality holds for $w = h(\delta_2(d))$. Indeed, by combining \eqref{eq:CoprodOfRedElt} and \eqref{eq:dgcoalgII} we have
        \[
            \delta_2(d)_{(-1)}\otimes \delta_2(d)_{(0)} = d_{(-1)}\otimes \delta_2(d_{(0)}) + \delta_1(d_{[1]})\otimes d_{[2]}.
        \]
        Because $h$ is left $C$-colinear we obtain
        \begin{align*}
            w_{(-1)}\otimes w_{(0)} &= h(\delta_2(d))_{(-1)}\otimes h(\delta_2(d)_{(0)}) = f(\delta_2(d)_{(-1)})\otimes h(\delta_2(d)_{(0)})\\
            &= f(d_{(-1)})\otimes h(\delta_2(d_{(0)})) + f(\delta_1(d_{[1]}))\otimes h(d_{[2]})\\
            &= f(d_{(-1)})\otimes h(\delta_2(d_{(0)})) + \delta(h(d_{[1]}))\otimes h(d_{[2]})
        \end{align*}
        and consequently
        \begin{align*}
            w_{(-1)}\otimes \delta(w_{(0)}) &= f(d_{(-1)})\otimes \delta(h(\delta_2(d_{(0)}))) + \delta(h(d_{[1]}))\otimes \delta(h(d_{[2]}))\\
            &= f(d_{(-1)})\otimes f(\delta_1(\delta_2(d_{(0)}))) + \delta(h(d_{[1]}))\otimes \delta(h(d_{[2]}))\\
            &= \delta(h(d_{[1]}))\otimes\delta(h(d_{[2]})) = \delta(h(d_{(1)}))\otimes \delta(h(d_{(2)})).
        \end{align*}
        It follows that $f^\flat(d) \in \mathscr{W}_2^\mathrm{max}$. Moreover, for the reason that $h(d) = 0$ if $d \in D_n$ with $n\neq 1$, it also holds that $h(d_{(1)})\cotimes{C} h(d_{(2)})\in \mathscr{W}_2^\mathrm{max}$ for any $d\in D_\bullet$, and thus $f^\flat(d) \in \mathscr{W}_n^\mathrm{max}$ for any $d \in D_\bullet$, since we can write
        \[
            h(d_{(1)})\cotimes{C} \dots \cotimes{C} h(d_{(n)}) = h(d_{(1)})\cotimes{C} \dots \cotimes{C}h(d_{(i)(1)})\cotimes{C} h(d_{(i)(2)})\cotimes{C} \dots \cotimes{C} h(d_{(i-1)})
        \]
        for any $n\ge 2$ and $1\le i\le n-1$. We are left with showing that $f^\flat(\delta_n(d)) = \delta_n(f^\flat(d))$ for all $n\ge 1$ and $d\in D_n$. For $n = 1$ this is equivalent to $f(\delta_1(d)) = \delta(h(d))$, which holds as already remarked. For $n = 2$ we have $\delta_2(h(d_{(1)})\cotimes{C}h(d_{(2)})) = w = h(\delta_2(d))$ in view of \ref{eq:DefDeg2Coforms}. For $n\ge 3$, note that the differential of $D_\bullet$ is also compatible with the reduced coproduct in the sense that
        \[
            \overline{\Delta}(\delta(d)) = \delta(d_{[1]})\otimes d_{[2]} + (-1)^{|d_{[1]}|}d_{[1]}\otimes \delta(d_{[2]}).
        \]
        By induction on $n$ it follows that
        \[
            \overline{\Delta}^{(n-2)}(\delta_n(d)) = \sum_{i=1}^{n-1}(-1)^{i-1}d_{[1]}\otimes \dots \otimes \delta(d_{[i]})\otimes \dots \otimes d_{[n-1]}.
        \]
        Therefore, we can write
        \begin{align*}
            f^\flat(\delta_n(d)) &= h(\delta_n(d)_{(1)})\cotimes{C} \dots \cotimes{C} h(\delta_n(d)_{(n-1)})\\
            &= \sum_{i=1}^{n-1}(-1)^{i-1}h(d_{(1)})\otimes \dots \otimes h(\delta(d_{(i)}))\otimes \dots \otimes h(d_{(n-1)})\\
            &= \sum_{i=1}^{n-1}(-1)^{i-1}h(d_{(1)})\otimes \dots \otimes\delta_2(h(d_{(i)})\cotimes{C} h(d_{(i+1)}))\otimes \dots \otimes h(d_{(n)})\\
            &= \delta_n(f^\flat(d)).
        \end{align*}
    \end{proof}

    The case where $D_\bullet$ is itself a codifferential calculus with $\underline{\mathscr{W}_1}(D_\bullet) = \mathscr{W}_1$ is of most interest to us.

    \begin{corollary}
        \label{CorMaximalProlongationMaximal}
        Let $\mathscr{W}_1$ be a first-order codifferential calculus over $C$. If $D_\bullet$ is a codifferential calculus such that $\underline{\mathscr{W}_1}(D_\bullet) = \mathscr{W}_1$ (here this also means that $D_0 = C$) then the unique morphism of dg coalgebras $D_\bullet \rightarrow \mathscr{W}_\bullet^\mathrm{max}$ extending $\id_C\colon D_0 = C \rightarrow C$ is an embedding.
    \end{corollary}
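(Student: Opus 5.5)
The plan is to reduce injectivity to the defining property of a codifferential calculus by factoring through the cotensor coalgebra $T_C^c(\mathscr{W}_1)$. Since $\underline{\mathscr{W}_1}(D_\bullet) = \mathscr{W}_1$ and $D_0 = C$, the identity $\id_C$ is codifferentiable with codifferential $\id_{\mathscr{W}_1}$. Theorem \ref{ThmMaxProlongationUniversalProperty} then yields a unique morphism of dg coalgebras $f^\flat\colon D_\bullet\rightarrow \mathscr{W}_\bullet^\mathrm{max}$ extending $\id_C$. It remains to show that $f^\flat$ is injective.

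First I compose $f^\flat$ with the inclusion $\mathscr{W}_\bullet^\mathrm{max}\hookrightarrow T_C^c(\mathscr{W}_1)$, which is an injective coalgebra map. From the proof of Theorem \ref{ThmMaxProlongationUniversalProperty}, the composite is the coalgebra map produced by Proposition \ref{PropCotensorCoalgUniversal}, whose data are:
\begin{itemize}
\item $q = \pi_0$ and $r = \id_C$;
\item $h$ equal to the projection $D_\bullet\overset{\pi_1}{\rightarrow} D_1\rightarrow \underline{\mathscr{W}_1}(D_\bullet)$ followed by $f^\delta = \id_{\mathscr{W}_1}$.
\end{itemize}
So $h$ is precisely the projection $D_\bullet\rightarrow \mathscr{W}_1$ used in Definition \ref{DefCodifferentialCalculus}.

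By the uniqueness part of Proposition \ref{PropCotensorCoalgUniversal}, the composite coincides with the map $D_\bullet\rightarrow T_{D_0}^c(\underline{\mathscr{W}_1}(D_\bullet))$ of Definition \ref{DefCodifferentialCalculus}. Both maps satisfy the same two commuting diagrams; the condition $h(\corad(D_\bullet)) = 0$ holds by Example \ref{ExplGrcoalg}. Since $D_\bullet$ is a codifferential calculus, that map is injective. Hence the composite is injective, and therefore $f^\flat$ is injective.

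The only point needing care is the identification of the two maps. One must check that the codifferential of $\id_C$ really is $\id_{\mathscr{W}_1}$, rather than some other bicolinear map. This follows from uniqueness of codifferentials: $\id_{\mathscr{W}_1}$ makes diagram \eqref{eq:Codifferentiability} commute, because the coderivation of $\underline{\mathscr{W}_1}(D_\bullet)$ is induced by $\delta_1$, and this equals $\delta$ under the identification $\underline{\mathscr{W}_1}(D_\bullet) = \mathscr{W}_1$. Given this, the explicit formula for $h^\flat$ in Proposition \ref{PropCotensorCoalgUniversal} shows directly that the two maps agree.
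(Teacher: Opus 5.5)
Your proposal is correct and follows essentially the same route as the paper. The paper's proof likewise observes that the injective map $D_\bullet\rightarrow T_C^c(\mathscr{W}_1)$ from Definition \ref{DefCodifferentialCalculus} lands in $\mathscr{W}_\bullet^\mathrm{max}$ by the proof of Theorem \ref{ThmMaxProlongationUniversalProperty}, and that it is the unique extension of $\id_C$. Your extra checks make that identification explicit: that the codifferential of $\id_C$ is $\id_{\mathscr{W}_1}$, and that the uniqueness in Proposition \ref{PropCotensorCoalgUniversal} applies.
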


    \begin{proof}
        Since $D_\bullet$ is a codifferential calculus we know that $D_\bullet\rightarrow T_C^c(\mathscr{W}_1)$ is an embedding. However, we know from the proof of Theorem \ref{ThmMaxProlongationUniversalProperty} that this map restricts to $\mathscr{W}_\bullet^\mathrm{max}$, and that it is the unique extension of $\id_C$ to a morphism of dg coalgebras $D_\bullet \rightarrow \mathscr{W}_\bullet^\mathrm{max}$.
    \end{proof}

    \section{Quantum homogeneous spaces}\label{SecQHS}

    In the present section, we will make use of notation for relative Hopf modules. Let $A$ be a bialgebra, and recall that a left $A$-comodule algebra $B$ is a monoid in the category of left $A$-comodules, which amounts to the assertion that multiplication of $B$ is $A$-colinear. We denote by ${}_B^A\mathcal{M}$ the category of left $B$-modules and left $A$-comodules $\mathcal{F}$ such that
    \[
        {}_A\Delta(b\triangleright f) = b_{(-1)}f_{(-1)}\otimes b_{(0)}\triangleright f_{(0)}, \quad f\in \mathcal{F}, b\in B
    \]
    with morphisms given by left $B$-linear and left $A$-colinear maps. We write ${}^A\mathcal{M}_B$ and ${}_B^A\mathcal{M}_B$ for analogously defined categories, where in the last category, the objects are also assumed to be $B$-bimodules. Dually, for a left $A$-module coalgebra $C$, we can define categories ${}_A^C\mathcal{M}, {}_A\mathcal{M}^C, {}_A^C\mathcal{M}^C$, where the compatibility condition for an object $M$ in the first category reads as
    \[
        {}_M\Delta(a\triangleright m) = a_{(1)}\triangleright m_{(-1)}\otimes a_{(2)}\triangleright m_{(0)}, \quad a\in A, m\in M.        
    \]
    Lastly, for a left $A$-comodule algebra $B$ and a \textbf{right} $A$-module coalgebra $C$, we write ${}^C\mathcal{M}_B$ for the category consisting of left $C$-comodules and right $B$-modules $V$ such that
    \[
        {}_V\Delta(vb) = v_{(-1)}\triangleleft b_{(-1)}\otimes v_{(0)}\triangleleft b_{(0)}, \quad v \in V, b\in B
    \]
    and for a \textbf{right} $A$-comodule algebra $B$ and a left $A$-module coalgebra $C$, we write ${}_B\mathcal{M}^C$ for the analogous category.

    \subsection{Takeuchi's categorical equivalence}\label{SecTakeuchi}

    For this section, fix a Hopf algebra $A$ and a left coideal subalgebra $B\subseteq A$, that is, $B$ is a subalgebra of $A$ such that $\Delta(B) \subseteq A\otimes B$. Note that $B$ is naturally a left $A$-comodule algebra with coaction given by the coproduct. We consider the quotient coalgebra $A/B^+A$ together with the canonical projection $\pi_B\colon A\rightarrow A/B^+A$, and denote this quotient from now on by $\pi_B(A)$. Observe that the latter is a right $A$-module coalgebra with right $A$-action induced by the multiplication of $A$.

    We now recall the Takeuchi functors between ${}_B^A\mathcal{M}_B$ and ${}^{\pi_B}\mathcal{M}_B := {}^{\pi_B(A)}\mathcal{M}_B$. The first functor $\Phi\colon {}_B^A\mathcal{M}_B\rightarrow {}^{\pi_B}\mathcal{M}_B$ sends an object $\mathcal{F}\in {}_B^A\mathcal{M}_B$ to $\mathcal{F}/B^+\mathcal{F}$, where the right $B$-action is the one induced by the right $B$-action on $\mathcal{F}$, and the left $\pi_B(A)$-coaction is given by $[f]\mapsto \pi_B(f_{(-1)})\otimes [f_{(0)}]$. On morphisms the functor acts by taking induced maps on quotients. The second functor $\Psi \colon {}^{\pi_B}\mathcal{M}_B\rightarrow {}_B^A\mathcal{M}_B$ sends an object $V\in {}^{\pi_B}\mathcal{M}_B$ to $A\cotimes{\pi_B} V$ (here we write $\cotimes{\pi_B} := \cotimes{\pi_B(A)}$), where the left $B$-action is given by left multiplication on the first factor, the left $A$-coaction is given by applying the coproduct to the first factor, and the right $B$-action is given by $(a\cotimes{\pi_B}v)\triangleleft b = ab_{(1)}\cotimes{\pi_B} vb_{(2)}$. The functor $\Psi$ sends a morphism $\varphi\in {}^{\pi_B}\mathcal{M}_B$ to $\id_A\cotimes{\pi_B}\varphi$.

    \begin{theorem}
        \label{ThmTakeuchi}
        The functor $\Phi$ is left adjoint to $\Psi$. In more detail, the unit of the adjunction is given by
        \[
            \eta_{\mathcal{F}}\colon\mathcal{F} \rightarrow A\cotimes{\pi_B}\mathcal{F}/B^+\mathcal{F}, \quad f \mapsto f_{(-1)}\cotimes{\pi_B} [f_{(0)}]
        \]
        and the counit by
        \[
            \varepsilon_V\colon (A\cotimes{\pi_B}V)/B^+(A\cotimes{\pi_B}V) \rightarrow V,\quad [a\cotimes{\pi_B} v]\mapsto \varepsilon(a)v.
        \]
        If $A$ is faithfully flat as a right $B$-module, then this adjunction is an equivalence.
    \end{theorem}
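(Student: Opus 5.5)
The plan has three parts. First I check that $\Phi$ and $\Psi$ are well defined. Then I prove the adjunction by verifying that $\eta$ and $\varepsilon$ are well-defined natural morphisms satisfying the triangle identities. Finally, under faithful flatness, I show that both are isomorphisms.

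\emph{Well-definedness and the adjunction.} For $\Phi$, the subspace $B^+\mathcal{F}$ is a right $B$-submodule. Its image under ${}_A\Delta$ lies in $B^+A\otimes \mathcal{F} + A\otimes B^+\mathcal{F}$. This holds because $\Delta(b)\in A\otimes B$ and $b_{(1)}\otimes b_{(2)} - b_{(1)}\otimes\varepsilon(b_{(2)})\in A\otimes B^+$. Hence the $\pi_B(A)$-coaction descends. For $\Psi$, the main point is that the diagonal right $B$-action preserves $A\cotimes{\pi_B}V$. This uses that $\pi_B$ is right $A$-linear and the compatibility ${}_V\Delta(vb)=v_{(-1)}\triangleleft b_{(-1)}\otimes v_{(0)}b_{(0)}$.

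Next, $\eta_{\mathcal{F}}$ lands in the cotensor product by coassociativity. It is $A$-colinear and left $B$-linear, and it is right $B$-linear because of the $B$-bimodule compatibility in ${}^A_B\mathcal{M}_B$. The counit $\varepsilon_V$ is well defined because $\varepsilon$ kills $B^+A$. It is colinear since $(\varepsilon\otimes\id)$ applied to $a\cotimes{\pi_B}v$ recovers $\pi_B(a_{(1)})\varepsilon(a_{(2)})\otimes v$, which equals the coaction on $v$. Both triangle identities then reduce to counit axioms.

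\emph{Equivalence.} Assume $A$ is faithfully flat as a right $B$-module. I would not reprove this from scratch. Instead I appeal to Takeuchi's original theorem \cite{Tak79} (relative Hopf modules, left coideal subalgebra version), or rather to its standard proof, which runs as follows.

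\begin{itemize}
\item First show that the canonical map $A\otimes_B A\to A\otimes \pi_B(A)$, $a\otimes a'\mapsto aa'_{(1)}\otimes\pi_B(a'_{(2)})$, is bijective. Its inverse is $a\otimes\pi_B(a')\mapsto aS(a'_{(1)})\otimes a'_{(2)}$; well-definedness uses $\Delta(B)\subseteq A\otimes B$.
\item Then apply $A\otimes_B-$ to $\eta_{\mathcal{F}}$. Under the identification $A\otimes_B(A\cotimes{\pi_B}W)\cong (A\otimes_B A)\cotimes{\pi_B}W$, the result becomes an isomorphism with explicit inverse built from the antipode. Faithful flatness of $A$ over $B$ then implies that $\eta_{\mathcal{F}}$ is bijective.
\item For the counit, faithful flatness gives $B=A^{\mathrm{co}\,\pi_B}$. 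The commutation of $A\otimes_B-$ with cotensor products then lets one identify $(A\cotimes{\pi_B}V)/B^+(A\cotimes{\pi_B}V)$ with $V$, with inverse $v\mapsto [v_{(-1)}'\cotimes{\pi_B} v]$ constructed locally.
\end{itemize}

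\emph{Main obstacle.} The delicate step is the counit: showing $\varepsilon_V$ is injective for an arbitrary $V$. Here $V$ is only a comodule, and no $A$-structure is available a priori. The identification $A\otimes_B(A\cotimes{\pi_B}V)\cong (A\otimes_B A)\cotimes{\pi_B}V$ requires flatness to commute $\otimes_B$ with the kernel defining the cotensor product. If that proves too messy, I would fall back on citing \cite{Tak79} and \cite{MS99} directly for the equivalence. I would then only verify that the explicit unit and counit above agree with theirs. The right $B$-module structures are carried along, since every map involved is $B$-linear by the first part.
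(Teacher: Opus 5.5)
Your verification of the adjunction is correct: well-definedness of $\Phi$ and $\Psi$, $\eta$ and $\varepsilon$ as natural morphisms, and the triangle identities. Importing the equivalence from Takeuchi's theorem and carrying the right $B$-actions along also matches the paper's intent. The paper records this result as Takeuchi's equivalence without a separate proof. Its remark in the proof of Theorem~\ref{ThmSchneiderVariationI} indicates the intended argument: take the known equivalence on the categories without right $B$-action, then apply Theorem~\ref{ThmAdjunctionAndEquivalence}. That lemma is the cleaner way to finish your fallback. Once $\Phi$ is known to be an equivalence, the unit and counit of \emph{any} adjunction $\Phi\dashv\Psi$ are invertible, so you need not compare your $\eta,\varepsilon$ with Takeuchi's. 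Since their components are morphisms in ${}^A_B\mathcal{M}_B$ and ${}^{\pi_B}\mathcal{M}_B$, bijectivity makes them isomorphisms there.

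The step that would fail is your self-contained treatment of the counit. The proposed inverse $v\mapsto[v'_{(-1)}\cotimes{\pi_B}v]$ is not a construction. The element $v_{(-1)}$ lives in $\pi_B(A)$, and lifting it to $A$ does not in general give an element of $A\cotimes{\pi_B}V$. Exhibiting a preimage of $v$ under $\varepsilon_V$ is exactly the surjectivity you must prove.

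The repair uses only ingredients you already have. Put $M:=A\cotimes{\pi_B}V$. The map $A\otimes_B M\to A\otimes M/B^+M$, $a\otimes_B m\mapsto am_{(-1)}\otimes[m_{(0)}]$, is bijective for every object of ${}^A_B\mathcal{M}$, with inverse $a\otimes[m]\mapsto aS(m_{(-1)})\otimes_B m_{(0)}$ and no flatness needed. On the other side, flatness and your Galois isomorphism give
\[
A\otimes_B(A\cotimes{\pi_B}V)\cong(A\otimes_BA)\cotimes{\pi_B}V\cong(A\otimes\pi_B(A))\cotimes{\pi_B}V\cong A\otimes V .
\]
Write $m=\sum_i x_i\cotimes{\pi_B}v_i$. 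The composite $A\otimes M/B^+M\to A\otimes V$ sends $a\otimes[m]$ to $\sum_i aS(x_{i(1)})x_{i(2)}\otimes\varepsilon(x_{i(3)})v_i=a\otimes\varepsilon_V([m])$. So it is $\id_A\otimes\varepsilon_V$, and $\varepsilon_V$ is bijective because $A\neq 0$.

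This part uses only flatness. It does not need $B=A^{\mathrm{co}\,\pi_B(A)}$, which the paper derives only afterwards as Corollary~\ref{CorQHS}; using it here would require an independent descent argument. Faithful flatness is needed only for the unit, exactly as in your second bullet.
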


    As a consequence of the above Theorem, we can write $B$ as the coinvariants under the right $\pi_B(A)$-coaction on $A$.

    \begin{corollary}
        \label{CorQHS}
        Suppose that $A$ is faithfully flat as a right $B$-module. Then
        \[
            B = A\coinv{\pi_B(A)} = \{a \in A \mid a_{(1)}\otimes \pi_B(a_{(2)}) = a\otimes \pi_B(1)\}.
        \]
    \end{corollary}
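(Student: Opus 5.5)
The plan is to apply Theorem \ref{ThmTakeuchi} to the object $\mathcal{F} = A$ itself. We regard $A$ as an object of ${}_B^A\mathcal{M}_B$, with left and right $B$-actions given by multiplication and left $A$-coaction given by the coproduct; the compatibility conditions hold because $\Delta$ is an algebra map. Since $A$ is faithfully flat as a right $B$-module, the unit
\[
\eta_A\colon A \rightarrow A\cotimes{\pi_B} A/B^+A = A\cotimes{\pi_B}\pi_B(A), \quad a\mapsto a_{(1)}\cotimes{\pi_B}\pi_B(a_{(2)})
\]
is an isomorphism in ${}_B^A\mathcal{M}_B$. The idea is to compare this with the analogous statement for $B$ and then read off the coinvariants.

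First I would check the easy inclusion $B\subseteq A\coinv{\pi_B(A)}$. For $b\in B$ we have $\Delta(b)\in A\otimes B$, so $b_{(1)}\otimes\pi_B(b_{(2)}) = b_{(1)}\otimes \varepsilon(b_{(2)})\pi_B(1) = b\otimes\pi_B(1)$. Here we use that $\pi_B(b') = \varepsilon(b')\pi_B(1)$ for $b'\in B$, which holds because $b'-\varepsilon(b')1\in B^+\subseteq B^+A$.

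For the reverse inclusion I would identify $A\coinv{\pi_B(A)}$, transported through $\eta_A$, with a suitable subspace of $A\cotimes{\pi_B}\pi_B(A)$. The natural candidate is the image of $B$, namely the $b_{(1)}\cotimes{\pi_B}\pi_B(1)$ for $b\in B$. A cleaner route is to use the right $B$-linearity and left $A$-colinearity of $\eta_A$, together with the canonical identification $A\cotimes{\pi_B}\pi_B(A)\cong A$ given by $a\cotimes{\pi_B}\pi_B(a')\mapsto a\varepsilon(a')$, which is valid since $\pi_B(A)$ is the regular comodule. Under this identification the coinvariance condition $a_{(1)}\otimes\pi_B(a_{(2)}) = a\otimes\pi_B(1)$ says exactly that $\eta_A(a) = a\cotimes{\pi_B}\pi_B(1)$.

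The step I expect to be the real obstacle is upgrading this to $a\in B$, and this is where faithful flatness must enter a second time. The standard argument runs as follows. Consider the map $A\otimes_B A\rightarrow A\otimes \pi_B(A)$, $x\otimes_B y\mapsto xy_{(1)}\otimes\pi_B(y_{(2)})$, which is bijective under faithful flatness; this is the Galois-type isomorphism underlying Theorem \ref{ThmTakeuchi}. Under this map, $a\otimes_B 1 - 1\otimes_B a$ is sent to $a\otimes\pi_B(1) - a_{(1)}\otimes\pi_B(a_{(2)})$, which vanishes for coinvariant $a$. Hence $a\otimes_B 1 = 1\otimes_B a$ in $A\otimes_B A$. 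Faithful flatness of $A$ over $B$ then yields, by the usual descent argument (exactness of $0\to B\to A\rightrightarrows A\otimes_B A$ after tensoring with $A$), that $a\in B$.

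If I want to avoid invoking the Galois map directly, I would instead apply the counit $\varepsilon_V$ for $V=\pi_B(A)$ and compare the two equivalent descriptions $\Phi(A)=\pi_B(A)$ and $\Phi(B\text{-generated part})$. However, the descent argument above is the one I would commit to first.
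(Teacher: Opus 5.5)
Your committed argument (injectivity of the canonical map, then faithfully flat descent) is correct. It is, however, a different route from the paper's. The paper obtains the corollary directly from Theorem \ref{ThmTakeuchi}, and the object to feed in is $\mathcal{F}=B$, not $\mathcal{F}=A$:
\begin{itemize}
\item $\Phi(B)=B/B^+B\cong k$, with coaction $1\mapsto\pi_B(1)\otimes 1$.
\item Hence $\Psi\Phi(B)=A\cotimes{\pi_B}k=A^{\mathrm{co}\,\pi_B(A)}$.
\item The unit $\eta_B(b)=b_{(1)}\cotimes{\pi_B}[b_{(2)}]=b\cotimes{\pi_B}[1]$ is just the inclusion $B\hookrightarrow A^{\mathrm{co}\,\pi_B(A)}$.
\item This inclusion is bijective because the adjunction is an equivalence.
\end{itemize}

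Your step with $\eta_A$ carries no information. For every right $\pi_B(A)$-comodule $M$ one has $M\cotimes{\pi_B}\pi_B(A)\cong M$, so $\eta_A$ is an isomorphism under no hypotheses at all. You were right to move on from it.

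Your descent route bypasses the equivalence and essentially reproduces the ingredients from which such equivalences are usually proved. This makes it more self-contained and shows exactly where the hypothesis enters; the paper's route buys brevity, given the theorem.

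One correction: the canonical map $A\otimes_B A\to A\otimes\pi_B(A)$ is bijective with no flatness assumption. Its inverse is $x\otimes\pi_B(y)\mapsto xS(y_{(1)})\otimes_B y_{(2)}$. This is well defined on $B^+A$ because $\Delta(B)\subseteq A\otimes B$ lets you move $b_{(2)}$ across $\otimes_B$, and then $S(b_{(1)})b_{(2)}=\varepsilon(b)$. This is the $A$-side analogue of Lemma \ref{LemHopfGalois}. You should write this check out rather than attribute bijectivity to faithful flatness.

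Faithful flatness is needed only once. Apply $A\otimes_B-$ to the complex of left $B$-modules $0\to B\to A\to A\otimes_B A$. The result is split exact, so the original complex is exact, and $a\otimes_B 1=1\otimes_B a$ forces $a\in B$.
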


    The latter justifies the following terminology.

    \begin{definition}
        \label{DefQHS}
        A left coideal subalgebra $B\subseteq A$ such that $A$ is faithfully flat as a right $B$-module is called \textit{quantum homogeneous space}.
    \end{definition}

    Next let us recall that the category ${}_B^A\mathcal{M}_B$ is monoidal with monoidal product given by $-\otimes_B-$ and monoidal unit $B$ (consult \cite[Section 4.1]{Bua16} for more details on this monoidal structure). Thus, the category ${}^{\pi_B}\mathcal{M}_B$ inherits a monoidal structure through the equivalence in Theorem \ref{ThmTakeuchi}. In general however, there is no known description of the monoidal structure without reference to the category ${}_B^A\mathcal{M}_B$.
    
    The situation changes if we specialize our setup. Assume that the antipode of $A$ is invertible and that $AB^+ \subseteq B^+A$. Then, by Lemma \ref{LemABvsBA}, $\pi_B(A)$ is a quotient Hopf algebra of $A$. In this case consider the full subcategory ${}^{\pi_B}\mathcal{M}_0$ of ${}^{\pi_B}\mathcal{M}_B$ consisting of objects $V$, such that $v\triangleleft b = \varepsilon(b)v$. This category is equivalent to ${}^{\pi_B}\mathcal{M}$ and thus monoidal with monoidal product given by the tensor product over $k$. Further consider the full subcategory ${}_B^{A}\mathcal{M}_0$ of ${}_B^A\mathcal{M}_B$ consisting of objects $\mathcal{F}$ such that $\mathcal{F}B^+ \subseteq B^+\mathcal{F}$.

    \begin{proposition}
        \label{PropMonoidalTakeuchi}
        Assume that the antipode of $A$ is bijective and that $AB^+ \subseteq B^+A$. If $A$ is faithfully flat as a right $B$-module, then the functor $\Phi$ from Theorem \ref{ThmTakeuchi} restricts to a monoidal equivalence ${}_B^A\mathcal{M}_0\rightarrow {}^{\pi_B}\mathcal{M}_0$. The monoidal structure on this restriction is given by
        \[
            \Phi(\mathcal{F}\otimes_B \mathcal{G})\rightarrow \Phi(\mathcal{F})\otimes \Phi(\mathcal{G}), \quad [f\otimes_B g]\mapsto [f]\otimes [g].
        \]
    \end{proposition}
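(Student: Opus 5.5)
The plan is to prove the statement in three steps. First, $\Phi$ and $\Psi$ restrict to the subcategories ${}_B^A\mathcal{M}_0$ and ${}^{\pi_B}\mathcal{M}_0$. Second, the restricted equivalence is compatible with the tensor products. Third, the comparison map is an isomorphism satisfying the coherence conditions. Throughout, Theorem \ref{ThmTakeuchi} supplies the underlying equivalence. So it suffices to show that the two full subcategories correspond under $\Phi,\Psi$, and that the displayed map is a well-defined natural isomorphism compatible with associators and unitors.

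For the restriction, start with $\mathcal{F}\in{}_B^A\mathcal{M}_0$. Because $\mathcal{F}B^+\subseteq B^+\mathcal{F}$, every element $f\triangleleft b - \varepsilon(b)f=f\triangleleft b^+$ lies in $B^+\mathcal{F}$. Hence the right $B$-action on $\Phi(\mathcal{F})$ is trivial.

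Conversely, take $V$ with trivial right action. Then $(a\cotimes{\pi_B}v)\triangleleft b=ab\cotimes{\pi_B}v$, so we must show $(A\cotimes{\pi_B}V)B^+\subseteq B^+(A\cotimes{\pi_B}V)$. Here I would use $AB^+\subseteq B^+A$ together with faithful flatness. By Theorem \ref{ThmTakeuchi}, $\Psi(V)$ is isomorphic to $\Psi\Phi\Psi(V)$, and $B^+\Psi(V)$ is the kernel of the counit map $\Psi(V)\to V$, namely $a\cotimes{\pi_B} v\mapsto\varepsilon(a)v$. So it suffices to check that $x\triangleleft b^+$ is sent to $0$ by this map. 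Indeed $\varepsilon(ab^+)=0$ for every term, which gives the inclusion.

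For the monoidal structure, the map $[f\otimes_B g]\mapsto[f]\otimes[g]$ is well-defined for two reasons. First, $B^+(\mathcal{F}\otimes_B\mathcal{G})$ goes to $0$, because $b^+f$ projects to $0$ in $\Phi(\mathcal{F})$. Second, $fb\otimes g-f\otimes bg$ is compatible: $[fb]=\varepsilon(b)[f]$ since $\mathcal{F}\in{}_B^A\mathcal{M}_0$, and $[bg]=\varepsilon(b)[g]$. Left $\pi_B(A)$-colinearity follows because $\pi_B$ is an algebra map, which holds since $\pi_B(A)$ is a quotient Hopf algebra by Lemma \ref{LemABvsBA}. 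I also need that $\mathcal{F}\otimes_B\mathcal{G}$ again lies in ${}_B^A\mathcal{M}_0$, which is immediate from $\mathcal{G}B^+\subseteq B^+\mathcal{G}$ and $\mathcal{F}B^+\subseteq B^+\mathcal{F}$.

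The main obstacle is bijectivity of this map. To get it, I would construct an inverse using $\Psi$. Set $V=\Phi(\mathcal{F})$ and $W=\Phi(\mathcal{G})$, and build an isomorphism $\Psi(V)\otimes_B\Psi(W)\cong\Psi(V\otimes W)$ given by
\[
 (a\cotimes{\pi_B}v)\otimes_B(a'\cotimes{\pi_B}w)\mapsto aa'_{(1)}\cotimes{\pi_B}v\,\pi_B(a'_{(2)})\ldots
\]
Because that formula is delicate, a cleaner route is preferable. First reduce via $\eta$ to checking surjectivity and injectivity after applying $\Psi$. Surjectivity is clear. For injectivity, use the trivial-action description: in ${}_B^A\mathcal{M}_0$ the right action is determined modulo $B^+\mathcal{F}$. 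Then show $\mathcal{F}\otimes_B\mathcal{G}\cong \mathcal{F}\otimes_B B\otimes_B\mathcal{G}$, and compute $\Phi$ via $\mathcal{F}/B^+\mathcal{F}\otimes \mathcal{G}/B^+\mathcal{G}$. The key identity is $B^+(\mathcal{F}\otimes_B\mathcal{G})=B^+\mathcal{F}\otimes_B\mathcal{G}+\mathcal{F}\otimes_B B^+\mathcal{G}$. The inclusion $\supseteq$ uses $\mathcal{F}B^+\subseteq B^+\mathcal{F}$, giving $f\otimes b^+g=fb^+\otimes g\in B^+\mathcal{F}\otimes_B\mathcal{G}$; the inclusion $\subseteq$ is clear. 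The quotient of $\mathcal{F}\otimes_B\mathcal{G}$ by the right-hand side is $\mathcal{F}/B^+\mathcal{F}\otimes_B\mathcal{G}/B^+\mathcal{G}$ by right exactness of $\otimes_B$. This equals the tensor product over $k$, since $B$ acts through $\varepsilon$ on both sides, which gives bijectivity. Coherence with associators and unitors (the unit being $\Phi(B)=B/B^+\cong k$) is then a routine check on representatives.
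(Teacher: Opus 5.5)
Your proof is correct. The paper gives no argument of its own here and simply refers to \cite[Proposition 4.1]{Bua16}, so what you provide is a self-contained verification. Its three ingredients are the right ones. First, $\mathcal{F}B^+\subseteq B^+\mathcal{F}$ makes the right $B$-action on $\Phi(\mathcal{F})$ trivial. Second, invertibility of the counit $\Phi\Psi(V)\to V$ identifies $B^+\Psi(V)$ with the kernel of $\varepsilon\otimes\id$, and this kernel visibly contains $\Psi(V)B^+$ when $V$ has trivial action. Third, the identity $B^+(\mathcal{F}\otimes_B\mathcal{G})=\im(B^+\mathcal{F}\otimes_B\mathcal{G})\supseteq\im(\mathcal{F}\otimes_B B^+\mathcal{G})$, combined with right exactness of $\otimes_B$ and triviality of both $B$-actions on the quotients, gives bijectivity of $[f\otimes_B g]\mapsto[f]\otimes[g]$ in one stroke.

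What your route buys over the citation is transparency about where each hypothesis enters.
\begin{itemize}
\item Bijectivity of the comparison map uses only $\mathcal{F}B^+\subseteq B^+\mathcal{F}$, with no condition on $\mathcal{G}$ and no flatness. This is in exact parallel with Proposition \ref{PropMonEquivalenceSupZero} on the coalgebra side.
\item $AB^+\subseteq B^+A$ is what makes $\pi_B$ multiplicative. That in turn makes $\otimes$ a monoidal product on ${}^{\pi_B}\mathcal{M}_0$ and the comparison map colinear.
\item Faithful flatness enters only through Theorem \ref{ThmTakeuchi}: once for the underlying equivalence, and once for $\Psi(V)B^+\subseteq B^+\Psi(V)$. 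Contrary to your remark, that last step needs only invertibility of the counit, not $AB^+\subseteq B^+A$.
\end{itemize}
In a write-up, delete the abandoned formula for $\Psi(V)\otimes_B\Psi(W)$, and also the sentences about reducing via $\eta$ and about $\mathcal{F}\otimes_B B\otimes_B\mathcal{G}$. They contribute nothing, since the key identity already settles injectivity and surjectivity.
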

    
    \begin{proof}
        See \cite[Proposition 4.1]{Bua16}.
    \end{proof}

    \subsection{Schneider's dual equivalence}\label{SecSchneider}

    In this section, we demonstrate that the results of the previous section carry over to a dualized setup. Let $U$ be a Hopf algebra and let $H$ be a right coideal subalgebra, that is $H$ is a subalgebra such that $\Delta(H) \subseteq H\otimes U$. As in the previous section, we obtain a quotient coalgebra $C := U/UH^+ \cong U\otimes_H k$, which is naturally a left $U$-module coalgebra.

    The following Hopf--Galois-type isomorphism will be useful later.

    \begin{lemma}
        \label{LemHopfGalois}
        The canonical map
        \[
            \chi\colon U\otimes_H U \rightarrow C\otimes U, \quad x\otimes_H y \mapsto x_{(-1)}\otimes x_{(0)}y
        \]
        is an isomorphism.
    \end{lemma}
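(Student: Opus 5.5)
The plan is to write down an explicit inverse using the antipode $S$ of $U$, and then check well-definedness and the two compositions directly. Throughout, write $\pi\colon U\rightarrow C = U/UH^+$ for the canonical projection, so that the left $C$-coaction on $U$ is $x_{(-1)}\otimes x_{(0)} = \pi(x_{(1)})\otimes x_{(2)}$. Thus $\chi(x\otimes_H y) = \pi(x_{(1)})\otimes x_{(2)}y$.

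\emph{Step 1: $\chi$ is well defined.} Take $h\in H$. Since $\Delta(H)\subseteq H\otimes U$, we have $h_{(1)}\in H$. Hence $x_{(1)}h_{(1)} - \varepsilon(h_{(1)})x_{(1)} \in UH^+$, which gives
\[
    \chi(xh\otimes_H y) = \pi(x_{(1)}h_{(1)})\otimes x_{(2)}h_{(2)}y = \pi(x_{(1)})\otimes x_{(2)}hy = \chi(x\otimes_H hy).
\]
So the map $U\otimes U\rightarrow C\otimes U$ is $H$-balanced and descends to $U\otimes_H U$.

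\emph{Step 2: define the candidate inverse.} Set
\[
    \chi^{-1}\colon C\otimes U\rightarrow U\otimes_H U,\quad \pi(x)\otimes y\mapsto x_{(1)}\otimes_H S(x_{(2)})y .
\]
For this to be well defined it suffices that the map vanishes when $x$ is replaced by $xh$ with $h\in H^+$. Again $h_{(1)}\in H$, so $h_{(1)}$ can be moved across $\otimes_H$:
\[
    x_{(1)}h_{(1)}\otimes_H S(h_{(2)})S(x_{(2)})y = x_{(1)}\otimes_H h_{(1)}S(h_{(2)})S(x_{(2)})y = \varepsilon(h)\,x_{(1)}\otimes_H S(x_{(2)})y = 0 .
\]
Since $UH^+$ is spanned by such elements $xh$, the map $\chi^{-1}$ factors through $C\otimes U$. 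This is the only step that is not purely formal: it is exactly where the right coideal condition on $H$ enters. It is the step I would check most carefully, in particular that the leg landing in $H$ is the first one, which is what allows the move across $\otimes_H$.

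\emph{Step 3: the two compositions.} In both cases the antipode axioms collapse the expression:
\begin{align*}
    \chi\bigl(\chi^{-1}(\pi(x)\otimes y)\bigr) &= \pi(x_{(1)})\otimes x_{(2)}S(x_{(3)})y = \pi(x)\otimes y,\\
    \chi^{-1}\bigl(\chi(x\otimes_H y)\bigr) &= x_{(1)}\otimes_H S(x_{(2)})x_{(3)}y = x\otimes_H y .
\end{align*}
Hence $\chi$ is an isomorphism with inverse $\chi^{-1}$. Note that this argument needs neither faithful flatness nor bijectivity of $S$.
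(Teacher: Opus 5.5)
Your proof is correct and takes the same approach as the paper, which writes down the same inverse $[x]\otimes y\mapsto x_{(1)}\otimes_H S(x_{(2)})y$ and leaves the well-definedness checks to the reader. You carry out those checks correctly, using that the first leg of $\Delta(h)$ lies in $H$, and you also verify both compositions, which the paper omits.
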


    \begin{proof}
        The inverse is given by
        \[
            C\otimes U \rightarrow U\otimes_H U, \quad [x]\otimes y \mapsto x_{(1)} \otimes_H S(x_{(2)})y.
        \]
        One checks that both maps are well-defined.
    \end{proof}

    We would now like to describe an equivalence of categories analogous to \ref{ThmTakeuchi}. To that end, we consider the coinvariant functor
    \[
        \coinv{C}(-)\colon{}_U^C\mathcal{M}^C \rightarrow {}_H\mathcal{M}^C, \quad M \mapsto \coinv{C}M = \{m\in M \mid m_{(-1)}\otimes m_{(0)} = 1\otimes m\}.
    \]
    As well as the induction functor
    \[
        U\otimes_H-\colon{}_H\mathcal{M}^C\rightarrow {}_U^C\mathcal{M}^C, \quad V\mapsto U\otimes_H V.
    \]
    Here, the left $C$-coaction on $U\otimes_H V$ is given by applying the left $C$-coaction on $U$ to the first factor, and the right $C$-coaction is given by the formula
    \[
        x\otimes_H v \mapsto (x_{(1)}\otimes_H v_{(0)}) \otimes x_{(2)}v_{(1)}.
    \]
    By a routine argument one shows that the above functors are mutually adjoint.

    \begin{proposition}
        \label{PropAdjunctionCoinvariantsInduction}
        The induction functor $U\otimes_H-\colon {}_U^C\mathcal{M}^C\rightarrow {}_H\mathcal{M}^C$ is left adjoint to the coinvariant functor $\coinv{C}(-)\colon {}_U^C\mathcal{M}^C\rightarrow {}_H\mathcal{M}^C$. This adjunction is exhibited the unit, and counit given by
        \[
            \eta_V\colon V \rightarrow \coinv{C}(U\otimes_H V), \quad v \mapsto 1\otimes_H v
        \]
        for $V \in {}_H\mathcal{M}^C$ and
        \[
            \varepsilon_M \colon U\otimes_H\coinv{C}M \rightarrow M, \quad x\otimes_H m \mapsto xm
        \]
        for $M \in {}_U^C\mathcal{M}^C$.
    \end{proposition}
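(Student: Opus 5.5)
Since the statement is a standard adjunction, I will prove it by the unit--counit characterization. I will check that the displayed $\eta_V$ and $\varepsilon_M$ are well-defined morphisms in the right categories, and then verify the two triangle identities. (The statement lists the functors' domains in swapped order; the intended one is clear: $U\otimes_H-\colon {}_H\mathcal{M}^C\to {}_U^C\mathcal{M}^C$ and $\coinv{C}(-)$ in the other direction.)

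\emph{The functors land where claimed.} For $V\in{}_H\mathcal{M}^C$, I will show that the right coaction $x\otimes_H v\mapsto (x_{(1)}\otimes_H v_{(0)})\otimes x_{(2)}v_{(1)}$ is well defined on $U\otimes_H V$. For $h\in H$, compare $xh\otimes_H v$ with $x\otimes_H hv$. The compatibility in ${}_H\mathcal{M}^C$ gives $(hv)_{(0)}\otimes (hv)_{(1)}=h_{(1)}v_{(0)}\otimes h_{(2)}v_{(1)}$, and $\Delta(H)\subseteq H\otimes U$ lets $h_{(1)}$ cross the tensor sign. So both elements have the same image. Next I check:
\begin{itemize}
\item coassociativity, which is immediate;
\item the bicomodule condition, since the left coaction only touches $x$ through $U\to C$, which is a coalgebra map, and commutes with the right coaction;
\item $U$-linearity of both coactions, because $C$ is a left $U$-module coalgebra.
\end{itemize}
For $M\in{}_U^C\mathcal{M}^C$, I will show that $\coinv{C}M$ is stable under $H$ and is a right subcomodule. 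For $h\in H$ and $m$ coinvariant, ${}_M\Delta(hm)=h_{(1)}[1]\otimes h_{(2)}m=[h_{(1)}]\otimes h_{(2)}m$. Here I need $[h_{(1)}]\otimes h_{(2)}=[1]\otimes h$ in $C\otimes U$, which holds because $h_{(1)}\in H$ and $[h']=\varepsilon(h')[1]$ for $h'\in H$. Right subcomodule stability follows from the bicomodule axiom.

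\emph{Unit and counit are morphisms.} The unit $\eta_V(v)=1\otimes_H v$ is coinvariant, because the left coaction gives $[1]\otimes 1\otimes_H v$. It is $H$-linear, since $1\otimes_H hv=h\otimes_H v=h\cdot(1\otimes_H v)$ with the $H$-action induced from $U$. It is right $C$-colinear by the formula for the coaction. The counit $\varepsilon_M(x\otimes_H m)=xm$ is well defined by balancedness and is clearly $U$-linear. For left colinearity, ${}_M\Delta(xm)=x_{(1)}m_{(-1)}\otimes x_{(2)}m_{(0)}=[x_{(1)}]\otimes x_{(2)}m$, using that $m$ is coinvariant. For right colinearity, $\Delta_M(xm)=x_{(1)}m_{(0)}\otimes x_{(2)}m_{(1)}$, which matches the formula.

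\emph{Triangle identities and naturality.} Both triangle identities are one-line computations:
\begin{itemize}
\item $\varepsilon_{U\otimes_H V}\circ(U\otimes_H\eta_V)$ sends $x\otimes_H v\mapsto x\otimes_H(1\otimes_H v)\mapsto x\otimes_H v$;
\item $\coinv{C}(\varepsilon_M)\circ\eta_{\coinv{C}M}$ sends $m\mapsto 1\otimes_H m\mapsto m$.
\end{itemize}
Naturality of $\eta$ and $\varepsilon$ is immediate from their formulas.

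The only step needing genuine care is the well-definedness of the right $C$-coaction on $U\otimes_H V$ together with $H$-stability of coinvariants. Both rely on the right coideal property $\Delta(H)\subseteq H\otimes U$ and on the identity $[h]=\varepsilon(h)[1]$ in $C=U/UH^+$. I expect these to be the main (though mild) obstacle.
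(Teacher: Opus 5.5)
Your proposal is correct and is precisely the routine unit--counit verification that the paper invokes without writing out. You are also right that the statement lists the two functors' domains in swapped order. The one check you leave implicit is that the left coaction $x\otimes_H v\mapsto [x_{(1)}]\otimes x_{(2)}\otimes_H v$ descends to $U\otimes_H V$. This follows from the identity $[h_{(1)}]\otimes h_{(2)}=[1]\otimes h$ in $C\otimes U$, which you already use for the $H$-stability of coinvariants.
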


    \begin{remark}
        \label{RemDomainCoinvariantsInduction}
        The functors $U\otimes_H-$ and $\coinv{C}(-)$ can also be viewed as functors between ${}_H\mathcal{M}$ and ${}_U^C\mathcal{M}$, and they still form a pair of adjoint functors with respect to those domains.
    \end{remark}

    Note that if $U$ is faithfully flat as a left $H$-module, we have an equivalence of categories
    \[
        {}_H\mathcal{M}_H^U \simeq {}_H\mathcal{M}^C.
    \]
    This equivalence is given by the same functors $\Phi$ and $\Psi$ as in the previous section, by changing their respective handedness, that is, in one direction we have $M\mapsto M/MH^+$ and in the other direction $V\mapsto V\cotimes{C} U$. Using this equivalence it follows that $H = \coinv{C}U$ (compare Corollary \ref{CorQHS}). This observation allows us to show that the functors $\coinv{C}(-)$ and $U\otimes_H -$ are equivalences.

    \begin{theorem}
        \label{ThmSchneiderVariationI}
        Assume that $U$ is faithfully flat as a left- and right $H$-module then $U\otimes_H-$ and $\coinv{C}(-)$ are mutually inverse equivalences of categories. This assertion holds for both pairs of categories $({}_H\mathcal{M}^C, {}_U^C\mathcal{M}^C)$ and $({}_H\mathcal{M}, {}_U^C\mathcal{M})$.
    \end{theorem}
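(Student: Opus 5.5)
We show that the unit $\eta_V\colon V\to \coinv{C}(U\otimes_H V)$ and the counit $\varepsilon_M\colon U\otimes_H\coinv{C}M\to M$ of Proposition \ref{PropAdjunctionCoinvariantsInduction} are isomorphisms. The same maps serve for the pair $({}_H\mathcal{M},{}_U^C\mathcal{M})$ by Remark \ref{RemDomainCoinvariantsInduction}, so one argument covers both pairs; the right $C$-coaction plays no role in bijectivity. Two tools do the work: the Hopf--Galois isomorphism $\chi$ of Lemma \ref{LemHopfGalois}, and the identity $H=\coinv{C}U$ obtained from the equivalence ${}_H\mathcal{M}_H^U\simeq{}_H\mathcal{M}^C$, which uses left faithful flatness.

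\emph{Unit.} We compute $\coinv{C}(U\otimes_H V)$. By definition it is the kernel of
\[
U\otimes_H V\to C\otimes U\otimes_H V,\qquad x\otimes_H v\mapsto [x_{(1)}]\otimes x_{(2)}\otimes_H v-[1]\otimes x\otimes_H v .
\]
Since $U$ is flat as a right $H$-module, $-\otimes_H V$ is exact. Hence this kernel equals $\bigl(\ker(U\to C\otimes U,\ x\mapsto x_{(-1)}\otimes x_{(0)}-[1]\otimes x)\bigr)\otimes_H V=\coinv{C}U\otimes_H V$. Applying $H=\coinv{C}U$, this is $H\otimes_H V\cong V$, and the resulting identification is precisely $\eta_V$. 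The one point to check is that the coaction map is of the form $g\otimes_H\id_V$ for a right $H$-linear $g\colon U\to C\otimes U$, where $C\otimes U$ carries the right $H$-action on the second factor. This needs $[x_{(1)}h_{(1)}]\otimes x_{(2)}h_{(2)}$ to agree with $[x_{(1)}]\otimes x_{(2)}h$, which would follow from $[yh]=[y]\varepsilon(h)$ for $h\in H$. That identity is false in general, since $C=U/UH^+$ is a quotient by a left ideal, so $[yh]=\varepsilon(h)[y]$ does hold, but only after passing to the coinvariance condition. I would verify the needed compatibility directly: $\Delta(h)\in H\otimes U$ gives $[x_{(1)}h_{(1)}]=\varepsilon(h_{(1)})[x_{(1)}]$, so $[x_{(1)}h_{(1)}]\otimes x_{(2)}h_{(2)}=[x_{(1)}]\otimes x_{(2)}h$, as required.

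\emph{Counit.} This is the main obstacle. The approach is to apply the exact, faithful functor $U\otimes_H-$ after tensoring with $C$. Given $M\in{}_U^C\mathcal{M}$, the map
\[
\beta\colon U\otimes_H M\to C\otimes M,\qquad x\otimes_H m\mapsto x_{(-1)}\otimes x_{(0)}m,
\]
is a bijection, with inverse $[x]\otimes m\mapsto x_{(1)}\otimes_H S(x_{(2)})m$, exactly as in Lemma \ref{LemHopfGalois}. Under $\beta$, the subspace $U\otimes_H\coinv{C}M$, which sits inside $U\otimes_H M$ by flatness, maps onto the elements of the form $\sum x_{(-1)}\otimes x_{(0)}m$ with $m$ coinvariant. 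Conversely, I would show that $C\otimes M$ is \emph{cofree}, meaning that ${}_M\Delta$ identifies $M$ with a $C$-subcomodule of $C\otimes M$ compatible with $\beta$. This yields $U\otimes_H\coinv{C}M\cong \beta^{-1}({}_M\Delta(M))\cong M$, and one checks that the composite is $\varepsilon_M$.

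Concretely, I would show that $\varepsilon_M$ is surjective via $m=m_{(-1)}'\,(S(\cdot)m)$-type formulas: for $m\in M$, the element $\beta^{-1}({}_M\Delta(m))=\tilde m_{(1)}\otimes_H S(\tilde m_{(2)})m_{(0)}$, with $\tilde m$ a lift of $m_{(-1)}$, lies in $U\otimes_H\coinv{C}M$ and maps to $m$. Injectivity then follows from injectivity of $\beta$. The subtle step, which I expect to be hardest, is proving that $S(\tilde m_{(2)})m_{(0)}$ is coinvariant after the $\otimes_H$. This is where right faithful flatness enters: I would use faithfully flat descent for $H\subseteq U$, so that equalities in $U\otimes_H(-)$ detect the desired equality in $\coinv{C}M$. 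Finally, I would confirm that the $C$-coactions and the $U$-action are respected, which is routine.
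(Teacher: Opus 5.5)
\textbf{There is a genuine gap.} Your overall plan is sound: show that $\eta_V$ and $\varepsilon_M$ are bijective using Galois-type bijections, and note that bijectivity suffices for both pairs of categories. Carried through, it would give a self-contained replacement for the paper's appeal to Schneider's Theorem 3.7. But two steps do not work as written.

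\emph{Unit.} The claim ``since $U$ is flat as a right $H$-module, $-\otimes_H V$ is exact'' is false. Exactness of $-\otimes_H V$ on right $H$-modules means $V$ is flat as a left $H$-module, and $V$ is arbitrary here. So you get neither injectivity of $\coinv{C}U\otimes_H V\to U\otimes_H V$ nor $\ker(g\otimes_H\id_V)=\coinv{C}U\otimes_H V$. What is needed is faithfully flat descent combined with $\chi$. The map $\chi\otimes_H\id_V\colon U\otimes_H U\otimes_H V\to C\otimes U\otimes_H V$ is bijective. It carries $x\otimes_H v\mapsto x\otimes_H 1\otimes_H v$ to the left coaction, and $x\otimes_H v\mapsto 1\otimes_H x\otimes_H v$ to $[1]\otimes-$. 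Hence $\coinv{C}(U\otimes_H V)$ is the equalizer of the Amitsur pair. That equalizer is $1\otimes_H V\cong V$ because $U$ is faithfully flat as a right $H$-module: apply $U\otimes_H-$ and use the contracting homotopy given by multiplication. This also yields $H=\coinv{C}U$ without going through ${}_H\mathcal{M}_H^U$. Separately, $[yh]=\varepsilon(h)[y]$ does hold for all $y\in U$, $h\in H$, simply because $y(h-\varepsilon(h))\in UH^+$. Your remark that it fails in general is mistaken, though harmless.

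\emph{Counit.} Injectivity is fine. Composing $U\otimes_H\coinv{C}M\to U\otimes_H M$ with $\beta$ gives ${}_M\Delta\circ\varepsilon_M$, and right flatness makes the first map injective. Surjectivity, however, is the real content: you need $\beta^{-1}({}_M\Delta(M))\subseteq U\otimes_H\coinv{C}M$. You only defer this to ``faithfully flat descent'', and ``cofreeness of $C\otimes M$'' says nothing about $\beta^{-1}$. The missing idea is as follows. $\coinv{C}M$ is the equalizer of ${}_M\Delta$ and $m\mapsto[1]\otimes m$. Both maps are left $H$-linear for the diagonal $H$-action on $C\otimes M$. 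By right flatness, $U\otimes_H\coinv{C}M$ is therefore the equalizer of the induced pair $U\otimes_H M\rightrightarrows U\otimes_H(C\otimes M)$. Now apply your bijection $\beta$ also to the $U$-module $C\otimes M$ with diagonal action. One checks
\[
\beta_{C\otimes M}\circ(\id\otimes_H{}_M\Delta)=(\id_C\otimes{}_M\Delta)\circ\beta_M,\qquad \beta_{C\otimes M}\circ(\id\otimes_H([1]\otimes-))=(\Delta\otimes\id_M)\circ\beta_M .
\]
So $\beta_M$ carries $U\otimes_H\coinv{C}M$ onto the equalizer of $\Delta\otimes\id_M$ and $\id_C\otimes{}_M\Delta$, which is $C\cotimes{C}M={}_M\Delta(M)$. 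This step needs only flatness; faithful flatness is what the unit requires. Without this argument or an equivalent one, your treatment of the counit is incomplete.
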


    \begin{proof}
        Since $H = \coinv{C}U$, the assumptions of \cite[Theorem 3.7]{Sch90} are satisfied. Moreover, by Lemma \ref{LemHopfGalois} also condition of (1) in the same theorem holds. Thus, the induction functor is an equivalence between ${}_H\mathcal{M}$ and ${}_U^C\mathcal{M}$. The assertion that this equivalence also holds between ${}_U^C\mathcal{M}^C$ and ${}_H\mathcal{M}^C$ follows by Theorem \ref{ThmAdjunctionAndEquivalence} using the same argument as in the proof of Theorem \ref{ThmTakeuchi}.
    \end{proof}

    To end our discussion of categorical equivalences, we would like to prove a monoidal version of the above result, similar to Proposition \ref{PropMonoidalTakeuchi}. To this end observe that the category ${}_U^C\mathcal{M}^C$ is monoidal with monoidal product given by $-\cotimes{C}-$. Indeed, if $M, N \in {}_U^C\mathcal{M}^C$, then for any elements $m\cotimes{C} n \in M\cotimes{C} N$ and $x\in U$, we have
    \begin{align*}
        \Delta_M(x_{(1)}m)\otimes x_{(2)}n &= x_{(1)}m_{(0)}\otimes x_{(2)}m_{(1)}\otimes x_{(3)}n = x_{(1)}m \otimes x_{(2)}n_{(-1)}\otimes x_{(3)}n_{(0)}\\
        &= x_{(1)}m \otimes {}_N\Delta(x_{(2)}n).
    \end{align*}
    That is, the left $U$-action on $M\otimes N$ restricts to the cotensor product. Moreover, one can show that this left action is compatible with the left- and right $C$-coactions, in other words, we have $M\cotimes{C} N \in {}_U^C\mathcal{M}^C$. Since $C$ is a left $U$-module coalgebra it holds that $C \in {}_U^C\mathcal{M}^C$, and lastly, by checking that the associators and unitors of $({}^C\mathcal{M}^C, \cotimes{C}, C)$ are $U$-linear one shows that the monoidal structure of ${}^C\mathcal{M}^C$ extends to ${}_U^C\mathcal{M}^C$.
    
    Again, it is not obvious how to describe the monoidal structure on ${}_H\mathcal{M}^C$, which is induced by the one on ${}_U^C\mathcal{M}^C$ via the equivalence in \ref{ThmSchneiderVariationI}. We therefore restrict to the full subcategory ${}_H\mathcal{M}^0$ consisting of objects $V \in {}_H\mathcal{M}^C$, where the right $C$-coaction is given by $v \mapsto v\otimes [1]$. This subcategory is equivalent to ${}_H\mathcal{M}$. Thus, if $H$ is a sub bialgebra, then it is monoidal with monoidal product given by the tensor product over $k$. As the next result shows, this monoidal structure is the same as the one induced by the equivalence. In the following, we denote the full subcategory of ${}_U^C\mathcal{M}^C$ consisting of objects $M$ such that $\coinv{C}M \in {}_H\mathcal{M}^0$ by ${}_U^C\mathcal{M}^0$.

    \begin{proposition}
        \label{PropMonEquivalenceSupZero}
        Let $M \in {}_U^C\mathcal{M}^0$ and $N\in {}_U^C\mathcal{M}^C$, then
        \[
            \coinv{C}(M\cotimes{C}N) = \coinv{C}M\otimes \coinv{C}N.
        \]
        In particular, ${}_U^C\mathcal{M}^0$ is a monoidal subcategory of ${}_U^C\mathcal{M}^C$, and moreover, if $H$ is a sub bialgebra of $U$ and $U$ is faithfully flat as a left and right $H$-module, then $\coinv{C}(-)$ restricts to a monoidal equivalence between $({}_U^C\mathcal{M}^0, \cotimes{C}, C)$ and $({}_H\mathcal{M}^0,\otimes, k)$.
    \end{proposition}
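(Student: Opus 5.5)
The identity $\coinv{C}(M\cotimes{C}N) = \coinv{C}M\otimes \coinv{C}N$ should be read inside $M\otimes N$, with left coinvariants taken for the coaction $m\cotimes{C} n\mapsto m_{(-1)}\otimes m_{(0)}\cotimes{C} n$. I would prove it by two inclusions.

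For ``$\supseteq$'', take $m\in\coinv{C}M$ and $n\in\coinv{C}N$. Since $M\in{}_U^C\mathcal{M}^0$, we have $\Delta_M(m)=m\otimes[1]$, and also ${}_N\Delta(n)=[1]\otimes n$. Hence $m\otimes n$ lies in the cotensor product, and it is visibly left coinvariant.

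For ``$\subseteq$'', take $t=\sum_i m_i\otimes n_i\in M\cotimes{C}N$ with $t$ left coinvariant, so $t\in \coinv{C}M\otimes N$ (exactness of $-\otimes N$ over a field, since $\coinv{C}M$ is a kernel). We may choose the $m_i$ linearly independent in $\coinv{C}M$. The cotensor condition then reads $\sum_i m_i\otimes[1]\otimes n_i=\sum_i m_i\otimes n_{i(-1)}\otimes n_{i(0)}$. By independence of the $m_i$ this gives ${}_N\Delta(n_i)=[1]\otimes n_i$, so each $n_i\in\coinv{C}N$. This settles the equality.

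Monoidality of ${}_U^C\mathcal{M}^0$ then requires three checks.
\begin{enumerate}[label=(\roman*)]
\item The unit $C$ lies in the subcategory: $\coinv{C}C=k[1]$ (using $H=\coinv{C}U$, or directly), and $\Delta([1])=[1]\otimes[1]$.
\item It is closed under $\cotimes{C}$: for $M,N\in{}_U^C\mathcal{M}^0$, the coinvariants of $M\cotimes{C}N$ are $\coinv{C}M\otimes\coinv{C}N$. Their right coaction, $m\otimes n\mapsto m\otimes n_{(0)}\otimes n_{(1)}$, equals $m\otimes n\otimes[1]$.
\item It is closed under the structure isomorphisms, which holds because it is a full subcategory.
\end{enumerate}

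For the monoidal equivalence, I would invoke Theorem~\ref{ThmSchneiderVariationI}, which makes $\coinv{C}(-)$ an equivalence ${}_U^C\mathcal{M}^C\to{}_H\mathcal{M}^C$. By definition it restricts to an equivalence ${}_U^C\mathcal{M}^0\to{}_H\mathcal{M}^0$: essential surjectivity holds because $\coinv{C}(U\otimes_H V)\cong V$, and full faithfulness is inherited. The monoidal structure maps are the identities from the first part, $\coinv{C}(M\cotimes{C}N)=\coinv{C}M\otimes\coinv{C}N$ together with $\coinv{C}C=k$.

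What remains, and what I expect to be the main obstacle, is checking that the $H$-action on $\coinv{C}(M\cotimes{C}N)$, restricted from the $U$-action $x(m\otimes n)=x_{(1)}m\otimes x_{(2)}n$, agrees with the diagonal $H$-action on the tensor product in ${}_H\mathcal{M}$. This needs $\Delta(H)\subseteq H\otimes H$, which is where the sub-bialgebra hypothesis enters; otherwise the $x_{(2)}$ leg would not act on $\coinv{C}N$. Finally, the coherence diagrams hold trivially, since the associators and unitors on both sides are the canonical vector space ones (for the unitor, $[1]\otimes n\mapsto \varepsilon([1])n=n$).
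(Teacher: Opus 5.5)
Your proof is correct and follows essentially the same route as the paper for the key identity. You first place a coinvariant element in $\coinv{C}M\otimes N$; you use exactness of $-\otimes N$ where the paper expands in a basis of $N$, which is the same argument. You then write it with linearly independent first factors in $\coinv{C}M$ and use $\Delta_M(m_i)=m_i\otimes[1]$ in the cotensor condition to force each $n_i\in\coinv{C}N$. The paper does not spell out the ``in particular'' monoidal claims, and your checks of them (unit, closure under $\cotimes{C}$, restriction of the equivalence, and the role of $\Delta(H)\subseteq H\otimes H$ in matching the $H$-actions) correctly fill in those details.
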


    \begin{proof}
        Recall the left $C$-coaction on $M\cotimes{C} N$ is given by $m\cotimes{C}n\mapsto m_{(-1)}\otimes m_{(0)}\cotimes{C}n$. Thus, the inclusion ``$\supseteq$'' is immediate. For the converse inclusion let $u \in \coinv{C}(M\cotimes{C}N)$. Fix a basis $\{w_i\}_i$ of $N$. This lets us write
        \[
            u = \sum_i m_i\otimes w_i
        \]
        for uniquely determined $m_i \in M$, and we have
        \[
            \sum_i {}_M\Delta(m_i)\otimes w_i = {}_{M\cotimes{C}N}\Delta(u) = [1]\otimes u = \sum_i[1]\otimes m_i\otimes w_i.
        \]
        It follows that $\Delta(m_i) = [1]\otimes m_i$ for all $i$, and we have $u\in \coinv{C}M\otimes N$. Next, fix a basis $\{v_j\}_j$ of $\coinv{C}M$. We can similarly write
        \[
            u = \sum_jv_j\otimes n_j
        \]
        for uniquely determined $n_j \in N$, and we have
        \[
            \sum_jv_j\otimes {}_N\Delta(n_j) = \sum_j\Delta_M(v_j) \otimes n_j = \sum_j v_j\otimes [1] \otimes n_j.
        \]
        Therefore, we have $n_j \in \coinv{C}N$ for all $j$ and $u \in \coinv{C}M\otimes \coinv{C}N$.
    \end{proof}

    \subsection{Quantum homogeneous spaces as finitary duals of coalgebras}\label{SecInducedQHS}

    Let $U$ be a Hopf algebra and $H\subseteq U$ a right coideal subalgebra and $C := U\otimes_H k$ as in the previous section. In this section we will explain how to obtain a principal quantum homogeneous space as the finitary dual of $C$. To begin, we fix a tensor subcategory $\mathcal{C}\subseteq {}_U\mathcal{M}$ of finite dimensional $U$-modules, that is a full subcategory of finite dimensional $U$-modules such that $k \in \mathcal{C}$, and which is closed under taking direct sums, tensor products and duals \cite[p. 165]{MS99}. We consider the finitary dual of $U$ with respect to $\mathcal{C}$, the subspace of $U^\ast$ which consists of all matrix coefficients of modules in $\mathcal{C}$ (see Appendix \ref{SecFinDuals})
    \[
        A := U_\mathcal{C}^\circ = \{c_{f,v}^M \mid M \in \mathcal{C}\}.
    \]
    There is a natural left $U$-action on $A$ given by $x\triangleright a = a_{(2)}(x)a_{(1)}$, for $x \in U$ and $a \in A$. Let $B := {}^HA := \{a \in A\mid h\triangleright a = \varepsilon(h)a \text{ for all }h\in H\}$ denote the \textit{infinitesimal invariants} of $A$ with respect to the restriction of this action to $H$. We argue that we can identify $B$ with the finitary dual $C_\mathcal{C}^\circ$ of $C$ with respect to $\mathcal{C}$. The latter is defined as the subspace of $C^\ast$ consisting of all functionals $f\colon C\rightarrow k$, such that there is a left $U$-submodule $N\subseteq C$ such that $C/N \in \mathcal{C}$ and $f\vert_N \equiv 0$.

    \begin{lemma}
        \label{LemFinDualofC}
        The space infinitesimal invariants $B$ is a left coideal subalgebra of $A$. In addition, we have an isomorphism of $k$-algebras
        \[
            \iota\colon C_\mathcal{C}^\circ \rightarrow B, \quad \iota(f)(x) = f(x\otimes_H 1).
        \]
    \end{lemma}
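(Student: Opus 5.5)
The plan is to handle the two assertions separately. Throughout I use the description of $B$ by evaluation: for $a\in A$, $h\in H$ and $x\in U$ we have $(h\triangleright a)(x) = a_{(2)}(h)a_{(1)}(x) = a(xh)$.

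\emph{$B$ is a left coideal subalgebra.} For the coideal property, let $a\in B$ and $h\in H$. Applying $\id\otimes(h\triangleright -)$ to $\Delta(a)$ gives $a_{(1)}\otimes a_{(3)}(h)a_{(2)}$. This equals $\Delta(h\triangleright a)=\varepsilon(h)\Delta(a)$, because the action only touches the last tensor leg. Choosing $a_{(1)}$ linearly independent, I conclude $\Delta(B)\subseteq A\otimes B$. For the subalgebra property, recall that the product of $A$ is dual to the coproduct of $U$. Hence $h\triangleright(ab) = (h_{(1)}\triangleright a)(h_{(2)}\triangleright b)$. Since $\Delta(H)\subseteq H\otimes U$, the first leg $h_{(1)}$ lies in $H$ and acts trivially on $a\in B$, so the expression collapses to $a(h\triangleright b)=\varepsilon(h)ab$. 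The unit $\varepsilon_U$ of $A$ is clearly $H$-invariant.

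\emph{Reformulating $B$.} By the evaluation formula, $a\in B$ if and only if $a(xh)=\varepsilon(h)a(x)$ for all $x\in U$ and $h\in H$. This says exactly that $a$ vanishes on $UH^+$. So $B$ consists of the elements of $A$ that factor through the projection $\pi\colon U\rightarrow C$, $x\mapsto x\otimes_H 1$. In this language the map in the statement is $\iota(f)=f\circ\pi$. Injectivity of $\iota$ is immediate from surjectivity of $\pi$. Multiplicativity and unitality follow because $\pi$ is a coalgebra map and both products are dual to the respective coproducts.

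\emph{$\iota$ is well defined and surjective.} Well-definedness: let $f$ vanish on a $U$-submodule $N\subseteq C$ with $M:=C/N\in\mathcal{C}$, and write $\bar f\in M^\ast$ for the induced functional. Then $f(\pi(x))=\bar f(x\triangleright[1])$, so $\iota(f)=c^{M}_{\bar f,[1]}$ is a matrix coefficient of $M$ and hence lies in $A$. Surjectivity: a given $a\in B$ descends to $\bar a\in C^\ast$, and I must produce a suitable submodule $N$. I would take
\[
N=\{c\in C\mid \bar a(y\triangleright c)=0\text{ for all }y\in U\},
\]
which is the largest $U$-submodule contained in $\ker\bar a$. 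Then $C/N$ embeds $U$-linearly into $C^\ast$ via $c\mapsto(y\mapsto\bar a(yc))$. Its image is the span of the functionals $x\mapsto a(yx)$, $y\in U$. Writing $a=c^M_{g,v}$ with $M\in\mathcal{C}$, all of these functionals are matrix coefficients $c^M_{g',v}$ of $M$. Hence $C/N$ is isomorphic to a $U$-submodule of $M^\ast$; concretely it is the image of the $U$-submodule of $M$ generated by $v$ under a dualisation.

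\emph{Main obstacle.} The step I expect to be delicate is concluding from the previous paragraph that $C/N\in\mathcal{C}$. This requires $\mathcal{C}$ to be closed under subquotients, which is not among the closure properties (direct sums, tensor products, duals) listed above. I would first check whether the conventions of Appendix \ref{SecFinDuals} already guarantee it, as they do, for example, for categories of type 1 modules. Failing that, I would try to realise $C/N$ directly as a quotient of the cyclic module $U\triangleright v\subseteq M$ and, if necessary, add closure under subquotients as a standing assumption on $\mathcal{C}$.
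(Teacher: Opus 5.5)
Your argument coincides with the paper's wherever the paper actually argues. Well-definedness goes via the matrix coefficient $c^{C/N}_{\bar f,[1]}$, and the inverse is obtained by letting $b\in B$ descend along $U\to C$, because $b(xh)=\varepsilon(h)b(x)$. For the coideal-subalgebra assertion you give a correct direct proof, where the paper only cites \cite{MS99}. It uses the identity $h\triangleright(ab)=(h_{(1)}\triangleright a)(h_{(2)}\triangleright b)$ together with $\Delta(H)\subseteq H\otimes U$, and the identity $(\id\otimes(h\triangleright -))\circ\Delta=\Delta\circ(h\triangleright -)$. Your injectivity and multiplicativity arguments, which the paper omits, are also fine.

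The obstacle you flag is genuine, and it is a gap in the paper's proof, not in your reasoning. The paper's remark that the inverse ``is well-defined'' only checks that $b$ factors through $C$. It never checks that the resulting functional lies in $C^\circ_{\mathcal{C}}$.

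Your fallback is exactly what closes this, once the bookkeeping is fixed. The map $c\mapsto(y\mapsto \bar a(yc))$ lands in $U^\ast$, not $C^\ast$. Its image is spanned by the functionals $y\mapsto a(yx)=c^M_{g,xv}(y)$ for $x\in U$; you swapped the two variables. Equivalently, $x\otimes_H 1+N\mapsto xv+K$ with $K=\{w\in Uv\mid g(Uw)=0\}$ is a $U$-linear isomorphism $C/N\cong Uv/K$. It is well-defined because $a$ vanishes on $UH^+$. So $C/N$ is a subquotient of $M$, rather than a submodule of $M^\ast$. Closure of $\mathcal{C}$ under subquotients then gives $\bar a\in C^\circ_{\mathcal{C}}$. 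Surjectivity in turn shows that $C^\circ_{\mathcal{C}}=\iota^{-1}(B)$ is a subalgebra of $C^\ast$.

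The extra hypothesis cannot be dropped. Take $U=k[x]$ with $x$ primitive, $H=k$, and let $\mathcal{C}$ be the modules isomorphic to finite direct sums of tensor powers of the $2$-dimensional Jordan block $V_2$. This class contains $k$ and is closed under direct sums, tensor products and duals.
\begin{itemize}
\item Since $\mathrm{char}(k)\neq 2$, the functional $x^j\mapsto\delta_{j2}$ is a matrix coefficient of $V_2\otimes V_2$, so it lies in $A=B$.
\item Every object of $\mathcal{C}$ has $x$ nilpotent. Moreover $V_2^{\otimes m}$ is decomposable for $m\geq 2$, since $x$ acts on it with nilpotency index at most $m+1<2^m$.
\item So by Krull--Schmidt the only cyclic modules $k[x]/N$ in $\mathcal{C}$ are $0$, $k$ and $V_2$. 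Hence $C^\circ_{\mathcal{C}}$ consists of the functionals vanishing on $(x^2)$.
\end{itemize}
Thus $\iota$ is not onto, and $C^\circ_{\mathcal{C}}$ is not even closed under multiplication. In the paper's application, type 1 weight modules of $U_q(\mathfrak{g})$, subquotient closure does hold. So adding it as a standing assumption, as you propose, is the right fix and costs nothing downstream.
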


    \begin{proof}
        We only show the second assertion. The first one is well known (see for instance \cite[Theorem 2.2]{MS99}).
        To see that the map $\iota$ is well-defined, let $f \in C_\mathcal{C}^\circ$ and let $N\subseteq C$ be a $U$-submodule of finite codimension such that $f\vert_N = 0$ and $C/N \in \mathcal{C}$. Denote by $\overline{f}$ the induced linear map $C/N \rightarrow k$, then $\iota(f)$ can be written as the matrix coefficient $c_{\overline{f},1\otimes_H 1 + N}^{C/N}$, which shows that the image of $\iota$ lies in $A$. Next let $h \in H$ and $x\in U$ be arbitrary, then
        \begin{align*}
            (h\triangleright \iota(f))(x) &= (\iota(f)_{(2)}(h)\iota(f)_{(1)})(x) = \iota(f)(xh) = f(xh\otimes_H 1) = \varepsilon(h)\iota(f)(x)
        \end{align*}
        and thus even $\iota(f) \subseteq B$ for all $f \in C_\mathcal{C}^\circ$. The inverse is given by the map $B\rightarrow C_\mathcal{C}^\circ$ which sends an element $b\in B$ to the map $x\otimes_H 1\mapsto b(x)$. This is well-defined, since
        \[
            b(xh) = (h\triangleright b)(x) = \varepsilon(h)b(x)
        \]
        for all $b \in B, x\in U$ and $h \in H$.
    \end{proof}

    As in Section \ref{SecTakeuchi} we consider the quotient map $\pi_B \colon A\rightarrow A/B^+A$.

    \begin{theorem}
        \label{ThmMuellerSchneider}
        Assume that the antipode of $A$ is bijective. Then the following hold
        \begin{enumerate}[label = (\arabic*)]
            \item If $H$ is a sub bialgebra, then $AB^+ \subseteq B^+A$. In particular, $\pi_B(A)$ is a quotient Hopf algebra.
            \item Suppose that $H$ is \textit{$\mathcal{C}$-semisimple}, in the sense that the restriction of every $V \in \mathcal{C}$ is completely reducible as an $H$-module. Then $A$ is faithfully flat as a left- and right $B$-module.
        \end{enumerate}
    \end{theorem}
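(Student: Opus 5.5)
For (1), I would argue directly with the pairing between $A$ and $U$. Let $H$ be a sub bialgebra. An element $a\in A$ lies in $B$ exactly when $a(xh)=\varepsilon(h)a(x)$ for all $x\in U$ and $h\in H$, since $(h\triangleright a)(x)=a(xh)$.

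My plan is to show $AB^+\subseteq B^+A$ in two stages, the first being the inclusion $AB\subseteq BA$. I would use the bijective antipode to write $ab = b_{(1)}\big(S^{-1}(b_{(2)})\ldots\big)$-type expressions. More concretely, I would show that $S^{-1}(a_{(2)})\,b\,a_{(1)}$ lies in $B$, using that the $H$-invariance is tested on the right. Here the fact that $H$ is closed under the coproduct is used: for $h\in H$ we have $\Delta(h)\in H\otimes H$, so when we evaluate $S^{-1}(a_{(2)})ba_{(1)}$ on $xh$ and expand using the coproduct of $xh$, the $h$-legs hit $b$ and can be removed. The legs on $a_{(1)}$ and $S^{-1}(a_{(2)})$ then recombine to $\varepsilon$.

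Then $ab = a_{(1)}S^{-1}(a_{(3)})\,b\,a_{(2)}$ would give $AB\subseteq BA$ after reordering. To make this reordering work I would use the identity $ab=\big(a_{(3)}S^{-1}\ldots\big)$, which I would check carefully in the bijective-antipode setting. Finally I would apply $\varepsilon$ to pass to $B^+$: the $B$-factor obtained from $b\in B^+$ again has counit $\varepsilon(b)=0$. The statement that $\pi_B(A)$ is a quotient Hopf algebra then follows from Lemma \ref{LemABvsBA}, which the paper cites.

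For (2), I would follow the strategy of Müller–Schneider, invoking \cite{MS99} if possible, which the paper already cites for the coideal property. The key input is that every $V\in\mathcal{C}$ restricted to $H$ is semisimple. I would express $A=\bigoplus_{V}V^\ast\otimes V$ as a comodule, and $B$ as the sum of the $H$-invariant parts $V^\ast\otimes V^H$. Complete reducibility gives an $H$-stable complement $V=V^H\oplus V'$, hence a $B$-linear projection, and this should exhibit $A$ as a projective (or at least flat) $B$-module. Faithfulness would then come from the Galois-type criterion: $A\otimes_B A\cong A\otimes \pi_B(A)$, together with the existence of a $B$-linear retraction (a total-integral-like map) $A\to B$, giving faithful coflatness in Takeuchi's sense.

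The main obstacle is (2), in particular producing the retraction $A\to B$ that is simultaneously left and right $B$-linear and compatible with the coaction. I would first try to build it as the matrix-coefficient map induced by the $H$-equivariant projections $V\to V^H$. Symmetry using $S$ would then handle the opposite side.
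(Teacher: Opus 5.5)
\textbf{The gap is in part (1).} Your invariance computation showing $S^{-1}(a_{(2)})\,b\,a_{(1)}\in B$ is correct. The identity you then rely on, $ab=a_{(1)}S^{-1}(a_{(3)})\,b\,a_{(2)}$, is false. For a grouplike $a=g$ the right-hand side is $gg^{-1}bg=bg$, not $gb$. The identity that actually holds for your element is
\[
ba=a_{(3)}S^{-1}(a_{(2)})\,b\,a_{(1)},
\]
using $x_{(2)}S^{-1}(x_{(1)})=\varepsilon(x)1$. This gives $BA\subseteq AB$, hence $B^+A\subseteq AB^+$, which is the \emph{reverse} of the inclusion you need. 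It is exactly the computation the paper uses for the converse implication in the proof of Lemma \ref{LemABvsBA}. So as written, your argument proves the wrong inclusion.

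\textbf{The fix.} You need $B$ to be stable under the left adjoint action $a\otimes b\mapsto a_{(1)}bS(a_{(2)})$. You can get this in either of two ways.
\begin{enumerate}
\item Apply your own statement to $S(a)$ in place of $a$. Since $\Delta(S(a))=S(a_{(2)})\otimes S(a_{(1)})$, the element $S^{-1}(S(a)_{(2)})\,b\,S(a)_{(1)}$ equals $a_{(1)}bS(a_{(2)})$.
\item Redo the computation directly. The second leg of $\Delta(a_{(1)}bS(a_{(2)}))$ is $a_{(2)}b_{(2)}S(a_{(3)})$. Evaluate it on $h$ and use $h_{(2)}\in H$ to remove $b_{(2)}(h_{(2)})$. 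The remaining factor is $(a_{(2)}S(a_{(3)}))(h)=\varepsilon(a_{(2)})\varepsilon(h)$.
\end{enumerate}
Then write $ab=\bigl(a_{(1)}bS(a_{(2)})\bigr)a_{(3)}$. Since $\varepsilon(a_{(1)}bS(a_{(2)}))=\varepsilon(a)\varepsilon(b)$, this gives $AB^+\subseteq B^+A$ directly, with no intermediate step $AB\subseteq BA$. This is the paper's argument. Bijectivity of the antipode is not used here; it enters only through Lemma \ref{LemABvsBA} for the Hopf quotient.

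\textbf{Part (2).} Invoking \cite{MS99} is exactly what the paper does, so that part is fine. Your fallback sketch would not stand on its own, for three reasons:
\begin{enumerate}
\item The decomposition $A\cong\bigoplus_V V^\ast\otimes V$ requires $\mathcal{C}$ itself to be semisimple, which is not assumed.
\item The map $A\otimes_B A\to A\otimes\pi_B(A)$ is bijective for every left coideal subalgebra, so it carries no flatness information.
\item A $B$-linear retraction $A\to B$ gives faithfulness only once flatness is already known; it does not give flatness.
\end{enumerate}
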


    \begin{proof}
        \textit{(1)}: For any $a \in A$ and $b \in B$ we have
        \begin{align*}
            h\triangleright (a_{(1)}bS(a_{(2)})) &= a_{(1)(2)}(h_{(1)})b_{(2)}(h_{(2)})S(a_{(2)})_{(2)}(h_{(3)})a_{(1)(1)}b_{(1)}S(a_{(2)})_{(1)}\\
            &= a_{(2)}(h_{(1)})b_{(2)}(h_{(2)})S(a_{(3)})(h_{(3)})a_{(1)}b_{(1)}S(a_{(4)})\\
            &= (a_{(2)}S(a_{(3)}))(h)a_{(1)}bS(a_{(4)})\\
            &= \varepsilon(h)a_{(1)}bS(a_{(2)}).
        \end{align*}
        We see that $B$ is closed under the left adjoint action. In particular, if $b \in B^+$, then $ab = a_{(1)}bS(a_{(2)})a_{(3)} \in B^+A$, thus $AB^+ \subseteq B^+A$. The assertion that $\pi_B(A)$ is a quotient Hopf algebras now follows from \ref{LemABvsBA}.

        For \textit{(2)} see \cite[Theorem 2.2]{MS99}.
    \end{proof}

    We can summarize the previous result by saying that if the antipode of $A$ is bijective and $H$ is a sub bialgebra which is $\mathcal{C}$-semisimple, then the finitary dual $C_\mathcal{C}^\circ$ is a principal quantum homogeneous space.

    \begin{remark}
        The condition that the antipode of $A$ is bijective is for instance implied if the antipode of $U$ is bijective, and the tensor category $\mathcal{C}$ is closed under duals with respect to $S^{-1}$ (see Section \ref{SecFinDuals}).
    \end{remark}

    \subsection{Admissible pairings}

    We let $U, H, C, A, B$ and $\pi_B$ be as in the previous section. In this section, we discuss pairings between objects $\mathcal{F} \in {}_B^A\mathcal{M}_B$ and $M\in {}_U^C\mathcal{M}^C$, as well as between objects $V\in {}^{\pi_B}\mathcal{M}_B$ and $T\in {}_H\mathcal{M}^C$, that are compatible with the various (co)module structures as described in the following. 

    \begin{definition}
        \label{DefAdmissiblePairings}
        \begin{enumerate}[label = (\arabic*)]
            \item Let $V \in {}^{\pi_B}\mathcal{M}_B$ and $T\in {}_H\mathcal{M}^C$. A pairing $\langle-,-\rangle\colon V\times T \rightarrow k$ is called \textit{admissible} if
            \begin{equation}\label{eq:AdmissibleI}
                \langle v, ht\rangle = v_{(-1)}(h)\langle v_{(0)},t\rangle, \quad \langle vb,t\rangle = \langle v, t_{(0)}\rangle b(t_{(1)})
            \end{equation}
            for all $v \in V, t\in T, h\in H$ and $b \in B$.
            \item Let $\mathcal{F} \in {}_B^{A}\mathcal{M}_B$ and $M\in {}_U^C\mathcal{M}^C$. Then a pairing $\langle -,-\rangle\colon \mathcal{F}\times M \rightarrow k$ is called \textit{admissible} if
            \begin{equation}\label{eq:AdmissibleII}
                \begin{gathered}
                \langle bf, m\rangle = b(m_{(-1)})\langle f,m_{(0)}\rangle, \quad \langle fb,m\rangle = \langle f, m_{(0)}\rangle b(m_{(1)})\\
                \langle f, Xm\rangle = f_{(-1)}(X)\langle f_{(0)}, m\rangle
                \end{gathered}
            \end{equation}
            for all $f \in \mathcal{F}, b \in B, m \in M$ and $X \in U$.
        \end{enumerate}
    \end{definition}

    In the definition above, we make sense of and expression like $b(c)$ for $b \in B$ and $c\in C$, using the identification $B\cong C_\mathcal{C}^\circ$ of \ref{LemFinDualofC}, and an expression $\pi_B(a)(h)$ is understood as $a(h)$ for $a \in A$ and $h\in H$. The value of $a(h)$ only depends on $\pi_B(a)$, since
    \[
        b(h) = b(1h) = (h\triangleright b)(1) = \varepsilon(h)b(1) = 0            
    \]
    for all $b \in B^+$.

    \begin{proposition}
        \label{PropTranslAdmissiblePairings}
        Let $\mathcal{F} \in {}_B^A\mathcal{M}_B, M \in {}_U^C\mathcal{M}^C$, and write $V := \mathcal{F}/B^+\mathcal{F}, T := \coinv{C}M$. Then for every admissible pairing $\langle -,-\rangle\colon \mathcal{F}\times M\rightarrow k$, the induced pairing $\langle -, -\rangle^\flat \colon V\times T \rightarrow k$ given by $\langle [f], t\rangle^\flat = \langle f, t \rangle$ is admissible. Conversely, any admissible pairing $\langle-,-\rangle\colon V\times T\rightarrow k$ between arbitrary objects $V \in {}^{\pi_B}\mathcal{M}_B$ and $T \in {}_H\mathcal{M}^C$ extends to an admissible pairing
        \[
            \langle-,-\rangle^\sharp\colon A\cotimes{\pi_B }V\times U\otimes_HT \rightarrow k, \quad \langle a\cotimes{\pi_B} v, x\otimes_H t\rangle^\sharp = a(x)\langle v,t\rangle.
        \]
        If the antipode of $A$ is bijective, $A$ is faithfully flat as a right $B$-module and $U$ is faithfully flat as a left- and right $H$-module, then $\langle-,-\rangle \mapsto \langle-,-\rangle^\flat$ is a bijection from the set of admissible pairings between $\mathcal{F}$ and $M$ and the set of admissible pairings between $V$ and $T$.
    \end{proposition}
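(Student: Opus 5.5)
The proof has three parts: restriction, extension, and bijectivity.

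\emph{Restriction.} Given an admissible pairing on $\mathcal{F}\times M$, I first check that $\langle -,-\rangle^\flat$ is well defined, i.e.\ that $\langle B^+\mathcal{F}, T\rangle = 0$. For $b\in B^+$ and $t\in T=\coinv{C}M$, the first identity in \eqref{eq:AdmissibleII} gives
\[
\langle bf,t\rangle = b(t_{(-1)})\langle f,t_{(0)}\rangle = b([1])\langle f,t\rangle .
\]
Under Lemma \ref{LemFinDualofC}, $b([1]) = b(1)=\varepsilon(b)=0$. The right $B$-condition of \eqref{eq:AdmissibleI} is then exactly the second identity of \eqref{eq:AdmissibleII}. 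The $H$-condition follows from the third identity with $X=h$: we get $f_{(-1)}(h)\langle f_{(0)},t\rangle$, and $f_{(-1)}(h)$ depends only on $\pi_B(f_{(-1)})$, which is the coaction on $[f]$.

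\emph{Extension.} Conversely, I must check that $\langle-,-\rangle^\sharp$ is well defined on $U\otimes_H T$. Take $\sum_i a_i\cotimes{\pi_B}v_i\in A\cotimes{\pi_B}V$. Then
\[
\sum_i a_i(xh)\langle v_i,t\rangle = \sum_i a_{i(1)}(x)\,a_{i(2)}(h)\langle v_i,t\rangle .
\]
The cotensor condition lets me replace $a_{i(2)}(h)\otimes v_i$ by $v_{i(-1)}(h)\otimes v_{i(0)}$, and the $H$-condition of \eqref{eq:AdmissibleI} then turns the right-hand side into $\sum_i a_i(x)\langle v_i,ht\rangle$.

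The three identities of \eqref{eq:AdmissibleII} are routine checks.
\begin{itemize}
\item The $U$-action identity is $a(Xx)=a_{(1)}(X)a_{(2)}(x)$.
\item The left $B$-action identity comes from the left $C$-coaction on $U\otimes_H T$, which is $x\otimes_H t\mapsto [x_{(1)}]\otimes x_{(2)}\otimes_H t$, together with the identification of $B$ with functionals on $C$.
\item The right $B$-action identity uses $(a\cotimes{\pi_B}v)b = ab_{(1)}\cotimes{\pi_B} vb_{(2)}$, the formula $x_{(1)}\otimes_H t_{(0)}\otimes x_{(2)}t_{(1)}$ for the right coaction, the right condition of \eqref{eq:AdmissibleI}, and the fact that $b_{(2)}$ acts on $C$ via the module coalgebra structure.
\end{itemize}

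\emph{Bijectivity.} Under the flatness hypotheses, Theorem \ref{ThmTakeuchi} and Theorem \ref{ThmSchneiderVariationI} identify $\mathcal{F}\cong A\cotimes{\pi_B}V$ via $\eta_{\mathcal{F}}$ and $M\cong U\otimes_H T$ via $\varepsilon_M$. I therefore need two facts.
\begin{enumerate}
\item $(\langle-,-\rangle^\sharp)^\flat = \langle-,-\rangle$. This is immediate: $\eta$ sends $[f]$ to $f_{(-1)}\cotimes{\pi_B}[f_{(0)}]$, and pairing against $1\otimes_H t$ gives $\varepsilon(f_{(-1)})\langle [f_{(0)}],t\rangle$.
\item Any admissible pairing on $\mathcal{F}\times M$ is determined by its restriction. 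Since $M=U\cdot T$, the third identity gives
\[
\langle f, xt\rangle = f_{(-1)}(x)\langle f_{(0)},t\rangle = f_{(-1)}(x)\langle [f_{(0)}],t\rangle^\flat ,
\]
which is precisely $\langle \eta(f), x\otimes_H t\rangle^\sharp$ computed for $\langle-,-\rangle^\flat$.
\end{enumerate}
Together these give injectivity and surjectivity. I expect the main obstacle to be the bookkeeping in the right $B$-action check for $\sharp$, where the identification $B\cong C^\circ_{\mathcal{C}}$ and the module coalgebra structure on $C$ must interact correctly.
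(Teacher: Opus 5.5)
Your proposal is correct and follows the paper's proof essentially step by step: the same vanishing $\langle b^+f,t\rangle = b^+([1])\langle f,t\rangle = 0$ for well-definedness of $\langle-,-\rangle^\flat$, the same cotensor-condition computation showing $\langle-,-\rangle^\sharp$ is balanced over $H$, and the same bijectivity argument via $\mathcal{F}\cong A\cotimes{\pi_B}V$, $M\cong U\otimes_H T$ and the observation that admissible pairings are determined by their restriction to $\mathcal{F}\times T$. You also carry out the admissibility checks for $\langle-,-\rangle^\sharp$, which the paper leaves to the reader, and your only slip is minor: $\eta_{\mathcal{F}}$ is applied to $f$, not to $[f]$.
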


    \begin{proof}
        Let $\langle -,-\rangle\colon \mathcal{F}\times M\rightarrow k$ be an admissible pairing. Let $f \in \mathcal{F}, t \in T$ and $b \in B$, then
        \begin{align*}
            \langle b^+f, t \rangle = b^+(t_{(-1)})\langle f, t_{(0)}\rangle = b^+([1])\langle f, t\rangle = 0.
        \end{align*}
        Thus, the pairing $\langle -, -\rangle^\flat$ is well-defined. Admissibility follows directly from the admissibility of $\langle -,-\rangle$.
        
        For the other direction, we start with a pairing $\langle -, -\rangle\colon V\times T \rightarrow k$, then its extension is well-defined since
        \begin{align*}
            \langle a\cotimes{\pi_B} v, Xh\otimes t\rangle &= a(Xh)\langle v, t\rangle = a_{(1)}(X)a_{(2)}(h)\langle t,v\rangle = a_{(1)}(X) [a_{(2)}](h)\langle v,t\rangle\\
            &= a(X)v_{(-1)}(h)\langle v_{(0)},t \rangle = a(X)\langle v, ht\rangle = \langle a\cotimes{\pi_B}v,X\otimes ht\rangle.
        \end{align*}
        For all $a\cotimes{\pi_B}v \in A\cotimes{\pi_B}V, X \in U, h \in H$ and $t \in T$. One then checks that $\langle-,-\rangle^\sharp$ is admissible.
        
        For the last assertion, we can use that under our assumptions $\mathcal{F} \cong A\cotimes{\pi_B}V$ and $M \cong U\otimes_H T$ (see Theorem \ref{ThmSchneiderVariationI} and Theorem \ref{ThmTakeuchi}). Thus, the extension $\langle-,-\rangle^\sharp$ of an admissible pairing $\langle-,-\rangle$ induces an admissible pairing between $\mathcal{F}$ and $M$. The resulting mapping is inverse to $\langle-,-\rangle \mapsto \langle-,-\rangle^\flat$, arguing as follows: If we start with a pairing $\langle -,-\rangle$ between $\mathcal{F}$ and $M$, then we have for all $f \in \mathcal{F}$ and $t\in T$
        \begin{align*}
            \langle f, t\rangle^{\flat, \sharp} &= \langle f_{(-1)}\cotimes{\pi_B} [f_{(0)}], 1\otimes_H t\rangle^{\flat,\sharp} = f_{(-1)}(1)\langle [f_{(0)}], t\rangle^\flat = \langle [f], t\rangle^\flat = \langle f, t\rangle
        \end{align*}
        which implies that $\langle-,-\rangle^{\flat, \sharp} = \langle-,-\rangle$, since admissible pairings are determined by their restriction $\mathcal{F}\times T\rightarrow k$. If we start with a pairing $\langle-,-\rangle^\flat\colon V\times T\rightarrow k$, then using the same computation, its extension satisfies
        \begin{align*}
            \langle f, t\rangle^\sharp = \langle [f], t\rangle  
        \end{align*}
        for all $f \in \mathcal{F}$ and $t\in T$. Thus, the correspondence is bijective.
    \end{proof}

    For a pairing between vector spaces $\langle-,-\rangle\colon V\times T\rightarrow k$ we write
    \[
        W^\perp := \{t \in T \mid \langle w,t \rangle = 0 \text{ for all } w\in W\}
    \]
    for a subset $W\subseteq V$ and similarly
    \[
        S^\perp = \{v \in V\mid \langle v,s\rangle = 0 \text{ for all } s \in S\}
    \]
    for a subset $S\subseteq T$.

    \begin{lemma}
        \label{LemStabilityUnderOrthogonalComplements}
        Let $\mathcal{F} \in {}_B^A\mathcal{M}_B$ and $M\in {}_U^C\mathcal{M}^C$, and let $\langle-,-\rangle\colon \mathcal{F}\times M \rightarrow k$ be an admissible pairing. Then for any subobject $N \subseteq M$ in ${}_U^C\mathcal{M}^C$, the orthogonal complement $N^\perp$ is a subobject of $\mathcal{F}$ in ${}_B^A\mathcal{M}_B$. Moreover, if $B$ separates the elements of $C$, then also for every subobject $\mathcal{E} \subseteq \mathcal{F}$, the orthogonal complement $\mathcal{E}^\perp$ is a subobject of $M$.
    \end{lemma}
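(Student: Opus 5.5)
The plan is to verify the closure properties of $N^\perp$ directly from the three identities in \eqref{eq:AdmissibleII}. For each property we pair against elements of $N$ and use that $N$ is stable under the relevant structure.

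\emph{Left $B$-stability of $N^\perp$.} For $f\in N^\perp$, $b\in B$ and $n\in N$ we have $\langle bf,n\rangle = b(n_{(-1)})\langle f,n_{(0)}\rangle$. Since $N$ is a left subcomodule, ${}_M\Delta(n)\in C\otimes N$. We expand $n_{(-1)}\otimes n_{(0)}=\sum_i c_i\otimes n_i$ with $n_i\in N$, and every term vanishes. Right $B$-stability is identical, using $\Delta_M(N)\subseteq N\otimes C$.

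\emph{Left $A$-subcomodule.} Take $f\in N^\perp$ and pick a basis-type expansion ${}_{\mathcal F}\Delta(f)=\sum_j a_j\otimes f_j$ with the $a_j$ linearly independent. For $X\in U$ and $n\in N$ we get $0=\langle f,Xn\rangle=\sum_j a_j(X)\langle f_j,n\rangle$, because $N$ is a $U$-submodule. Fixing $n$, this says that $\sum_j\langle f_j,n\rangle a_j$ vanishes on all of $U$. Since $A\subseteq U^\ast$ and the $a_j$ are linearly independent, each $\langle f_j,n\rangle=0$. Hence $f_j\in N^\perp$ and ${}_{\mathcal F}\Delta(N^\perp)\subseteq A\otimes N^\perp$, so $N^\perp$ is a subobject in ${}_B^A\mathcal{M}_B$.

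\emph{The converse direction.} Let $\mathcal E\subseteq\mathcal F$ be a subobject and assume $B$ separates points of $C$. $U$-stability of $\mathcal E^\perp$ is easy: for $m\in\mathcal E^\perp$, $X\in U$ and $e\in\mathcal E$, we have $\langle e,Xm\rangle=e_{(-1)}(X)\langle e_{(0)},m\rangle=0$, since $e_{(0)}\in\mathcal E$. For the left $C$-coaction, write ${}_M\Delta(m)=\sum_i c_i\otimes m_i$ with the $c_i$ linearly independent. Then for $b\in B$ and $e\in\mathcal E$ we have $0=\langle be,m\rangle=\sum_i b(c_i)\langle e,m_i\rangle$, so the element $\sum_i\langle e,m_i\rangle c_i\in C$ is annihilated by all of $B$. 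Separation is exactly what forces this element to vanish. Linear independence of the $c_i$ then gives $\langle e,m_i\rangle=0$, so each $m_i\in\mathcal E^\perp$. The right coaction is handled the same way using $\langle eb,m\rangle$.

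The only genuinely delicate point is this last use of the separation hypothesis. I will read ``$B$ separates the elements of $C$'' as saying that the pairing $B\times C\to k$ is nondegenerate in $C$, i.e.\ $\bigcap_{b\in B}\ker b=0$, and the argument reduces precisely to that. In the first part, the analogous role is played by the fact that $A$ consists of functionals on $U$, which holds automatically, so no hypothesis is needed there.
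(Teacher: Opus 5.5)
Your proof is correct and follows essentially the same route as the paper. Both check each closure property directly from the admissibility identities \eqref{eq:AdmissibleII}, and both use the separation hypothesis only to pass from ``$b(m_{(1)})m_{(0)}\in\mathcal{E}^\perp$ for all $b\in B$'' to $\Delta_M(m)\in\mathcal{E}^\perp\otimes C$. Your explicit linear-independence arguments, in particular the use of $A\subseteq U^\ast$ to get ${}_{\mathcal F}\Delta(N^\perp)\subseteq A\otimes N^\perp$, spell out steps the paper leaves implicit.
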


    \begin{proof}
        For the first assertion let $f \in N^\perp, b \in B$ and $n \in N$, then
        \begin{align*}
            \langle fb, n\rangle = \langle f,n_{(0)}\rangle b(n_{(1)}) = 0
        \end{align*}
        since $n_{(0)}\otimes n_{(1)} \in N\otimes C$, thus $fb \in N^\perp$. Similarly, $bf \in N^\perp$. Also for any $f \in N^\perp, x \in U$ and $n\in N$
        \begin{align}
            \langle f_{(-1)}(x)f_{(0)}, n\rangle = \langle f, xn\rangle = 0 
        \end{align}
        since $xn \in N$, thus $f_{(-1)}\otimes f_{(0)} \in A\otimes N^\perp$. For the second assertion let $m\in \mathcal{E}^\perp, b \in B$ and $e\in \mathcal{E}$, then
        \[
            \langle e, b(m_{(1)})m_{(0)}\rangle = \langle eb,m\rangle = 0
        \]
        since $eb \in \mathcal{E}$. It follows that $b(m_{(1)})m_{(0)}\in \mathcal{E}^\perp$ for all $b \in B$, since $B$ separates the elements of $C$, we have $m_{(0)}\otimes m_{(1)}\in \mathcal{E}^\perp\otimes C$. By the same argument, we also have $m_{(-1)}\otimes m_{(0)}\in C\otimes \mathcal{E}^\perp$. Lastly, if $x \in U$, then for all $e\in \mathcal{E}$
        \[
            \langle e, xm\rangle = e_{(-1)}(x)\langle e_{(0)},m\rangle = 0
        \]
        since $e_{(-1)}\otimes e_{(0)}\in A\otimes \mathcal{E}$, and thus also $xm \in \mathcal{E}^\perp$.
    \end{proof}

    \begin{lemma}
        \label{LemOrthogonalComplements}
        Let $V \in {}^{\pi_B}\mathcal{M}_B, T\in {}_H\mathcal{M}^C$ and let $\langle-,-\rangle^\flat\colon W\times T \rightarrow k$ be an admissible pairing. Then the following assertions for the extended pairing
        \[
            \langle -,-\rangle\colon(A\cotimes{\pi_B}V)\times (U\otimes_H T) \rightarrow k
        \]
        (see Proposition \ref{PropTranslAdmissiblePairings}) hold.
        \begin{enumerate}[label = (\arabic*)]
            \item For any subobject $S\subseteq T$ in ${}_H\mathcal{M}^C$, one has $(U\otimes_H S)^\perp = A\cotimes{\pi_B} S^\perp$.
            \item Assume that the antipode of $A$ is bijective, that $A$ is faithfully flat as a left $B$-module, and that $U$ is faithfully flat as a left- and right $H$-module. Further assume that $B$ separates the elements of $C$. Let $W\subseteq V$ be a subobject in ${}^{\pi_B}\mathcal{M}_B$. Using Lemma \ref{LemStabilityUnderOrthogonalComplements} and Theorem \ref{ThmSchneiderVariationI} we can write $(A\cotimes{\pi_B} W)^\perp = U\otimes_H S$ for some subobject $S\subseteq T$ in ${}_H\mathcal{M}^C$. If $W^{\perp\perp} = W$ and $S^{\perp\perp} = S$ (note that this is automatic if $V$ and $T$ are finite dimensional and $\langle-,-\rangle^\flat$ is non-degenerate), then $(A\cotimes{\pi_B} W)^\perp = U\otimes_H W^\perp$.
        \end{enumerate}
    \end{lemma}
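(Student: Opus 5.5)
The plan is to prove (1) by a direct double inclusion, computing the extended pairing on elementary tensors, and to deduce (2) from (1) together with the bijectivity of $\perp$ on closed subobjects.

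\textbf{Part (1).} The pairing is $\langle a\cotimes{\pi_B}v, x\otimes_H t\rangle = a(x)\langle v,t\rangle$.

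For ``$\supseteq$'', take $\sum_i a_i\cotimes{\pi_B}v_i\in A\cotimes{\pi_B}S^\perp$ with $v_i \in S^\perp$. Pairing it against $x\otimes_H s$ with $s\in S$ gives $\sum_i a_i(x)\langle v_i,s\rangle = 0$.

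For ``$\subseteq$'', take $u=\sum_i a_i\otimes v_i \in (U\otimes_H S)^\perp$ with the $a_i$ linearly independent in $A$. For every $x\in U$ and $s\in S$ we get $\sum_i a_i(x)\langle v_i,s\rangle=0$, that is, the functional $\sum_i \langle v_i,s\rangle a_i$ vanishes on $U$. Since $A\subseteq U^\ast$, this gives $\langle v_i,s\rangle=0$ for every $i$ and every $s$. Hence $u\in (A\otimes S^\perp)\cap (A\cotimes{\pi_B}V)$.

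This intersection equals $A\cotimes{\pi_B}S^\perp$, because the cotensor product is a kernel and $S^\perp\subseteq V$ is a subcomodule. The subcomodule property follows from Lemma \ref{LemStabilityUnderOrthogonalComplements} applied on the level of $V,T$, or directly from admissibility: $\langle v_{(0)},s\rangle v_{(-1)}(h) = \langle v,hs\rangle$ and $S$ is $H$-stable. A small point to check is that the definition of $\perp$ agrees with the one on $A\otimes V$; this holds because the pairing is defined on all of $A\otimes V$ by the same formula.

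\textbf{Part (2).} By Lemma \ref{LemStabilityUnderOrthogonalComplements}, $(A\cotimes{\pi_B}W)^\perp$ is a subobject of $U\otimes_H T$, so by Theorem \ref{ThmSchneiderVariationI} it has the form $U\otimes_H S$ with $S = \coinv{C}\bigl((A\cotimes{\pi_B}W)^\perp\bigr)\subseteq T$. Applying (1) gives $(U\otimes_H S)^\perp = A\cotimes{\pi_B}S^\perp$.

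\emph{Identifying $S$.} An element $t\in T$ lies in $S$ if and only if $1\otimes_H t$ is orthogonal to $A\cotimes{\pi_B}W$. Pairing with elements of the form $\eta(f)=f_{(-1)}\cotimes{\pi_B}[f_{(0)}]$ gives $\langle [f],t\rangle$. Since $\Psi$ is an equivalence, every element of $A\cotimes{\pi_B}W$ is such an image with $[f]$ ranging over $W$. Therefore $S=W^\perp$.

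\emph{Conclusion.} Since $S = W^\perp$, we already have $(A\cotimes{\pi_B}W)^\perp = U\otimes_H S = U\otimes_H W^\perp$, which is the claim. As a consistency check, $(U\otimes_H W^\perp)^\perp = A\cotimes{\pi_B}W^{\perp\perp} = A\cotimes{\pi_B}W$ by (1) and the hypothesis $W^{\perp\perp}=W$. The hypothesis $S^{\perp\perp}=S$ is then only needed if one argues instead via the double-perp route, that is, showing $S^\perp=W$ first and then deducing $S=W^\perp$.

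\textbf{Main obstacle.} The key step is $S=W^\perp$. It requires the identification $A\cotimes{\pi_B}W\cong \Psi(W)$ and the counit formula from Theorem \ref{ThmTakeuchi}, which is where the faithful flatness and bijective antipode assumptions enter.
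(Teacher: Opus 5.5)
Your part (1) is the paper's argument. Observing that $\sum_i\langle v_i,s\rangle a_i$ vanishes on $U$, and hence is $0$ in $A\subseteq U^\ast$, is exactly what the paper's choice of $X_j$ with $a_i(X_j)=\delta_{ij}$ encodes. Two of your asides there are slightly off, but neither affects the argument:
\begin{enumerate}
\item The formula $a(x)\langle v,t\rangle$ is not $H$-balanced on all of $A\otimes V$. The well-definedness check in Proposition~\ref{PropTranslAdmissiblePairings} uses the cotensor identity, so the formula only descends to $U\otimes_H T$ on $A\cotimes{\pi_B}V$. That is all you actually use.
\item Deriving the subcomodule property of $S^\perp$ from admissibility would require $H$ to separate the elements of $\pi_B(A)$, which is not assumed. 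The paper simply treats $A\cotimes{\pi_B}S^\perp$ as meaningful. In any case, the cotensor identity for $u$ together with the linear independence of the $a_i$ already gives $v_{i(-1)}\otimes v_{i(0)}\in\pi_B(A)\otimes S^\perp$.
\end{enumerate}

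Your part (2) takes a genuinely different and shorter route, and it is correct.

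\emph{The paper's route.} It argues formally, using (1) twice. From $A\cotimes{\pi_B}W\subseteq(A\cotimes{\pi_B}W)^{\perp\perp}=(U\otimes_H S)^\perp=A\cotimes{\pi_B}S^\perp$ it gets $W\subseteq S^\perp$. From $U\otimes_H W^\perp\subseteq U\otimes_H S$ it gets $A\cotimes{\pi_B}S^\perp\subseteq A\cotimes{\pi_B}W^{\perp\perp}=A\cotimes{\pi_B}W$. Hence $S^\perp=W$, and finally $S=S^{\perp\perp}=W^\perp$. Both closure hypotheses are used.

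\emph{Your route.} You identify $S$ directly through coinvariants: $t\in S$ if and only if $1\otimes_H t\perp A\cotimes{\pi_B}W$. Since $\langle u,1\otimes_H t\rangle=\langle(\varepsilon\otimes\id)(u),t\rangle^\flat$ and $(\varepsilon\otimes\id)(A\cotimes{\pi_B}W)=W$ by surjectivity of the Takeuchi counit $\varepsilon_W$, you get $S=W^\perp$ at once. This is the clean form of your sentence about $\eta(f)$, which is somewhat garbled as written.

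\emph{What each route buys.} Your argument shows that the hypotheses $W^{\perp\perp}=W$ and $S^{\perp\perp}=S$ are superfluous. Only the two categorical equivalences and the separation assumption feeding Lemma~\ref{LemStabilityUnderOrthogonalComplements} are needed. The paper's steps ``$A\cotimes{\pi_B}W\subseteq A\cotimes{\pi_B}S^\perp$ implies $W\subseteq S^\perp$'' and its converse tacitly rely on this same surjectivity of $\varepsilon\otimes\id$. Your route therefore makes the real ingredient explicit. The paper's Galois-connection manipulation is more symmetric, but it pays for that with the closure hypotheses. These are harmless in the finite-dimensional, non-degenerate application in Theorem~\ref{ThmCodiffProlongationisMaximalProlongation}.
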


    \begin{proof}
        \textit{(1)}: The inclusion $A\cotimes{\pi_B} S^\perp \subseteq (U\otimes_H S)^\perp$ follows by direct computation. For the other inclusion let $a\cotimes{\pi_B} v \in (U\otimes_H S)^\perp$ and write
        \[
            a\cotimes{\pi_B} v = \sum_{i=1}^Na_i\otimes v_i
        \]
        such that $a_1, \dots, a_N$ is linear independent. This lets us choose $X_1, \dots, X_n \in U$ such that $a_i(X_j) = \delta_{ij}$ for all $1\le i,j\le N$. Now for all $1\le i \le N$ and $s \in S$
        \begin{align*}
            \langle v_i, s\rangle^\flat &= \sum_{j=1}^Na_j(X_i)\langle v_j, s\rangle = \langle a\cotimes{\pi_B} v, X_i\otimes_H s\rangle = 0.
        \end{align*}
        That is $v_i \in S^\perp$ for all $1\le i \le N$, and therefore $a\cotimes{\pi_B} v \in A\cotimes{\pi_B}S^\perp$.

        \textit{(2)}: On the one hand we have
        \[
            A\cotimes{\pi_B}W \subseteq (A\cotimes{\pi_B}W)^{\perp\perp} = (U\otimes_H S)^\perp\overset{(1)}{=} A\cotimes{\pi_B} S^\perp.
        \]
        Therefore, we have $W \subseteq S^\perp$. On the other hand one has $U\otimes_H W^\perp \subseteq (A\cotimes{\pi_B}W)^\perp = U\otimes_H S$ by direct computation. Again using \textit{(1)} this gives 
        \[
            A\cotimes{\pi_B} S^\perp = (U\otimes_H S)^\perp \subseteq (U\otimes_H W^\perp)^\perp = A\cotimes{\pi_B} W^{\perp\perp} = A\cotimes{\pi_B}W.
        \]
        Thus, also $S^\perp \subseteq W$, and it follows that $W^\perp = S^{\perp\perp} = S$.
    \end{proof}

    \section{Equivariant codifferential calculi} \label{SecEquivariantCocalculi}

    In this section, we consider codifferential calculi, that are equipped with a compatible left action of a Hopf algebra. Thus, we fix a Hopf algebra $U$ and a left $U$-module coalgebra $C$ over $U$.
    
    \subsection{First- and higher-order equivariant codifferential calculi}

    We start with basic definitions.

    \begin{definition}
        \begin{enumerate}[label = (\arabic*)]
            \item A first-order codifferential calculus $(\mathscr{W}_1, \delta)$ over $C$ is defined to be \textit{$U$-equivariant} if $\mathscr{W}_1\in {}_U^C\mathcal{M}^C$ and $\delta$ is $U$-linear.
            \item A (higher-order) codifferential calculus $(\mathscr{W}_\bullet, \delta)$ over $C$ is called \textit{$U$-equivariant} if for each $n\in \Z_{\ge 0}$ the component $\mathscr{W}_n$ is an object in ${}_U^C\mathcal{M}^C$, and the coproduct as well as the differential $\delta$ are $U$-linear.
        \end{enumerate}
    \end{definition}

    The easiest example of an equivariant codifferential calculus is given by the universal one.

    \begin{example}
        \label{ExplUniversalFOCCEquivariant}
        Let $\mathscr{W}_1^u$ be the universal first-order codifferential calculus. Since $C$ is a $U$-module coalgebra, the image of the comultiplication is a $U$-submodule of $C\otimes C$, thus $\mathscr{W}_1^u$ is a quotient $U$-module of $C\otimes C$. Moreover, one can check that the $U$-action is compatible with the $C$-coactions and that $\delta^u$ is $U$-linear. Thus, the universal first-order codifferential calculus is $U$-equivariant. This equivariance extends to the higher-orders. Since ${}_U^C\mathcal{M}^C$ is a monoidal category with monoidal structure as in the discussion above \ref{PropMonEquivalenceSupZero} it follows that $(\mathscr{W}_n^u)^\mathrm{max} = (\mathscr{W}_1^u)^{\cotimes{C} n} \in {}_U^C\mathcal{M}^C$ for all $n\ge 0$. Moreover, recall that in the case of the universal codifferential the degree two differential is given by
        \[
            \delta_2^u\colon \mathscr{W}_1^u\cotimes{C}\mathscr{W}_1^u \rightarrow \mathscr{W}_1^u, \quad w_1\cotimes{C}w_2 \mapsto [\delta^u(w_1)\otimes \delta^u(w_2)]
        \]
        which is a $U$-linear map. It follows that also for every $n\ge 2$ the differential $\delta_n^u$ is $U$-linear since it is given by the formula \eqref{eq:MaximalProlongationDifferential}. Since the coproduct is given by deconcatenation, it is $U$-linear as well, therefore the maximal prolongation of the universal first-order codifferential calculus is $U$-equivariant.
    \end{example}

    It follows directly from the definition, that for any $U$-equivariant codifferential calculus $(\mathscr{W}_\bullet, \delta)$ over $C$ the associated first-order codifferential calculus $\underline{\mathscr{W}_1}(\mathscr{W}_\bullet)$ is $U$-equivariant. The converse also holds.

    \begin{proposition}
        \label{PropEquivarianceOfProlongation}
        Let $(\mathscr{W}_1,\delta)$ be a $U$-equivariant first-order codifferential calculus. Then its maximal prolongation is also $U$-equivariant.
    \end{proposition}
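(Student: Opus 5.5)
The plan is to show that each $\mathscr{W}_n^\mathrm{max}$ is a sub-object of $\mathscr{W}_1^{\cotimes{C} n}$ in the monoidal category ${}_U^C\mathcal{M}^C$. Once this is established, the remaining requirements follow from structure already in place. The coproduct on $T_C^c(\mathscr{W}_1)$ is deconcatenation and therefore $U$-linear. The differentials $\delta_n$ are assembled from $\delta_2$ by the formula \eqref{eq:MaximalProlongationDifferential}, so they are $U$-linear as soon as $\delta_2$ is. This mirrors Example \ref{ExplUniversalFOCCEquivariant}. As explained before Proposition \ref{PropMonEquivalenceSupZero}, the $U$-action on $\mathscr{W}_1\otimes\mathscr{W}_1$ restricts to the cotensor product, so $\mathscr{W}_1^{\cotimes{C} n}\in {}_U^C\mathcal{M}^C$.

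The first step is to check that the canonical map $\varphi\colon\mathscr{W}_1\to\mathscr{W}_1^u$, $w\mapsto[w_{(-1)}\otimes\delta(w_{(0)})]$, is $U$-linear. This follows because the left coaction is $U$-colinear in the module-coalgebra sense, $\delta$ is $U$-linear, and the action on $\mathscr{W}_1^u$ is induced by the diagonal action on $C\otimes C$ (Example \ref{ExplUniversalFOCCEquivariant}). Likewise, $\check{\delta}_2\colon w_1\cotimes{C}w_2\mapsto[\delta(w_1)\otimes\delta(w_2)]$ is $U$-linear, since $x(w_1\cotimes{C}w_2)=x_{(1)}w_1\cotimes{C}x_{(2)}w_2$.

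The key step, and the one I expect to be the main obstacle, is showing that $\mathscr{W}_2^\mathrm{max}$ is $U$-stable. It is defined as the largest sub-$C$-bicomodule $N\subseteq \mathscr{W}_1\cotimes{C}\mathscr{W}_1$ with $\check{\delta}_2(N)\subseteq\varphi(\mathscr{W}_1)$. I would take $N=\mathscr{W}_2^\mathrm{max}$ and consider $UN$. Since $\check{\delta}_2$ and $\varphi$ are $U$-linear, $\check{\delta}_2(UN)=U\check{\delta}_2(N)\subseteq U\varphi(\mathscr{W}_1)=\varphi(\mathscr{W}_1)$. The compatibility ${}_M\Delta(xm)=x_{(1)}m_{(-1)}\otimes x_{(2)}m_{(0)}$, together with the fact that $C$ is a $U$-module, gives ${}_M\Delta(UN)\subseteq C\otimes UN$, and similarly on the right. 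So $UN$ is again a sub-bicomodule satisfying the defining condition, and maximality gives $UN\subseteq N$. This requires that the largest such sub-bicomodule exists, i.e.\ that sums of admissible sub-bicomodules are admissible; that holds because the condition is linear.

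$U$-linearity of $\delta_2$ then follows from $\varphi\circ\delta_2=\check{\delta}_2|_{\mathscr{W}_2^\mathrm{max}}$ and injectivity of $\varphi$: $\varphi(\delta_2(xw))=x\check{\delta}_2(w)=\varphi(x\delta_2(w))$. For $n\ge3$, each term $\mathscr{W}_1^{\cotimes{C} i}\cotimes{C}\mathscr{W}_2^\mathrm{max}\cotimes{C}\mathscr{W}_1^{\cotimes{C} j}$ is a cotensor product of objects of ${}_U^C\mathcal{M}^C$. It is therefore $U$-stable inside $\mathscr{W}_1^{\cotimes{C} n}$, because the iterated coproduct of $x$ acts factorwise and preserves each factor. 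Intersections of $U$-submodules are $U$-submodules, so $\mathscr{W}_n^\mathrm{max}\in{}_U^C\mathcal{M}^C$. Finally, $\delta_n$ is $U$-linear because $x$ acts on $w_1\cotimes{C}\cdots\cotimes{C}w_n$ via $x_{(1)}\otimes\cdots\otimes x_{(n)}$, and $\delta_2(x_{(i)}w_i\cotimes{C}x_{(i+1)}w_{i+1})=x_{(i)}\delta_2(w_i\cotimes{C}w_{i+1})$ after coassociativity.
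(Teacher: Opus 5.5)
Your proposal is correct and follows the paper's proof essentially step for step: reduce to $\mathscr{W}_2^\mathrm{max}$ and $\delta_2$, use the $U$-linearity of $\check{\delta}_2$ and $\varphi$ to see that $U\mathscr{W}_2^\mathrm{max}$ is again a sub-bicomodule mapped into $\varphi(\mathscr{W}_1)$, and conclude by maximality. You also spell out two points the paper leaves implicit, and both arguments are sound: $U$-linearity of $\delta_2$ via injectivity of $\varphi$, and $U$-stability of the intersections defining $\mathscr{W}_n^\mathrm{max}$ for $n\ge 3$.
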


    \begin{proof}
        Consider the embedding $\mathscr{W}_\bullet^\mathrm{max}\hookrightarrow T_C^c(\mathscr{W}_1)$. As in Example \ref{ExplUniversalFOCCEquivariant}, we see that each $(\mathscr{W}_1)^{\cotimes{C} n}$ is a $U$-submodule and that the coproduct of $T_C^c(\mathscr{W}_1)$ is $U$-linear. Thus, it suffices to show that every $\mathscr{W}_n^\mathrm{max}$ is a $U$-submodule of $(\mathscr{W}_1)^{\cotimes{C} n}$ and that the differential is $U$-linear. In fact, we only need to check this for $\mathscr{W}_2^\mathrm{max}$ and $\delta_2$ by \eqref{eq:MaximalProlongationDegn} and \eqref{eq:MaximalProlongationDifferential}. Recall that $\mathscr{W}_2^\mathrm{max}$ is defined as the maximal sub $C$-bicomodule such that the map
        \[
            \check{\delta}\colon \mathscr{W}_1\cotimes{C}\mathscr{W}_1\rightarrow \mathscr{W}_1^u, \quad w\cotimes{C}v\mapsto [\delta(w)\otimes \delta(v)]
        \]
        factors through the canonical embedding $\mathscr{W}_1 \hookrightarrow \mathscr{W}_1^u$. As in the previous example this map is $U$-linear. Note also that the embedding $\varphi\colon\mathscr{W}_1 \rightarrow \mathscr{W}_1^u, w \mapsto [w_{(-1)}\otimes \delta(w_{(0)})]$ is $U$-linear. Thus, if $M$ is any sub $C$-bicomodule such that $\check{\delta}(M)\subseteq \varphi(\mathscr{W}_1)$, then also $\check{\delta}(UM)\subseteq \varphi(\mathscr{W}_1)$. Moreover, $UM$ is again a sub $C$-bicomodule. For instance, the assertion that $UM$ is closed under the left $C$-coaction holds, since
        \[
            (xm)_{(-1)}\otimes (xm)_{(0)} = x_{(1)}m_{(-1)}\otimes x_{(2)}m_{(0)} \in C\otimes UM
        \]
        for all $x \in U$ and $m \in M$. In particular, it follows that $\mathscr{W}_2^\mathrm{max}$ is a $U$-submodule, and the induced map $\delta_2\colon \mathscr{W}_2^\mathrm{max}\rightarrow \mathscr{W}_1$ is $U$-linear.
    \end{proof}

    \subsection{Classification of equivariant first-order codifferential calculi} \label{SecHermissionClassification}

    In this section, we specialize the setup to the case where $C = U\otimes_H k$, for a right coideal subalgebra $H\subseteq U$. Our goal is to show that $U$-equivariant first-order codifferential calculi are all classified by quantum tangent spaces if $U$ is faithfully flat as a left and right $H$-module.

    \begin{definition}
        \label{DefQuantumTangentSpace}
        A \textit{quantum tangent space} is a subspace $T\subseteq C^+ = \ker(\varepsilon)$ such that
        \begin{equation}
            \Delta(T) \subseteq (T\oplus k[1])\otimes C, \quad HT \subseteq T.
        \end{equation}
    \end{definition}

    Note that the condition on the image of $T$ under the coproduct can be equivalently phrased as demanding that $T$ is a right $C$-subcomodule of $C^+$, where the latter is endowed with the coaction
    \begin{equation}\label{eq:ReducedRightCoaction}
        C^+ \rightarrow C^+\otimes C, \quad c\mapsto c_{(1)}^+\otimes c_{(2)}.
    \end{equation}
    Moreover, since $T$ is closed under the left $H$-action one can deduce that $T$ is an object in ${}_H\mathcal{M}^C$. Indeed, we have compatibility of the $H$-action and the right $C$ action
    \begin{align*}
        (hc)_{(1)}^+\otimes (hc)_{(2)} &= h_{(1)}c_{(1)}\otimes h_{(2)}c_{(2)} - \varepsilon(h_{(1)})\varepsilon(c_{(1)})[1]\otimes h_{(2)}c_{(2)}\\
        &= h(c_{(1)}\otimes c_{(2)}) - \varepsilon(c_{(1)})h_{(1)}[1]\otimes h_{(2)}c_{(2)}\\ &= h(c_{(1)}^+\otimes c_{(2)})
    \end{align*}
    for all $h \in H$ and $c \in C$, where we used the fact that $h[1] = [h] = \varepsilon(h)[1]$.

    If we apply the induction functor $U\otimes_H -$ to $T$, we get the following.

    \begin{proposition}
        \label{PropTangentspaceFOCC}
        Let $T \subseteq C$ be a quantum tangent space, and write $\mathscr{W}_1^T := U\otimes_H T$. Then 
        \[
            \delta^T\colon \mathscr{W}_1^T \rightarrow C, \quad x\otimes_H t \mapsto xt
        \]
        is a coderivation. Moreover, if $U$ is flat as a left $H$-module then $(\mathscr{W}_1^T,\delta^T)$ is a $U$-equivariant first-order codifferential calculus.
    \end{proposition}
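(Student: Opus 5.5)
The plan has four parts: well-definedness, the coderivation identity, equivariance, and the injectivity axiom of Definition \ref{DefFOCC}.

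\emph{Well-definedness.} First note that $T\in {}_H\mathcal{M}^C$, as remarked after Definition \ref{DefQuantumTangentSpace}. Hence $\mathscr{W}_1^T = U\otimes_H T$ is an object of ${}_U^C\mathcal{M}^C$ via the induction functor. The left coaction is $x\otimes_H t\mapsto [x_{(1)}]\otimes x_{(2)}\otimes_H t$, and the right coaction is $x\otimes_H t\mapsto (x_{(1)}\otimes_H t_{(0)})\otimes x_{(2)}t_{(1)}$, where $t_{(0)}\otimes t_{(1)} = t_{(1)}^+\otimes t_{(2)}$ is the reduced coaction \eqref{eq:ReducedRightCoaction}. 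The map $\delta^T$ is well defined because $x\otimes_H ht\mapsto xht$ is $H$-balanced. It is clearly $U$-linear.

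\emph{Coderivation identity.} Since $C$ is a $U$-module coalgebra, $\Delta(xt) = x_{(1)}t_{(1)}\otimes x_{(2)}t_{(2)}$. Write $t_{(1)}\otimes t_{(2)} = t_{(1)}^+\otimes t_{(2)} + \varepsilon(t_{(1)})[1]\otimes t_{(2)} = t_{(0)}\otimes t_{(1)} + [1]\otimes t$. This gives
\[
\Delta(\delta^T(x\otimes_H t)) = x_{(1)}t_{(0)}\otimes x_{(2)}t_{(1)} + [x_{(1)}]\otimes x_{(2)}t = \delta^T\bigl((x\otimes_H t)_{(0)}\bigr)\otimes (x\otimes_H t)_{(1)} + (x\otimes_H t)_{(-1)}\otimes \delta^T\bigl((x\otimes_H t)_{(0)}\bigr),
\]
which is exactly the coderivation property. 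No flatness is needed for this part.

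\emph{Injectivity.} By Proposition \ref{PropBicomodQuotients}, it suffices to show that
\[
\psi\colon U\otimes_H T\to C\otimes C,\quad x\otimes_H t\mapsto [x_{(1)}]\otimes x_{(2)}t
\]
is injective. I factor $\psi$ as
\[
U\otimes_H T \xrightarrow{\ \id\otimes_H \iota\ } U\otimes_H C \xrightarrow{\ \chi'\ } C\otimes C,
\]
where $\iota\colon T\hookrightarrow C$ is the inclusion and $\chi'(x\otimes_H c) = [x_{(1)}]\otimes x_{(2)}c$.

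The first map is injective because $U$ is flat as a right $H$-module. Note this means flatness for $U\otimes_H -$; I would double-check the paper's "left/right" convention and use whichever side makes $U\otimes_H -$ exact.

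For the second map, I want the analogue of Lemma \ref{LemHopfGalois} with $U$ replaced by the module $C = U\otimes_H k$. Applying $-\otimes_H k$ to the isomorphism $\chi\colon U\otimes_H U\to C\otimes U$ gives an isomorphism $U\otimes_H U\otimes_H k\to C\otimes U\otimes_H k$, since $\chi$ is right $U$-linear and hence right $H$-linear. This map is precisely $\chi'\colon U\otimes_H C\to C\otimes C$. Alternatively, I can write down the inverse $[x]\otimes c\mapsto x_{(1)}\otimes_H S(x_{(2)})c$ and check well-definedness directly.

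Composing, $\psi$ is injective, so $(\mathscr{W}_1^T,\delta^T)$ is a first-order codifferential calculus, and it is $U$-equivariant. The main obstacle is this injectivity step: one has to make sure the module-version of the Galois map is genuinely bijective, i.e.\ that its inverse is well defined modulo $UH^+$ in the first variable. One also has to confirm that the flatness hypothesis matches the side on which $T$ is tensored.
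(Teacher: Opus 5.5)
Your proof is correct and is essentially the paper's own: the splitting $t_{(1)}\otimes t_{(2)} = t_{(1)}^+\otimes t_{(2)} + [1]\otimes t$ gives the coderivation identity. Injectivity is obtained exactly as you do, by factoring $x\otimes_H t\mapsto [x_{(1)}]\otimes x_{(2)}t$ through $U\otimes_H C$ and identifying the second map with $\chi\otimes_H \id_k$ for the isomorphism $\chi$ of Lemma \ref{LemHopfGalois}; this map is literally the one in Definition \ref{DefFOCC}, so the appeal to Proposition \ref{PropBicomodQuotients} is unnecessary. Your caution about sides is justified: the argument needs $U\otimes_H -$ to preserve injections, i.e.\ flatness of $U$ for its right $H$-module structure in the usual convention, and that is how the paper's hypothesis has to be read.
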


    \begin{proof}
        Let $x \in U$ and let $t \in T$. Then
        \begin{align*}
            \Delta(\delta^T(x\otimes_H t)) &= \Delta(xt) = x_{(1)}t_{(1)}\otimes x_{(2)}t_{(2)}\\
            &= x_{(1)}t_{(1)}^+\otimes x_{(2)}t_{(2)} + x_{(1)}\varepsilon(t_{(1)})[1]\otimes x_{(2)}t_{(2)}\\
            &= \delta^T(x_{(1)}\otimes_H t_{(1)}^+)\otimes x_{(2)}t_{(2)} + [x_{(1)}]\otimes \delta^T(x_{(2)}\otimes_H t)\\
            &= (\delta^T\otimes \id)\Delta_{\mathscr{W}_1^T}(x\otimes_H t) + (\id \otimes \delta^T){}_{\mathscr{W}_1^T}\Delta(x\otimes_H t).
        \end{align*}
        Now assume that $U$ is flat as a left $H$-module. Consider the map
        \[
            \overline{\chi}\colon U\otimes_H C \rightarrow C\otimes C, \quad x\otimes_H c, \quad x \otimes_H c \mapsto [x_{(1)}]\otimes x_{(2)}c
        \]
        and observe that this is simply the map $\chi\otimes_H \id_k$ where $\chi$ is the canonical map $U\otimes_H U \rightarrow C\otimes U$ from Lemma \ref{LemHopfGalois}. Hence, it is an isomorphism. By flatness, we have an embedding $\mathscr{W}_1^T \hookrightarrow U\otimes_H C$. Therefore, also the map
        \[
            \mathscr{W}_1^T \rightarrow C\otimes C, \quad x\otimes_H t \mapsto x_{(1)}\otimes x_{(2)}t
        \]
        is injective. However, this map is precisely the map which is required to be injective in Definition \ref{DefFOCC}, since $x_{(1)}\otimes x_{(2)}t = (\id \otimes\delta^T){}_{\mathscr{W}_1^T}\Delta(x\otimes_H t)$ for all $x\in U$ and $t \in T$. Lastly, equivariance follows directly by the definition of $\mathscr{W}_1^T$ and $\delta^T$.
    \end{proof}

    We can now state our classification result.

    \begin{theorem}
        \label{ThmClassificationEquivariantFOCC}
        Assume that $U$ is faithfully flat as a left- and right $H$ module. Then the assignment $T \mapsto (\mathscr{W}_1^T, \delta^T)$ from Proposition \ref{PropTangentspaceFOCC} induces a bijection
        \begin{gather*}
            \{\text{quantum tangent spaces } T\subseteq C\}\\ \overset{1:1}{\longleftrightarrow} \{\text{equivariant first-order codifferential calculi } (\mathscr{W}_1, \delta)\}/\cong.
        \end{gather*}
    \end{theorem}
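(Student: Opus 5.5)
The plan is to construct an explicit inverse to $T\mapsto(\mathscr{W}_1^T,\delta^T)$ using Theorem \ref{ThmSchneiderVariationI}. Given an equivariant first-order codifferential calculus $(\mathscr{W}_1,\delta)$, we have $\mathscr{W}_1\in{}_U^C\mathcal{M}^C$, so $V:=\coinv{C}\mathscr{W}_1\in{}_H\mathcal{M}^C$ and the counit $U\otimes_H V\to\mathscr{W}_1$, $x\otimes_H v\mapsto xv$, is an isomorphism. We set $T:=\delta(V)\subseteq C$ and check the following.

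First, $T\subseteq C^+$ by \eqref{eq:CounitOnCoderivation}. Second, $HT\subseteq T$, because $\delta$ is $U$-linear and $V$ is an $H$-submodule. Third, $T$ satisfies the coproduct condition. For $v\in V$ the coderivation property together with $v_{(-1)}\otimes v_{(0)}=[1]\otimes v$ gives
\[
\Delta(\delta(v))=\delta(v_{(0)})\otimes v_{(1)}+[1]\otimes\delta(v).
\]
Since $V$ is a right $C$-subcomodule, $\delta(v_{(0)})\otimes v_{(1)}\in T\otimes C$. Hence $\Delta(T)\subseteq(T\oplus k[1])\otimes C$, and $T$ is a quantum tangent space.

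Next we show $\delta|_V\colon V\to T$ is injective; this is the key point. The injectivity axiom says that $w\mapsto w_{(-1)}\otimes\delta(w_{(0)})$ is injective. On $V$ this map is $v\mapsto[1]\otimes\delta(v)$, so $\delta|_V$ is injective. A short computation using the displayed formula shows that $\delta|_V$ intertwines the right coaction of $V$ with the reduced coaction \eqref{eq:ReducedRightCoaction} on $T$: apply $(\cdot)^+$ in the first leg and note that $\delta(v_{(0)})$ already lies in $C^+$. It is also $H$-linear. Therefore $V\cong T$ in ${}_H\mathcal{M}^C$, and inducing gives an isomorphism $\mathscr{W}_1\cong U\otimes_H V\cong U\otimes_H T=\mathscr{W}_1^T$ in ${}_U^C\mathcal{M}^C$. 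Under this isomorphism $\delta(xv)=x\delta(v)$ corresponds to $\delta^T(x\otimes_H t)=xt$, so the calculi are isomorphic.

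Conversely, starting from $T$, we must recover $T$ from $\mathscr{W}_1^T$. By Theorem \ref{ThmSchneiderVariationI} the unit gives $\coinv{C}(U\otimes_H T)=1\otimes_H T$, and $\delta^T(1\otimes_H t)=t$, so the construction returns exactly $T$. It remains to check that isomorphic calculi give the same $T$. An isomorphism of calculi over $C$ is a $U$-linear bicolinear map commuting with the coderivations, so it maps coinvariants onto coinvariants and preserves their $\delta$-images. (If "$\cong$" also allows $U$-equivariant coalgebra automorphisms of $C$, one would have to restrict to isomorphisms over $\id_C$; I will interpret it as $\Focc_C$-isomorphisms, which are $U$-linear.)

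The main obstacle is the careful verification that $\delta|_V$ is a morphism in ${}_H\mathcal{M}^C$ onto $T$ with the reduced coaction. In particular, one must confirm that the comodule structure on $T$ used in forming $\mathscr{W}_1^T$ is the one in \eqref{eq:ReducedRightCoaction}, and not the naive one. Faithful flatness is needed only through Theorem \ref{ThmSchneiderVariationI}, and Proposition \ref{PropTangentspaceFOCC} ensures that $\mathscr{W}_1^T$ is a genuine calculus, so the map is well defined.
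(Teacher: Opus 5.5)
Your proposal is correct and follows essentially the same route as the paper: you set $T:=\delta(\coinv{C}\mathscr{W}_1)$, verify the quantum tangent space axioms via the reduced coaction \eqref{eq:ReducedRightCoaction}, and use Theorem~\ref{ThmSchneiderVariationI} in both directions (the counit gives $\mathscr{W}_1\cong U\otimes_H T$, the unit gives $\coinv{C}(U\otimes_H T)=1\otimes_H T$). Your explicit argument that $\delta$ is injective on $\coinv{C}\mathscr{W}_1$ (there the injectivity axiom reduces to $v\mapsto[1]\otimes\delta(v)$) is a useful addition, because the paper uses this fact silently when it writes down the isomorphism $U\otimes_H\delta(\coinv{C}\mathscr{W}_1)\to\mathscr{W}_1$, $x\otimes_H\delta(w)\mapsto xw$.
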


    \begin{proof}
        Given an equivariant first-order codifferential calculus $(\mathscr{W}_1, \delta)$ set $T := \delta(\coinv{C}\mathscr{W}_1)$. By the universal property of the universal first-order codifferential calculus \ref{PropUniversalFOCC}, $T$ only depends on the isomorphism class of $\mathscr{W}_1$. Moreover, $T$ is a quantum tangent space. Indeed, we have $HT \subseteq T$, since the $C$-coinvariants are closed under the $H$-action and $\delta$ is $U$-linear. Since the image of any coderivation is contained in $C^+$, we in particular have $T\subseteq C^+$. Lastly, by
        \begin{align*}
            \delta(w)_{(1)}^+\otimes \delta(w)_{(2)} &= w_{(-1)}^+\otimes \delta(w_{(0)}) + \delta(w_{(0)})^+\otimes w_{(1)} = [1]^+\otimes \delta(w) + \delta(w_{(0)})\otimes w_{(1)}\\
            &= \delta(w_{(0)})\otimes w_{(1)}
        \end{align*}
        for all $w\in \coinv{C}\mathscr{W}_1$, the restriction of $\delta$ to $\coinv{C}\mathscr{W}_1$ is right $C$-colinear, and $T$ is a right $C$-subcomodule of $C^+$. To show the assignment is a bijection, note that on the one hand, by the categorical equivalence of Corollary \ref{ThmSchneiderVariationI}, we get an isomorphism in ${}_U^C\mathcal{M}^C$
        \[
            U\otimes_H T = U\otimes_H \delta(\coinv{C}\mathscr{W}_1) \cong \mathscr{W}_1
        \]
        which is concretely given by $x\otimes_H \delta(w)\mapsto xw$. Since $\delta$ is $U$-linear, this map is also compatible with the differentials, and hence an isomorphism of $U$-equivariant first-order codifferential calculi. On the other hand, given a quantum tangent space $T\subseteq C$, then $\coinv{C}(U\otimes_H T) = 1\otimes_H T$, which again follows from the categorical equivalence, and thus $\delta^T(\coinv{C}\mathscr{W}_1^T) = T$.
    \end{proof}

    \begin{example}
        \label{ExplTangentSpaceofUniversalFOCC}
        Consider the universal first-order codifferential calculus $\mathscr{W}_1^u$ over $C$. As an object in ${}_U^C\mathcal{M}^C$ we have
        \[
            \mathscr{W}_1^u \cong C\otimes C^+.
        \]
        Here, the $C$-bicomodule structure on $C\otimes C^+$ is given as in \ref{RemUniversalFOCCKhalkhali}, and the left $U$-action is simply the diagonal action.
        Since the left coaction is given by applying the coproduct to the first factor, the space of left $C$-coinvariants of $C\otimes C^+$ is $C^+$. Thus, the quantum tangent space of $\mathscr{W}_1^u$ is $C^+$, and we have an isomorphism of first-order codifferential calculi $\mathscr{W}_1^u\cong U\otimes_H C^+$. Explicitly, we have an isomorphism
        \[
            U\otimes_H C^+ \rightarrow C\otimes C^+, \quad x\otimes d \mapsto [x_{(1)}]\otimes x_{(2)}d
        \]
        which is given by restricting the isomorphism $\overline{\chi}$ in the proof of Proposition \ref{PropTangentspaceFOCC}. In particular, its inverse is the map given by $[x]\otimes d \mapsto x_{(1)}\otimes_H S(x_{(2)})d$. Combining this isomorphism with the isomorphism $\mathscr{W}_1^u\cong C\otimes C^+$, the isomorphism between $U\otimes_H C^+$ and $\mathscr{W}_1^u$ is given by
        \[
            U\otimes_H C^+ \rightarrow \mathscr{W}_1^u, \quad x\otimes_H d \mapsto [[x_{(1)}]\otimes x_{(2)}d]
        \]
        with inverse
        \[
            \mathscr{W}_1^u \rightarrow U\otimes_H C^+, \quad [[x]\otimes d]\mapsto x_{(1)}\otimes_H S(x_{(2)})d - \varepsilon(d)x\otimes_H [1].
        \]
    \end{example}

    To finish the section, we look at the special case where $H$ is the ground field. The result we obtain dualizes a result of Woronowicz \cite{Wor89} about covariant first-order differential calculi on Hopf algebras.

    \begin{corollary}
        \label{CorClassificationEquivariantFOCCHopf}
        For any Hopf algebra $U$, equivariant first-order codifferential calculi on $U$ are, up to isomorphism, in bijective correspondence with subspaces $T\subseteq U^+$ such that
        \[
            \Delta(T)\subseteq (T\oplus k1)\otimes U.
        \] 
        More precisely, given a subspace $T$ as above, then $U\otimes T$ is a first-order codifferential calculus with codifferential
        \[
            \delta^T\colon U\otimes T \rightarrow U, \quad x\otimes t\mapsto xt.
        \]
    \end{corollary}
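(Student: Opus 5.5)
This is the special case $H = k$ of Theorem \ref{ThmClassificationEquivariantFOCC}, so the plan is to specialise that theorem and check that its hypotheses and the notions in it reduce correctly.

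First, take $H = k\cdot 1 \subseteq U$. This is a right coideal subalgebra, since $\Delta(1) = 1\otimes 1 \in k\otimes U$. Moreover $H^+ = 0$, so $C = U/UH^+ = U$ as a left $U$-module coalgebra, and $U\otimes_H - = U\otimes -$. Faithful flatness of $U$ as a left and right $k$-module holds trivially, because every vector space is faithfully flat over a field. Theorem \ref{ThmClassificationEquivariantFOCC} therefore applies, and gives a bijection between quantum tangent spaces $T\subseteq C$ and isomorphism classes of equivariant first-order codifferential calculi over $U$.

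Second, I unwind Definition \ref{DefQuantumTangentSpace} in this case. The condition $HT\subseteq T$ is automatic, since $H = k$ and $T$ is a subspace. We have $C^+ = U^+ = \ker(\varepsilon)$, and the class $[1]$ becomes the unit $1\in U$. The remaining condition is $\Delta(T)\subseteq (T\oplus k1)\otimes U$, which is exactly the one in the statement. By Proposition \ref{PropTangentspaceFOCC}, the calculus attached to $T$ is $\mathscr{W}_1^T = U\otimes_k T = U\otimes T$ with $\delta^T(x\otimes t) = xt$, which is the explicit description claimed.

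The only point that needs any care is that the equivariance notion and the categorical equivalence (Theorem \ref{ThmSchneiderVariationI}) used in the proof of Theorem \ref{ThmClassificationEquivariantFOCC} remain valid when $H = k$. This is the case, because Lemma \ref{LemHopfGalois} reduces to the standard bijectivity of $x\otimes y\mapsto x_{(1)}\otimes x_{(2)}y$ on $U\otimes U$. Consequently no step here presents a real obstacle; the corollary is a direct substitution into the theorem.
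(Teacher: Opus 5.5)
Your proposal is correct and follows the paper: it gives no separate proof and presents the corollary as the special case $H = k$ of Theorem~\ref{ThmClassificationEquivariantFOCC}. Your checks are exactly what that specialization requires, namely that $C = U$, that faithful flatness is automatic over a field, that $HT\subseteq T$ becomes vacuous, and that the Hopf--Galois map becomes the standard bijection $x\otimes y\mapsto x_{(1)}\otimes x_{(2)}y$.
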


    \subsection{Computation of higher-order equivariant codifferential calculi} \label{SecHigherOrderEquivariantCalculi}

    We let $U, H$ and $C$ be the same as in the previous section. We additionally assume that $H$ is a subbialgebra and that $U$ is faithfully flat as a left and right $H$-module.

    As we know from Theorem \ref{ThmClassificationEquivariantFOCC} an equivariant first-order codifferential calculus $\mathscr{W}_1$ over $C$ is uniquely determined by its quantum tangent space $T$. The goal of this section is to give a description of the maximal prolongation of $\mathscr{W}_1$, in the case where the right $C$-coaction on $T$ is trivial, that is, when $T \in {}_H\mathcal{M}^0$ (see the discussion around Proposition \ref{PropMonEquivalenceSupZero}). The key step in computing the maximal prolongation is to compute $\mathscr{W}_2^\mathrm{max}$. Recall from section \ref{SecMaximalProlongation} that this space is determined by the map
    \[
        \mathscr{W}_1\cotimes{C}\mathscr{W}_1 \rightarrow \mathscr{W}_1^u, \quad w_1\cotimes{C}w_2 \mapsto [\delta(w_1)\otimes \delta(w_2)].
    \]
    Let us assume that $\mathscr{W}_1 = U\otimes_H T$ with $T\in {}_H\mathcal{M}^0$. By Example \ref{ExplTangentSpaceofUniversalFOCC}, we can identify $\mathscr{W}_1^u$ with $U\otimes_H C^+$, and using the monoidal equivalence from Proposition \ref{PropMonEquivalenceSupZero} we can identify $\mathscr{W}_1\cotimes{C}\mathscr{W}_1$ with $U\otimes_H (T\otimes T)$. Under these identifications, the map above becomes
    \[
        \hat{\delta}\colon U\otimes_H (T\otimes T) \rightarrow U\otimes_H C^+, \quad 1\otimes_H ([x]\otimes t)\mapsto x_{(1)}\otimes_H S(x_{(2)})t.
    \]
    Here it suffices to specify the values of elements of the form $1\otimes_H (t_1\otimes t_2)$ by $U$-linearity. In the following we write $\check{\delta}_0([x]\otimes t) := \check{\delta}(1\otimes_H ([x]\otimes t))$.

    \begin{proposition}
        \label{PropDegTwoRelationsSupZero}
        Write $\check{R} := \check{\delta}_0^{-1}(U\otimes_H T)$, then under the identification $\mathscr{W}_1\cotimes{C}\mathscr{W}_1 \cong U\otimes_H (T\otimes T)$, the subspace $\mathscr{W}_2^\mathrm{max}$ corresponds to $U\otimes_H \check{R}$.
    \end{proposition}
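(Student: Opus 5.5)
The plan is to use that $\mathscr{W}_2^\mathrm{max}$ is automatically a $U$-submodule, and then pass through the equivalence of Theorem \ref{ThmSchneiderVariationI} in its monoidal form, Proposition \ref{PropMonEquivalenceSupZero}. After that, both inclusions $\mathscr{W}_2^\mathrm{max}\subseteq U\otimes_H\check{R}$ and $U\otimes_H\check{R}\subseteq\mathscr{W}_2^\mathrm{max}$ follow formally from the defining maximality property of $\mathscr{W}_2^\mathrm{max}$.

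\emph{Step 1: fixing the identifications.} I first check that under the isomorphism $\mathscr{W}_1^u\cong U\otimes_H C^+$ of Example \ref{ExplTangentSpaceofUniversalFOCC}, the canonical embedding $\varphi\colon\mathscr{W}_1\to\mathscr{W}_1^u$ becomes the map $U\otimes_H T\to U\otimes_H C^+$ induced by $T\subseteq C^+$. Indeed, $\varphi(x\otimes_H t)=[x_{(1)}\otimes \delta^T(x_{(2)}\otimes_H t)]=[[x_{(1)}]\otimes x_{(2)}t]$, and this is exactly the image of $x\otimes_H t$ under the isomorphism $U\otimes_H C^+\to\mathscr{W}_1^u$ of that example. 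By flatness of $U$ over $H$, the map $U\otimes_H T\to U\otimes_H C^+$ is injective, so $\varphi(\mathscr{W}_1)$ corresponds to the subspace $U\otimes_H T$. Similarly, $\check{\delta}$ becomes the $U$-linear map $\hat{\delta}$, determined by $\check{\delta}_0$ on $1\otimes_H(T\otimes T)$.

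\emph{Step 2: $\check{R}$ is a subobject.} For $h\in H$ and $r\in T\otimes T$ we have, by $U$-linearity,
\[
\check{\delta}_0(hr)=\hat{\delta}(h\otimes_H r)=h\,\check{\delta}_0(r).
\]
Since $U\otimes_H T$ is a $U$-submodule, $\check{R}$ is an $H$-submodule of $T\otimes T$. The right $C$-coaction on $T\otimes T$ is trivial, so $\check{R}\in{}_H\mathcal{M}^0$ and every $H$-submodule is a subobject. Applying the (exact, by flatness) induction functor, $U\otimes_H\check{R}$ is a subobject of $U\otimes_H(T\otimes T)$ in ${}_U^C\mathcal{M}^C$, in particular a sub $C$-bicomodule. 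Moreover, $\hat{\delta}(U\otimes_H\check{R})=U\check{\delta}_0(\check{R})\subseteq U\otimes_H T$. Maximality of $\mathscr{W}_2^\mathrm{max}$ therefore gives $U\otimes_H\check{R}\subseteq\mathscr{W}_2^\mathrm{max}$.

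\emph{Step 3: the reverse inclusion.} By the proof of Proposition \ref{PropEquivarianceOfProlongation}, $\mathscr{W}_2^\mathrm{max}$ is a $U$-submodule as well as a sub bicomodule, so it is a subobject in ${}_U^C\mathcal{M}^C$. By Theorem \ref{ThmSchneiderVariationI}, $\mathscr{W}_2^\mathrm{max}=U\otimes_H S$ with $S={}^{\mathrm{co}\,C}\mathscr{W}_2^\mathrm{max}$. By Proposition \ref{PropMonEquivalenceSupZero}, $S$ sits inside ${}^{\mathrm{co}\,C}(U\otimes_H(T\otimes T))=1\otimes_H(T\otimes T)$. For $s\in S$ we get $\check{\delta}_0(s)=\hat{\delta}(1\otimes_H s)\in U\otimes_H T$, hence $S\subseteq\check{R}$, and thus $\mathscr{W}_2^\mathrm{max}\subseteq U\otimes_H\check{R}$.

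The main obstacle is the bookkeeping in Step 1. One has to verify that the chain of identifications (Proposition \ref{PropMonEquivalenceSupZero}, Example \ref{ExplTangentSpaceofUniversalFOCC}, and the induced map $U\otimes_H T\hookrightarrow U\otimes_H C^+$) is compatible. Concretely, the condition $\check{\delta}(M)\subseteq\varphi(\mathscr{W}_1)$ must translate exactly into $\hat{\delta}(M)\subseteq U\otimes_H T$. Flatness is what guarantees that the induced maps are injective, so that $U\otimes_H\check{R}$ and $U\otimes_H T$ are genuinely subspaces of the respective ambient spaces.
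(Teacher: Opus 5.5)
Your proposal is correct and follows essentially the same route as the paper: you show $U\otimes_H\check{R}$ is mapped into $U\otimes_H T$, then use that the relevant sub-bicomodule is a $U$-submodule, write it as $U\otimes_H W$ via Theorem \ref{ThmSchneiderVariationI} with $W$ its coinvariants inside $1\otimes_H(T\otimes T)$, and conclude $W\subseteq\check{R}$. The differences are minor. You apply the equivalence directly to $\mathscr{W}_2^\mathrm{max}$, which is a $U$-submodule by Proposition \ref{PropEquivarianceOfProlongation}, whereas the paper treats an arbitrary $L$. You also make explicit two points the paper leaves to ``by construction'': that $U\otimes_H\check{R}$ is a sub-bicomodule, and that $\varphi$ corresponds to $U\otimes_H T\hookrightarrow U\otimes_H C^+$.
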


    \begin{proof}
        We have to show that $U\otimes_H \check{R}$ is the largest $C$-sub-bicomodule of $U\otimes_H (T\otimes T)$ which gets mapped to $U\otimes_H T$ under $\check{\delta}$. Indeed, it follows by construction that $U\otimes_H \check{R}$ gets mapped into $U\otimes_H T$ under $\check{\delta}$. To show that it is maximal, let $L \subseteq U\otimes_H (T\otimes T)$ be a $C$-sub-bicomodule which gets mapped to $U\otimes_H T$. Arguing as in the end of the proof of Proposition \ref{PropEquivarianceOfProlongation} we can assume without loss of generality that $L$ is a $U$-submodule, and thus a subobject of $U\otimes_H (T\otimes T)$ in ${}_U^C\mathcal{M}^C$. Write $W := \coinv{C}L$. By Proposition \ref{PropAdjunctionCoinvariantsInduction} and Theorem \ref{ThmSchneiderVariationI}, the map
        \[
            U\otimes_H W \rightarrow L, \quad u\otimes_H w \mapsto uw
        \]
        is an isomorphism in ${}_U^C\mathcal{M}^C$. Since $W \subseteq \coinv{C}(U\otimes_H (T\otimes T)) = 1\otimes_H (T\otimes T)$, we can identify $W$ as a subspace of $T\otimes T$. Under this identification, the above map is just the identity, and we have $L = U\otimes_H W$, and by assumption $\check{\delta}(W) \subseteq \check{\delta}(L) \subseteq U\otimes_H T$. Thus, we have $W\subseteq \check{R}$ and therefore $L\subseteq U\otimes_H \check{R}$.
    \end{proof}

    For the next result, recall the notion of a quadratic coalgebra $\mathfrak{C}(T,\check{R})$, corresponding to a finite dimensional vector space $T$ and a subspace $\check{R}\subseteq T\otimes T$ (see Section \ref{SecQuadraticCoalgebras}). Note that if $T \in {}_H\mathcal{M}^0$ and $\check{R}$ is an $H$-submodule of $T\otimes T$, then $\mathfrak{C}(T,\check{R})$ is a coalgebra in the category ${}_H\mathcal{M}^0$.

    \begin{theorem}
        \label{ThmMaxProlongationViaQuadraticCoalgs}
        Let $\mathscr{W}_1$ be an equivariant first-order codifferential calculus such that its quantum tangent space $T$ is an object in ${}_H\mathcal{M}^0$, and let $\check{R}$ be as in Proposition \ref{PropDegTwoRelationsSupZero}. Then
        \[
            \mathscr{W}_\bullet^\mathrm{max} \cong U\otimes_H \mathfrak{C}(T,\check{R})
        \]
        as coalgebras in ${}_U^C\mathcal{M}^C$.
    \end{theorem}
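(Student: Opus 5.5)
The plan is to compute each degree $\mathscr{W}_n^\mathrm{max}$ separately, transporting the definition \eqref{eq:MaximalProlongationDegn} through the monoidal equivalence of Proposition \ref{PropMonEquivalenceSupZero}, and then to check that the coproducts agree.

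\textbf{Degree one.} Since $T\in {}_H\mathcal{M}^0$, the object $\mathscr{W}_1\cong U\otimes_H T$ lies in ${}_U^C\mathcal{M}^0$. Proposition \ref{PropMonEquivalenceSupZero} applied iteratively gives $\coinv{C}(\mathscr{W}_1^{\cotimes{C} n}) = T^{\otimes n}$. Theorem \ref{ThmSchneiderVariationI} then yields isomorphisms in ${}_U^C\mathcal{M}^C$
\[
    \mathscr{W}_1^{\cotimes{C} n}\cong U\otimes_H T^{\otimes n}.
\]
Explicitly, $1\otimes_H(t_1\otimes\dots\otimes t_n)$ corresponds to $(1\otimes_H t_1)\cotimes{C}\dots\cotimes{C}(1\otimes_H t_n)$, and the isomorphism is extended $U$-linearly. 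These isomorphisms are compatible with the associativity of both sides, since the monoidal structure map is just concatenation of tensor factors.

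\textbf{Degree two and higher.} By Proposition \ref{PropDegTwoRelationsSupZero}, under this identification $\mathscr{W}_2^\mathrm{max}$ corresponds to $U\otimes_H\check{R}$. Here $\check{R}$ is an $H$-submodule because $\mathscr{W}_2^\mathrm{max}$ is a $U$-submodule by Proposition \ref{PropEquivarianceOfProlongation}; alternatively this follows directly from $H$-linearity of $\check{\delta}_0$. Again using the monoidal equivalence, $\mathscr{W}_1^{\cotimes{C} i}\cotimes{C}\mathscr{W}_2^\mathrm{max}\cotimes{C}\mathscr{W}_1^{\cotimes{C} j}$ corresponds to $U\otimes_H (T^{\otimes i}\otimes\check{R}\otimes T^{\otimes j})$. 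The main point to verify is that the intersection in \eqref{eq:MaximalProlongationDegn} commutes with the equivalence, that is,
\[
    \bigcap_{i+j+2=n} U\otimes_H(T^{\otimes i}\otimes \check{R}\otimes T^{\otimes j}) = U\otimes_H\Bigl(\bigcap_{i+j+2=n}T^{\otimes i}\otimes\check{R}\otimes T^{\otimes j}\Bigr).
\]
I would argue this by taking left $C$-coinvariants. The functor $\coinv{C}(-)$ is a kernel and hence preserves intersections of subobjects, and by the equivalence every subobject is of the form $U\otimes_H(\coinv{C}\text{ of it})$. Alternatively, flatness of $U$ over $H$ shows that $U\otimes_H-$ preserves finite intersections. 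This identifies $\mathscr{W}_n^\mathrm{max}$ with $U\otimes_H\mathfrak{C}(T,\check{R})_n$, the degree-$n$ component of the quadratic coalgebra (Appendix \ref{SecQuadraticCoalgebras}). In degree $0$ we have $U\otimes_H k = C$.

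\textbf{Coproduct.} Finally, the coproduct of $\mathscr{W}_\bullet^\mathrm{max}\subseteq T_C^c(\mathscr{W}_1)$ is given by the coactions plus deconcatenation. On $U\otimes_H\mathfrak{C}(T,\check{R})$ the coalgebra structure in ${}_U^C\mathcal{M}^C$ is the image, under the monoidal equivalence, of the deconcatenation coproduct of $\mathfrak{C}(T,\check{R})$ in ${}_H\mathcal{M}^0$, together with the $C$-coactions coming from $U$. Since the monoidal structure map sends $[x\otimes_H(t_1\otimes\dots\otimes t_n)]$ to $x_{(1)}\otimes_H t_1\cotimes{C}\dots$, the two deconcatenations match. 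This is a direct check on generators $1\otimes_H(t_1\otimes\dots\otimes t_n)$, extended by $U$-linearity.

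I expect the main obstacle to be the bookkeeping that the identification $\mathscr{W}_1^{\cotimes{C}n}\cong U\otimes_H T^{\otimes n}$ is simultaneously compatible with the three $C$-structures, the $U$-action and deconcatenation. This should reduce to monoidality of the equivalence, that is, naturality of the coherence isomorphisms in Proposition \ref{PropMonEquivalenceSupZero}.
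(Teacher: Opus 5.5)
Your proposal is correct and follows essentially the same route as the paper's proof. Both arguments pass to left $C$-coinvariants and use that $\coinv{C}(-)$ commutes with the finite intersection \eqref{eq:MaximalProlongationDegn}, together with Propositions \ref{PropMonEquivalenceSupZero} and \ref{PropDegTwoRelationsSupZero}, to obtain $\coinv{C}\mathscr{W}_n^\mathrm{max}\cong\mathfrak{C}^{(n)}(T,\check{R})$; they then induce back via Theorem \ref{ThmSchneiderVariationI} and match coproducts by observing that on coinvariants the coproduct is deconcatenation. One point is tacit in both: $\mathscr{W}_2^\mathrm{max}$, as a subobject of $\mathscr{W}_1\cotimes{C}\mathscr{W}_1$, again lies in ${}_U^C\mathcal{M}^0$, which is what lets Proposition \ref{PropMonEquivalenceSupZero} be applied to the factor $\mathscr{W}_2^\mathrm{max}$ when $j\ge 1$.
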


    \begin{proof}
        For simplicity, assume that $\mathscr{W}_1 = U\otimes_H T$. First we show that $\coinv{C}\mathscr{W}_\bullet^\mathrm{max} \cong \mathfrak{C}(T,\check{R})$ as objects in ${}_H\mathcal{M}^C$. In degree zero and one, it is immediate that $\coinv{C}\mathscr{W}_0^\mathrm{max} = \coinv{C} C \cong k$, and $\coinv{C}\mathscr{W}_1^\mathrm{max} = \coinv{C}(U\otimes_H T) \cong T$. Hence, these coinvariant subspaces agree with $\mathfrak{C}^{(0)}(T,\check{R})$, respectively $\mathfrak{C}^{(1)}(T,\check{R})$. For $n\ge 2$, combining Proposition \ref{PropDegTwoRelationsSupZero} with the definition of $\mathscr{W}_n^\mathrm{max}$, given by \eqref{eq:MaximalProlongationDegn}, and Lemma \ref{PropMonEquivalenceSupZero} we get
        \begin{align*}
            \coinv{C}(\mathscr{W}_n^\mathrm{max}) &= \coinv{C}\left(\bigcap_{i+j+2=n}(U\otimes_H T)^{\cotimes{C}i}\cotimes{C}(U\otimes_H \check{R})\cotimes{C}(U\otimes_H T)^{\cotimes{C}j}\right)\\
            &= \bigcap_{i+j+2=n}\coinv{C}\left((U\otimes_H T)^{\cotimes{C}i}\cotimes{C}(U\otimes_H \check{R})\cotimes{C}(U\otimes_H T)^{\cotimes{C}j}\right)\\
            &\cong \bigcap_{i+j+2=n}T^{\otimes i}\otimes \check{R}\otimes T^{\otimes j}\\
            &= \mathfrak{C}^{(n)}(T,\check{R}).
        \end{align*}
        This shows that ${}^{\text{co C}}\mathscr{W}_\bullet^\mathrm{max}\cong \mathfrak{C}(T,\check{R})$ as objects in ${}_H\mathcal{M}^C$, and thus by Theorem \ref{ThmSchneiderVariationI} $\mathscr{W}_\bullet^\mathrm{max} \cong U\otimes_H \mathfrak{C}(T,\check{R})$ as objects in ${}_U^C\mathcal{M}^C$. Lastly, this isomorphism is also an isomorphism of coalgebras, since the comultiplication on $\coinv{C}\mathscr{W}_\bullet^\mathrm{max}$ induced by $\mathscr{W}_\bullet^\mathrm{max}$ is simply given by deconcatenation of tensors, which agrees with the comultiplication on $\mathfrak{C}(T,\check{R})$.
    \end{proof}

    \section{Relation to differential calculi and Cartan pairs}\label{SecRelFOCCFODCFOVC}

    In this section we discuss how first-order codifferential calculi relate to other types of algebraically defined differential structures.

    \subsection{Correspondence between first-order differential- and codifferential calculi}

    Let $C$ be a coalgebra and fix a subalgebra $A\subseteq C^\ast$ of the dual algebra of $C$. Recall that every first-order differential calculus on $A$ is a quotient of the universal one $\Omega_u^1$, which is given as the kernel of the multiplication map $\mu\colon A\otimes A \rightarrow A$ with differential given by $d_u(a) = 1\otimes a - a\otimes 1$ \cite{Wor89}. We use this observation to define a first-order differential calculus on $A$ in terms of a subbimodule of $\Omega_u^1$ determined by a first-order codifferential calculus on $C$.

    \begin{lemma}
        \label{LemFOCCtoFODC}
        Let $\mathscr{W}_1$ be a first-order codifferential calculus on $C$, then the subspace
        \[
            N^{(1)} := \mathrm{span}_k\{ad_u(b) \mid a(w_{(-1)})b(\delta(w_{(0)})) = 0 \text{ for all }w\in \mathscr{W}_1\}
        \]
        is sub bimodule. In particular, we obtain a first-order differential calculus $\Omega^1 := \Omega_u^1/N^{(1)}$, determined by $\mathscr{W}_1$.
    \end{lemma}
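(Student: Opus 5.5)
The plan is to realise $N^{(1)}$ as the annihilator of $\mathscr{W}_1$ under a pairing, and then check that this annihilator is stable under left and right multiplication by $A$, using only coassociativity of the left coaction and the coderivation property of $\delta$.

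\emph{Setting up the pairing.} Throughout, the product on $C^\ast$ is the convolution $(fg)(c)=f(c_{(1)})g(c_{(2)})$ with unit $\varepsilon$. I read the definition of $N^{(1)}$ as the set of elements $\sum_i a_i d_u(b_i)\in\Omega^1_u$ with $\sum_i a_i(w_{(-1)})b_i(\delta(w_{(0)}))=0$ for all $w\in\mathscr{W}_1$. Define
\[
\langle-,-\rangle\colon (A\otimes A)\times \mathscr{W}_1\rightarrow k,\qquad \langle a\otimes b, w\rangle := a(w_{(-1)})\,b(\delta(w_{(0)})).
\]
Every element of $\Omega^1_u=\ker\mu$ can be written as $\sum_i a_i d_u(b_i)$: if $\sum_i a_ib_i=0$, then $\sum_i a_i\otimes b_i=\sum_i a_i(1\otimes b_i-b_i\otimes 1)$. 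Conversely,
\[
a\,d_u(b)=a\otimes b-ab\otimes 1,\qquad \langle ab\otimes 1,w\rangle=(ab)(w_{(-1)})\,\varepsilon(\delta(w_{(0)}))=0
\]
by \eqref{eq:CounitOnCoderivation}. Hence the pairing of $\sum_i a_id_u(b_i)$ with $w$ is exactly the expression in the definition. So $N^{(1)}=\Omega^1_u\cap \mathscr{W}_1^\perp$ is the kernel of a linear map $\Omega^1_u\rightarrow \mathscr{W}_1^\ast$, and in particular it is a well-defined subspace, independent of how an element is written.

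\emph{Left $A$-stability.} Let $\xi=\sum_i a_i\otimes b_i\in N^{(1)}$ and $c\in A$. Coassociativity of the left coaction gives
\[
\langle c\xi,w\rangle=\sum_i c(w_{(-2)})\,a_i(w_{(-1)})\,b_i(\delta(w_{(0)}))=c(w_{(-1)})\,\langle \xi, w_{(0)}\rangle=0.
\]
Also $c\xi\in\ker\mu$, so $c\xi\in N^{(1)}$.

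\emph{Right $A$-stability.} This is the step that actually uses the coderivation axiom, and the one I expect to need the most care. Consider $\xi c=\sum_i a_i\otimes b_ic$. Expanding $(b_ic)(\delta(w_{(0)}))$ with $\Delta(\delta(w_{(0)}))=\delta(w_{(0)})\otimes w_{(1)}+w_{(-1)}\otimes\delta(w_{(0)})$ (with appropriate Sweedler relabelling) gives
\[
\langle \xi c,w\rangle=\langle\xi,w_{(0)}\rangle\, c(w_{(1)}) + \Big(\sum_i a_ib_i\Big)(w_{(-1)})\,c(\delta(w_{(0)})).
\]
The first term vanishes because $\xi\in\mathscr{W}_1^\perp$; this uses the bicomodule compatibility to move the right coaction outside. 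The second term vanishes because $\sum_i a_ib_i=0$. So $\xi c\in N^{(1)}$. Here it matters that we intersect with $\ker\mu$, and one must keep the Sweedler indices consistent with the bicomodule identity.

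\emph{Conclusion.} $N^{(1)}$ is an $A$-sub-bimodule of $\Omega^1_u$. By the standard fact recalled before the lemma (\cite{Wor89}), every quotient of $\Omega^1_u$ by a sub-bimodule, equipped with the induced differential $[d_u]$, is a first-order differential calculus. Hence $\Omega^1:=\Omega^1_u/N^{(1)}$ is a first-order differential calculus on $A$.
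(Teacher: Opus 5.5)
Your proof is correct and follows essentially the same route as the paper's. Left stability comes from coassociativity of the left coaction, and right stability from the coderivation identity, where your constraint $\sum_i a_ib_i=0$ does the job of the paper's rewriting $ad_u(b)c = ad_u(bc)-abd_u(c)$; your explicit check that the pairing is well defined on $\Omega^1_u$ (via $\varepsilon\circ\delta=0$, with $1_A=\varepsilon$) and your reading of $N^{(1)}$ as the full annihilator $\Omega^1_u\cap\mathscr{W}_1^\perp$ make precise what the paper's computation uses implicitly.
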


    \begin{proof}
        Let $ad_u(b) \in N^{(1)}, c \in A$ and $w\in \mathscr{W}_1$ then
        \begin{align*}
            (ca)(w_{(-1)})b(\delta(w_{(0)})) &= c(w_{(-2)})a(w_{(-1)})b(\delta(w_{(0)})) = 0
        \end{align*}
        and thus, $cad_u(b) \in N^{(1)}$. Observe that, since $d_u$ is a derivation
        \[
            ad_u(b)c = ad_u(bc) - abd_u(c)
        \]
        and since $\delta$ is a coderivation
        \begin{align*}
            &\phantom{=} a(w_{(-1)})(bc)(\delta(w_{(0)})) - (ab)(w_{(-1)})c(\delta(w_{(0)}))\\
            &= a(w_{(-1)})b(\delta(w_{(0)})_{(1)})c(\delta(w_{0})_{(2)}) - a(w_{(-2)})b(w_{(-1)})c(\delta(w_{(0)}))\\
            &= a(w_{(-2)})b(w_{(-1)})c(\delta(w_{(0)})) + a(w_{(-1)})b(\delta(w_{(0)}))c(w_{(1)}) - a(w_{(-2)})b(w_{(-1)})c(\delta(w_{(0)}))\\
            &= 0.
        \end{align*}
        Thus, also $ad_u(b)c\in N^{(1)}$.
    \end{proof}

    The resulting first-order differential calculus $\Omega^1 = \Omega_u^1/N^{(1)}$ pairs in a canonical way with $\mathscr{W}_1$:
    \[
        \langle-,-\rangle\colon \Omega^1\times \mathscr{W}_1\rightarrow k, \quad \langle ad(b), w\rangle = a(w_{(-1)})b(\delta(w_{(0)}))
    \]
    where $d$ denotes the differential of $\Omega^1$.
    Moreover, this pairing has the following properties
    \begin{equation}\label{eq:fodcpairing}
        \langle a\omega, w\rangle = a(w_{(-1)})\langle \omega, w_{(0)}\rangle, \quad \langle \omega a,w\rangle = \langle \omega, w_{(0)}\rangle a(w_{(1)}), \quad \langle d(a),w\rangle = a(\delta(w)).
    \end{equation}

    \begin{definition}
        \label{DefBimodPairing}
        Let $M\in {}^C\mathcal{M}^C$ and $\mathscr{F} \in {}_A\mathcal{M}_A$. We call pairing $\langle -, -,\rangle\colon\mathcal{F}\times M \rightarrow k$ a \textit{bimodule pairing} if it satisfies the first two identities in \eqref{eq:fodcpairing}.
    \end{definition}

    By considering the pairing between the corresponding universal  first-order calculi, we can also pass from first-order differential calculi to first-order codifferential calculi.

    \begin{proposition}
        \label{PropFODCtoFOCC}
        Assume that $A$ separates the elements of $C$ (for instance, if $A = C^\ast$). Let $N^{(1)}\subseteq \Omega_u^1$ be a sub bicomodule. Then $\mathscr{W}_1 := (N^{(1)})^\perp$ is a sub-bicomodule of $\mathscr{W}_1^u$. In particular, it is a first-order codifferential calculus with codifferential given by the restriction of $\delta^u$.
    \end{proposition}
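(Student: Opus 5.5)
First I fix the pairing. Since $\Omega_u^1=\ker(\mu)\subseteq A\otimes A$ and $\mathscr{W}_1^u=C\otimes C/\im(\Delta)$, I take
\[
    \langle-,-\rangle\colon \Omega_u^1\times\mathscr{W}_1^u\rightarrow k,\quad \Bigl\langle \sum_i a_i\otimes b_i, [c\otimes d]\Bigr\rangle=\sum_i a_i(c)b_i(d).
\]
This is well defined: on $\Delta(c)$ we get $\sum_i (a_ib_i)(c)=\mu(\sum_i a_i\otimes b_i)(c)=0$, because the product in $C^\ast$ is the convolution. With $\omega=ad_u(b)=a\otimes b-ab\otimes 1$ this gives $\langle ad_u(b),[c\otimes d]\rangle=a(c)b(d)-\varepsilon(d)(ab)(c)$. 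In particular, for $[c\otimes d]=\varphi(w)=[w_{(-1)}\otimes\delta(w_{(0)})]$ we get $a(w_{(-1)})b(\delta(w_{(0)}))$ by \eqref{eq:CounitOnCoderivation}, which is consistent with Lemma \ref{LemFOCCtoFODC}.

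Next I check that this is a bimodule pairing in the sense of Definition \ref{DefBimodPairing}:
\[
    \langle e\omega,m\rangle=e(m_{(-1)})\langle\omega,m_{(0)}\rangle,\qquad \langle\omega e,m\rangle=\langle\omega,m_{(0)}\rangle e(m_{(1)}),
\]
for $e\in A$ and $m\in\mathscr{W}_1^u$, using the coactions from Proposition \ref{PropUniversalFOCC}. For the left action this is immediate, since $(ea_i)(c)=e(c_{(1)})a_i(c_{(2)})$. For the right action, $\omega e=\sum_i a_i\otimes b_ie$ is again the bimodule product in $A\otimes A$ restricted to $\ker\mu$, so $(b_ie)(d)=b_i(d_{(1)})e(d_{(2)})$ gives the identity at once. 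I interpret ``sub bicomodule'' $N^{(1)}$ as a sub-bimodule of $\Omega_u^1$.

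The main step is to show that $\mathscr{W}_1=(N^{(1)})^\perp\subseteq\mathscr{W}_1^u$ is a sub-bicomodule, which I expect to follow the second half of Lemma \ref{LemStabilityUnderOrthogonalComplements}. Take $m\in(N^{(1)})^\perp$ and $\omega\in N^{(1)}$. For every $e\in A$ we have $0=\langle\omega e,m\rangle=\langle\omega,m_{(0)}\rangle e(m_{(1)})$. Fix a basis $\{c_i\}$ of $C$ and write $\Delta_{\mathscr{W}_1^u}(m)=\sum_i m_i\otimes c_i$ with finitely many $c_i$ involved. Because $A$ separates the elements of $C$, the restrictions of elements of $A$ to the finite-dimensional span of those $c_i$ give all linear functionals on it: the restriction map has trivial annihilator in the dual, hence it is surjective. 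So we may choose $e$ dual to each $c_i$, and conclude $\langle\omega,m_i\rangle=0$ for all $\omega\in N^{(1)}$. Thus $m_i\in(N^{(1)})^\perp$, i.e.\ $\Delta_{\mathscr{W}_1^u}(m)\in\mathscr{W}_1\otimes C$. The left coaction is handled in the same way using left multiplication. This density argument is the only real obstacle, and it is settled by finite-dimensionality.

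Finally, $\delta^u|_{\mathscr{W}_1}$ is a coderivation, since it is the restriction of one to a sub-bicomodule. The injectivity axiom holds because the inclusion $\mathscr{W}_1\hookrightarrow\mathscr{W}_1^u$ is exactly the canonical map $\varphi$ of Proposition \ref{PropUniversalFOCC} (by uniqueness), and it is injective. Proposition \ref{PropBicomodQuotients} then shows that $(\mathscr{W}_1,\delta^u|_{\mathscr{W}_1})$ is a first-order codifferential calculus.
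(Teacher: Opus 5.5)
Your proof is correct and follows the paper's route. You first establish the two bimodule-pairing identities between $\Omega_u^1$ and $\mathscr{W}_1^u$, and then use that $A$ separates $C$ to conclude that the left and right coactions of an element of $(N^{(1)})^\perp$ land in $C\otimes (N^{(1)})^\perp$ and $(N^{(1)})^\perp\otimes C$, respectively.

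The differences are cosmetic. Working with the tensor form of $\ker\mu$ makes the right-action identity immediate, where the paper uses the coderivation property of $\delta^u$ on elements $a\,d_u(b)$. You also spell out the finite-dimensional duality argument behind the separation step, which the paper leaves implicit.
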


    \begin{proof}
        Let $w \in \mathscr{W}_1$. We have to show that $w_{(-1)}\otimes w_{(0)} \in C\otimes \mathscr{W}_1$ and $w_{(0)}\otimes w_{(1)}\in \mathscr{W}_1\otimes C$. Since $A$ separates the elements of $C$ by assumption, this is equivalent to $c(w_{(-1)})w_{(0)}, c(w_{(1)})w_{(0)} \in \mathscr{W}_1$ for all $c\in A$. Indeed, for any $c\in A$ and $\sum_i a_id_u(b_i)\in N^{(1)}$, we have
        \begin{align*}
            \sum_i\langle c(w_{(-1)})a_id_u(b_i), w_{(0)}\rangle &= \sum_i c(w_{(-2)})a_i(w_{(-1)})b_i(\delta(w_{(0)}))\\
            &= \sum_i (ca_i)(w_{(-1)})b_i(\delta(w_{(0)}))\\
            &= \sum_i\langle ca_id_u(b_i),w\rangle\\
            &= 0 
        \end{align*}
        using that $N^{(1)}$ is a sub bimodule. Moreover, using that $\delta$ is a coderivation
        \begin{align*}
            \sum_i\langle c(w_{(1)})a_id_u(b_i), w_{(0)}\rangle &= \sum_i a_i(w_{(-1)})b_i(\delta(w_{(0)}))c(w_{(1)})\\
            &= \sum_i (a_i(w_{(-1)})b_i(\delta(w_{(0)})_{(1)})c(\delta(w_{(0)})_{(2)})\\
            &\phantom{== \sum_i} - a_i(w_{(-2)})b_i(w_{(-1)})c(\delta(w_{(0)})))\\
            &= \sum_i \langle a_id_u(b_ic) - a_ib_id_u(c), w\rangle\\
            &= \sum_i\langle a_id_u(b_i)c, w\rangle\\
            &= 0
        \end{align*}
    \end{proof}

    As a consequence of Proposition \ref{PropFODCtoFOCC}, we can make the following observation: First note the categories of first-order (co)differential calculi are equivalent to the poset categories of sub $A$-bimodules, respectively sub $C$-bicomodules. Next, if we assume that $A$ separates the elements of $C$, then taking orthogonal complements gives functors between those poset categories, that form a Galois connection (see \cite[IV.5 Theorem 1]{Mac98}). In particular, the constructions in Lemma \ref{LemFOCCtoFODC} and Proposition \ref{PropFODCtoFOCC} give rise to a pair of adjoint functors.

    \subsection{The codifferential prolongation}
    
    We fix a coalgebra $C$, a subalgebra $A\subseteq C^\ast$, and a first-order codifferential calculus $\mathscr{W}_1$. We let $\Omega^1 := \Omega_u^1/N^{(1)}$ denote the first-order differential calculus corresponding to $\mathscr{W}_1$ as in Lemma \ref{LemFOCCtoFODC}.

    In this section, we present a construction which extends the first-order differential calculus $\Omega^1$ to a full differential calculus, incorporating the maximal prolongation of $\mathscr{W}_1$. Firstly, note that we have a canonical bimodule pairing between $(\Omega^1)^{\otimes_A n}$ and $(\mathscr{W}_1)^{\cotimes{C} n}$ for all $n\ge 0$. For $n = 0$, we use the pairing $A\times C \rightarrow k, \langle a, c\rangle = a(c)$, and for $n\ge 2$, the pairing is constructed using the following:

    \begin{lemma}
        \label{LemTensorPairing}
        Let $\mathcal{F}, \mathcal{G}\in {}_A\mathcal{M}_A, M, N \in {}^C\mathcal{M}^C$, and let
        \[
            \langle-,-\rangle_1\colon \mathcal{F}\times M\rightarrow k, \quad \langle-,-\rangle_2\colon \mathcal{G}\times N\rightarrow k
        \]
        be bimodule pairings. Then
        \[
            \langle-,-\rangle\colon (\mathcal{F}\otimes_A \mathcal{G})\times (M\cotimes{C}N)\rightarrow k, \quad (f\otimes_A g, m\cotimes{C} n)\mapsto \langle f, m\rangle_1\langle g,n\rangle_2.
        \]
        Is a bimodule pairing.
    \end{lemma}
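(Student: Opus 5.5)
The plan is to define the pairing first on $(\mathcal{F}\otimes\mathcal{G})\times(M\cotimes{C}N)$ and then check that it descends to $\mathcal{F}\otimes_A\mathcal{G}$. This is the only non-formal step. The bimodule identities will then be inherited directly from those of $\langle-,-\rangle_1$ and $\langle-,-\rangle_2$.

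\emph{Step 1: the pre-pairing.} Regard $M\cotimes{C}N$ as a subspace of $M\otimes N$. The map $(f\otimes g,\sum_i m_i\otimes n_i)\mapsto \sum_i\langle f,m_i\rangle_1\langle g,n_i\rangle_2$ is then a well-defined bilinear map on $(\mathcal{F}\otimes\mathcal{G})\times(M\otimes N)$, being the tensor product of the two pairings. Restricting to $M\cotimes{C}N$ in the second slot gives a bilinear form $\mathcal{F}\otimes\mathcal{G}\times M\cotimes{C}N\to k$.

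\emph{Step 2: balancing over $A$.} This is the main obstacle, and it is exactly where the cotensor condition is needed. We must show that $\langle fa\otimes g, \underline{x}\rangle=\langle f\otimes ag,\underline{x}\rangle$ for every $a\in A$ and every $\underline{x}=\sum_i m_i\otimes n_i\in M\cotimes{C}N$.

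By the right bimodule identity for $\langle-,-\rangle_1$, the left-hand side equals $\sum_i\langle f,m_{i(0)}\rangle_1\, a(m_{i(1)})\langle g,n_i\rangle_2$. By the left identity for $\langle-,-\rangle_2$, the right-hand side equals $\sum_i\langle f,m_i\rangle_1\, a(n_{i(-1)})\langle g,n_{i(0)}\rangle_2$.

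Both expressions arise from applying the linear functional $\langle f,-\rangle_1\otimes a\otimes\langle g,-\rangle_2$ on $M\otimes C\otimes N$. In the first case it is applied to $\sum_i m_{i(0)}\otimes m_{i(1)}\otimes n_i$, and in the second to $\sum_i m_i\otimes n_{i(-1)}\otimes n_{i(0)}$. These two tensors coincide precisely because $\underline{x}$ lies in the kernel defining $M\cotimes{C}N$. Hence the form vanishes on the balancing relations and factors through $\mathcal{F}\otimes_A\mathcal{G}$.

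\emph{Step 3: bimodule identities.} The left $C$-coaction on $M\cotimes{C}N$ is $m\cotimes{C}n\mapsto m_{(-1)}\otimes m_{(0)}\cotimes{C}n$, and the left $A$-action on $\mathcal{F}\otimes_A\mathcal{G}$ acts on the first factor. Consequently
\[
\langle b(f\otimes_A g),m\cotimes{C}n\rangle=\langle bf,m\rangle_1\langle g,n\rangle_2=b(m_{(-1)})\langle f,m_{(0)}\rangle_1\langle g,n\rangle_2 ,
\]
which is the required identity, extended linearly to non-elementary elements of the cotensor product.

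The right-hand identity follows symmetrically, using the right identity of $\langle-,-\rangle_2$ and the right coaction $m\cotimes{C}n\mapsto m\cotimes{C}n_{(0)}\otimes n_{(1)}$.
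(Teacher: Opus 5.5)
Your proof is correct and follows essentially the same route as the paper: it checks balancing over $A$ by combining the right identity for $\langle-,-\rangle_1$, the defining equation of $M\cotimes{C}N$, and the left identity for $\langle-,-\rangle_2$, and then treats the bimodule identities as immediate from the (co)module structures. The only difference is that you spell out, through the functional $\langle f,-\rangle_1\otimes a\otimes\langle g,-\rangle_2$ on $M\otimes C\otimes N$, how the argument handles non-elementary elements of $M\cotimes{C}N$, which the paper covers only through its symbolic notation $m\cotimes{C}n$.
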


    \begin{proof}
        For any $f \in \mathcal{F}, g \in \mathcal{G}$,  $m\cotimes{C} n \in M\cotimes{C}N$ and $a\in A$, we have by the properties of cotensor products, and \eqref{eq:fodcpairing}
        \begin{align*}
            \langle fa,m\rangle_1\langle g,n\rangle_2 &= \langle f, m_{(0)}\rangle_1 a(m_{(1)})\langle g,n\rangle_2 = \langle f, m\rangle_1 a(n_{(-1)})\langle g,n_{(0)}\rangle_2\\
            &= \langle f,m\rangle_1\langle a g,n\rangle_2.     
        \end{align*}
        This shows that the pairing is well-defined. The fact that it is a bimodule pairing follows directly from the definition of the $A$-actions on $\mathcal{F}\otimes_A \mathcal{G}$, the $C$-coactions on $M\cotimes{C}N$.
    \end{proof}

    We use the pairing defined in Lemma \ref{LemTensorPairing} to define an ideal of $T_A(\Omega^1)$. For each $n\ge 2$ consider the following subbimodule of $(\Omega^1)^{\otimes_A n}$ (the fact that the space defined below is indeed a subbimodule follows since the given pairing is a bimodule pairing)
    \[
        N_\text{codiff}^{(n)} := (\mathscr{W}_n^\mathrm{max})^\perp.
    \]
    If we let $\Omega_\mathrm{codiff}^{(n)} := (\Omega^1)^{\otimes_A n}/N_\mathrm{codiff}^{(n)}$, we get an induced pairing
    \begin{equation}\label{eq:IndTensorPairing}
        \Omega_\mathrm{codiff}^{(n)}\times \mathscr{W}_n^\mathrm{max} \rightarrow k.
    \end{equation}
    Note that with this choice, $\mathscr{W}_n^\mathrm{max}$ separates the elements of $\Omega_\mathrm{codiff}^{(n)}$.

    \begin{theorem}
        \label{ThmCodifferentialProlongation}
        Let $N_\mathrm{codiff}^\bullet := \bigoplus_{n\ge 2} N_\mathrm{codiff}^{(n)}$. Then $N_\mathrm{codiff}^\bullet$ is a two-sided ideal of $T_A(\Omega^1)$ and the resulting quotient algebra $\Omega_\mathrm{codiff}^\bullet$ is a differential calculus over $A$ with differential determined by the equation
        \begin{equation}\label{eq:DiffPairing}
            \langle d(\omega), w\rangle = \langle \omega,\delta(w)\rangle
        \end{equation}
        Where $\omega \in (\Omega^1)^{\otimes_A n}/N_\mathrm{codiff}^{(n)}$ and $w \in \mathscr{W}_n^\mathrm{max}$ with $n\ge 0$.
        Moreover, the extension of the pairing \eqref{eq:IndTensorPairing} $\Omega_\mathrm{codiff}^\bullet\times \mathscr{W}_\bullet^\mathrm{max} \rightarrow k$ is compatible with the algebra and coalgebra structures, in the sense that
        \begin{equation}\label{eq:AlgCoalgPairing}
            \langle \omega\eta, w\rangle = \langle\omega,w_{(1)}\rangle\langle \eta,w_{(2)}\rangle
        \end{equation}
        for all $\omega, \eta \in \Omega_\mathrm{codiff}^\bullet$ and $w\in \mathscr{W}_\bullet^\mathrm{max}$.
    \end{theorem}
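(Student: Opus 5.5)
The plan is to make everything a statement about orthogonality to the coalgebra $\mathscr{W}_\bullet^\mathrm{max}$. The basic tool is the pairing $\langle-,-\rangle\colon T_A(\Omega^1)\times T_C^c(\mathscr{W}_1)\rightarrow k$, built degreewise from Lemma \ref{LemTensorPairing} (with $\langle a,c\rangle=a(c)$ in degree $0$). Pairing a degree $n$ form with a degree $m$ element gives $0$ for $n\neq m$.

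\textbf{Step 1 (multiplicativity).} I first check \eqref{eq:AlgCoalgPairing} at the level of $T_A(\Omega^1)\times T_C^c(\mathscr{W}_1)$. For $\omega$ of degree $p$, $\eta$ of degree $q$ and $w\in\mathscr{W}_1^{\cotimes{C}(p+q)}$, the deconcatenation coproduct has exactly one component in bidegree $(p,q)$. By the definition of the pairing in Lemma \ref{LemTensorPairing}, the identity holds on this component. The edge cases $p=0$ or $q=0$ use the bimodule pairing identities \eqref{eq:fodcpairing} together with the coaction terms in the coproduct of $T_C^c$.

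\textbf{Step 2 (ideal).} Let $\omega\in N_\mathrm{codiff}^{(n)}$, let $\eta$ be arbitrary of degree $m$, and let $w\in\mathscr{W}_{n+m}^\mathrm{max}$. By Step 1, $\langle\omega\eta,w\rangle=\langle\omega,w_{(1)}\rangle\langle\eta,w_{(2)}\rangle$, with the relevant component of $\Delta(w)$ lying in $\mathscr{W}_n^\mathrm{max}\otimes\mathscr{W}_m^\mathrm{max}$ because $\mathscr{W}_\bullet^\mathrm{max}$ is a graded subcoalgebra of $T^c_C(\mathscr{W}_1)$. Hence this vanishes. The same argument handles $\eta\omega$, and for $n=1$ the case is covered by the definition of $\Omega^1$ ($N^{(1)}$ is already quotiented out). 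So $N^\bullet_\mathrm{codiff}$ is an ideal, and \eqref{eq:AlgCoalgPairing} descends to $\Omega^\bullet_\mathrm{codiff}\times\mathscr{W}^\mathrm{max}_\bullet$.

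\textbf{Step 3 (differential).} Since $\mathscr{W}_n^\mathrm{max}$ separates $\Omega^{(n)}_\mathrm{codiff}$ by construction, \eqref{eq:DiffPairing} determines $d(\omega)$ uniquely, provided the functional $w\mapsto\langle\omega,\delta(w)\rangle$ on $\mathscr{W}_{n+1}^\mathrm{max}$ is represented by some element of $\Omega^{(n+1)}_\mathrm{codiff}$. This existence is the main obstacle. I would define $d$ explicitly instead of abstractly. On $A$ and $\Omega^1$ there is the given $d$. Extend by the graded Leibniz rule, $d(a_0da_1\cdots da_n)=da_0da_1\cdots da_n$, which is a priori only defined on $T_A(\Omega^1)$ modulo the question of well-definedness. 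I would then verify $\langle d\omega,w\rangle=\langle\omega,\delta(w)\rangle$ on generators. For $n=0$ this is \eqref{eq:fodcpairing}. For $\omega=a\,db$ and $w\in\mathscr{W}_2^\mathrm{max}$, using the universal pairing and Definition \ref{DefDeg2Coforms}, I compute
\[
\langle da\,db,w_1\cotimes{C}w_2\rangle=a(\delta(w_1))b(\delta(w_2)),
\]
and by the defining property $\varphi(\delta_2(w))=[\delta(w_1)\otimes\delta(w_2)]$ this equals $\langle a\,db,\delta_2(w)\rangle$, since the universal pairing $\langle a\,d_ub,[c\otimes c']\rangle$ reduces to $a(c)b(c')$ on $\ker\mu$. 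For higher degrees I induct using Step 1, the Leibniz rule for $d$, and the coderivation-type formula \eqref{eq:dgcoalg} for $\delta$ on $\mathscr{W}_\bullet^\mathrm{max}$; the signs match because $\delta_n$ is given by \eqref{eq:MaximalProlongationDifferential}. Once the pairing identity holds on the free level, well-definedness follows: any relation, that is any element of $N^{(1)}$ or of $N_\mathrm{codiff}$, is sent by $d$ to something orthogonal to $\mathscr{W}^\mathrm{max}$, because $\delta$ preserves $\mathscr{W}^\mathrm{max}_\bullet$. Here the extension to $T_A(\Omega^1)$ may need to be phrased as a map from $T_A(\Omega^1_u)$, which is unproblematic.

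\textbf{Step 4.} Finally, $d^2=0$ follows from $\delta^2=0$ and separation. Surjectivity onto $\Omega^\bullet_\mathrm{codiff}$ by $A\,dA\cdots dA$ is inherited from $T_A(\Omega^1)$, so $\Omega^\bullet_\mathrm{codiff}$ is a differential calculus.
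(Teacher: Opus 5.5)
Your proposal is correct and follows essentially the same route as the paper: multiplicativity of the tensor pairing, the ideal property coming from $\mathscr{W}_\bullet^\mathrm{max}$ being a graded subcoalgebra of $T_C^c(\mathscr{W}_1)$, and the identity \eqref{eq:DiffPairing}. That identity is checked in degree one via the defining property of $\delta_2$, and in higher degrees by induction using the Leibniz rule and \eqref{eq:dgcoalgII}. The only difference is packaging: you descend the differential from the universal calculus $T_A(\Omega^1_u)$, whereas the paper first proves $N^{(2)}_\mathrm{max}\subseteq N^{(2)}_\mathrm{codiff}$ and descends from $\Omega^\bullet_\mathrm{max}$; that inclusion is exactly your degree-one check applied to $N^{(1)}$.
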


    \begin{proof}
        We first argue that \eqref{eq:AlgCoalgPairing} holds for the pairing between $T_A(\Omega^1)$ and $T_C^c(\mathscr{W}_1)$ induced by Lemma \ref{LemTensorPairing}. Let $\omega \in (\Omega^1)^{\otimes_A m}, \eta \in (\Omega^1)^{\otimes_A n}$, and $w \in \mathscr{W}_1^{\cotimes{C} l}$ for $m,n,l \ge 0$. Note that \eqref{eq:AlgCoalgPairing} holds if $l\neq m+n$. Indeed, in this case both sides of the equation are zero. For the right-hand side this follows directly, since the pairing is only non-zero for elements of the same degree, and for the left-hand side recall that the coproduct of $w$ can be written as
        \[
            w_{(1)}\otimes w_{(2)} = \sum_{i=0}^lw_{(1)}^{l-i}\otimes w_{(2)}^i
        \]
        where $w_{(1)}^{l-i}\otimes w_{(2)}^i \in \mathscr{W}_1^{\cotimes{C} l-i}\otimes \mathscr{W}_1^{\cotimes{C} i}$ for $0\le i \le l$. Thus, let $l = m+n$ if $m = 0$, then $\omega = a$ for some $a\in A$ and since $\langle-,-\rangle$ is a bimodule pairing
        \[
            \langle a\eta, w\rangle = \langle a, w_{(-1)}\rangle \langle \eta, w_{(0)}\rangle = \langle a, w_{(1)}^0\rangle \langle \eta,w_{(2)}^{n}\rangle = \langle a,w_{(1)}\rangle \langle \eta, w_{(2)}\rangle.
        \]
        By symmetry, we also get \eqref{eq:AlgCoalgPairing} if $n = 0$. Thus let $m,n > 0$. We can assume without loss of generality, that $\omega = \omega_1\otimes_A \dots \otimes_A \omega_m$ and $\eta = \omega_{m+1}\otimes_A\dots\otimes_A \omega_{m+n}$, and if we write $w = w_1\cotimes{C}\dots\cotimes{C} w_{m+n}$, we get
        \begin{align*}
            \langle \omega\eta,w\rangle &= \langle \omega_1, w_1\rangle \dots \langle\omega_{m+n}, w_{m+n}\rangle = \langle \omega, w_1\cotimes{C} \dots \cotimes{C} w_m\rangle\langle \eta, w_{m+1}\cotimes{C}\dots \cotimes{C} w_{m+n}\rangle\\
            &= \langle \omega, w_{(1)}^m\rangle\langle \eta, w_{(2)}^n\rangle\\
            &= \langle \omega, w_{(1)}\rangle\langle \eta, w_{(2)}\rangle.
        \end{align*}

        Next, we show that $N_\mathrm{codiff}^\bullet$ is a two-sided ideal. Indeed, by construction of $\mathscr{W}_\bullet^\mathrm{max}$ (see \eqref{eq:MaximalProlongationDegn}), we have that $\mathscr{W}_{n+1} \subseteq \mathscr{W}_1\cotimes{C} \mathscr{W}_n^\mathrm{max}, \mathscr{W}_n^\mathrm{max}\cotimes{C}\mathscr{W}_1$ for all $n\ge 2$. Thus, $N_\mathrm{codiff}^\bullet$ is closed under left- and right multiplication with generators of $T_A(\Omega^1)$, and therefore it is a two-sided ideal. Note also, that \eqref{eq:AlgCoalgPairing} still holds for the induced pairing between $\Omega_\mathrm{codiff}^\bullet$ and $\mathscr{W}_\bullet^\mathrm{max}$.
        
        We are left with showing that $\Omega_\mathrm{codiff}^\bullet$ admits a differential. We will argue as follows: First we show that $N_\mathrm{max}^{(2)} \subseteq N_\mathrm{codiff}^{(2)}$, where
        \[
            N_\mathrm{max}^{(2)} = \mathrm{span}_{A\otimes A^{op}}\{d(a)\otimes_A d(b) \mid ad_u(b) \in N^{(1)}\}
        \]
        This will imply that $\Omega_\mathrm{codiff}^\bullet$ is a quotient algebra of the maximal prolongation of $\Omega^1$ which is given by $\Omega_\mathrm{max}^\bullet = T_A(\Omega^1)/\langle N_\mathrm{max}^{(2)}\rangle$ (see for instance \cite[Lemma 1.32]{BM20}). We will then show that the image of $N_\mathrm{codiff}^\bullet$ in $\Omega_\mathrm{max}^\bullet$ is a differential ideal, and that the induced differential on $\Omega_\mathrm{codiff}^\bullet$ satisfies \eqref{eq:DiffPairing}. Let $ad_u(b) \in N^{(1)}$ and $w_1\cotimes{C} w_2 \in \mathscr{W}_2^\mathrm{max}$. Then by definition of $\mathscr{W}_2^\mathrm{max}$, we have $[\delta(w_1)\otimes \delta(w_1)] = [w_{(-1)}\otimes \delta(w_{(0)})]$ for some $w\in \mathscr{W}_1$, and thus
        \begin{align*}
            \langle d(a)\otimes_A d(b), w_1\cotimes{C} w_2\rangle &= \langle d(a), w_1\rangle \langle d(b),w_2\rangle = \langle ad_u(b), [\delta(w_1)\otimes \delta(w_2)]\rangle\\
            &= \langle ad_u(b), [w_{(-1)}\otimes \delta(w_{(0)})]\rangle\\
            &= a(w_{(-1)})b(\delta(w_{(0)}))\\
            &= 0
        \end{align*}
        Where the last equality holds by definition of $N^{(1)}$ (see Lemma \ref{LemFOCCtoFODC}). It follows that $\Omega_\mathrm{codiff}^\bullet$ is a quotient of $\Omega_\mathrm{max}^\bullet$. In particular, we get a well-defined pairing $\Omega_\mathrm{max}^\bullet\times \mathscr{W}_\bullet^\mathrm{max}$ which moreover satisfies \eqref{eq:AlgCoalgPairing}. We show that this pairing satisfies \eqref{eq:DiffPairing} by induction on the degree $n$ of a homogeneous element $\omega \in \Omega_\mathrm{max}^n$. For $n = 0$, this follows from \eqref{eq:fodcpairing}. For $n = 1$, we have for all $ad(b) \in \Omega^1, w_1\cotimes{C} w_2 \in \mathscr{W}_2^\mathrm{max}$ and with $w := \delta_2(w_1\cotimes{C} w_2)$
        \begin{align*}
            \langle d(ad(b)),w_1\cotimes{C}w_2\rangle &= \langle d(a)\wedge d(b), w_1\cotimes{C}w_2\rangle = \langle d(a),w_1\rangle \langle d(b),w_2\rangle\\
            &= a(\delta(w_1))b(\delta(w_2))\\
            &= \langle ad_u(b), [\delta(w_1)\otimes \delta(w_2)]\rangle\\
            &= \langle ad_u(b), [w_{(-1)}\otimes \delta(w_{(0)})]\rangle\\
            &= a(w_{(-1)})b(\delta(w_{(0)}))\\
            &= \langle ad(b), \delta(w_1\cotimes{C} w_2)\rangle
        \end{align*}
        Where we denoted by $-\wedge -$ the product of $\Omega_\mathrm{max}^\bullet$. Now let $n\ge 2$ and let $\omega \in \Omega^1, \eta\in \Omega_\mathrm{max}^{n-1}$ and $w \in \mathscr{W}_{n+1}$. Then using induction and \eqref{eq:dgcoalgII} we get
        \begin{align*}
            \langle d(\omega\wedge \eta),w \rangle &= \langle d(\omega)\wedge \eta, w\rangle - \langle \omega \wedge d(\eta), w\rangle\\
            &= \langle d(\omega), w_{(1)}\rangle \langle \eta, w_{(2)}\rangle - \langle \omega, w_{(1)}\rangle\langle d(\eta),w_{(2)}\rangle\\
            &= \langle d(\omega), w_{(1)}^2\rangle\langle\eta, w_{(2)}^{n-1}\rangle - \langle \omega, w_{(1)}^1\rangle \langle d(\eta), w_{(2)}^{n}\rangle\\
            &= \langle \omega, \delta(w_{(1)}^2)\rangle\langle \eta, w_{(2)}^{n-1}\rangle - \langle \omega, w_{(1)}^1\rangle\langle \eta, \delta(w_{(2)}^n)\rangle\\
            &= \langle \omega, \delta(w)_{(1)}^1\rangle\langle\eta,\delta(w)_{(2)}^{n-1}\rangle\\
            &= \langle \omega\wedge \eta, \delta(w)\rangle.
        \end{align*}
        From the assertion that \eqref{eq:DiffPairing} holds for $\Omega_\mathrm{max}^\bullet$ and $\mathscr{W}_\bullet^\mathrm{max}$, it follows immediately that the differential on $\Omega_\mathrm{max}^\bullet$ descends to $\Omega_\mathrm{codiff}^\bullet$ and that \eqref{eq:DiffPairing} also holds for $\Omega_\mathrm{codiff}^\bullet$.
    \end{proof}

    \begin{definition}
        \label{DefCodifferentialProlongation}
        We call the differential calculus $\Omega_\mathrm{codiff}^\bullet$ constructed in Theorem \ref{ThmCodifferentialProlongation} the \textit{codifferential prolongation} of $\Omega^1$.
    \end{definition}

    \subsection{Differential calculi over quantum homogeneous spaces arising as finitary duals of coalgebras}

    It is natural to ask whether the codifferential prolongation of a first-order differential calculus coming from a first-order codifferential calculus agrees with the maximal prolongation. In this section we will give a sufficient condition in the framework of quantum homogeneous spaces.

    We let $U$ be a Hopf algebra, $H\subseteq U$ a right coideal subalgebra, $C := U\otimes_H k$, as well as $A := U_\mathcal{C}^\circ$ for a tensor category $\mathcal{C}$ of finite dimensional $U$-modules which is closed under taking duals.

    \begin{lemma}
        Let $\mathscr{W}_1$ be a $U$-equivariant first-order codifferential calculus over $C$, then the pairing
        \[
            \Omega_u^1\times \mathscr{W}_1\rightarrow k, \quad \langle ad_u(b),w\rangle = a(w_{(-1)})b(\delta(w_{(0)}))
        \]
        where $\Omega_u^1$ is the universal first-order differential calculus over $B$, is admissible (see Definition \ref{DefAdmissiblePairings}). Moreover, the first-order differential calculus $\Omega^1 = \Omega_u^1/\mathscr{W}_1^\perp$ is $A$-covariant.
    \end{lemma}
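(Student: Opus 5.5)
We would verify the three identities of \eqref{eq:AdmissibleII} directly on generators $ad_u(b)$, with $B\cong C_\mathcal{C}^\circ$ (Lemma \ref{LemFinDualofC}) playing the role of the subalgebra of $C^\ast$. First, the pairing is a bimodule pairing in the sense of Definition \ref{DefBimodPairing}. The left identity $\langle c\,ad_u(b),w\rangle = c(w_{(-1)})\langle ad_u(b),w_{(0)}\rangle$ is immediate from coassociativity of the left coaction. The right identity is the coderivation computation from the proof of Lemma \ref{LemFOCCtoFODC} and Proposition \ref{PropFODCtoFOCC}: we write $ad_u(b)c = ad_u(bc)-ab\,d_u(c)$ and expand $\Delta(\delta(w_{(0)}))$. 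Well-definedness on $\Omega_u^1$ (rather than on $B\otimes B$) should hold because elements of $\Omega_u^1$ are exactly the $\sum a_i\otimes b_i$ with $\sum a_ib_i=0$. In that case $\sum a_i(w_{(-1)})b_i(\delta(w_{(0)}))$ is the pairing of $\sum a_i\otimes b_i$ with $(\id\otimes\delta){}_{\mathscr{W}_1}\Delta(w)$. We will also need to check that the term $\sum \varepsilon$-type contributions vanish, using $\varepsilon\circ\delta=0$ from \eqref{eq:CounitOnCoderivation}.

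The remaining identity is the $U$-compatibility $\langle \omega, Xw\rangle = \omega_{(-1)}(X)\langle \omega_{(0)},w\rangle$. Here the left $A$-coaction on $\Omega_u^1\subseteq B\otimes B$ is $a\otimes b\mapsto a_{(1)}b_{(1)}\otimes a_{(2)}\otimes b_{(2)}$, coming from the coproduct of $A$ restricted to the left coideal $B$. We use that $\mathscr{W}_1\in{}_U^C\mathcal{M}^C$, so $(Xw)_{(-1)}\otimes(Xw)_{(0)} = X_{(1)}w_{(-1)}\otimes X_{(2)}w_{(0)}$, and that $\delta$ is $U$-linear. This gives
\[
\langle ad_u(b),Xw\rangle = a(X_{(1)}w_{(-1)})\,b(X_{(2)}\delta(w_{(0)})).
\]
Under the identification $\iota$, $a(Xc) = a_{(1)}(X)a_{(2)}(c)$ for $a\in B$, $X\in U$, $c\in C$, because $B\subseteq A=U_\mathcal{C}^\circ$ has coproduct dual to the multiplication of $U$ and $C=U\otimes_H k$. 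Substituting this yields $a_{(1)}(X_{(1)})b_{(1)}(X_{(2)})\,a_{(2)}(w_{(-1)})b_{(2)}(\delta(w_{(0)}))$. This equals $(a_{(1)}b_{(1)})(X)\langle a_{(2)}d_u(b_{(2)}),w\rangle$, which is the required expression. We would have to confirm that the coaction of $a\,d_u(b)=a\otimes b-ab\otimes 1$ matches, which follows from multiplicativity of $\Delta_A$.

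For covariance, Lemma \ref{LemStabilityUnderOrthogonalComplements} applied to the subobject $\mathscr{W}_1\subseteq\mathscr{W}_1$ shows that $\mathscr{W}_1^\perp\subseteq\Omega_u^1$ is a subobject in ${}_B^A\mathcal{M}_B$. Here $\Omega_u^1$ is itself in ${}^A_B\mathcal{M}_B$ as the kernel of the $A$-colinear multiplication. Hence the quotient $\Omega^1=\Omega_u^1/\mathscr{W}_1^\perp$ inherits a left $A$-coaction making $d$ colinear, i.e.\ it is $A$-covariant. It also agrees with the calculus of Lemma \ref{LemFOCCtoFODC}, since $N^{(1)}=\mathscr{W}_1^\perp$ by definition. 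The main obstacle is bookkeeping in the $U$-compatibility step, specifically justifying $a(Xc)=a_{(1)}(X)a_{(2)}(c)$ with $a_{(2)}\in B$ through the identification of Lemma \ref{LemFinDualofC}. We would address this first by checking it for matrix coefficients, writing $c=[Y]$, so that $a(X[Y])=a(XY)$.
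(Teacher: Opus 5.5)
Your proposal is correct and follows essentially the same route as the paper. The bimodule-pairing identities come from the coderivation computation of Lemma \ref{LemFOCCtoFODC}. The $U$-compatibility is the expansion $a(X_{(1)}w_{(-1)})b(X_{(2)}\delta(w_{(0)})) = (a_{(1)}b_{(1)})(X)\langle a_{(2)}d_u(b_{(2)}),w\rangle$, using that $\Delta_A$ is dual to the multiplication of $U$. Covariance then follows from Lemma \ref{LemStabilityUnderOrthogonalComplements}.

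You are slightly more careful than the paper in two places, both of which it uses tacitly. You check well-definedness on $\Omega_u^1$ via $\varepsilon\circ\delta=0$. You also justify the identity $a(Xc)=a_{(1)}(X)a_{(2)}(c)$, with $a_{(2)}\in B$.
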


    \begin{proof}
        First note that the universal first-order differential calculus is $\Omega_u^1$ is $A$-covariant. In particular, we have $\Omega_u^1 \in {}_B^A\mathcal{M}_B$. We have already seen that the above is a bimodule pairing \eqref{eq:fodcpairing}. Let $X\in U, a,b \in B$ and $w \in \mathscr{W}_1$, then 
        \begin{align*}
            \langle ad_u(b),Xw\rangle &= a((Xw)_{(-1)})b(\delta((Xw)_{(0)})) = a(X_{(1)}w_{(-1)})b(X_{(2)}\delta(w_{(0)}))\\
            &= a_{(1)}(X_{(1)})b_{(1)}(X_{(2)})a_{(2)}(w_{(-1)})b_{(2)}(\delta(w_{(0)}))\\
            &= a_{(1)}b_{(1)}(X)\langle a_{(2)}d_u(b_{(2)})w\rangle.
        \end{align*}
        This shows admissibility, recalling that the left $A$-coaction on $\Omega_u^1$ can be written as $ad_u(b)\mapsto a_{(1)}b_{(1)}\otimes a_{(2)}d_u(b_{(2)})$. By Lemma \ref{LemStabilityUnderOrthogonalComplements} the orthogonal complement of $\mathscr{W}_1$ is a subobject of $\Omega_u^1$ and thus $\Omega^1 \in {}_B^A\mathcal{M}_B$ as a quotient object.
    \end{proof}

    In order to proceed, we use the classification results for first-order (co)differential calculi. On this note we also assume for the rest of this section that $U$ is faithfully flat as a left and right $H$-module, and that $A$ is faithfully flat as a right $B$-module (a criterion for the latter is given in Theorem \ref{ThmMuellerSchneider}). Recall that by \cite[Theorem 2]{Her02} any $A$-covariant first-order differential calculus $\Omega^1$ is isomorphic to one of the form $A\cotimes{\pi_B} B^+/I$ for some subobject $I\subseteq B^+$, where $B^+$ is an object in ${}^{\pi_B}\mathcal{M}_B$ via right multiplication and $\pi_B(A)$-coaction given by
    \[
        b\mapsto \pi_B(b_{(1)})\otimes b_{(2)}^+.
    \]
    The differential of $A\cotimes{\pi_B} B^+/I$ is given by
    \[
        B \rightarrow A\cotimes{\pi_B} B^+/I, \quad b\mapsto b_{(1)}\otimes p(b_{(2)}^+)
    \]
    where $p\colon B^+\rightarrow B^+/I$ denotes the canonical projection. For more details on the construction consult \cite{Her02}.
    For the universal first-order differential calculus $\Omega_u^1$, we have an explicit isomorphism
    \[
        A\cotimes{\pi_B} B^+\rightarrow  \Omega_u^1, \quad a\cotimes{\pi_B}b \mapsto aS(b_{(1)})d_u(b_{(2)}).
    \]
    Indeed, the above map is an isomorphism of first-order differential calculus, which follows by checking that it is inverse to the canonical map
    \[
        \Omega_u^1 \rightarrow A\cotimes{\pi_B} B^+, \quad ad_u(b)\mapsto ad(b) = ab_{(1)}\otimes b_{(2)}^+.
    \]
    Under this identification, the pairing between $\Omega_u^1$ and a $U$-equivariant first-order codifferential calculus $\mathscr{W}_1$, such that $\mathscr{W}_1 \cong U\otimes_H T$ for some quantum tangent space $T\subseteq C^+$ takes the form
    \begin{align*}
        \langle a\cotimes{\pi_B} b, X\otimes_H t\rangle &= \langle aS(b_{(1)})d_u(b_{(2)}), X\otimes_H t\rangle = (aS(b_{(1)}))(X_{(1)})b_{(2)}(X_{(2)}t)\\
        &= a(X_{(1)})(S(b_{(1)}))(X_{(2)})b_{(2)}(X_{(3)}t) = a(X_{(1)})b_{(1)}(S(X_{(2)}))b_{(2)}(X_{(3)}t)\\
        &= a(X_{(1)})b(S(X_{(2)})X_{(3)}t) = a(X)b(t)
    \end{align*}
    where $a\cotimes{\pi_B}b\in A\cotimes{\pi_B}B^+, X \in U$ and $t\in T$. Thus, the pairing $A\cotimes{\pi_B} B^+\times U\otimes_H T \rightarrow k$ is induced by the pairing $B^+\otimes T \rightarrow k$ in the sense of Proposition \ref{PropTranslAdmissiblePairings}. Using Lemma \ref{LemOrthogonalComplements} we can write $\mathscr{W}_1^\perp = A\cotimes{\pi_B}I^{(1)}$ with $I^{(1)} := T^\perp$, and the induced first-order differential calculus takes the form $\Omega^1 = A\cotimes{\pi_B} B^+/I^{(1)}$.

    The following gives a criterion for when it is possible to recover $T$ from $I^{(1)}$.

    \begin{lemma}
        \label{LemTangentSpaceFromIdeal}
        In the situation above, assume that $B$ separates the elements of $C$, and that the quantum tangent space $T$ is finite dimensional. Then $T = (I^{(1)})^\perp$, where the orthogonal complement is taken with respect to the extended pairing $B^+\times C^+ \rightarrow k$. In particular
        \[
            B^+/I^{(1)}\times T \rightarrow k, \quad \langle b + I^{(1)}, t\rangle = b(t)
        \]
        is a non-degenerate pairing between finite dimensional vector space.
    \end{lemma}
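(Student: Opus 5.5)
The plan is to work directly with the pairing $B^+\times C^+\rightarrow k$, $\langle b,c\rangle = b(c)$, obtained by restricting the identification $B\cong C_\mathcal{C}^\circ$ of Lemma \ref{LemFinDualofC}. By construction $I^{(1)} = T^\perp = \{b\in B^+ \mid b(t) = 0 \text{ for all } t\in T\}$. Hence $T\subseteq (I^{(1)})^\perp$ is immediate, and the whole statement reduces to the reverse inclusion $(I^{(1)})^\perp\subseteq T$, i.e.\ the double complement identity $T^{\perp\perp} = T$ inside $C^+$.

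For the reverse inclusion I would use finite dimensionality of $T$ together with the separation hypothesis. Let $c\in C^+\setminus T$. Then $T' := T\oplus kc$ is finite dimensional. Since $B$ separates the elements of $C$, the restriction map $B\rightarrow T'^\ast$ is surjective: its image is a subspace of $T'^\ast$ whose annihilator in $T'$ is $\{0\}$, and in finite dimensions such a subspace must be all of $T'^\ast$. Consequently there is $b\in B$ with $b|_T = 0$ and $b(c) = 1$.

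It then remains to place $b$ in $B^+$. Here $B^+$ is the kernel of the counit of $B$, which under Lemma \ref{LemFinDualofC} is evaluation at $[1]$. I would therefore run the argument above with $T'' := T\oplus kc\oplus k[1]$ instead of $T'$; this is still a direct sum, since $T + kc\subseteq C^+$ and $[1]\notin C^+$. Choosing $b$ with $b(T) = 0$, $b(c) = 1$ and $b([1]) = 0$ gives $b\in B^+\cap T^\perp = I^{(1)}$ with $b(c)\neq 0$. Thus $c\notin (I^{(1)})^\perp$, which yields $(I^{(1)})^\perp = T$.

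For the final claim, the pairing $B^+/I^{(1)}\times T\rightarrow k$ is well defined because $I^{(1)} = T^\perp$. It is non-degenerate on the left by the definition of $I^{(1)}$. On the right, $t\in T$ with $b(t) = 0$ for all $b\in B^+$ lies in $(B^+)^\perp\cap C^+$; applying the separation argument to $kt\oplus k[1]$ forces $t = 0$. So $B^+/I^{(1)}$ injects into $T^\ast$, is finite dimensional, and the pairing is perfect.

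The only genuine step is the surjectivity of $B\rightarrow T''^\ast$, and I would use this standard finite-dimensional fact in the form: a subspace $S\subseteq V^\ast$ with $S^\perp = 0$ equals $V^\ast$ when $\dim V < \infty$. I do not expect any serious obstacle beyond carefully keeping track of the counit condition.
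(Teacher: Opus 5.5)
Your proposal is correct and is essentially the paper's argument: both rest on the fact that, because $B$ separates the elements of $C$, one can prescribe the values of an element of $B$ on any finite-dimensional subspace of $C$. The paper uses a dual basis $b^1,\dots,b^N\in B^+$ of $T$ and derives a contradiction from $\tilde{s} = s-\sum_i b^i(s)t_i$ (tacitly using $B^+ = I^{(1)} + \mathrm{span}_k\{b^i\}$), whereas you directly exhibit an element of $I^{(1)}$ not vanishing at $c$, and you make explicit the counit bookkeeping $b\in B^+$ that the paper leaves implicit.
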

    
    \begin{proof}
        Let $t_1, \dots, t_n$ be a basis of $T$. Since $B$ separates the points of $C$, we can find $b^1, \dots, b^N \in B^+$ such that $b^i(t_j) = \delta_{ij}$ for all $1\le i,j\le N$. Note that $b^1 + I^{(1)},\dots, b^N + I^{(1)}$ is a dual basis of $t_1,\dots, t_N$ with respect to the pairing $B^+/I^{(1)}\times T\rightarrow k$. Suppose there is some element $s \in (I^{(1)})^\perp\setminus T$. Then the set $t_1,\dots, t_N, s$ is linear independent, and thus
        \[
            \tilde{s} := s - \sum_{i = 1}^Nb^i(s)t_i
        \]
        is non-zero. However, we still have $\tilde{s} \in (I^{(1)})^\perp$, and also
        \[
            b^j(\tilde{s}) = b^j(s) - b^j(s) = 0
        \]
        for all $1\le j \le N$. In other words, the element $\tilde{s}$ is annihilated by all $b \in B^+$, which contradicts the assumption that $B$ separates the points of $C$.
    \end{proof}

    If we are in the case where $H$ is a sub bialgebra of $U$ and $T\in {}_H\mathcal{M}^0$ (see the discussion around Proposition \ref{PropMonEquivalenceSupZero}), then the codifferential prolongation of $\Omega^1$ indeed agrees with its maximal prolongation.
    
    \begin{theorem}
        \label{ThmCodiffProlongationisMaximalProlongation}
        Assume that $H$ is a sub bialgebra of $U$, that the antipode of $A$ is invertible, and that $B$ separates the elements of $C$. Let $T\subseteq C^+$ be a finite dimensional quantum tangent space such that $T\in {}_H\mathcal{M}^0$, let $\mathscr{W}_1 := U\otimes_H T$ be the corresponding first-order codifferential calculus, and let $\Omega^1 = \Omega_u^1/\mathscr{W}_1^\perp$ be the corresponding first-order differential calculus. Then we have $\Omega_\mathrm{max}^\bullet = \Omega_\mathrm{codiff}^\bullet$.
    \end{theorem}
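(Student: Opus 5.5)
The plan is to prove the reverse of the inclusion already obtained in the proof of Theorem \ref{ThmCodifferentialProlongation}. There we showed $N_\mathrm{max}^{(2)}\subseteq N_\mathrm{codiff}^{(2)}$, so $\Omega_\mathrm{codiff}^\bullet$ is a quotient of $\Omega_\mathrm{max}^\bullet = T_A(\Omega^1)/\langle N_\mathrm{max}^{(2)}\rangle$. It therefore suffices to show that $N_\mathrm{codiff}^{(n)}$ lies in the degree $n$ part of the ideal generated by $N_\mathrm{max}^{(2)}$ for every $n\ge 2$. I would translate everything to coinvariants and reduce the question to finite-dimensional linear algebra between $V := B^+/I^{(1)}$ and $T$.

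\emph{Step 1: setup.} By Lemma \ref{LemTangentSpaceFromIdeal}, the pairing $V\times T\to k$ is non-degenerate and both spaces are finite dimensional. Since $T\in{}_H\mathcal{M}^0$, admissibility \eqref{eq:AdmissibleI} gives
\[
\langle vb,t\rangle = \langle v,t\rangle b([1]) = \varepsilon(b)\langle v,t\rangle,
\]
so by non-degeneracy $V\in{}^{\pi_B}\mathcal{M}_0$. Because $H$ is a sub bialgebra and the antipode of $A$ is bijective, Theorem \ref{ThmMuellerSchneider}(1) gives $AB^+\subseteq B^+A$. Proposition \ref{PropMonoidalTakeuchi} then yields $(\Omega^1)^{\otimes_A n}\cong A\cotimes{\pi_B}V^{\otimes n}$. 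On the other side, Proposition \ref{PropMonEquivalenceSupZero} yields $\mathscr{W}_1^{\cotimes{C} n}\cong U\otimes_H T^{\otimes n}$. I would check that under these identifications the pairing of Lemma \ref{LemTensorPairing} is the extension, in the sense of Proposition \ref{PropTranslAdmissiblePairings}, of the tensor power pairing $V^{\otimes n}\times T^{\otimes n}\to k$. This should follow directly from the formula $\langle a\cotimes{\pi_B}b, X\otimes_H t\rangle = a(X)b(t)$.

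\emph{Step 2: identify $N_\mathrm{codiff}^{(n)}$.} Theorem \ref{ThmMaxProlongationViaQuadraticCoalgs} gives $\mathscr{W}_n^\mathrm{max}\cong U\otimes_H \mathfrak{C}^{(n)}(T,\check R)$. Lemma \ref{LemOrthogonalComplements}(1) then gives
\[
N_\mathrm{codiff}^{(n)} = A\cotimes{\pi_B}\mathfrak{C}^{(n)}(T,\check R)^\perp .
\]
Since everything is finite dimensional and the pairing is non-degenerate,
\[
\Bigl(\bigcap_{i+j+2=n} T^{\otimes i}\otimes\check R\otimes T^{\otimes j}\Bigr)^\perp = \sum_{i+j+2=n} V^{\otimes i}\otimes \check R^\perp\otimes V^{\otimes j}.
\]
Hence the coinvariant part of $\Omega_\mathrm{codiff}^\bullet$ is the quadratic algebra $T(V)/\langle \check R^\perp\rangle$, and $N_\mathrm{codiff}^\bullet$ is generated in degree two by $A\cotimes{\pi_B}\check R^\perp$. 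It thus remains to show $A\cotimes{\pi_B}\check R^\perp\subseteq N_\mathrm{max}^{(2)}$. Since $N_\mathrm{max}^{(2)}$ is a subobject in ${}_B^A\mathcal{M}_B$, Theorem \ref{ThmTakeuchi} reduces this to showing that the image $Q\subseteq V\otimes V$ of $N_\mathrm{max}^{(2)}$ under $\Phi$ contains $\check R^\perp$. We already know $Q\subseteq\check R^\perp$.

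\emph{Step 3: the main obstacle, $\check R^\perp\subseteq Q$.} I would use that the map $m\colon\Omega_u^1\to\Omega^1\otimes_A\Omega^1$, $a d_u(b)\mapsto d(a)\otimes_A d(b)$, is adjoint to $\check\delta_2$:
\[
\langle m(\omega), w_1\cotimes{C} w_2\rangle = \langle \omega, [\delta(w_1)\otimes\delta(w_2)]\rangle .
\]
This is the computation already carried out in the proof of Theorem \ref{ThmCodifferentialProlongation}. By definition $N_\mathrm{max}^{(2)}$ is spanned over $A\otimes A^{op}$ by $m(N^{(1)})$, where $N^{(1)} = \mathscr{W}_1^\perp = A\cotimes{\pi_B}I^{(1)}$ by Lemma \ref{LemOrthogonalComplements}. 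Passing to coinvariants, $m$ induces a map $\bar m\colon I^{(1)}\to V\otimes V$, which is the transpose of $\check\delta_0\colon T\otimes T\to U\otimes_H C^+$ with respect to the pairing $B^+\times C^+\to k$ extended as in Proposition \ref{PropTranslAdmissiblePairings}. The image of $\bar m$ lies in $Q$.

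Now take $r\in T\otimes T$. Using $(U\otimes_H T)^{\perp} = A\cotimes{\pi_B}I^{(1)}$ together with $T = (I^{(1)})^\perp$ from Lemma \ref{LemTangentSpaceFromIdeal} and the fact that $B$ separates $C$, I expect
\[
r\in\check R \iff \check\delta_0(r)\in U\otimes_H T \iff \langle \bar m(I^{(1)}), r\rangle = 0 .
\]
This says $\check R = \bar m(I^{(1)})^\perp$ inside $T\otimes T$. The delicate point is the middle equivalence. It needs a double-orthogonal statement inside the infinite-dimensional space $U\otimes_H C^+$, and I would handle it by Lemma \ref{LemOrthogonalComplements}(1) applied with $S = T$, combined with the separation argument used in Lemma \ref{LemTangentSpaceFromIdeal}.

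Granting this, finite dimensionality and non-degeneracy of $V\otimes V\times T\otimes T$ give
\[
\check R^\perp = \bar m(I^{(1)})^{\perp\perp} = \bar m(I^{(1)})\subseteq Q .
\]
Therefore $N_\mathrm{codiff}^{(2)} = N_\mathrm{max}^{(2)}$, both ideals are generated in degree two, and $\Omega_\mathrm{max}^\bullet = \Omega_\mathrm{codiff}^\bullet$.
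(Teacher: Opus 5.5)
Your architecture is the paper's. Step~2 is the paper's closing paragraph: identify $N_\mathrm{codiff}^{(n)}$ through $\mathfrak{C}^{(n)}(T,\check R)$ and finite-dimensional duality. Step~3 is its degree-two argument, recast on coinvariants $r\in T\otimes T$ rather than on $(N_\mathrm{max}^{(2)})^\perp\subseteq \mathscr{W}_1\cotimes{C}\mathscr{W}_1$; this spares you checking that $(N_\mathrm{max}^{(2)})^\perp$ is a sub-bicomodule. One small point: $m$ is not $B$-linear on all of $\Omega_u^1$. One has $m(c\omega)=c\,m(\omega)+d(c)\otimes_B\omega'$ and $m(\omega c)=m(\omega)c-\omega'\otimes_B d(c)$, with $\omega'$ the image of $\omega$ in $\Omega^1$. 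The correction terms vanish on $N^{(1)}$, so $\bar m=\Phi(m|_{N^{(1)}})$ is well defined.

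The genuine gap is the step you grant. The implication $\langle N^{(1)},\check\delta_0(r)\rangle=0\Rightarrow \check\delta_0(r)\in U\otimes_H T$ is exactly the closedness statement $(A\cotimes{\pi_B}I^{(1)})^\perp\subseteq U\otimes_H T$ inside the infinite-dimensional space $\mathscr{W}_1^u\cong U\otimes_H C^+$. Neither tool you name gives it.
\begin{itemize}
\item Lemma~\ref{LemOrthogonalComplements}(1) with $S=T$ yields $(U\otimes_H T)^\perp=A\cotimes{\pi_B}I^{(1)}$, i.e.\ $\mathscr{W}_1^\perp=N^{(1)}$. That is the opposite orthogonality, and you have already used it.
\item The separation argument of Lemma~\ref{LemTangentSpaceFromIdeal} is finite-dimensional and lives inside $C^+$. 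It subtracts $\sum_i b^i(s)t_i$ via a dual basis, and the resulting projection $C^+\to T$ is not $H$-linear, so it does not descend to $U\otimes_H C^+$.
\end{itemize}
Moreover, separation of points alone does not make a subspace of an infinite-dimensional space equal to its double orthogonal.

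The missing idea is to reduce to coinvariants. By the second part of Lemma~\ref{LemStabilityUnderOrthogonalComplements}, which needs that $B$ separates $C$, $(A\cotimes{\pi_B}I^{(1)})^\perp$ is a subobject of $\mathscr{W}_1^u$ in ${}_U^C\mathcal{M}^C$. By Theorem~\ref{ThmSchneiderVariationI} it therefore equals $U\otimes_H S$, with $S\subseteq C^+$ its coinvariant part. For $s\in S$, pair against $1\otimes_H s$ and use Proposition~\ref{PropTranslAdmissiblePairings}, together with surjectivity of $\varepsilon\otimes\id\colon A\cotimes{\pi_B}I^{(1)}\to I^{(1)}$ (the Takeuchi counit is an isomorphism). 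This gives $\langle I^{(1)},s\rangle=0$, so $S\subseteq (I^{(1)})^\perp=T$ by Lemma~\ref{LemTangentSpaceFromIdeal}. Hence $\mathscr{W}_1^{\perp\perp}=U\otimes_H T$, which is precisely the identity the paper invokes at this point. With it inserted, your equivalence $r\in\check R\iff\langle\bar m(I^{(1)}),r\rangle=0$ holds, and the rest of your argument goes through.
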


    \begin{proof}
        Recall that the codifferential prolongation of $\Omega^1$ is given by
        \[
            \Omega_\mathrm{codiff}^\bullet = B\oplus \Omega^1 \oplus \bigoplus_{n\ge 2}(\Omega^1)^{\otimes_B n}/N_\mathrm{codiff}^{(n)}
        \]
        where $N_\mathrm{codiff}^{(n)}$ is the orthogonal complement of $\mathscr{W}_n^\mathrm{max}$ with respect to the pairing
        \[
            (\Omega^1)^{\otimes_B n}\times (\mathscr{W}_1)^{\cotimes{C}n} \rightarrow k.
        \]
        Thus, we have to check that
        \[
            N_\mathrm{codiff}^{(n)} = N_\mathrm{max}^{(n)} = \sum_{i+j+2 = n} (\Omega^1)^{\otimes_B i} \otimes_B N_\mathrm{max}^{(2)}\otimes_B (\Omega^1)^{\otimes_B j}
        \]
        for all $n\ge 2$. Since $T \in {}_H\mathcal{M}^0$, we also have $V := B^+/I^{(1)} \in {}^{\pi_B}\mathcal{M}_0$, and we can apply the monoidal equivalences between $({}_U^C\mathcal{M}^0,\cotimes{C},C)$ and $({}_H\mathcal{M}^0,\otimes, k)$, as well as $({}_B^A\mathcal{M}_0, \otimes_B, B)$ and $({}^{\pi_B}\mathcal{M}_0, \otimes, k)$ (see Proposition \ref{PropMonoidalTakeuchi} and Proposition \ref{PropMonEquivalenceSupZero}). In particular, since $\Omega^1 \cong A\cotimes{\pi_B} V$ we have identifications $(\Omega^1)^{\otimes_B n} \cong (A\cotimes{\pi_B} V)^{\otimes_B n} \cong A\cotimes{\pi_B} V^{\otimes n}$ and $\mathscr{W}_1^{\cotimes{C} n} \cong U\otimes_H T^{\otimes n}$. Under these identifications, the induced pairing $\langle-,-\rangle^\flat\colon V^{\otimes n}\times T^{\otimes n}\rightarrow k$ (in the sense of Proposition \ref{PropTranslAdmissiblePairings}) can be written as
        \[
            \langle v^1\otimes \dots \otimes v^n, t_1\otimes \dots\otimes t_n\rangle^\flat = \langle v^1, t_1\rangle \dots \langle v^n,t_n\rangle.
        \]
        Indeed, using the strong monoidal structure of $\Phi$ (Proposition \ref{PropMonoidalTakeuchi}) and the counit of the adjunction $\Phi \dashv \Psi$ (Theorem \ref{ThmTakeuchi}), we obtain an isomorphism
        \begin{gather*}
            \Phi((A\cotimes{\pi_B}V)^{\otimes_B n})\rightarrow V^{\otimes n},\\ (a^1\cotimes{\pi_B} v^1)\otimes_B \dots \otimes_B (a^n\cotimes{\pi_B} v^n) + B^+(A\cotimes{\pi_B} V) \mapsto \varepsilon(a^1)v^1\otimes \dots \otimes\varepsilon(a^n)v^n
        \end{gather*}
        and if we write $v^i = b^i + I^{(1)}$ for suitable $b^i \in B^+$, then the preimage of $v^1\otimes \dots \otimes v^n$ is given by $(b_{(1)}^1\cotimes{\pi_B}((b_{(2)}^1)^+ + I^{(1)}))\otimes_B \dots \otimes_B (b_{(1)}\cotimes{\pi_B} ((b_{(2)}^n)^+ + I^{(1)}))$. It follows now
        \begin{align*}
            &\phantom{=} \langle (b_{(1)}^1\cotimes{\pi_B}((b_{(2)}^1)^+ + I^{(1)}))\otimes_B \dots \otimes_B (b_{(1)}\cotimes{\pi_B} ((b_{(2)}^n)^+ + I^{(1)})),\\
            &\hspace{9cm} (1\otimes_H t_1)\cotimes{C} \dots \cotimes{C} (1\otimes_H t_n)\rangle\\
            &= \langle b_{(1)}^1\cotimes{\pi_B} ((b_{(2)}^1)^+ + I^{(1)}), 1\otimes_H t_1\rangle \dots \langle b_{(1)}^n\cotimes{\pi_B} ((b_{(2)}^n)^+ + I^{(1)}),1\otimes_H t_n\rangle\\
            &= b_{(1)}^1(1)\langle (b_{(2)}^1)^+ + I^{(1)}, t_1\rangle \dots b_{(1)}^n(1)\langle (b_{(2)}^n)^+ + I^{(1)}, t_n\rangle\\
            &=  \langle v^1, t_1\rangle \dots \langle v^n,t_n\rangle.
        \end{align*}
        In particular, by Lemma \ref{LemTangentSpaceFromIdeal}, the pairing between $V^{\otimes n}$ and $T^{\otimes n}$ is non-degenerate. We proceed by showing that $N_\mathrm{max}^{(2)} = N_\mathrm{codiff}^{(2)}$. Let $w_1\cotimes{C} w_2 \in (N_\mathrm{max}^{(2)})^\perp$, then for every $ad_u(b) \in N^{(1)}$
        \begin{align*}
            \langle ad_u(b), [\delta(w_1)\otimes \delta(w_2)]\rangle &= a(\delta(w_1))b(\delta(w_2)) = \langle d(a),w_1\rangle\langle d(b),w_2\rangle\\
            &= \langle d(a)\otimes_Bd(b),w_1\cotimes{C} w_2\rangle\\
            &= 0.
        \end{align*}
        Thus, we have $[\delta(w_1)\otimes \delta(w_2)] \in (N^{(1)})^\perp = \mathscr{W}_1^{\perp\perp}$, where orthogonal complements are taken with respect to the pairing $\Omega_u^1\times \mathscr{W}_1^u\rightarrow k$. By identifying $\Omega_u^1$ with $A\cotimes{\pi_B} B^+$ we get, using Lemma \ref{LemTangentSpaceFromIdeal} and Lemma \ref{LemStabilityUnderOrthogonalComplements}
        \[
            \mathscr{W}_1^{\perp\perp} = (A\cotimes{\pi_B}I^{(1)})^\perp = U\otimes_H (I^{(1)})^\perp = U\otimes_H T.
        \]
        This implies that $[\delta(w_1)\otimes \delta(w_2)] \in \mathscr{W}_1$ and thus $w_1\cotimes{C} w_2 \in \mathscr{W}_2^\mathrm{max}$. Since $N_\mathrm{max}^{(2)} \subseteq N_\mathrm{codiff}^{(2)}$ we also have
        \[
        (\mathscr{W}_2^\mathrm{max})^{\perp\perp} = (N_\mathrm{codiff}^{(2)})^\perp \subseteq (N_\mathrm{max}^{(2)})^\perp \subseteq \mathscr{W}_2^\mathrm{max}
        \]
        and since $\mathscr{W}_2^\mathrm{max} \subseteq (\mathscr{W}_2^\mathrm{max})^{\perp\perp}$ all the above inclusions are equalities, which intern implies $N_\mathrm{codiff}^{(2)} = N_\mathrm{max}^{(2)}$ using the fact that the induced pairing $V\otimes V\times T\otimes T\rightarrow k$ is a non-degenerate pairing between finite dimensional vector spaces and Lemma \ref{LemStabilityUnderOrthogonalComplements}. For the higher degrees, we first observe that we can identify $\mathscr{W}_2^\mathrm{max}$ with $U\otimes_H \check{R}$ for some subspace $\check{R}\subseteq T\otimes T$ and $N_\mathrm{max}^{(2)} = A\cotimes{\pi_B }R$ with $R = \check{R}^\perp$ again using Lemma \ref{LemOrthogonalComplements}. We then have for arbitrary $n \ge 2$
        \[
            \mathscr{W}_n^\mathrm{max} \cong U\otimes_H \bigcap_{i+j+2 = n} T^{\otimes i}\otimes \check{R} \otimes T^{\otimes j}.
        \]
        Thus, we can identify $N_\mathrm{codiff}^{(n)} = (\mathscr{W}_n^\mathrm{max})^\perp$ with
        \[
            A\cotimes{\pi_B } \left(\bigcap_{i+j+2 = n} T^{\otimes i}\otimes \check{R} \otimes T^{\otimes j}\right)^\perp = A\cotimes{\pi_B} \left(\sum_{i+j+2=n} V^{\otimes i}\otimes R \otimes V^{\otimes j}\right).
        \]
        The right-hand side is precisely the image of $N_\mathrm{max}^{(n)}$ under the isomorphism $(\Omega^1)^{\otimes_B n} \cong A\cotimes{\pi_B }V^{\otimes n}$, and we conclude that also $N_\mathrm{max}^{(n)} = N_\mathrm{codiff}^{(n)}$.
    \end{proof}

    The proof of the above theorem gives an explicit description of the maximal prolongation of $\Omega^1$, which conversely allows us to recover the maximal prolongation of $\mathscr{W}_1$ if we happen to know $\Omega_\mathrm{max}^\bullet$.

    \begin{corollary}
        \label{PorMaxProlongationQuadraticAlgebras}
        In the situation of Theorem \ref{ThmCodiffProlongationisMaximalProlongation} we have
        \[
            \Omega_\mathrm{max}^\bullet \cong A\cotimes{\pi_B} \mathfrak{A}(V,\check{R}^\perp).
        \]
        Here, $\check{R}$ is the subobject of $T\otimes T$ such that $\mathscr{W}_2^\mathrm{max} \cong U\otimes_H \check{R}$, and $\mathfrak{A}(V,\check{R}^\perp)$ is the quadratic dual algebra of $\mathfrak{C}(T,\check{R})$.
    \end{corollary}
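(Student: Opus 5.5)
The plan is to read the isomorphism off the degree-wise description obtained in the proof of Theorem \ref{ThmCodiffProlongationisMaximalProlongation}, and then check that the algebra structures match. By that theorem $\Omega_\mathrm{max}^\bullet = \Omega_\mathrm{codiff}^\bullet$. Its proof gives, for every $n\ge 2$, the identification
\[
(\Omega^1)^{\otimes_B n}/N_\mathrm{max}^{(n)} \cong A\cotimes{\pi_B} \Bigl(V^{\otimes n}\big/\textstyle\sum_{i+j+2=n} V^{\otimes i}\otimes R\otimes V^{\otimes j}\Bigr),
\]
with $V = B^+/I^{(1)}$ and $R = \check{R}^\perp$. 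This uses that the functor $A\cotimes{\pi_B}-$ is exact: by Theorem \ref{ThmTakeuchi} it is one half of an equivalence of abelian categories, so it commutes with quotients. The right-hand quotient is exactly the degree $n$ part $\mathfrak{A}^{(n)}(V,R)$ of the quadratic algebra. In degrees $0$ and $1$ we have $A\cotimes{\pi_B} k \cong B$ (Corollary \ref{CorQHS}) and $A\cotimes{\pi_B} V \cong \Omega^1$ (Hermisson's description). This gives a degreewise isomorphism $\Omega_\mathrm{max}^\bullet \cong A\cotimes{\pi_B}\mathfrak{A}(V,R)$ in ${}_B^A\mathcal{M}_B$.

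Next I check multiplicativity. Both sides are graded algebras in ${}_B^A\mathcal{M}_0$. On the left the product is the one induced from $T_B(\Omega^1)$. On the right the product is transported along the strong monoidal structure of $\Phi$ and $\Psi$ from Proposition \ref{PropMonoidalTakeuchi}, using $(A\cotimes{\pi_B}X)\otimes_B(A\cotimes{\pi_B}Y)\cong A\cotimes{\pi_B}(X\otimes Y)$. The isomorphism $(\Omega^1)^{\otimes_B n}\cong A\cotimes{\pi_B}V^{\otimes n}$ in the theorem's proof is built from exactly this monoidal structure, so it intertwines concatenation in $T_B(\Omega^1)$ with concatenation in the tensor algebra $T(V)$. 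Passing to the quotients by the two ideals, which correspond by the previous paragraph, gives an algebra isomorphism. I would verify this on elementary tensors $[b^1]\otimes\dots\otimes[b^n]$ via the explicit preimages written in the theorem's proof.

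Finally, $\mathfrak{A}(V,\check{R}^\perp)$ is the quadratic dual of $\mathfrak{C}(T,\check{R})$ because the pairing $V\times T\to k$ is non-degenerate between finite dimensional spaces (Lemma \ref{LemTangentSpaceFromIdeal}). Under this pairing $V\cong T^\ast$ and $R=\check{R}^\perp\subseteq V\otimes V$, which is the definition of the quadratic dual in Appendix \ref{SecQuadraticCoalgebras}.

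The main obstacle is the multiplicativity bookkeeping, namely confirming that the monoidal isomorphism of $\Phi$ is compatible with the tensor-algebra multiplication and with the quotient maps. I expect this to reduce to naturality of the monoidal structure of $\Phi$.
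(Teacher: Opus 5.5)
Your proposal is correct and follows the paper's argument. The paper gives no separate proof: the corollary is read off from the proof of Theorem \ref{ThmCodiffProlongationisMaximalProlongation}, where $N_\mathrm{max}^{(n)}$ is identified with $A\cotimes{\pi_B}\bigl(\sum_{i+j+2=n}V^{\otimes i}\otimes \check{R}^\perp\otimes V^{\otimes j}\bigr)$ inside $A\cotimes{\pi_B}V^{\otimes n}$ via the monoidal Takeuchi equivalence. You pass to quotients in the same way, and your extra checks (exactness of $A\cotimes{\pi_B}-$, multiplicativity via the monoidal structure of $\Phi$, and $V\cong T^\ast$ via Lemma \ref{LemTangentSpaceFromIdeal}) just spell out what the paper leaves implicit.
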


    \subsection{Cartan pairs associated to first-order codifferential calculi}

    Besides first-order differential calculi there is also the notion of a Cartan pair, which serves as a replacement for vector fields in noncommutative geometry. The latter concept is due to Borowiec \cite{Bor96, Bor24}. In this section we show that a first-order codifferential calculus gives rise to a Cartan pair.

    \begin{definition}
        \label{DefVectorCalculus}
        A right \textit{Cartan pair} over a $k$-algebra $A$ is a pair $(\mathscr{X}^1, \mathscr{L})$ consisting of an $A$-bimodule $\mathscr{X}^1$ and a linear map $\mathscr{L}\colon \mathscr{X}^1\rightarrow \End_k(A), X\mapsto \mathscr{L}_X$ such that $\mathscr{L}$ is injective and
        \begin{gather}\label{eq:CPAnchorR}
            \mathscr{L}_{aX}(b) = a\mathscr{L}_X(b)\\
            \mathscr{L}_X(ab) = \mathscr{L}_X(a)b + \mathscr{L}_{Xa}(b)
        \end{gather} 
        for all $X \in \mathscr{X}^1$ and $a, b \in A$.
    \end{definition}

    Similarly, one defines left Cartan pairs, replacing \eqref{eq:CPAnchorR} by
    \begin{gather}\label{eq:CPAnchorL}
        \mathscr{L}_{Xa}(b) = \mathscr{L}_X(b)a\\
        \mathscr{L}_{X}(ab) = a\mathscr{L}_X(b) + \mathscr{L}_{bX}(a).
    \end{gather}
    In \cite{Bor96} the map $\mathscr{L}$ is referred to as an action. However, motivated by differential geometry, we find it more fitting to call the latter an anchor morphism as in anchor morphism of a Lie algebroid, or more generally, Lie--Rinehart algebra \cite{Hue21}.

    Let $C$ be a coalgebra and let $A := C^\ast$ be its dual algebra. Then a first-order codifferential calculus on $C$ gives rise to a Cartan pair as follows:

    \begin{proposition}
        \label{PropCPfromFOCC}
        Let $\mathscr{W}_1$ be a first-order codifferential calculus over $C$ with differential $\delta$. Let
        \[
            \mathscr{X}^1 := \mathrm{Hom}^C(C,\mathscr{W}_1)
        \]
        denote the space of right $C$-colinear maps from $C$ to $\mathscr{W}_1$. Define left- and right $A$-action on $\mathscr{X}^1$ via
        \begin{equation}\label{eq:CPLeftAction}
            (a\triangleright X)(c) = a(c_{(1)})X(c_{(2)})
        \end{equation}
        \begin{equation}\label{eq:CPRightAction}
            (X\triangleleft a)(c) = a(X(c)_{(-1)})X(c)_{(0)}
        \end{equation}
        for $a \in A, X \in \mathscr{W}_1$ and $c \in C$. Additionally, we define an anchor morphism $\mathscr{L}\colon \mathscr{X}^1 \rightarrow \mathrm{End}(A)$
        \begin{equation}\label{eq:CPAnchor}
            \mathscr{L}_X(a)(c) = a(\delta(X(c)))
        \end{equation}
        with $a \in A, X\in \mathscr{X}^1$ and $c \in C$. The pair $(\mathscr{X}^1, \mathscr{L})$ is a Cartan pair.
    \end{proposition}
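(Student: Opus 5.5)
We verify the axioms of a right Cartan pair one at a time, working with elements $X \in \mathscr{X}^1$, $a,b \in A = C^\ast$ and $c \in C$.

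\emph{Step 1: the actions.} First we check that \eqref{eq:CPLeftAction} and \eqref{eq:CPRightAction} land in $\mathscr{X}^1$.
For $a\triangleright X$, right colinearity follows from coassociativity of $C$ together with right colinearity of $X$: we have $(a\triangleright X)(c)_{(0)}\otimes(a\triangleright X)(c)_{(1)} = a(c_{(1)})X(c_{(2)})\otimes c_{(3)}$.
For $X\triangleleft a$ we use that $\mathscr{W}_1$ is a bicomodule, so the left coaction commutes with the right one.
Next we check that the two actions commute and are associative. For associativity, note that $(a\triangleright(b\triangleright X))(c) = a(c_{(1)})b(c_{(2)})X(c_{(3)}) = ((ab)\triangleright X)(c)$, since the product of $C^\ast$ is the convolution product. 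The right action is handled the same way, using coassociativity of the left coaction on $\mathscr{W}_1$.

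\emph{Step 2: the anchor is linear over the left action.} Here we compute
\[
\mathscr{L}_{a\triangleright X}(b)(c) = a(c_{(1)})\, b(\delta(X(c_{(2)}))) = (a\mathscr{L}_X(b))(c),
\]
which is immediate from the definitions.

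\emph{Step 3: the Leibniz rule.} Using the coderivation property, for $w = X(c)$ we get
\[
(ab)(\delta(w)) = a(\delta(w_{(0)}))\,b(w_{(1)}) + a(w_{(-1)})\,b(\delta(w_{(0)})).
\]
Since $X$ is right colinear, $w_{(0)}\otimes w_{(1)} = X(c_{(1)})\otimes c_{(2)}$. So the first term equals $\mathscr{L}_X(a)(c_{(1)})\,b(c_{(2)}) = (\mathscr{L}_X(a)b)(c)$. The second term equals $b(\delta((X\triangleleft a)(c))) = \mathscr{L}_{X\triangleleft a}(b)(c)$.
Together these give $\mathscr{L}_X(ab) = \mathscr{L}_X(a)b + \mathscr{L}_{Xa}(b)$.

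\emph{Step 4: injectivity of $\mathscr{L}$.} This is the main obstacle, and we handle it with the injectivity axiom of Definition \ref{DefFOCC}. Suppose $\mathscr{L}_X = 0$.
Then $\mathscr{L}_{X\triangleleft a}(b) = \mathscr{L}_X(ab) - \mathscr{L}_X(a)b = 0$ for all $a,b$. Evaluating at $c$ gives
\[
(a\otimes b)\bigl(X(c)_{(-1)}\otimes\delta(X(c)_{(0)})\bigr) = 0 \quad \text{for all } a,b\in C^\ast.
\]
The functionals $a\otimes b$ with $a,b\in C^\ast$ separate the points of $C\otimes C$. Hence $X(c)_{(-1)}\otimes\delta(X(c)_{(0)}) = 0$.
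By the injectivity axiom this forces $X(c) = 0$ for every $c$, so $X = 0$. This completes the verification that $(\mathscr{X}^1,\mathscr{L})$ is a right Cartan pair.
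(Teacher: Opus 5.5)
Your proof is correct and follows the paper's argument step by step: the actions, $\mathscr{L}_{aX}(b)=a\mathscr{L}_X(b)$, the Leibniz rule via the coderivation property and right colinearity of $X$, and then injectivity. The only variation is in the injectivity step, where you use the Leibniz rule to get $a(X(c)_{(-1)})\,b(\delta(X(c)_{(0)}))=0$ and apply the injectivity axiom of Definition \ref{DefFOCC} as stated, whereas the paper uses $(\mathscr{L}_X(a)b)(c)=a(\delta(X(c)_{(0)}))\,b(X(c)_{(1)})=0$ and needs the easy equivalence with injectivity of $w\mapsto\delta(w_{(0)})\otimes w_{(1)}$; you also check that $\mathscr{X}^1$ is closed under both actions and that they commute, which the paper leaves implicit.
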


    \begin{proof}
        We first show that \eqref{eq:CPLeftAction} and \eqref{eq:CPRightAction} are respectively left- and right actions. Indeed, let $X \in \mathscr{X}^1, a, b\in A$ and $c \in C$, then
        \[
            (1\triangleright X)(c) = \varepsilon(c_{(1)})X(c_{(2)}) = X(c)        
        \]
        \begin{align*}
            (a\triangleright(b\triangleright X))(c) = a(c_{(1)})b(c_{(2)})X(c_{(3)}) = (ab)(c_{(1)})X(c_{(2)}) = (ab\triangleright X)(c)
        \end{align*}
        and also
        \[
            (X\triangleleft 1)(c) = \varepsilon(X(c)_{(-1)})X(c)_{(0)} = X(c)
        \]
        \begin{align*}
            ((X\triangleleft a)\triangleleft b)(c) &= b((X\triangleleft a)(c)_{(-1)})(X\triangleleft a)(c)_{(0)}\\
            &= a(X(c)_{(-1)})b(X(c)_{(-1)(0)})X(c)_{(0)(0)}\\
            &= (ab)(X(c)_{(-1)})X(c)_{(0)}\\
            &= (X\triangleleft ab)(c).
        \end{align*}
        Moreover, 
        \[
            \mathscr{L}_{aX}(b)(c) = b(\delta((aX)(c))) = a(c_{(1)})b(\delta(X(c_{(2)}))) = (a\mathscr{L}_X(b))(c)
        \]
        and since $\delta$ is a coderivation and $X$ right $C$-colinear
        \begin{align*}
            \mathscr{L}_X(ab)(c) &= (ab)(\delta(X(c))) = a(\delta(X(c))_{(1)})b(\delta(X(c))_{(2)})\\
            &= a(\delta(X(c)_{(0)}))b(X(c)_{(1)}) + a(X(c)_{(-1)})b(\delta(X(c)_{(0)}))\\
            &= a(\delta(X(c_{(1)})))b(c_{(2)}) + b(\delta(a(X(c)_{(-1)})X(c)_{(0)}))\\
            &=  (\mathscr{L}_X(a)b)(c) + b(\delta((X\triangleleft a)(c)))\\
            &= (\mathscr{L}_X(a)b)(c) + \mathscr{L}_{Xa}(b)(c).
        \end{align*}
        To prove injectivity of $\mathscr{L}$, let $X\in \mathscr{X}^1$ such that $\mathscr{L}_X(a) = 0$ for all $a \in A$. We have to show that $X(c) = 0$ for all $c \in C$. By the injectivity property of a first-order codifferential calculus, it is enough to show that $\delta(X(c)_{(0)})\otimes X(c)_{(1)} = 0$ (note that since $\delta$ is a coderivation, injectivity of the map $w\mapsto w_{(-1)}\otimes \delta(w_{(0)})$ is equivalent to injectivity of $w\mapsto \delta(w_{(0)})\otimes w_{(1)}$). Indeed, for all $a,b \in A$, we have by right $C$-colinearity of $X$
        \[
            a(\delta(X(c)_{(0)}))b(X(c)_{(1)}) = a(\delta(X(c_{(1)})))b(c_{(2)}) = (\mathscr{L}_X(a)b)(c) = 0.
        \]
    \end{proof}

    \section{Codifferential calculi on quantized flag manifolds}\label{SecCodifferentialCalculionQuantizedFlags}

    In this section, we will study codifferential calculi on certain quotient module coalgebras of quantized universal enveloping algebras. As an application of the results in the previous section, we show that the antiholomorphic Heckenberger--Kolb calculi on the quantized projective spaces have classical dimension.

    \subsection{Quantized flag manifolds}\label{SecQuantizedFlagManifolds}

    We start by fixing Lie theoretic notation. For more explanation of the below, we refer to introductory texts in Lie theory \cite{Hal03,Hum72}. Let $\mathfrak{g}$ be a complex semisimple Lie algebra of rank $r$ and fix a Cartan subalgebra $\mathfrak{h}\subseteq \mathfrak{g}$. We denote by $\Delta\subseteq \mathfrak{h}^\ast$ the corresponding set of roots of $\mathfrak{g}$ and let $E := \mathrm{span}_\R\Delta$. We rescale the inner product of $E$ induced by the Killing form in such a way that $(\alpha, \alpha) = 2$ for the shortest root. Choose a set of simple roots $\Pi := \{\alpha_1, \dots, \alpha_r\}$, and write $(a_{ij})_{1\le i,j\le n}$ for the Cartan matrix as well as $d_i := \frac{(\alpha_i,\alpha_i)}{2}$. Furthermore, we consider the lattice of integral weights
    \[
        P := \{\lambda \in E \mid \forall 1\le i\le r\colon (\lambda, \alpha_i^\vee) \in \Z\}
    \]
    where $\alpha_i^\vee := 2\alpha_i/(\alpha_i,\alpha_i)$ denotes the coroot of $\alpha_i$.
    
    For $q \in \C\setminus \{-1,0,1\}$, we can define the \textit{quantized universal enveloping algebra} $U_q(\mathfrak{g})$ as the $\C$-algebra with generators $E_i, F_i, K_i^{\pm 1}, 1\le i\le r$ and relations
    \begin{equation}
        K_iK_i^{\pm 1} = K_iK_i^{\pm 1} = 1, \quad K_iK_j = K_jK_i
    \end{equation}
    \begin{equation}
        K_iE_jK_i^{-1} = q_i^{a_{ij}}E_j, \quad K_iF_jK_i^{-1} = q_i^{-a_{ij}}F_j
    \end{equation}
    \begin{equation}
        [E_i,F_j] = \delta_{ij}\frac{1}{q_i - q_i^{-1}}(K_i-K_i^{-1})
    \end{equation}
    \begin{equation}
        \sum_{\ell = 0}^{1-a_{ij}}(-1)^{\ell}
        \left[\begin{matrix}
        1-a_{ij}\\\ell
        \end{matrix}\right]_{q_i}E_i^{1-a_{ij}-\ell}E_jE_i^{\ell}
    \end{equation}
    \begin{equation}
        \sum_{\ell = 0}^{1-a_{ij}}(-1)^{\ell}
        \left[\begin{matrix}
        1-a_{ij}\\\ell
        \end{matrix}\right]_{q_i}F_i^{1-a_{ij}-\ell}F_jF_i^{\ell}
    \end{equation}

    Here we made use of $q$-integers. For an arbitrary complex number $q$ such that $q\notin\{0,1,-1\}$ and $m,n\in \Z_{\ge 0}$, one  defines
    \[
        [n]_q := \frac{q^{n}-q^{-n}}{q-q^{-1}}
    \]
    \[
        [n]_q! := \prod_{i=1}^n [i]_q
    \]
    and
    \[
        \left[\begin{matrix}
            m\\n
        \end{matrix}\right]_q := \frac{[m]_q!}{[n]_q![m-n]_q!}.
    \]

    The quantized universal enveloping algebra $U_q(\mathfrak{g})$ is endowed with the structure of Hopf algebra \cite[Proposition 6.5]{KS97} by letting
    \begin{equation}
        \Delta(E_i) = E_i\otimes K_i + 1\otimes E_i, \quad \Delta(F_i) = F_i\otimes 1 + K_i^{-1}\otimes F_i, \quad \Delta(K_i^{\pm 1}) = K_i^{\pm 1}\otimes K_i^{\pm 1}
    \end{equation}
    \begin{equation}
        \varepsilon(E_i) = \varepsilon(F_i) = 0, \quad \varepsilon(K_i^{\pm}) = 1
    \end{equation}
    \begin{equation}
        S(E_i) = -E_iK_i^{-1}, \quad S(F_i) = -K_iF_i, \quad S(K_i^{\pm 1}) = K_i^{\mp 1}.
    \end{equation}
    We remark that by \cite[Proposition 6.6]{KS97}, the antipode of $U_q(\mathfrak{g})$ is invertible.

    For a fixed subset of simple roots $S\subseteq \Pi$, we consider the Levi subalgebra $U_q(\mathfrak{l}_S)$, which is the subalgebra of $U_q(\mathfrak{g})$ generated by the set $\{E_i, F_i, K_j^{\pm 1}\mid \alpha_i\in S, 1\le j \le r\}$. The following enables us to use the tools developed in section \ref{SecEquivariantCocalculi}.
    
    \begin{proposition}
        The quantized universal enveloping algebra $U_q(\mathfrak{g})$ is faithfully flat as a left- and right $U_q(\mathfrak{l}_S)$-module.
    \end{proposition}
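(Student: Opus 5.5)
We would prove that $U := U_q(\mathfrak{g})$ is in fact \emph{free} as a left and as a right $H := U_q(\mathfrak{l}_S)$-module. Since $H$ contains the full torus generated by the $K_j^{\pm 1}$, freeness implies faithful flatness: a free module over a nonzero ring is faithfully flat, because $U\otimes_H M \cong M^{(I)}$ vanishes only if $M = 0$, and exactness is preserved.

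The tool is a PBW-type decomposition. Let $\Delta_S^+$ be the positive roots in the span of $S$. Choose a reduced expression of the longest element $w_0 = w_{0,S}\,w^S$ that starts with a reduced expression of the longest element $w_{0,S}$ of the parabolic subgroup $W_S$. Lusztig's braid group action then produces root vectors $E_\beta$ and $F_\beta$ for $\beta \in \Delta^+$. The roots in $\Delta_S^+$ come from the initial segment, so their root vectors lie in $H$. Let $U_q(\mathfrak{u}_S^{\pm})$ be the span of the ordered monomials in the $E_\beta$, respectively $F_\beta$, with $\beta \in \Delta^+ \setminus \Delta^+_S$; these are subalgebras by the Levendorskii--Soibelman convexity property. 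Combining the triangular decomposition $U \cong U^- \otimes U^0 \otimes U^+$ with the PBW factorizations $U^+ \cong U_q(\mathfrak{l}_S)^+ \otimes U_q(\mathfrak{u}_S^+)$ and $U^- \cong U_q(\mathfrak{u}_S^-) \otimes U_q(\mathfrak{l}_S)^-$, we would show that multiplication gives a linear isomorphism
\[
U_q(\mathfrak{u}_S^-)\otimes H\otimes U_q(\mathfrak{u}_S^+)\rightarrow U .
\]
From this we would deduce that
\[
U_q(\mathfrak{u}_S^-)\otimes U_q(\mathfrak{u}_S^+)\otimes H \rightarrow U
\]
is an isomorphism as well. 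Here we would use that $H$ normalizes the nilradical parts modulo lower terms: the adjoint action of $U_q(\mathfrak{l}_S)$ preserves $U_q(\mathfrak{u}_S^{+})$ and $U_q(\mathfrak{u}_S^{-})$, a standard fact for quantum parabolics that follows from convexity. A filtration argument by weight and degree then lets us move $H$ to the right. The result exhibits $U$ as a free right $H$-module with basis given by the ordered monomials in $U_q(\mathfrak{u}_S^-)U_q(\mathfrak{u}_S^+)$. The symmetric argument, or applying the invertible antipode (an anti-automorphism that maps $H$ onto $H$), gives freeness as a left $H$-module.

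The main obstacle is the second isomorphism, namely commuting $H$ past $U_q(\mathfrak{u}_S^+)$ or $U_q(\mathfrak{u}_S^-)$ while keeping a basis. If that step becomes messy, we would instead avoid it and invoke a general theorem. Masuoka's result \cite{Mas91} (also Schneider's) states that a Hopf algebra is faithfully flat over a right coideal subalgebra under suitable hypotheses, for instance when the subalgebra contains the coradical; $H$ does contain the coradical $k[K_j^{\pm1}]$ of $U$, since $U$ is pointed with group-likes generated by the $K_j$. Since $H$ is a sub Hopf algebra of the pointed Hopf algebra $U$ containing its coradical, this would give faithful flatness on both sides directly. We would present the direct PBW argument as primary and cite the general result as a check.
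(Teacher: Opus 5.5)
Your argument is correct. The fallback you describe is exactly the paper's proof. The paper notes that $U_q(\mathfrak{l}_S)$ is a Hopf subalgebra, and that $U_q(\mathfrak{g})$ is pointed because it is generated by group-like and skew-primitive elements \cite[Corollary 5.1.14]{Rad12}; hence its coradical $k[K_j^{\pm 1}]$ is cocommutative. It then quotes Masuoka's theorem \cite{Mas91} that a Hopf algebra with cocommutative coradical is faithfully flat over its Hopf subalgebras. Your version, via coideal subalgebras containing the coradical, applies equally, since $H$ is a Hopf subalgebra containing $k[K_j^{\pm1}]$.

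Your primary PBW route is genuinely different. It proves the stronger statement that $U_q(\mathfrak{g})$ is \emph{free} on both sides, with an explicit basis of ordered root-vector monomials, which fits well with Proposition \ref{PropPBWQuotient}. The cost is that it imports much more structure theory, some of which you assert rather than prove:
\begin{itemize}
\item PBW factorizations adapted to the parabolic, with the Levi factor of $U^-$ on the correct side (this may require reversing the order of the $F_\beta$);
\item the Levendorskii--Soibelman subalgebra property;
\item the $U_q(\mathfrak{l}_S)$-stability of the quantum nilradical.
\end{itemize}
If you pursue this route, two points need care. First, with the paper's coproduct $\Delta(E_i)=E_i\otimes K_i+1\otimes E_i$, the left adjoint action $h_{(1)}vS(h_{(2)})$ sends $E_j$ to $(E_iE_j-E_jE_i)K_i^{-1}\notin U^+$ when $a_{ij}\neq 0$. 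So the stability must be stated for $\mathrm{ad}_r(h)(v)=S(h_{(1)})vh_{(2)}$, with the nilradical chosen to match. Second, once that stability holds, no filtration argument is needed. The map $v\otimes h\mapsto h_{(1)}\otimes \mathrm{ad}_r(h_{(2)})(v)$ is a bijection $V\otimes H\to H\otimes V$, with inverse $h\otimes v\mapsto \mathrm{ad}_r(S^{-1}(h_{(2)}))(v)\otimes h_{(1)}$, and it intertwines the two multiplication maps.

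A small correction: any nonzero free module is faithfully flat, so the torus plays no role in that step. If freeness is what you are after, Radford's theorem that pointed Hopf algebras are free over all Hopf subalgebras gives it directly, with no PBW work.
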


    \begin{proof}
        From the definition of the coproduct and antipode on $U_q(\mathfrak{g})$ it follows that $U_q(\mathfrak{l}_S)$ is a Hopf subalgebra. It also follows that $U_q(\mathfrak{g})$ is generated by group-like and skew primitive elements, thus by \cite[Corollary 5.1.14]{Rad12} it follows that $U_q(\mathfrak{g})$ is pointed. In particular, the coradical $U_q(\mathfrak{g})_0$ is cocommutative and the assertion follows from the main result in \cite{Mas91}.
    \end{proof}

    For simplicity, we will from now on assume that $q$ is not a root of unity. Let $\mathcal{C}$ denote the full subcategory of the category of finite dimensional $U_q(\mathfrak{g})$-modules $M$ that admit a decomposition
    \[
        M = \bigoplus_{\lambda \in P} M[\lambda]
    \]
    where $M[\lambda] := \{m\in M \mid \forall 1\le i\le r \colon K_im = q^{(\alpha_i,\lambda)}m\}$ denotes the weight space of the weight $\lambda$. Note that $\mathcal{C}$ is a tensor subcategory of ${}_{U_q(\mathfrak{g})}\mathcal{M}$. Indeed, it is immediate that $\mathcal{C}$ is closed under direct sums, and for tensor products, this follows directly from the way how the coproducts of the $K_i$'s are defined. The category $\mathcal{C}$ is also closed under duals with respect to $S$ by verifying that
    \[
        M^\ast = \bigoplus_{\lambda \in P} M^\ast[-\lambda].
    \]
    Note that one gets the same decomposition if one considers the dual of $M$ with respect to $S^{-1}$. In other words, $\mathcal{C}$ is also closed under taking dual with respect to $S^{-1}$.

    \begin{definition}
        \label{DefQuantizedFlagManifold}
        We define the $q$-deformed coordinate ring $\mathcal{O}_q(G)$ as
        \[
            \mathcal{O}_q(G) := U_q(\mathfrak{g})_\mathcal{C}^\circ.
        \]
        In addition, for a subset $S\subseteq \Pi$ we define the \textit{quantized flag manifold}
        \[
            \mathcal{O}_q(G/L_S) := {}^{U_q(\mathfrak{l}_S)}\mathcal{O}_q(G) = \{b\in \mathcal{O}_q(G)\mid \forall X\in U_q(\mathfrak{l}_S)\mid b_{(1)}\langle b_{(2)}, X\rangle = \varepsilon(X)b\}.
        \]
    \end{definition}

    Since the antipode of $U_q(\mathfrak{g})$ is invertible and $\mathcal{C}$ is closed under taking duals with respect to $S$ and $S^{-1}$, the antipode of $\mathcal{O}_q(G)$ is bijective as well (see the discussion in Appendix \ref{SecFinDuals}). Moreover, $\mathcal{O}_q(G/L_S)$ is a left coideal subalgebra which $O_q(G)$ is faithfully flat over. The last assertion follows from Theorem \ref{ThmMuellerSchneider} using that every $U_q(\mathfrak{l}_S)$-module that arises as the restriction of a module in $\mathcal{C}$ completely reducible. In this situation, we consider the quotient coalgebra
    \[
        U_q^c(\mathfrak{g}/\mathfrak{l}_S) := U_q(\mathfrak{g})\otimes_{U_q(\mathfrak{l}_S)} \C_\mathrm{triv}.
    \]
    By Lemma \ref{LemFinDualofC} we have $\mathcal{O}_q(G/L_S)\cong U_q^c(\mathfrak{g}/\mathfrak{l}_S)_\mathcal{C}^\circ$. Here, the latter even agrees with the full finitary dual $U_q^c(\mathfrak{g}/\mathfrak{l}_S)^\circ$. Indeed, the quotient $U_q^c(\mathfrak{g}/\mathfrak{l}_S)$ is spanned by the images under the projection $U_q(\mathfrak{g})\twoheadrightarrow U_q^c(\mathfrak{g}/\mathfrak{l}_S)$ of monomials in $E_i$'s and $F_i$'s for $1\le i\le r$. If $v$ is the image of any such monomial in $U_q^c(\mathfrak{g}/\mathfrak{l}_S)$, then each $K_i$ acts on $v$ via some scalar of the form $q^{(\alpha_i,\lambda)}$, where $\lambda \in P$. Thus, every finite dimensional $U_q(\mathfrak{g})$-quotient of $U_q^c(\mathfrak{g}/\mathfrak{l}_S)$ is already in $\mathcal{C}$.

    \begin{remark}
        Here we use $U_q^c(-)$ in analogy with $\mathcal{O}_q(-)$ on the quantized coordinate ring-side. The superscript $c$ is used to avoid confusions with the quantized universal enveloping algebra, and to emphasize the coalgebra structure.
    \end{remark}

    \subsection{Poincar\'e--Birkhoff--Witt basis for quotient coalgebras}

    We want to give an explicit basis for the quotient coalgebra $U_q^c(\mathfrak{g}/\mathfrak{l}_S)$ of a quantized flag manifold. First let us recall the PBW theorem for $U_q(\mathfrak{g})$. For details, we refer to \cite[Sec. 6.2]{KS97}. Let $W$ be the Weyl group of $\mathfrak{g}$ and denote by $w_0$ its longest element. Recall that $W$ is generated by involutions $s_1, \dots, s_r$ corresponding to the choice of simple roots $\alpha_1, \dots, \alpha_r$. Any choice of a reduced decomposition (this is, the number $n$ is the smallest non-negative integer such that $w_0$ admits a decomposition as below).
    \[
        w_0 = s_{i_n}\dots s_{i_1}
    \]
    gives rise to elements $E_{\beta_1},\dots, E_{\beta_n}, F_{\beta_1},\dots,F_{\beta_n} \in U_q(\mathfrak{g})$ called \textit{root vectors}. In fact, the assignment $i\mapsto \beta_i$ is a bijection between the set $\{1,\dots, n\}$ and the positive roots of $\mathfrak{g}$. By taking certain monomials in the root vectors, we obtain a basis of $U_q(\mathfrak{g})$. We adapt the formulation of \cite[Theorem 24, Chap. 6]{KS97}.

    \begin{theorem}
        \label{ThmPBW}
        The following elements form a vector space basis of $U_q(\mathfrak{g})$:
        \[
            F_{\beta_1}^{l_1}\dots F_{\beta_n}^{l_n}K_1^{m_1}\dots K_r^{m_r}E_{\beta_1}^{t_1}\dots E_{\beta_n}^{t_n}
        \]
        where $l_j, t_j \in \Z_{\ge 0}$ for $1\le j\le n$ and $m_j \in \Z$ for $1\le j \le r$.
    \end{theorem}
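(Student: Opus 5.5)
The plan is to follow the standard route for the PBW theorem for $U_q(\mathfrak{g})$, as in \cite[Sec.~6.2]{KS97}, in two parts: spanning and linear independence.

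First, the triangular decomposition. The multiplication map
\[
U_q(\mathfrak{n}^-)\otimes U_q^0\otimes U_q(\mathfrak{n}^+)\to U_q(\mathfrak{g})
\]
is a linear isomorphism. Here $U_q(\mathfrak{n}^\pm)$ is the subalgebra generated by the $E_i$ (respectively $F_i$), and $U_q^0$ is the Laurent polynomial algebra in the $K_j$. This reduces the theorem to the statement that the Laurent monomials $K_1^{m_1}\cdots K_r^{m_r}$ form a basis of $U_q^0$, and that the ordered monomials $E_{\beta_1}^{t_1}\cdots E_{\beta_n}^{t_n}$ (respectively in the $F_\beta$) form a basis of $U_q(\mathfrak{n}^+)$ (respectively $U_q(\mathfrak{n}^-)$).

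\emph{Spanning of the triangular product.} Use the commutation relations $K_iE_jK_i^{-1}=q_i^{a_{ij}}E_j$ (and the analogue for $F_j$) together with $[E_i,F_j]=\delta_{ij}(K_i-K_i^{-1})/(q_i-q_i^{-1})$. These let us move every $F$ to the left and every $E$ to the right, which shows the triangular product spans.

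\emph{Injectivity of the triangular product.} Construct enough representations, for instance Verma modules $M(\lambda)$ over $U_q(\mathfrak{b}^+)$, or act on a suitable tensor algebra. Then a weight argument separates the $U_q^0$-coefficients, since $q$ is not a root of unity.

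Second, the root vectors. Define $E_{\beta_k}:=T_{i_n}\cdots T_{i_{k+1}}(E_{i_k})$ via Lusztig's braid automorphisms $T_i$, with the appropriate ordering convention for the reduced word of $w_0$; define the $F_{\beta_k}$ analogously. Two inputs are needed. The first is that each $E_{\beta_k}$ lies in $U_q(\mathfrak{n}^+)$ and has weight $\beta_k$. The second is the Levendorskii--Soibelman convexity property: for $i<j$, the $q$-commutator of $E_{\beta_i}$ and $E_{\beta_j}$ is a linear combination of ordered monomials in $E_{\beta_{i+1}},\dots,E_{\beta_{j-1}}$. Given this, an induction on the length of a monomial, with ties broken by the ordering, shows that ordered monomials span $U_q(\mathfrak{n}^+)$.

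For linear independence, compare dimensions of weight spaces. The number of ordered monomials of weight $\mu$ equals the Kostant partition function $p(\mu)$. I would bound $\dim U_q(\mathfrak{n}^+)_\mu$ from below by $p(\mu)$ in one of two ways. One option is to use the nondegenerate Drinfeld--Rosso pairing between $U_q(\mathfrak{b}^+)$ and $U_q(\mathfrak{b}^-)$, on which the PBW monomials are orthogonal with nonzero norms computable in rank one. The other is a specialization argument $q\to1$, which recovers the classical PBW theorem for $U(\mathfrak{n}^+)$. The $F$-side then follows by applying the Cartan anti-involution, or the algebra automorphism $\omega$ exchanging $E_i\leftrightarrow F_i$ and $K_i\leftrightarrow K_i^{-1}$.

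The main obstacle is the convexity property together with the independence step: both rely on detailed braid-group computations. In practice I would simply import these from \cite[Theorem 24, Chap.~6]{KS97}, as the statement itself indicates, since the paper uses the theorem only as a cited tool.
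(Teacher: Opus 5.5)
Your proposal is correct and ends where the paper does: the paper gives no proof of its own and quotes the PBW theorem from \cite[Theorem 24, Chap.~6]{KS97}. Your outline (triangular decomposition, Lusztig root vectors, Levendorskii--Soibelman relations, and independence via the Drinfeld--Rosso pairing) is the standard argument behind that reference, with one loosely worded step: the Levendorskii--Soibelman correction terms are controlled by weight but not by length, so the spanning argument should induct on $j-i$ together with the exponents of $E_{\beta_i}$ and $E_{\beta_j}$ (or use a De Concini--Kac filtration) rather than on monomial length.
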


    In the sequel, we assume for the sake of simplicity that $S$ is of the form $S\setminus\{\alpha_i\}$, where $\alpha_i$ is a simple root which appears in any positive root with coefficient at most one in the corresponding decomposition into simple roots. In other words, we only consider irreducible (quantized) flag manifolds from now on. Following \cite{HK04}, we write
    \[
        \Delta_S^+ := \mathrm{span}_\Z S\cap \Delta^+, \quad \overline{\Delta_S^+} := \Delta^+\setminus \Delta_S^+.
    \]
    In addition, we write $\overline{\Delta_S^+} = \{\beta_1', \dots, \beta_M'\}$, where $M = |\overline{\Delta_S^+}|$.

    \begin{proposition}
        \label{PropPBWQuotient}
        The images of the elements
        \[
            E_{\beta_1'}^{l_1}\dots E_{\beta_M'}^{l_M}F_{\beta_1'}^{m_1}\dots F_{\beta_M'}^{m_M}, \quad l_j, m_j \in\Z_{\ge 0}
        \]
        under the projection $U_q(\mathfrak{g})\twoheadrightarrow U_q^c(\mathfrak{g}/\mathfrak{l}_S)$ form a basis of $U_q^c(\mathfrak{g}/\mathfrak{l}_S)$.
    \end{proposition}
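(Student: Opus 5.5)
We would derive the statement from the PBW basis of Theorem \ref{ThmPBW}, using a PBW basis that is adapted to the Levi factor. The first step is to choose the reduced decomposition of $w_0$ compatibly with $S$. Write $w_0 = w_0^S w_S$, where $w_S$ is the longest element of the parabolic subgroup $W_S$ and $w_0^S$ is the longest minimal coset representative. Then take a reduced expression of $w_0$ that is the concatenation of reduced expressions of these two factors. With such a choice, the roots $\beta_j$ with $j$ in one block are exactly $\Delta_S^+$, and those in the other block are $\overline{\Delta_S^+}$. Moreover, the root vectors for roots in $\Delta_S^+$ lie in $U_q(\mathfrak{l}_S)$, since they are obtained from Lusztig automorphisms $T_i$ with $\alpha_i\in S$ applied to generators of $U_q(\mathfrak{l}_S)$. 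Since the order in which the root vectors appear can be reversed by using the reversed reduced word, we may also arrange the $S$-block on whichever side we need. By \cite[Sec. 6.2]{KS97}, the PBW theorem holds for any reduced decomposition, and it also holds with the factors permuted as $E$-part, $F$-part, $K$-part up to triangular rearrangement.

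The second step is to produce a factorization $U_q(\mathfrak{g}) \cong N\otimes U_q(\mathfrak{l}_S)$ as a right $U_q(\mathfrak{l}_S)$-module. Here $N$ is the span of the monomials $E_{\beta_1'}^{l_1}\cdots E_{\beta_M'}^{l_M}F_{\beta_1'}^{m_1}\cdots F_{\beta_M'}^{m_M}$. The plan is to show that multiplication $N\otimes U_q(\mathfrak{l}_S)\to U_q(\mathfrak{g})$ is bijective. The key input is that, for the adapted order, the $E$-monomials factor as $U^+ \cong U^+[\overline{\Delta_S^+}]\otimes U^+_S$, where $U^+[\overline{\Delta_S^+}]$ is spanned by ordered monomials in the $E_{\beta'}$ and $U^+_S$ by ordered monomials in root vectors of $\Delta_S^+$. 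The analogous factorization holds for $U^-$. We would then use triangular decomposition $U_q(\mathfrak{g})\cong U^+\otimes U^-\otimes U^0$. Together with the fact that $U^+[\overline{\Delta_S^+}]$ and $U^-[\overline{\Delta_S^+}]$ are stable under the adjoint action of $U_q(\mathfrak{l}_S)$ up to lower terms (they are the quantum nilradicals), this lets us move the Levi parts to the right. A filtration by degree in the $E$'s and $F$'s, with associated graded essentially $U^+\otimes U^-\otimes U^0$, shows that the monomials $n\cdot \ell$, with $n$ in our spanning set of $N$ and $\ell$ a PBW monomial of $U_q(\mathfrak{l}_S)$, form a basis. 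The reason is that their leading terms are, up to nonzero scalars and $K$-factors, exactly the PBW basis.

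The final step is to deduce the statement from this factorization. Given $U_q(\mathfrak{g}) = N\otimes U_q(\mathfrak{l}_S)$ freely, we obtain $U_q(\mathfrak{g})\otimes_{U_q(\mathfrak{l}_S)}\C_{\mathrm{triv}} \cong N$, and the stated monomials map to a basis.

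We expect the main obstacle to be the second step, specifically commuting $F_{\beta'}$'s past the Levi root vectors and $E_{\beta'}$'s past the Levi $F$'s while keeping track of the fact that the correction terms stay in $N\cdot U_q(\mathfrak{l}_S)$ of lower filtration degree. The first approach we would try is a degree filtration combined with the Levendorskii–Soibelman convexity property of the root vectors.
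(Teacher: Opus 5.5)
The paper gives no argument of its own here; it simply cites \cite[Proposition 4]{HK04}. Your proposal replaces that citation with a self-contained proof, and its core is correct.

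Write $E_{\overline{\Delta}}^{\mathbf{l}}$, $E_S^{\mathbf{b}}$, $F_{\overline{\Delta}}^{\mathbf{m}}$, $F_S^{\mathbf{a}}$ for the ordered monomials in the nilradical and Levi root vectors. Choose the reduced word so that the roots of $\Delta_S^+$ form the \emph{last} block. Then the PBW theorem says that the $E_{\overline{\Delta}}^{\mathbf{l}}E_S^{\mathbf{b}}$ form a basis of $U^+$ and the $F_{\overline{\Delta}}^{\mathbf{m}}F_S^{\mathbf{a}}$ a basis of $U^-$.

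Now filter by total degree in the $E_i,F_i$. In $\mathrm{gr}\,U_q(\mathfrak{g})$ every $E_i$ commutes with every $F_j$, and the $K_j$ only produce nonzero scalars. So the symbol of $E_{\overline{\Delta}}^{\mathbf{l}}F_{\overline{\Delta}}^{\mathbf{m}}\cdot F_S^{\mathbf{a}}K^{\mathbf{c}}E_S^{\mathbf{b}}$ is a nonzero multiple of $E_{\overline{\Delta}}^{\mathbf{l}}E_S^{\mathbf{b}}F_{\overline{\Delta}}^{\mathbf{m}}F_S^{\mathbf{a}}K^{\mathbf{c}}$, and these run through a basis of $\mathrm{gr}\,U_q(\mathfrak{g})$. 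Hence multiplication $N\otimes U_q(\mathfrak{l}_S)\to U_q(\mathfrak{g})$ is bijective, and the statement follows after applying $-\otimes_{U_q(\mathfrak{l}_S)}\C_{\mathrm{triv}}$.

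Consequences for your write-up:
\begin{enumerate}[label=(\roman*)]
\item You need neither the adjoint stability of the quantum nilradicals nor Levendorskii--Soibelman.
\item The obstacle you anticipate does not arise. The only reordering is moving Levi $E$'s past $F$'s and $K$'s, which costs nothing in $\mathrm{gr}$.
\item Your route never uses that $S$ is of irreducible type, so it proves the statement for every $S\subseteq\Pi$. The citation only buys brevity.
\end{enumerate}

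One justification needs repair: reversing a reduced word of $w_0$ does not reverse the order of the roots. If a word has roots $\beta_1,\dots,\beta_n$ (with $\beta_k=s_{i_1}\cdots s_{i_{k-1}}(\alpha_{i_k})$), the reversed word has roots $-w_0\beta_n,\dots,-w_0\beta_1$. So a leading block $\Delta_S^+$ becomes a trailing block $\Delta_{S^*}^+$ with $S^*=-w_0(S)$. For example, for $\mathfrak{sl}_3$ and $S=\{\alpha_2\}$ the Levi-first word $s_2s_1s_2$ is a palindrome.

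The fix is to take a reduced word for $w_0=(w_Sw_0)\,w_{S^*}$. Its first block of roots is $\overline{\Delta_S^+}$ and its last block is $w_Sw_0(\Delta_{S^*}^+)=\Delta_S^+$. The corresponding root vectors are $T_{w_Sw_0}$ applied to root vectors of $U_q(\mathfrak{l}_{S^*})$. These lie in $U_q(\mathfrak{l}_S)$ because $w_Sw_0$ maps the simple roots in $S^*$ bijectively onto those in $S$, so $T_{w_Sw_0}(E_j)=E_{j'}$ and $T_{w_Sw_0}(F_j)=F_{j'}$ whenever $w_Sw_0(\alpha_j)=\alpha_{j'}$. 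Alternatively, keep the Levi-first word and reorder the PBW monomials via Levendorskii--Soibelman.
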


    \begin{proof}
        See \cite[Proposition 4]{HK04}.
    \end{proof}

    In the case of $\mathfrak{g} = \mathfrak{sl}_{r+1}$, there is a more explicit description for the root vectors. Recall that the standard $A_r$-root system \cite[p. 64]{Hum72} is given by
    \[
        \Delta := \{\alpha_{ij} := \varepsilon_i - \varepsilon_j \in \R^{r+1} \mid 1\le i\neq j \le r+1\}.
    \]
    Here, the $\varepsilon_i$'s denote the standard basis vectors of $\R^{r+1}$. As a subset of simple roots, we choose $\Pi = \{\alpha_i := \alpha_{i,i+1}\mid 1\le i\le r\}$. Corresponding to this choice of simple roots, the positive roots are given by the set $\Delta^+ = \{\alpha_{ij}\mid 1\le i<j\le r+1\}$. Recall also, that the Weyl group of $\mathfrak{sl}_{r+1}$ is given by the symmetric group on $\{1, \dots, r+1\}$, with its standard generators given by the transpositions 
    \[
        s_i := (i,i+1), \quad 1\le i \le r
    \]
    (written in cycle notation).
        
    In the following, we write $[X,Y]_c := XY - cYX$, where $c\in \C$ for the twisted commutator of two elements $X,Y$ of an arbitrary $\C$-algebra $A$.

    \begin{proposition}
        \label{PropAnRootVectors}
        For $1\le i<j \le r+1$ write
        \begin{equation}\label{eq:PropAnRootVectors}
            E_{ji} := [\dots [[E_{j-1},E_{j-2}]_{q^{-1}},E_{j-3}]_{q^{-1}},  \dots, E_i]_{q^{-1}}.
        \end{equation}
        Then the $E_{ji}$'s are precisely the positive root vectors of $U_q(\mathfrak{sl}_{r+1})$ corresponding to the reduced decomposition of the longest element of the Weyl group
        \[
            w_0 = (s_r\dots s_1)(s_r\dots s_2) \dots (s_rs_{r-1})s_r.
        \]
    \end{proposition}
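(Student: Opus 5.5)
The plan is to verify the claim directly from the definition of root vectors via Lusztig's braid group automorphisms $T_1,\dots,T_r$ of $U_q(\mathfrak{sl}_{r+1})$, in the convention of \cite[Sec.~6.2]{KS97}. For $w_0 = s_{i_n}\cdots s_{i_1}$, the root vectors are obtained by applying to the simple generator $E_{i_k}$ the braid automorphisms of the letters on one fixed side of $i_k$ in the word. Correspondingly, the root $\beta_k$ is the image of $\alpha_{i_k}$ under the product of the reflections on that same side, and the map $k\mapsto\beta_k$ is the bijection onto $\Delta^+$. The first step is to write the given reduced word explicitly, with $n=r(r+1)/2$. I will group it into blocks $B_m := s_r s_{r-1}\cdots s_m$ for $m=1,\dots,r$, so that $w_0 = B_1B_2\cdots B_r$. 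Then I compute the resulting ordering of the positive roots. Since $s_r\cdots s_{l+1}(\alpha_l)=\alpha_{l,r+1}$, a short computation with the symmetric group action on the $\varepsilon_i$ shows that each letter is sent to a root $\alpha_{ij}$, and every $\alpha_{ij}$ with $i<j$ occurs exactly once. This fixes which word $T_{\dots}$ must be applied to which $E_l$ in order to produce the root vector of weight $\alpha_{ij}$.

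The second step consists of the elementary braid identities needed for the computation. Their precise form depends on the convention for $T_i$, and I will check them against \cite{KS97}. First, $T_k(E_l)=E_l$ whenever $|k-l|\ge 2$. Second, for $|k-l|=1$ we have $T_k(E_l)=[E_k,E_l]_{q^{-1}}$ up to ordering and sign conventions. Third, the braid relation gives $T_kT_l(E_k)=E_l$ for $|k-l|=1$. Using these identities, I will show by induction on $j-i$ that the root vector attached to $\alpha_{ij}$ equals the nested commutator $E_{ji}$ in \eqref{eq:PropAnRootVectors}. The base case $j=i+1$ is handled as follows. The word acting on the relevant $E_l$ either consists only of $T_k$ with $|k-l|\ge 2$, or it can be reduced to that situation using $T_kT_l(E_k)=E_l$. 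In either case the root vector is a simple generator. For the induction step, I will peel off one letter $T_k$ that acts on a root vector which is already known to equal a shorter nested commutator. Since $T_k$ is an algebra automorphism, it commutes with forming twisted commutators. The letter $T_k$ fixes every $E_l$ with $|l-k|\ge 2$, and it converts the one neighbouring generator into $[E_k,E_{k\pm1}]_{q^{-1}}$. This extends the nested commutator by one further step, namely from $E_{j-1,i}$ to $E_{j,i}$.

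The main obstacle is bookkeeping. Because the word is not a single block, the root vector for $\alpha_{ij}$ involves automorphisms from several blocks $B_m$, and these must first be simplified using the braid relations and the triviality of distant $T_k$ before the induction applies cleanly. The other delicate point is consistency of conventions: whether the commutator appears as $[E_{k},E_l]_{q^{-1}}$ or as $[E_l,E_k]_{q}$, and whether $T_i$ or $T_i^{-1}$ is used. My first attempt will be to fix the convention by checking the case $r=2$ by hand, which gives $E_{31}=[E_2,E_1]_{q^{-1}}$, and then to run the induction in the form that matches this case. If the ordering of the word makes the direct induction awkward, an alternative is to rewrite $w_0$ as an equivalent reduced word and use that the root vector of a given root is unchanged under braid moves that preserve its position. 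I regard this as a fallback only.
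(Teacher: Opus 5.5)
The paper gives no argument for this proposition; it only cites \cite[Proposition A.3]{BS25}. Your plan is a genuinely different, self-contained route, a direct computation with Lusztig's automorphisms, and it goes through.

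The bookkeeping you flag as the main obstacle is settled by one lemma: for $p<k\le r$ one has $T_rT_{r-1}\cdots T_p(E_k)=E_{k-1}$. This holds because $T_p,\dots,T_{k-2}$ and $T_{k+1},\dots,T_r$ act trivially on the relevant generator, and $T_kT_{k-1}(E_k)=E_{k-1}$.

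With $B_m=s_r\cdots s_m$, the letter $s_l$ of $B_m$ has root $\alpha_{l-m+1,\,r-m+2}$. Its root vector is $T_{B_1}\cdots T_{B_{m-1}}$ applied to $T_r\cdots T_{l+1}(E_l)$, which is a polynomial in $E_l,\dots,E_r$ with $l\ge m$. By the lemma, the prefix blocks just lower every index by $m-1$. So everything reduces to the elements $T_{j-1}\cdots T_{i+1}(E_i)$, and for these your induction works in either direction:
\begin{itemize}
\item peeling off the innermost letter gives $[\,\cdot\,,E_i]_{q^{-1}}$ applied to the element for $(i+1,j)$;
\item peeling off the outermost letter $T_{j-1}$ replaces the innermost $E_{j-2}$ of $E_{j-1,i}$ by $[E_{j-1},E_{j-2}]_{q^{-1}}$.
\end{itemize}
Two points need care. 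First, state the induction hypothesis for these auxiliary elements, since the words $T_{j-2}\cdots T_{i+1}$ you get by peeling are not prefixes of the given reduced word. Second, extend at the top end. Applying $T_{i-1}$ at the bottom produces brackets such as $[E_3,[E_1,E_2]_{q^{-1}}]_{q^{-1}}$, which is not proportional to $E_{41}$.

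On conventions, you cannot fix the normalization by matching the case $r=2$. The $r=2$ computation tests the statement under a given definition of root vectors; it does not let you choose one. If your normalization gives $T_k(E_l)=\epsilon[E_k,E_l]_{q^{-1}}$ for $|k-l|=1$ with $\epsilon=\pm1$, your computation yields $\epsilon^{\,j-i-1}E_{ji}$ as the root vector of $\alpha_{ij}$. By contrast, $T_kT_{k-1}(E_k)=E_{k-1}$, and hence the lemma above, hold for either sign.

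Normalizations with an extra factor $(-1)^{a_{kl}}$ in the formula for $T_k(E_l)$ give $\epsilon=-1$. There the statement holds only up to sign. That is harmless for the PBW basis and for the use in Lemma \ref{LemRootVectorsInCPr}, but it should be said. With $T_k^{-1}$ in place of $T_k$ you would get mirrored brackets, for example $[E_1,E_2]_{q^{-1}}$ for $r=2$, and the statement fails outright.

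That is what your route buys over the citation: it is self-contained and makes the dependence on conventions explicit. The paper, by contrast, silently inherits the normalization of \cite{BS25}.
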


    \begin{proof}
        See \cite[Proposition A.3]{BS25}.
    \end{proof}

    \subsection{The Podle\'s cocalculus}\label{SecPodlesCocalculus}

    For our first example of a codifferential calculus on a quantized flag manifold, we take $\mathfrak{g} := \mathfrak{sl}_2$, and $S = \emptyset$. Thus, the Levi subalgebra is $U_q(\mathfrak{h}) := U_q(\mathfrak{l}_S) = \langle K^{\pm 1}\rangle$. We write $U_q^c(S^2) := U_q(\mathfrak{sl}_2)\otimes_{U_q(\mathfrak{h})}\C_\mathrm{triv}$. Note that the root vectors for $U_q(\mathfrak{sl}_2)$ are simply the generators $E$ and $F$. Thus, we can give an explicit basis for $U_q^c(S^2)$.

    \begin{corollary}
        \label{CorBasisForPodles}
        The elements
        \[
            [E^mF^n], \quad m,n\in \Z_{\ge 0}
        \]
        form a basis of $U_q^c(S^2)$.
    \end{corollary}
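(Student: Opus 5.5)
This is a direct specialization of Proposition \ref{PropPBWQuotient} to the case $\mathfrak{g} = \mathfrak{sl}_2$ and $S = \emptyset$.

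First I identify the data. For $\mathfrak{sl}_2$ there is a single simple root $\alpha$, which is also the only positive root. The Weyl group is $\{1, s_1\}$ with $w_0 = s_1$, and the root vectors are $E_\beta = E$ and $F_\beta = F$. With $S=\emptyset$ we get $\Delta_S^+ = \emptyset$, hence $\overline{\Delta_S^+} = \{\alpha\}$ and $M = 1$. The Levi subalgebra $U_q(\mathfrak{l}_\emptyset)$ is generated by $K^{\pm1}$ alone, so it equals $U_q(\mathfrak{h})$, and therefore $U_q^c(\mathfrak{g}/\mathfrak{l}_S) = U_q^c(S^2)$. The standing assumption before Proposition \ref{PropPBWQuotient} requires $\alpha$ to occur with coefficient at most one in every positive root. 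This holds trivially, since the only positive root is $\alpha$ itself. (The paper writes $S$ as $\Pi\setminus\{\alpha_i\}$, which here gives $\emptyset$.) Proposition \ref{PropPBWQuotient} then says that the classes $[E^mF^n]$ with $m,n\ge 0$ form a basis.

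As a sanity check independent of \cite{HK04}, I would also verify the statement directly using Theorem \ref{ThmPBW}. The monomials $F^lK^mE^t$ form a basis of $U_q(\mathfrak{sl}_2)$. Commuting $K$ past $E$ and $F$ only rescales, so the monomials $E^aF^bK^m$ also form a basis; this can be seen from the triangular decomposition $U = U^+\otimes U^0\otimes U^-$ in the other order, which follows from Theorem \ref{ThmPBW} together with the antipode or a Chevalley-type anti-automorphism.

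As a right $U_q(\mathfrak{h})$-module, $U_q(\mathfrak{sl}_2)$ is therefore free with basis $\{E^aF^b\}$. It follows that $U\otimes_{U_q(\mathfrak{h})}\C_{\mathrm{triv}}\cong \bigoplus_{a,b}\C\, E^aF^b$, with $K^m$ acting by $1$. This yields the claimed basis.

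The only point needing care is the reordering from $FKE$ form to $EFK$ form, but citing Proposition \ref{PropPBWQuotient} avoids it entirely. The proof is thus essentially a one-line corollary.
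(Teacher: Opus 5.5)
Your proposal is correct and matches the paper: the paper's proof is exactly the specialization of Proposition~\ref{PropPBWQuotient} to $\mathfrak{g}=\mathfrak{sl}_2$, $S=\emptyset$, where $U_q(\mathfrak{l}_S)=U_q(\mathfrak{h})$ and the only root vectors are $E$ and $F$. Your supplementary direct check is also valid and makes the argument independent of \cite{HK04}: you reorder the PBW basis to $E^aF^bK^m$ (via the antipode or a $K$-inverting anti-automorphism), observe that $U_q(\mathfrak{sl}_2)$ is then free as a right $U_q(\mathfrak{h})$-module on $\{E^aF^b\}$, and tensor with $\C_{\mathrm{triv}}$.
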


    \begin{proof}
        This is a special case of Proposition \ref{PropPBWQuotient}.
    \end{proof}
    
    Consider the following elements $e := [E]$ and $f := [F]$, and let $T := \mathrm{span}_\C\{e,f\}$.

    \begin{lemma}
        \label{LemPodlesTangentSpace}
        The subspace $T$ is a quantum tangent space, and moreover $T \in {}_{U_q(\mathfrak{h})}\mathcal{M}^0$.
    \end{lemma}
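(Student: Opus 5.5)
We verify the three conditions of Definition \ref{DefQuantumTangentSpace} directly, and then check that the induced right $C$-coaction \eqref{eq:ReducedRightCoaction} on $T$ is trivial. Throughout, $C = U_q^c(S^2)$, $H = U_q(\mathfrak{h}) = \langle K^{\pm 1}\rangle$, and $[1]$ denotes the class of $1$ in $C$.

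First, $T\subseteq C^+$. The counit of $C$ is induced from that of $U_q(\mathfrak{sl}_2)$, and $\varepsilon(E)=\varepsilon(F)=0$, so $\varepsilon(e)=\varepsilon(f)=0$.

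Second, $HT\subseteq T$. It suffices to treat the generators $K^{\pm1}$. Using $KEK^{-1}=q^2E$ and the fact that $[K^{-1}]=[1]$ in $C=U\otimes_H\C_{\mathrm{triv}}$, we get
\[
K\triangleright e=[KE]=[q^{2}EK]=q^{2}e.
\]
The same argument gives $K\triangleright f=q^{-2}f$, and analogously for $K^{-1}$. So $T$ is $H$-stable, and $T$ is spanned by weight vectors.

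Third, the coproduct condition. The coproduct on $C$ is induced from $U_q(\mathfrak{sl}_2)$ by applying the projection to both factors. From $\Delta(E)=E\otimes K+1\otimes E$ and $[K]=[1]$ we obtain
\[
\Delta(e)=e\otimes[1]+[1]\otimes e.
\]
From $\Delta(F)=F\otimes 1+K^{-1}\otimes F$ and $[K^{-1}]=[1]$ we obtain
\[
\Delta(f)=f\otimes[1]+[1]\otimes f.
\]
Hence $\Delta(T)\subseteq (T\oplus\C[1])\otimes C$, so $T$ is a quantum tangent space.

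Finally, triviality of the right coaction. The reduced coaction is $c\mapsto c_{(1)}^+\otimes c_{(2)}$, where $c^+=c-\varepsilon(c)[1]$. Applying it to $e$: the term $[1]^+$ vanishes and $e^+=e$, so
\[
e\mapsto e\otimes[1],
\]
and likewise $f\mapsto f\otimes[1]$. This coaction is exactly the trivial one defining ${}_H\mathcal{M}^0$, so $T\in{}_{U_q(\mathfrak{h})}\mathcal{M}^0$.

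The only delicate point is well-definedness: one must make sure the identities $[K^{\pm1}]=[1]$ and $[XK]=[X]$ (from $C=U/UH^+$, using $\varepsilon(K)=1$) are applied in the correct factor of each tensor. Here $K$ sits on the right of the first tensor factor for $E$, and $K^{-1}$ alone forms the first factor for $F$, so both reductions are legitimate.
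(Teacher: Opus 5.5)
Your proposal is correct and matches the paper's proof step for step. Both arguments use $\varepsilon(e)=\varepsilon(f)=0$, the weight computation $Ke=[KE]=q^2[EK]=q^2e$ (and $Kf=q^{-2}f$), and the coproducts $\Delta(e)=e\otimes[1]+[1]\otimes e$ and $\Delta(f)=f\otimes[1]+[1]\otimes f$; your explicit reduced-coaction calculation $e\mapsto e^+\otimes[1]+[1]^+\otimes e=e\otimes[1]$ simply spells out what the paper says ``follows at once.''
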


    \begin{proof}
        Since $\varepsilon(e) = \varepsilon(f) = 0$, we have $T\subseteq U_q^c(\mathfrak{g}/\mathfrak{l}_S)^+$. In addition, $U_q(\mathfrak{h})T\subseteq T$, since
        \[
            Ke = [KE] = q^2[EK] = q^2e
        \]
        and similarly $Kf = q^{-2}f$. Lastly, it follows at once that $T \in {}_{U_q(\mathfrak{h})}\mathcal{M}^0$ and that $\Delta(T) \subseteq (T\oplus k[1])\otimes C$, since
        \begin{align*}
            \Delta(e) = [E]\otimes [K] + [1]\otimes [E] = e\otimes [1] + [1]\otimes e
        \end{align*}
        and similarly $\Delta(f) = f\otimes [1] + [1]\otimes f$.
    \end{proof}

    From the above, and Proposition \ref{PropTangentspaceFOCC} it follows that $\mathscr{W}_1^q(S^2) := U_q(\mathfrak{sl}_2)\otimes_{U_q(\mathfrak{l}_S)} T$ together with the codifferential $X\otimes_{U_q(\mathfrak{l}_S)}t\mapsto Xt$ is an equivariant first-order codifferential calculus. Our next goal is to compute its maximal prolongation.

    \begin{proposition}
        \label{PropPodlesCorelations}
        Recall the map
        \[
            \check{\delta}\colon T\otimes T \rightarrow U_q(\mathfrak{sl}_2)\otimes_{U_q(\mathfrak{h})} U_q^c(S^2)^+, \quad [x]\otimes t\mapsto x_{(1)}\otimes_{U_q(\mathfrak{h})}S(x_{(2)})t.
        \]
        Then, in the notation of \ref{PropDegTwoRelationsSupZero}, we have $\check{R} = \mathrm{span}_\C\{e\otimes f - q^2f\otimes e\}$.
    \end{proposition}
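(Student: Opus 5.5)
We compute $\check{\delta}_0$ on the four basis tensors $e\otimes e$, $e\otimes f$, $f\otimes e$, $f\otimes f$ of $T\otimes T$. We then determine the preimage of $U_q(\mathfrak{sl}_2)\otimes_{U_q(\mathfrak{h})} T$ inside $U_q(\mathfrak{sl}_2)\otimes_{U_q(\mathfrak{h})} U_q^c(S^2)^+$.

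For $x = E$ we have $\Delta(E) = E\otimes K + 1\otimes E$ and $S(K)=K^{-1}$, $S(E) = -EK^{-1}$. This gives
\[
\check{\delta}_0(e\otimes t) = E\otimes_{U_q(\mathfrak{h})} K^{-1}t - 1\otimes_{U_q(\mathfrak{h})} EK^{-1}t .
\]
Since $K^{-1}$ acts on $e$ by $q^{-2}$ and on $f$ by $q^{2}$, the first term lies in $U_q(\mathfrak{sl}_2)\otimes_{U_q(\mathfrak{h})} T$. The second term is, up to the scalar $q^{\mp 2}$, $1\otimes [E^2]$ for $t=e$ and $1\otimes [EF]$ for $t=f$. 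Similarly, $\Delta(F) = F\otimes 1 + K^{-1}\otimes F$ and $S(F) = -KF$ give
\[
\check{\delta}_0(f\otimes t) = F\otimes_{U_q(\mathfrak{h})} t - K^{-1}\otimes_{U_q(\mathfrak{h})} KFt .
\]
Moving $K^{-1}$ across $\otimes_{U_q(\mathfrak{h})}$ yields $-1\otimes [Ft]$. Here $[Fe] = [FE] = [EF] - \frac{1}{q-q^{-1}}[K-K^{-1}] = [EF]$, because $[K]=[K^{-1}]=[1]$ in $C$. Also $[Ff] = [F^2]$.

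Modulo $U_q(\mathfrak{sl}_2)\otimes_{U_q(\mathfrak{h})} T$, the values of $\check{\delta}_0$ are therefore
\[
e\otimes e\mapsto -q^{-2}\,1\otimes[E^2],\quad e\otimes f\mapsto -q^{2}\,1\otimes[EF],\quad f\otimes e\mapsto -1\otimes[EF],\quad f\otimes f\mapsto -1\otimes[F^2].
\]
I will double check the scalars in this table, since the sign conventions decide the coefficient $q^2$.

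The combination $e\otimes f - q^2 f\otimes e$ then cancels the $[EF]$ terms and maps into $U_q(\mathfrak{sl}_2)\otimes_{U_q(\mathfrak{h})} T$. To show that $\check{R}$ is no larger, we need $1\otimes[E^2]$, $1\otimes[EF]$, $1\otimes[F^2]$ to be linearly independent modulo $U_q(\mathfrak{sl}_2)\otimes_{U_q(\mathfrak{h})} T$.

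For this, use the isomorphism $U_q(\mathfrak{sl}_2)\otimes_{U_q(\mathfrak{h})}C^+\cong C\otimes C^+$ from Example \ref{ExplTangentSpaceofUniversalFOCC}. Under it, $1\otimes c$ corresponds to $[1]\otimes c$, and the image of $U_q(\mathfrak{sl}_2)\otimes_{U_q(\mathfrak{h})} T$ is the image of $\overline{\chi}$ restricted to $U\otimes_H T$. Apply the projection $\varepsilon\otimes \id\colon C\otimes C^+\to C^+$, which sends $[x_{(1)}]\otimes x_{(2)}t$ to $xt$. This maps $U_q(\mathfrak{sl}_2)\otimes_{U_q(\mathfrak{h})} T$ onto $U_q(\mathfrak{sl}_2)T$, which is not helpful because $U_q(\mathfrak{sl}_2)T$ contains $[E^2]$.

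So a better invariant is needed. Decompose by degree: $C\otimes C^+$ is graded via the PBW basis of Corollary \ref{CorBasisForPodles}, with $[E^mF^n]$ of degree $m+n$. Use the map $\psi\colon C\otimes C^+\to C^+$ given by $c\otimes d\mapsto \varepsilon'(c)d$, where $\varepsilon'$ is a functional picking out the degree-zero coefficient. Alternatively, and more robustly, use the injective map $U\otimes_H T\to C\otimes C$ of Proposition \ref{PropTangentspaceFOCC}. Its image consists of elements $[x_{(1)}]\otimes x_{(2)}t$, whose component in $[1]\otimes C$ has second factor lying in $T$ plus the $\varepsilon$-part.

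Concretely, I will write a general element $\sum_i x_i\otimes t_i$ in PBW form with $x_i=E^aF^b$. I will then show that its $[1]\otimes(\text{degree }2)$ component is a combination of $[E^aF^b]$-type terms that can only reach degree two through $x_i$ of degree one acting on $t_i$. I then check that these contributions are paired with nonzero components $[x_i]\otimes t_i$ of degree $(1,1)$, which $1\otimes[E^2]$ and the like lack.

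This linear-independence step is the main obstacle, and I would attack it by filtering $C\otimes C^+$ by total PBW degree and comparing leading terms. Once it is done, $\check{R}$ is one-dimensional and spanned by $e\otimes f - q^2 f\otimes e$.
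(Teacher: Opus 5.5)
Your computation of $\check{\delta}_0$ on the four basis tensors is correct. The scalars $-q^{-2},-q^{2},-1,-1$ in your table agree with the paper, including $[FE]=[EF]$ in $U_q^c(S^2)$. Your check that $e\otimes f-q^2f\otimes e$ is mapped into $U_q(\mathfrak{sl}_2)\otimes_{U_q(\mathfrak{h})}T$ is also correct, and this is exactly the paper's route.

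The gap is the reverse inclusion. You correctly reduce it to showing that no nonzero combination of $1\otimes[E^2]$, $1\otimes[EF]$, $1\otimes[F^2]$ lies in $U_q(\mathfrak{sl}_2)\otimes_{U_q(\mathfrak{h})}T$. However, none of the tools you propose establishes this:
\begin{enumerate}
\item The functional $\varepsilon'$ extracting the coefficient of $[1]$ is just $\varepsilon$, since $\varepsilon([E^mF^n])=\delta_{m,0}\delta_{n,0}$. So $\psi=\varepsilon\otimes\id$ is the map you had already, rightly, discarded.
\item The $[1]\otimes C$-component of $[x_{(1)}]\otimes x_{(2)}t$ is $xt$, which is not in $T$ in general.
\item The claim that degree-two terms in $[1]\otimes C$ can only come from degree-one $x_i$ is false. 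The module $U_q^c(S^2)$ is only filtered, not graded, under the $U_q(\mathfrak{sl}_2)$-action: for instance $F^3e=[EF^3]+[3]_q[2]_q[F^2]$. Similarly, $F^2e=[EF^2]+[2]_qf$ lets $x_i=F^3$ produce degree-$(1,1)$ terms.
\end{enumerate}
So the leading-term bookkeeping you sketch does not rule out cancellations.

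The missing idea is to use freeness of $U_q(\mathfrak{sl}_2)$ over $U_q(\mathfrak{h})$ directly in $U_q(\mathfrak{sl}_2)\otimes_{U_q(\mathfrak{h})}C^+$.
\begin{itemize}
\item By Theorem \ref{ThmPBW} and $K^mE^t=q^{2mt}E^tK^m$, $U_q(\mathfrak{sl}_2)$ is a free right $U_q(\mathfrak{h})$-module with basis $\{F^lE^t\}_{l,t\ge 0}$.
\item Hence $U_q(\mathfrak{sl}_2)\otimes_{U_q(\mathfrak{h})}M\cong\bigoplus_{l,t}F^lE^t\otimes M$, naturally in the $U_q(\mathfrak{h})$-module $M$. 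Under this, $U_q(\mathfrak{sl}_2)\otimes_{U_q(\mathfrak{h})}T$ corresponds to $\bigoplus_{l,t}F^lE^t\otimes T$.
\item Extracting the coefficient of $F^0E^0=1$ is then a well-defined map $p\colon U_q(\mathfrak{sl}_2)\otimes_{U_q(\mathfrak{h})}C^+\to C^+$. It satisfies $p(U_q(\mathfrak{sl}_2)\otimes_{U_q(\mathfrak{h})}T)\subseteq T$, $p(1\otimes c)=c$ and $p(E\otimes c)=p(F\otimes c)=0$.
\item Apply $p$ to $\check{\delta}_0(\alpha\, e\otimes e+\beta\, e\otimes f+\gamma\, f\otimes e+\mu\, f\otimes f)$ and use your table. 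This gives $-q^{-2}\alpha[E^2]-(q^2\beta+\gamma)[EF]-\mu[F^2]$, which must lie in $T=\mathrm{span}_\C\{[E],[F]\}$.
\item Corollary \ref{CorBasisForPodles} then forces $\alpha=\mu=0$ and $\gamma=-q^2\beta$.
\end{itemize}
This is what the paper's step ``apply $\varepsilon\otimes_{U_q(\mathfrak{h})}\id$ and read off the PBW basis'' amounts to. Taken literally, $\varepsilon\otimes_{U_q(\mathfrak{h})}\id$ is not well defined on $U_q(\mathfrak{sl}_2)\otimes_{U_q(\mathfrak{h})}C^+$: it would send $K\otimes e=q^2(1\otimes e)$ to both $e$ and $q^2e$. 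The free-module decomposition is precisely what makes that step rigorous.
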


    \begin{proof}
        Computing the map $\check{\delta}$ on the basis elements of $T\otimes T$ gives
        \begin{align*}
            \check{\delta}(e\otimes e) &= q^{-2}E\otimes_{U_q(\mathfrak{h})}e - q^{-2}\otimes_{U_q(\mathfrak{h})}[E^2]\\
            \check{\delta}(e\otimes f) &= q^2E\otimes_{U_q(\mathfrak{h})} f - q^2\otimes_{U_q(\mathfrak{h})}[EF]\\
            \check{\delta}(f\otimes e) &= F\otimes_{U_q(\mathfrak{h})}e - 1\otimes_{U_q(\mathfrak{h})}[EF]\\
            \check{\delta}(f\otimes f) &= F\otimes_{U_q(\mathfrak{h})} f - 1\otimes_{U_q(\mathfrak{h})} [F^2].
        \end{align*}
        A sufficient condition for an element $w \in U_q(\mathfrak{sl}_2)\otimes_{U_q(\mathfrak{h})}U_q^c(S^2)^+$ to lie in $\mathscr{W}_1^q(S^2)$ is that $(\varepsilon\otimes_{U_q(\mathfrak{h})} \id)(w) \in T$. The values of the basis elements of $T\otimes T$ under the composition $(\varepsilon\otimes_{U_q(\mathfrak{h})}\id)\circ \check{\delta}$ are
        \[
            -q^{-2}[E^2], \quad -q^2[EF], \quad -[EF], \quad -[F^2].
        \]
        Using Corollary \ref{CorBasisForPodles}, we see that the only elements of $T\otimes T$, that can get mapped to $\mathscr{W}_1^q(S^2)$ are scalar multiples of $e\otimes f - q^2f\otimes e$, and indeed
        \[
            \check{\delta}(e\otimes f-q^2f\otimes e) = q^2E\otimes_{U_q(\mathfrak{h})}f - q^2F\otimes_{U_q(\mathfrak{h})}e \in \mathscr{W}_1^q(S^2).
        \]
    \end{proof}

    The last step in computing the maximal prolongation of $\mathscr{W}_1^q(S^2)$ is to determine the quadratic coalgebra $\mathfrak{C}(T,\check{R})$. If we consider the dual vector space $V$ of $T$ equipped with dual basis $\{e^\ast,f^\ast\}$, the orthogonal complement of $R$ in $V\otimes V$ with respect to the pairing
    \[
        V\otimes V\times T\otimes T\rightarrow \C, \quad (v\otimes w, s\otimes t) \mapsto v(s)w(t)
    \]
    is $\mathrm{span}_\C\{e^\ast\otimes e^\ast, f^\ast\otimes f^\ast, e^\ast\otimes f^\ast + q^{-2}f^\ast\otimes e^\ast\}$. The dual quadratic algebra of $\mathfrak{C}(T,\check{R})$ is therefore
    \[
        \Lambda_{q^{-2}}^\bullet(V) := T(V)/\langle e^\ast\otimes e^\ast, f^\ast\otimes f^\ast, e^\ast\otimes f^\ast + q^{-2}f^\ast\otimes e^\ast\rangle.
    \]
    Which has basis $1, e^\ast, f^\ast, e^\ast\wedge f^\ast$, which can be seen by applying Bergman's diamond lemma \cite{Ber77}. With the help of Lemma \ref{LemDualQuadraticAlgebra}, we conclude that $[1], e, f, e\otimes f - q^2 f\otimes e$ forms a basis of $\mathfrak{C}(T,\check{R})$.
    If we denote by $\C_\lambda$ the one-dimensional $U_q(\mathfrak{h})$-module, where $K$ acts via the scalar $q^\lambda$, and by $W(\lambda)$ the induced module $U_q(\mathfrak{sl}_2)\otimes_{U_q(\mathfrak{h})} \C_\lambda$, where we denote by $w_\lambda$, the vector $1\otimes_{U_q(\mathfrak{h})} 1$, and use the obvious identifications
    \[
        U_q(\mathfrak{g})\otimes_{U_q(\mathfrak{h})}\C e \cong W(2), \quad U_q(\mathfrak{g})\otimes_{U_q(\mathfrak{h})} \C f \cong W(-2), \quad U_q(\mathfrak{g})\otimes_{U_q(\mathfrak{h})} \C 1\cong W(0)
    \]
    as well as the identification
    \[
        W(0) \overset{\sim}{\rightarrow} U_q(\mathfrak{g}) \otimes_{U_q(\mathfrak{h})} \C (e\otimes f - q^2f\otimes e), \quad w_0 \mapsto q^{-2}e\otimes f - f\otimes e
    \]
    then the codifferential calculus explicitly looks as follows:
    \begin{equation}\label{eq:BGGresolutionSL2}\begin{tikzcd}
	&& {W(2)} \\
	0 & {W(0)} & \bigoplus & {W(0)} & 0 \\
	&& {W(-2)}
	\arrow["{\delta_E}", from=1-3, to=2-4]
	\arrow[from=2-1, to=2-2]
	\arrow["{-\delta_F}", from=2-2, to=1-3]
	\arrow["{\delta_E}"', from=2-2, to=3-3]
	\arrow[from=2-4, to=2-5]
	\arrow["{\delta_F}"', from=3-3, to=2-4]
    \end{tikzcd}\end{equation}
    where $\delta_F$ is the map $\delta_F\colon W(\lambda) \rightarrow W(\lambda + 2), w_\lambda \mapsto Fw_{\lambda + 2}$, and $\delta_E\colon W(\lambda)\rightarrow W(\lambda-2)$ is defined analogously. Note that \eqref{eq:BGGresolutionSL2} is exactly the complex given in the introduction of \cite{HK072}.

    \subsection{Antiholomorphic cocalculi on quantized projective spaces}\label{SecAntiholomorphicProjectiveCocalculus}

    We proceed by giving family of higher-rank examples. Let $r \ge 1$ and $\mathfrak{g} = \mathfrak{sl}_{r+1}$. We choose the subset of simple roots $S = \{\alpha_2, \dots, \alpha_r\}$, such that the corresponding Levi subalgebra is $U_q(\mathfrak{l}_S) := \langle K_i^{\pm 1},E_j, F_j\colon 1\le i\le r, 2\le j\le r\rangle$. Let us denote $U_q^c(\C P^r) := U_q^c(\mathfrak{g}/\mathfrak{l}_S)$ and $e_i := [E_iE_{i-1}\dots E_1]\in U_q^c(\C P^r)$ for $1 \le i \le r$. We consider the subspace of $U_q^c(\C P^r)$
    \[
            T^{(0,1)} := \mathrm{span}_\C\{e_1, \dots, e_r\}.
    \]
    In what follows, we will view modules over $U_q(\mathfrak{l}_S)$ as modules over $U_q(\mathfrak{sl}_r)$, using the embedding $U_q(\mathfrak{sl}_r)\rightarrow U_q(\mathfrak{l}_S)$ determined by
    \begin{equation}\label{eq:EmbeddingUqslr}
        E_i\mapsto E_{i+1},\quad F_i\mapsto F_{i+1},\quad K_i \mapsto K_{i+1},\quad 1\le i \le r-1.
    \end{equation}
    When dealing with weights, we will refer to weights of the corresponding representation of $U_q(\mathfrak{sl}_r)$. On that note, recall that the fundamental weights $\varpi_1, \dots,\varpi_{r-1}$ of $\mathfrak{sl}_r$ are determined by the relations
    \[
        (\alpha_i^\vee,\varpi_j) = \delta_{ij}.
    \]

    \begin{lemma}
        \label{LemActiononT01}
        For $1\le i\le r$ and $2\le j \le r$, we have
        \begin{enumerate}[label = (\arabic*)]
            \item
            \[
                K_je_i = \begin{cases}
                    qe_i, \quad j=i\\
                    q^{-1}e_i, \quad j = i+1\\
                    e_i, \quad \text{else}.
                \end{cases}
            \]
            \item
            \[
                E_je_i = \begin{cases}
                    e_{i+1}, \quad j = i+1\\
                    0,\quad \text{else}
                \end{cases}
            \]
            \item
            \[
                F_je_i = \begin{cases}
                    e_{i-1},\quad j = i\\
                    0, \quad \text{else}
                \end{cases}
            \]
            where $e_0 := e_{r+1} := 0$.
        \end{enumerate}
    \end{lemma}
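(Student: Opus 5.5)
The plan is to compute directly in $U_q^c(\C P^r)=U_q(\mathfrak{sl}_{r+1})\otimes_{U_q(\mathfrak{l}_S)}\C_\mathrm{triv}$. We use the defining relations of $U_q(\mathfrak{sl}_{r+1})$ to move generators to the right, and then the fact that $[XK_j]=[X]$, $[XE_j]=[XF_j]=0$ for $j\ge 2$, and $[XK_1]=[X]$. Throughout, $e_i=[E_iE_{i-1}\cdots E_1]$.

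Part (1) is immediate. Since $K_jE_kK_j^{-1}=q^{a_{jk}}E_k$, commuting $K_j$ through the monomial gives the factor $q^{\sum_{k=1}^i a_{jk}}$, after which $K_j$ acts trivially on the class. For $j\ge2$ we have $\sum_{k\le i}a_{jk}$ equal to $2-1=1$ if $j=i$ (the terms $k=i$ and $k=i-1$), equal to $-1$ if $j=i+1$, and equal to $0$ otherwise (either $j>i+1$, or $j<i$ where $-1+2-1=0$). This gives the three cases.

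For (2), when $j=i+1$ we have $E_{i+1}e_i=e_{i+1}$ by definition. In the other cases:
\begin{enumerate}[label=(\alph*)]
\item If $j>i+1$, then $E_j$ commutes with $E_1,\dots,E_i$, so $E_je_i=[E_i\cdots E_1E_j]=0$.
\item If $2\le j\le i$, the task is to show that $[E_jE_i\cdots E_1]=0$. First $E_j$ commutes past $E_i,\dots,E_{j+2}$. This leaves the word $E_jE_{j+1}E_jE_{j-1}\cdots$, or for $j=i$ the word $E_jE_jE_{j-1}\cdots$.
\end{enumerate}
For $j=i$ I would use the Serre relation to rewrite $E_j^2E_{j-1}=(q+q^{-1})E_jE_{j-1}E_j-E_{j-1}E_j^2$. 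In each term some $E_j$ can then be pushed to the right past $E_{j-2},\dots,E_1$, which commute with it, and that term dies in the quotient. For $j<i$ the word $E_jE_{j+1}E_jE_{j-1}$ needs a similar manipulation. The approach is to apply the Serre relation in $E_j,E_{j+1}$ to rewrite $E_jE_{j+1}E_j$ as $(q+q^{-1})^{-1}(E_j^2E_{j+1}+E_{j+1}E_j^2)$, which reduces to the previous case applied to $E_j^2E_{j-1}\cdots E_1$. An alternative is induction on $i$, using that $E_j$ for $j\ge2$ kills $e_{j-1}$-type classes by (a).

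For (3), use $[E_k,F_j]=0$ for $k\ne j$, so $F_j$ commutes with every factor except $E_j$ (if $j\le i$). If $j>i$ or $j=1$ is excluded, then $F_je_i=[E_i\cdots E_1F_j]=0$. If $2\le j\le i$, write
\[
F_jE_iE_{i-1}\cdots E_1=E_i\cdots E_{j+1}\bigl(E_jF_j-\tfrac{K_j-K_j^{-1}}{q-q^{-1}}\bigr)E_{j-1}\cdots E_1 .
\]
The $E_jF_j$ term dies after commuting $F_j$ to the right. In the other term, commute $K_j^{\pm1}$ to the right through $E_{j-1}\cdots E_1$, which picks up $q^{\mp a_{j,j-1}}=q^{\pm1}$, and then $K_j$ acts trivially. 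The resulting scalar is $-(q-q^{-1})/(q-q^{-1})=-1$, which gives $-[E_i\cdots\widehat{E_j}\cdots E_1]$. This is a problem: for $j<i$ the removed factor leaves the word $E_i\cdots E_{j+1}E_{j-1}\cdots E_1$, which by (a)-type commutation equals $E_{j-1}\cdots E_1$ followed by $E_i\cdots E_{j+1}$ on the right, and that is $0$ in the quotient since $j+1\ge2$. This confirms that $F_je_i=0$ unless $j=i$. For $j=i$ we get $\pm e_{i-1}$.

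The main obstacle is this sign and scalar bookkeeping, to match the stated $F_ie_i=e_{i-1}$ exactly, together with the Serre-relation step in (2). I would recheck the commutation sign carefully, since conventions may produce $-e_{i-1}$; if so, a rescaling of the $e_i$ would be needed. I expect the careful computation to yield the stated normalization.
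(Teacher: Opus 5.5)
Your strategy matches the paper's. You commute generators to the right in $U_q(\mathfrak{sl}_{r+1})\otimes_{U_q(\mathfrak{l}_S)}\C_\mathrm{triv}$, use the Serre relations for (2), and use $[E_j,F_j]=(K_j-K_j^{-1})/(q-q^{-1})$ for (3).

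Parts (1) and (2) are correct. Your direct treatment of $2\le j<i$ via the Serre relation in $E_j,E_{j+1}$ is equivalent to the paper's reduction of $j<i-1$ to $j=i-1$. The term $E_j^2E_{j+1}E_{j-1}\cdots E_1$ dies because $E_{j+1}$ commutes to the right.

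Part (3), however, contains a sign error, and as written it proves $F_ie_i=-e_{i-1}$, contradicting the statement. Since $K_jE_kK_j^{-1}=q^{a_{jk}}E_k$, moving $K_j^{\pm1}$ to the right through $E_{j-1}\cdots E_1$ produces $q^{\pm a_{j,j-1}}=q^{\mp1}$, not $q^{\pm1}$. Hence for $j=i$
\[
-\frac{1}{q-q^{-1}}\bigl[(K_i-K_i^{-1})E_{i-1}\cdots E_1\bigr]=-\frac{q^{-1}-q}{q-q^{-1}}\,e_{i-1}=e_{i-1},
\]
so $F_ie_i=+e_{i-1}$, exactly as stated. Your own part (1) already gives this, since $K_ie_{i-1}=q^{-1}e_{i-1}$ (the case $j=(i-1)+1$). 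That is precisely how the paper evaluates this term. Note that the paper's displayed line reads $q^{-1}e_{i-1}+qe_{i-1}$; the plus sign there is a typo for a minus.

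The fallback you mention, rescaling the $e_i$ if the sign came out as $-1$, is not available. The $e_i=[E_i\cdots E_1]$ are fixed by definition. Moreover, the sign is not a normalization issue. By (2), $F_ie_i=F_iE_ie_{i-1}$, and the scalar by which $F_iE_i$ acts on the line $\C e_{i-1}$ is unchanged by any rescaling. So if that scalar were $-1$, no rescaling could make $E_ie_{i-1}=e_i$ and $F_ie_i=e_{i-1}$ hold simultaneously.

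With the exponent corrected, your argument is complete and coincides with the paper's proof.
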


    \begin{proof}
        \textit{(1)}: This follows immediately from the commutation relations 
        \[
            K_iE_i = q^2E_iK_i, K_{i\pm 1}E_i = q^{-1}E_iK_{i\pm 1}
        \]
        and $K_jE_i = E_iK_j$ if $|i-j|\ge 2$. The latter follow since the Cartan matrix for $\mathfrak{sl}_{r+1}$ is given by $a_ii = 2, a_{i,i\pm 1} = -1$ and $a_{ij} = 0$ for $|i-j| \ge 2$, and the fact that $(\alpha,\alpha) = 2$ for all roots of $\mathfrak{sl}_{r+1}$.

        \textit{(2)}: Recall that for $U_q(\mathfrak{sl}_{r+1})$, the Serre relations among the $E_i's$ are explicitly given by
        \[
            E_{i\pm1}^2E_i -[2]_qE_{i\pm 1}E_iE_{i\pm 1} + E_iE_{i\pm 1} = 0
        \]
        and
        \[
            E_iE_j = E_jE_i
        \]
        if $|i-j| \ge 2$. Thus, it is clear that $E_{i+1}e_i = e_{i+1}$ for $1\le i \le r-1$ and $E_je_i = 0$ for $j > i+1$. For $j = i$, we get
        \begin{align*}
            E_ie_i &= [E_i^2E_{i-1}\dots E_1] = [[2]_qE_iE_{i-1}E_i \dots E_1] - [E_{i-1}E_i^2E_{i-2}\dots E_1] = 0.
        \end{align*}
        If $j = i-1$, then
        \begin{align*}
            E_{i-1}e_i &= [E_{i-1}E_i\dots E_1] = \frac{1}{[2]_q}\left([E_{i-1}^2E_iE_{i-2}\dots E_1] + [E_iE_{i-1}E_{i-2}\dots E_1]\right)\\
            &= \frac{1}{[2]_q}E_iE_{i-1}e_{i-1}\\
            &= 0
        \end{align*}
        and lastly, if $j < i-1$, then
        \[
            E_je_i = E_i \dots E_{j+2}E_je_{j+1} = 0
        \]
        by the case $j = i-1$.

        \textit{(3)}: Since $E_i$ and $F_j$ commute if $i\neq j$, it is clear that $F_je_i = 0$ if $j>i$. For $j=i$, we compute using \textit{(1)}
        \begin{align*}
            F_ie_i &= [F_iE_i\dots E_1] = [E_iF_iE_{i-1}\dots E_1] - [[E_i,F_i]E_{i-1}\dots E_i]\\
            &= -\frac{1}{q-q^{-1}}(K_i-K_i^{-1})[E_{i-1}\dots E_1]\\
            &= -\frac{1}{q-q^{-1}}\left(q^{-1}e_{i-1} + qe_{i-1}\right) = e_{i-1}.
        \end{align*}
        Lastly, for $j < i$, we get
        \[
            F_je_i = E_i\dots E_{j+1}F_je_j = E_i\dots E_{j+1}e_{j-1} = 0
        \]
        by \textit{(2)}.
    \end{proof}

    In fact, the elements $e_1, \dots, e_r \in U_q^c(\C P^r)^+$ form a linear independent set.

    \begin{lemma}
        \label{LemRootVectorsInCPr}
        For every $2\le j \le r+1$, the image of $E_{ji}$ (see \ref{PropAnRootVectors}) in $U_q^c(\C P^r)$ under the canonical projection is $e_{j-1}$. In particular the set $\{e_1,\dots, e_r\}$ is linearly independent.
    \end{lemma}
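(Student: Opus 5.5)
The statement concerns $E_{ji} = [\dots[[E_{j-1},E_{j-2}]_{q^{-1}},E_{j-3}]_{q^{-1}},\dots,E_i]_{q^{-1}}$. Only the image of the root vector $E_{j1}$ (i.e.\ $i=1$) can be $e_{j-1}$. For $i\ge 2$ the root $\alpha_{ij}$ lies in $\mathrm{span}_\Z S$, so $E_{ji}\in U_q(\mathfrak{l}_S)$ and its image in $U_q^c(\C P^r)$ is just $\varepsilon(E_{ji})[1]=0$. I therefore read the statement as $[E_{j1}]=e_{j-1}$ for $2\le j\le r+1$, and I would prove this by induction on $j$.

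\emph{Base case and inductive step.} For $j=2$ we have $E_{21}=E_1$, so $[E_{21}]=[E_1]=e_1$. For $j\ge 3$ the nested commutator satisfies the recursion $E_{j1}=[E_{j-1,1}\text{-type expression}]$; more precisely, expanding the outermost brackets I would write $E_{j1}$ as a sum of $E_{j-1}E_{j-2}\cdots E_1$ plus terms in which some $E_k$ with $k\ge 2$ stands to the right of a product involving $E_1$.

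The cleaner route is to expand the whole nested commutator into a sum of signed, $q$-weighted words in $E_1,\dots,E_{j-1}$, each letter appearing exactly once. Each bracket $[X,E_k]_{q^{-1}}=XE_k-q^{-1}E_kX$ either appends $E_k$ on the right or prepends it on the left. Since $E_1$ is the last letter added, every word other than $E_{j-1}\cdots E_1$ has a letter $E_k$ with $k\ge 2$ to the right of $E_1$, or more precisely a rightmost letter $E_k$ with $k\ge2$ (the only word ending in $E_1$ is the one where $E_1$ is appended on the right, which is the top word $E_{j-1}\cdots E_1$ times $E_1$ prepended... ).

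On reflection, the simplest argument is direct. Write $E_{j1}=XE_1-q^{-1}E_1X$ with $X=E_{j2}\in U_q(\mathfrak{l}_S)$. In $C=U/UH^+$ the relation $[E_1X]=\varepsilon(X)[E_1]=0$ holds because $X\in H^+$ (it is a commutator of $E$'s and has counit zero). Hence $[E_{j1}]=[XE_1]=X\cdot e_1$, computed via the left $U$-action. Now $X=E_{j2}$ is built from $E_2,\dots,E_{j-1}$ by the same bracket recursion, and Lemma \ref{LemActiononT01} gives the action of $E_k$ on $e_i$. Expanding $X$ as words and acting on $e_1$, a word acts nonzero only if it applies $E_2,E_3,\dots,E_{j-1}$ in increasing order, i.e.\ the word is $E_{j-1}\cdots E_2$. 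In the recursion $[Y,E_k]_{q^{-1}}=YE_k-q^{-1}E_kY$, the term $E_kY$ acting on $e_1$ first applies $Y$, producing a multiple of $e_{k}$ (or zero), and then $E_k e_k=0$. Inductively, therefore, $X e_1=E_{j-1}\cdots E_2e_1=e_{j-1}$, using $E_{i+1}e_i=e_{i+1}$.

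\emph{Linear independence.} The $e_{j-1}$ are images of the distinct PBW root vectors $E_{\beta'}$ with $\beta'=\alpha_{1j}\in\overline{\Delta_S^+}$. By Proposition \ref{PropPBWQuotient} (with the root vectors of Proposition \ref{PropAnRootVectors}), these images are distinct basis elements of $U_q^c(\C P^r)$, hence linearly independent.

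\emph{Main obstacle.} The step that needs most care is matching the indexing and ordering conventions: that the $E_{j1}$ are exactly the root vectors for $\overline{\Delta_S^+}$ appearing in the PBW quotient basis, and that $\varepsilon(E_{j2})=0$ holds, including the degenerate case $j=2$, where $X$ is absent.
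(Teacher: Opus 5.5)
Your final (``direct'') argument is correct and is essentially the paper's proof. The paper likewise discards $E_1E_{j2}$ because $[E_{j2}]=\varepsilon(E_{j2})[1]=0$, and then shows inductively that $E_{j,k}e_{k-1}=e_{j-1}$ via Lemma \ref{LemActiononT01}, dropping each wrong-order term because $E_{j,k+1}$ (a polynomial in $E_{k+1},\dots,E_{j-1}$) annihilates $e_{k-1}$; it reads off linear independence from Proposition \ref{PropPBWQuotient} exactly as you do, and your reading of the statement as $E_{j1}$ matches the paper's conclusion $[E_{j,1}]=e_{j-1}$.
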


    \begin{proof}
        Solely for the sake of this proof, we let $e_0 := [1]$. Fix $2\le j \le r+1$. We claim that
        \begin{equation}\label{eq:ActionOfRootVectors}
            E_{j,j-i}e_{j-i-1} = e_{j-1}
        \end{equation}
        for all $1\le i < j$. Indeed, if $i = 1$ and $j \ge 3$, then
        \[
            E_{j,j-1}e_{j-2} = E_{j-1}e_{j-2} = e_{j-1}
        \]
        by Lemma \ref{LemActiononT01}, and if $i = 1$ and $j = 2$, then we see directly
        \[
            E_{21}e_0 = E_1[1] = e_1.
        \]
        If $i\ge 1$ and $i+1 < j$, then by induction on $i$
        \begin{align*}
            E_{j,j-i-1}e_{j-i-2} &= [E_{j,j-i},E_{j-i-1}]_{q^{-1}}e_{j-i-2}\\
            &= E_{j,j-1}E_{j-i-1}e_{j-i-2} -q^{-1}E_{j-i-1}E_{j,j-i}e_{j-i-2}\\
            &= E_{j,j-i}e_{j-i-1} = e_{j-1}.
        \end{align*}
        Here, the last step follows by induction, the equality $E_{j-i-1}e_{j-i-2} = e_{j-i-1}$ follows from the case $i = 1$, since $E_{j-i-1} = E_{j-i,j-i-1}$, and $E_{j,j-i}e_{j-i-2} = 0$, since $E_{j,j-i}$ is a sum of monomials in $E_{j-i}, \dots, E_{j-1}$ by \eqref{eq:PropAnRootVectors}, which annihilate $e_{{j-i-2}}$. In the case $j-i-2 = 0$, the latter assertion follows directly from the relations of $U_q^c(\C P^r)$ and for $j-i-2 > 0$ this is a consequence of Lemma \ref{LemActiononT01}.

        We conclude $[E_{j,1}] = E_{j,1}[1] = e_{j-1}$ for all $2\le j \le r+1$. The last assertion follows from Proposition \ref{PropPBWQuotient}.
    \end{proof}

    From Lemma \ref{LemActiononT01} and \ref{LemRootVectorsInCPr} it follows that $T^{(0,1)}$ is an irreducible $U_q(\mathfrak{l}_S)$-submodule of $U_q^c(\C P^r)$. If we restrict the action to $U_q(\mathfrak{sl}_r)$ along the map \eqref{eq:EmbeddingUqslr}, then $T^{(0,1)}\cong V_{\varpi_{r-1}}$, the irreducible $U_q(\mathfrak{sl}_r)$-module of highest weight $\varpi_{r-1}$.

    \begin{proposition}
        \label{PropTangentspaceCPr}
        The subspace $T^{(0,1)}$ is a quantum tangent space such that $T^{(0,1)} \in {}_{U_q(\mathfrak{l}_S)}\mathcal{M}^0$.
    \end{proposition}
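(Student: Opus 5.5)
Following Definition \ref{DefQuantumTangentSpace}, I would verify three things: $T^{(0,1)}\subseteq C^+$, $U_q(\mathfrak{l}_S)T^{(0,1)}\subseteq T^{(0,1)}$, and $\Delta(e_i)\in e_i\otimes[1]+[1]\otimes C$. The last condition gives the coideal property together with triviality of the reduced right coaction $c\mapsto c_{(1)}^+\otimes c_{(2)}$ on $T^{(0,1)}$, which is exactly membership in ${}_{U_q(\mathfrak{l}_S)}\mathcal{M}^0$.

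The first condition is immediate, since $\varepsilon(E_j)=0$ gives $\varepsilon(e_i)=0$. For the second, note that $U_q(\mathfrak{l}_S)$ is generated by $K_j^{\pm1}$ for $1\le j\le r$ and by $E_j,F_j$ for $2\le j\le r$. Lemma \ref{LemActiononT01} already shows that $T^{(0,1)}$ is stable under $E_j,F_j$ for $j\ge 2$ and under $K_j$ for $j\ge 2$. It remains to treat $K_1$. Using $K_1E_1K_1^{-1}=q^2E_1$, $K_1E_2K_1^{-1}=q^{-1}E_2$ and that $K_1$ commutes with $E_j$ for $j\ge3$, together with $[K_1]=[1]$ in $C$, we get $K_1e_i=q^{2-1}e_i=qe_i$ for $i\ge2$ and $K_1e_1=q^2e_1$. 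So $T^{(0,1)}$ is stable under the whole Levi subalgebra.

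The main step is the coproduct. Since $\Delta(E_j)=E_j\otimes K_j+1\otimes E_j$, we have
\[
\Delta(E_i\cdots E_1)=\sum_{I\subseteq\{1,\dots,i\}} (\text{ordered product of }E_j,\ j\in I)\cdot(\text{product of }K_j,\ j\notin I)\otimes(\text{ordered product of }E_j,\ j\notin I).
\]
Here the $K$'s in the first factor appear interspersed, but they can be commuted to the right at the cost of powers of $q$. Projecting the first factor to $C=U/UH^+$ kills all $K$'s, since $[XK_j]=[X]$. The term with $I=\{1,\dots,i\}$ gives $e_i\otimes[1]$ and the term with $I=\emptyset$ gives $[1]\otimes e_i$.

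For a mixed term, I would argue that the first factor vanishes in $C$ whenever $I\neq\{1,\dots,i\}$ and $I\neq\emptyset$. The first factor is a product $E_{j_1}\cdots E_{j_s}$ with $i\ge j_1>\dots>j_s$ (indices in $I$ taken in the given order) followed by $K$'s. If $1\notin I$, this product lies in the subalgebra generated by $E_2,\dots,E_r$, which sits inside $H^+$ up to the $K$'s, so its class is $0$. If $1\in I$, write it as $E_{j_1}\cdots E_{j_{s-1}}E_1$; if $I$ is not an initial segment $\{1,\dots,m\}$, then some $E_{j}$ with $j\ge 2$ acts on an element of the form $e_m$ with $j\neq m+1$, which is zero by Lemma \ref{LemActiononT01}(2) (taking $e_0=[1]$ as in the proof of Lemma \ref{LemRootVectorsInCPr}). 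If $I=\{1,\dots,m\}$ with $0<m<i$, the first factor is $e_m$, but the second factor is $E_i\cdots E_{m+1}$, again in $\{1,\dots,i\}\setminus I$.

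This last case is the expected obstacle, because the mixed term $e_m\otimes[E_i\cdots E_{m+1}]$ survives. The resolution is that the right factor $E_i\cdots E_{m+1}$ consists of Levi generators only, so its image in $C$ is $\varepsilon(\cdot)[1]=0$. Hence all mixed terms vanish, and
\[
\Delta(e_i)=e_i\otimes[1]+[1]\otimes e_i,
\]
in analogy with Lemma \ref{LemPodlesTangentSpace}. This gives both the coideal condition and the trivial reduced right coaction, completing the proof.
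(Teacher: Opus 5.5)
Your proof is correct and follows the paper's route: expand $\Delta(E_i\cdots E_1)$ over subsets, project to $U_q^c(\C P^r)$, and observe that only $e_i\otimes[1]$ and $[1]\otimes e_i$ survive. You also supply the mixed-term case analysis and the $K_1$-stability (not literally covered by Lemma \ref{LemActiononT01}), both of which the paper leaves as ``one can check''; the one slip is harmless: since $\Delta(E_j)=E_j\otimes K_j+1\otimes E_j$, the $K_j$ with $j\in I$ sit in the second tensor factor rather than the first, but they are commuted to the right and killed by the projection there in exactly the same way, so the argument is unchanged.
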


    \begin{proof}
        We have already seen that $U_q(\mathfrak{l}_S)T^{(0,1)}\subseteq T^{(0,1)}$ (Lemma \ref{LemActiononT01}). To see the coproduct property, we compute the coproducts
        \[
            \Delta(E_i\dots E_1)
        \]
        for $1\le i \le r$. Denote by $U_q(\mathfrak{h})$ the subalgebra generated by $K_j^{\pm 1}$ for $1\le j \le r$. Then in the quotient $U_q(\mathfrak{sl}_{r+1})\otimes_{U_q(\mathfrak{h})}\C_\mathrm{triv}$, the coproducts evaluate to
        \[
            \sum_{j = 0}^i\sum_{\sigma \in \mathrm{Sh}(j,i-j)}\lambda(j,\sigma)[E_{\sigma(1)}\dots E_{\sigma(j)}]\otimes [E_{\sigma(j+1)}\dots E_{\sigma(i)}]
        \]
        here $\mathrm{Sh}(j,i-j)$ denotes the set of $(j,i-j)$-shuffels, that is, permutations $\sigma \in S_{i}$ such that $\sigma(1)< \dots < \sigma(j)$ and $\sigma(j+1)< \dots < \sigma(i)$. If we pass further to the quotient $U_q^c(\C P^r)$ then the only summands that survive are $[E_i\dots E_1]\otimes [1]$ and $[1]\otimes [E_i\dots E_1]$, and one can check that the corresponding coefficients $\lambda(j,\sigma)$ are both equal to one. Thus,
        \[
            \Delta(e_i) = e_i\otimes [1] + [1]\otimes e_i
        \]
        for all $1\le i \le r$ and the assertion follows immediately. 
    \end{proof}

    \begin{proposition}
        \label{PropCPrCorelation}
        In the notation of Proposition \ref{PropDegTwoRelationsSupZero} the subset $\check{R}\subseteq T^{(0,1)}\otimes T^{(0,1)}$ is given by
        \[
            \mathrm{span}_\C\{e_i\otimes e_j - qe_j\otimes e_i \mid 1\le i < j\le r\}.
        \]
    \end{proposition}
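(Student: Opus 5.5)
The plan follows the computation for the Podle\'s sphere in Proposition \ref{PropPodlesCorelations}: split the problem into a necessary condition, which cuts $T^{(0,1)}\otimes T^{(0,1)}$ down to the stated span, and a direct verification that this span really lands in $U\otimes_H T^{(0,1)}$. Throughout we write $H:=U_q(\mathfrak{l}_S)$, $U:=U_q(\mathfrak{sl}_{r+1})$, $C:=U_q^c(\C P^r)$ and $T:=T^{(0,1)}$.

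\emph{Step 1: a necessary condition via $\varepsilon$.} The map $\varepsilon\otimes_H\id\colon U\otimes_H C^+\to C^+$ is well defined, because $h$ acts on $C^+$ compatibly with $\varepsilon(h)$. It maps $U\otimes_H T$ onto $T$. Hence if $\check{\delta}_0(\sum_{i,j}c_{ij}\,e_i\otimes e_j)\in U\otimes_H T$, then
\[
\sum_{i,j}c_{ij}\,S(E_i\cdots E_1)\,e_j\in T .
\]
Since $S$ is an antihomomorphism and $S(E_k)=-E_kK_k^{-1}$, the element $S(E_i\cdots E_1)$ is a nonzero scalar multiple of $E_1E_2\cdots E_i\,K^{-1}$, where $K^{-1}$ is some monomial in the $K_k^{-1}$. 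Pushing the $K$'s through with Lemma \ref{LemActiononT01} (1), we get
\[
S(E_i\cdots E_1)\,e_j=\mu_{ij}\,[E_1\cdots E_iE_j\cdots E_1],\qquad \mu_{ij}\neq 0 \text{ explicit}.
\]
Each such element has weight $\alpha_{1,i+1}+\alpha_{1,j+1}$, which is not the weight of any element of $T$ (those weights are single roots $\alpha_{1,k+1}$). Therefore the necessary condition becomes
\[
\sum_{i,j}c_{ij}\,\mu_{ij}\,[E_1\cdots E_iE_j\cdots E_1]=0\quad\text{in } C.
\]

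\emph{Step 2 (main obstacle): rewrite these elements in the PBW basis.} We need to express $[E_1\cdots E_iE_j\cdots E_1]$ in the basis of Proposition \ref{PropPBWQuotient}. The relevant basis elements are $[E_{a+1,1}E_{b+1,1}]$ in the fixed ordering of the root vectors in $\overline{\Delta_S^+}$, as given by Proposition \ref{PropAnRootVectors}. The approach is as follows.
\begin{enumerate}[label=(\roman*)]
\item Use $H^+$-relations, namely that $F_k$ and $K_k-1$ (for $k\ge 2$) act on the right as zero modulo $UH^+$, together with the Serre relations as in Lemma \ref{LemActiononT01}. With these, move letters inside $E_1\cdots E_i$ modulo $UH^+$ and show that
\[
[E_1\cdots E_iE_j\cdots E_1]=\nu_{ij}\,[E_{i+1,1}E_{j+1,1}],\qquad \nu_{ij}\neq0.
\]
The mechanism is that $E_1\cdots E_i$ acting on $e_j$ agrees with the root vector $E_{i+1,1}$ acting on $e_j$, up to a nonzero scalar; this follows from an induction like the one in Lemma \ref{LemRootVectorsInCPr}.
\item Use that the root vectors of the abelian nilradical $q$-commute: $E_{a1}E_{b1}=q^{\pm1}E_{b1}E_{a1}$ for $a<b$. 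This can be checked by induction from \eqref{eq:PropAnRootVectors} and the Serre relations.
\end{enumerate}
Together, (i) and (ii) show the following. The diagonal terms $c_{ii}$ must vanish, since $[E_{i+1,1}^2]$ is a basis element. For each pair $i<j$, the two terms from $(i,j)$ and $(j,i)$ are proportional to the same basis element. Different pairs give linearly independent basis elements. Tracking the scalars $\mu_{ij}\nu_{ij}$ and the $q$-commutation factor, the necessary condition forces $c_{ji}=-q\,c_{ij}$. I would first verify this ratio in the case $r=2$ by hand before doing the general bookkeeping. This step shows that $\check R$ is contained in $\mathrm{span}_\C\{e_i\otimes e_j-qe_j\otimes e_i\mid 1\le i<j\le r\}$.

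\emph{Step 3: sufficiency.} It remains to show $\check{\delta}_0(e_i\otimes e_j-qe_j\otimes e_i)\in U\otimes_H T$. First compute $\Delta(E_i\cdots E_1)$ in $U\otimes U$. By the shuffle expansion from the proof of Proposition \ref{PropTangentspaceCPr}, $x_{(1)}\otimes_H S(x_{(2)})e_j$ is a sum of terms $E_{\sigma}\otimes_H S(E_{\sigma'})K\,e_j$, where $E_\sigma$ and $E_{\sigma'}$ run over complementary shuffled subwords. Using Lemma \ref{LemActiononT01}, $S(E_{\sigma'})e_j$ vanishes unless $\sigma'$ is empty or the whole word, except possibly for words producing an element of $T$ via the $F$-free action. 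The whole-word terms are exactly the ones that Step 2 shows cancel in the antisymmetrized combination. The remaining terms are of the form $E_\sigma\otimes_H(\text{element of }T)$, as in the Podle\'s computation, where one obtains $q^2E\otimes f-q^2F\otimes e$. Hence the image lies in $U\otimes_H T$, and Proposition \ref{PropDegTwoRelationsSupZero} gives equality.

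As a sanity check, this $\check R$ has dimension $\binom r2$. Its dual quadratic algebra is then a $q$-exterior algebra on $r$ generators, consistent with the classical-dimension application announced in the introduction.
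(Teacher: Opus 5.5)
There is a genuine gap in Step~1, and Steps~2 and~3 do not close it.

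\textbf{The test in Step~1 is not valid.} The map $\varepsilon\otimes_H\id\colon U\otimes_H C^+\to C^+$ is not well defined. For $u\otimes c\mapsto\varepsilon(u)c$ to descend to $\otimes_H$ you would need $hc=\varepsilon(h)c$ for all $h\in H$, $c\in C^+$, and this fails (e.g.\ $K_1e_1=q^2e_1$, $E_2e_1=e_2$). In effect, your condition $\sum_{i,j}c_{ij}S(E_i\cdots E_1)e_j\in T$ keeps only the summand $1\otimes_H S(x)t$ of $x_{(1)}\otimes_H S(x_{(2)})t$. It discards the summands whose first leg is a nonempty word $E_\sigma$ in $E_2,\dots,E_r\in H$. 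But $E_\sigma\otimes_H c=1\otimes_H E_\sigma c$, so these land in exactly the component you are testing.

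The resulting condition is not necessary, and it gives wrong answers.
\begin{itemize}
\item For $r=2$ one has $S(E_1)e_1=-q^{-2}[E_1^2]$, $S(E_1)e_2=-q^{-1}[E_1E_2E_1]$, $S(E_2E_1)e_1=[E_1E_2E_1]$ and $S(E_2E_1)e_2\propto E_1E_2e_2=0$. So your test gives $c_{11}=0$ and $c_{21}=q^{-1}c_{12}$, and leaves $c_{22}$ free. It therefore selects $e_1\otimes e_2+q^{-1}e_2\otimes e_1=E_2(e_1\otimes e_1)$, which lies in the other irreducible component, and it cannot exclude $e_2\otimes e_2$.
\item The correct ratio appears only once the discarded term $-E_2\otimes_H[E_1K_1^{-1}K_2^{-1}E_1]=-q^{-1}[2]_q\,(1\otimes_H[E_1E_2E_1])$ is included. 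This changes the coefficient of $1\otimes_H[E_1E_2E_1]$ in $\check\delta_0(e_2\otimes e_1)$ from $1$ to $-q^{-2}$, and then forces $c_{21}=-qc_{12}$.
\item For $r\ge3$ your test even rejects elements of $\check R$. We have $S(E_3E_2E_1)e_1\propto E_1E_2E_3e_1=0$, whereas $S(E_1)e_3=-q^{-1}[E_{21}E_{41}]\neq0$. So $e_1\otimes e_3-qe_3\otimes e_1$ fails the test.
\end{itemize}

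\textbf{Steps~2 and~3 cannot repair this.} Claim~(i) of Step~2 is false. By Lemma~\ref{LemActiononT01}, $E_ie_j=0$ for $2\le i\le j$, so $\nu_{ij}=0$ for all such pairs, including every diagonal pair with $i\ge2$. The final ratio $c_{ji}=-qc_{ij}$ is asserted rather than derived. Step~3 inherits the same defect, because the terms with $E_\sigma\in H$ are exactly the ones needed for the cancellation.

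\textbf{The missing idea is equivariance.} Since $\check\delta_0$ is $U_q(\mathfrak{l}_S)$-linear, $\check R$ is a $U_q(\mathfrak{l}_S)$-submodule of $T\otimes T\cong V_{2\varpi_{r-1}}\oplus V_{\varpi_{r-2}}$. This is a sum of two non-isomorphic irreducibles, with lowest weight vectors $e_1\otimes e_1$ and $e_1\otimes e_2-qe_2\otimes e_1$. The paper therefore only has to make two checks.
\begin{itemize}
\item $\check\delta_0(e_1\otimes e_1)\notin U\otimes_H T$. Here $x=E_1\notin H$, and the relevant component is $-q^{-2}[E_1^2]\notin T$.
\item $\check\delta_0(e_1\otimes e_2-qe_2\otimes e_1)\in U\otimes_H T$, with the $E_2$-term moved across $\otimes_H$ as above.
\end{itemize}
The explicit spanning set then follows by applying $E_{i+1}$ and $E_{j+1}$, and comparing with $\dim V_{\varpi_{r-2}}=\binom{r}{2}$.

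If you want to keep a direct computation instead, you must first rewrite $\check\delta_0(w)$ as $\sum_k u_k\otimes_H c_k$. The $u_k$ must come from a fixed subspace $U'\subseteq U$ for which multiplication $U'\otimes H\to U$ is bijective. Only then can you read off the coefficient of $1$. This reinstates all the shuffle terms you dropped, and for general $r$ that bookkeeping is much harder than the two checks above.
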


    \begin{proof}
        The crucial observation is that $\check{\delta}$ is $U_q(\mathfrak{l}_S)$-linear, and therefore in particular $U_q(\mathfrak{sl}_r)$-linear, if we restrict the action along the embedding $U_q(\mathfrak{sl}_r)\rightarrow U_q(\mathfrak{l}_S)$ described above. Note that $T^{(0,1)}$ is an irreducible $U_q(\mathfrak{sl}_r)$-module of weight $\varpi_{r-1}$. Thus, the tensor product with itself decomposes as in \cite[p. 300]{OnVin90} (note that the decomposition of tensor products of finite dimensional type one representation of $U_q(g)$ into irreducible components is the same as for the Lie algebra $\mathfrak{g}$, as remarked in \cite[Sec. 7.2]{KS97}):
        \[
            T^{(0,1)}\otimes T^{(0,1)} \cong V_{2\varpi_r}\oplus V_{\varpi_{r-2}}.
        \]
        Here $V_\lambda$ denotes the irreducible $U_q(\mathfrak{sl}_r)$-module of highest weight $\lambda$. The irreducible components are spanned by their (up to scalar multiples) unique lowest weight vectors. More precisely, we have
        \[
            V_{2\varpi_{r-1}} = U_q(\mathfrak{l}_S)(e_1\otimes e_1), \quad V_{\varpi_{r-2}} = U_q(\mathfrak{l}_S)(e_1\otimes e_2 - qe_2\otimes e_1).
        \]
        Indeed, it is clear that $e_1\otimes e_1$ is a lowest weight vector, and for $e_1\otimes e_2-qe_2\otimes e_1$ it is immediate that it is annihilated by $F_j$ for $j\ge 3$ and for $F_2$, we first compute
        \begin{align*}
            F_2e_2 &= [F_2E_2E_1] = \underbrace{[E_2F_2E_1]}_{= 0} - [[E_2,F_2]E_1] = \frac{1}{q-q^{-1}}\left([(K_2^{-1}-K_2)E_1]\right)\\
            &= \frac{1}{q-q^{-1}}\left(q[E_1]-q^{-1}[E_1]\right)\\
            &= e_1.
        \end{align*}
        Consequently,
        \begin{align*}
            F_2\cdot(e_1\otimes e_2 - qe_2\otimes e_1) &= K_2^{-1}e_1\otimes e_1 - qe_1\otimes e_1 = qe_1\otimes e_1 - qe_1\otimes e_1 = 0.
        \end{align*}
        Next, we compute $\check{\delta}(e_1\otimes e_2 - qe_2\otimes e_1)$. Using that
        \[
            (\id \otimes S)(\Delta(E_1)) = E_1\otimes K_1^{-1} - 1\otimes E_1K_1^{-1}
        \]
        we find that
        \begin{align*}
            \check{\delta}(e_1\otimes e_2) &= E_1\otimes_{U_q(\mathfrak{l}_S)}  K_1^{-1}e_2 - 1\otimes E_1K_1^{-1}e_2\\
            &= q^{-1}E_1\otimes_{U_q(\mathfrak{l}_S)}e_2 - q^{-1}\otimes_{U_q(\mathfrak{l}_S)}[E_1E_2E_1]
        \end{align*}
        and using that
        \begin{gather*}
            (\id\otimes S)(\Delta(E_2E_1)) = E_2E_1\otimes K_1^{-1}K_2^{-1} - E_2\otimes E_1K_1^{-1}K_2^{-1} + E_1\otimes K_1^{-1}E_2K_2^{-1}\\ + 1\otimes E_1K_1^{-1}E_2K_2^{-1}
        \end{gather*}
        as well as
        \[
            [E_2E_1^2] = [[2]_qE_1E_2E_1 - E_1^2E_2] = [2]_q[E_1E_2E_1]
        \]
        we see that
        \begin{align*}
            \check{\delta}(e_2\otimes e_1) &= \eta - E_2\otimes_{U_q(\mathfrak{l}_S)}[E_1K_1^{-1}K_2^{-1}E_1] + 1\otimes_{U_q(\mathfrak{l}_S)}[E_1K_1^{-1}E_2K_2^{-1}E_1]\\
            &= \eta - q^{-1}\otimes_{U_q(\mathfrak{l}_S)}[E_2E_1^2] + 1\otimes_{U_q(\mathfrak{l}_S)}[E_1E_2E_1]\\
            &= \eta + (1-q^{-1}[2]_q)\otimes_{U_q(\mathfrak{l}_S)}[E_1E_2E_1]\\
            &= \eta -q^{-2}\otimes_{U_q(\mathfrak{l}_S)}[E_1E_2E_1].
        \end{align*}
        Where $\eta \in U_q(\mathfrak{sl}_{r+1})\otimes_{U_q(\mathfrak{l}_S)} T^{(0,1)}$. Combining these two computations immediately gives 
        \[
            \check{\delta}(e_1\otimes e_2 - qe_1\otimes e_2) \in U_q(\mathfrak{sl}_{r+1})\otimes_{U_q(\mathfrak{l}_S)}T^{(0,1)}.
        \]
        We claim that on the other hand $\check{\delta}(e_1\otimes e_1) \notin U_q(\mathfrak{sl}_{r+1})\otimes_{U_q(\mathfrak{l}_S)} T^{(0,1)}$. Indeed, we have
        \begin{align*}
            (\varepsilon\otimes_{U_q(\mathfrak{l}_S)}\id)(\check{\delta}(e_1\otimes e_1)) &= S(E_1)e_1 = -[E_1K_1^{-1}E_1] = -q^{-2}[E_1^2] \notin T^{(0,1)}
        \end{align*}
        here the conclusion was made using the description of the basis of $U_q^c(\C P^r)$ from Proposition \ref{PropPBWQuotient} and Proposition \ref{PropAnRootVectors}.

        By $U_q(\mathfrak{l}_S)$-linearity of $\check{\delta}$, and since $T^{(0,1)}\otimes T^{(0,1)}$ only has two non-isomorphic irreducible components we must have $\check{R} = U_q(\mathfrak{l}_S)(e_1\otimes e_2-qe_2\otimes e_1)$. Because
        \[
            E_{i+1}\cdot(e_i\otimes e_j - qe_j\otimes e_i) = e_{i+1}\otimes e_j - qe_j\otimes e_{i+1}
        \]
        for $1\le i < j-1$ and
        \[
            E_{j+1}\cdot(e_i\otimes e_j - qe_j\otimes e_i) = e_i \otimes e_{j+1} - qe_{j+1}\otimes e_i 
        \]
        for $1\le i \le j\le r-1$, we find that
        \[
            \mathrm{span}_\C\{e_i\otimes e_j - qe_j\otimes e_i\mid 1\le i< j\le r\} \subseteq \check{R}.
        \]
        Finally, since $\dim_\C V_{\varpi_{r-2}} = \binom{r}{2}$, and the set on the left-hand side is linearly independent, the assertion follows.
    \end{proof}

    Let us consider the first-order covariant differential calculus on $\mathcal{O}_q(\C P^r)$ induced by
    \[
        \mathscr{W}_{(0,1)}^q(\C P^r) := U_q(\mathfrak{sl}_{r+1})\otimes_{U_q(\mathfrak{l}_S)} T^{(0,1)}.
    \]
    We may write this induced first-order differential calculus as
    \[
        \Omega_q^{(0,1)}(\C P^r) := \mathcal{O}_q(SL_{r+1})\cotimes{\pi_{O_q(\C P^r)}} V^{(0,1)}
    \]
    with $V^{(0,1)} = \mathcal{O}_q(\C P^r)^+/I^{(0,1)}$, and $I^{(0,1)}$ the orthogonal complement of $T^{(0,1)}$ with respect to the non-degenerate pairing
    \[
        \mathcal{O}_q(\C P^r)^+\times U_q^c(\C P^r)^+\rightarrow \C.
    \]
    By \cite[Section 7 Theorem 2]{HK04}, this first-order differential calculus is indeed isomorphic to the antiholomorphic part of the first-order Heckenberger--Kolb calculus on $\mathcal{O}_q(\C P^r)$. As a consequence of Proposition \ref{PropCPrCorelation} we obtain a new proof that the maximal prolongation of $\Omega_q^{(0,1)}(\C P^r)$ has classical dimension.
    
    \begin{corollary}
        \label{CorClassDim}
        The maximal prolongation of $\Omega_q^{(0,1)}(\C P^r)$ is isomorphic to
        \[
            \mathcal{O}_q(SL_{r+1})\cotimes{\pi_{\mathcal{O}_q(\C P^r)}} V^{(0,\bullet)}
        \]
        where $V^{(0,\bullet)} := T(V^{(0,1)})/\langle e^i\otimes e^j + q^{-1}e^j\otimes e^i\mid 1\le i\le j \le r\rangle$, and $e^1, \dots, e^r$ is the dual basis of $e_1, \dots, e_r$ with respect to the non-degenerate pairing $V^{(0,1)}\times T^{(0,1)}\rightarrow \C$. If we denote by $\wedge$ the product of $V^{(0,\bullet)}$ then the degree $n$ part for each $n \ge 0$ has basis
        \[
            \{e^{i_1}\wedge \dots \wedge e^{i_n}\mid 1\le i_1 < \dots < i_n \le r\}.
        \] 
        In particular,
        \[
            \dim_\C \Omega_q^{(0,n)}(\C P^r) := \dim_\C V^{(0,n)} = \binom{r}{n}.
        \]
        Thus, the dimension of $\Omega_q^{(0,\bullet)}(\C P^r)$ is classical.
    \end{corollary}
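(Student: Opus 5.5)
The plan is to combine Corollary \ref{PorMaxProlongationQuadraticAlgebras} with Proposition \ref{PropCPrCorelation}, and then compute the quadratic dual by hand.

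\emph{Applying the corollary.} First we check the hypotheses of Theorem \ref{ThmCodiffProlongationisMaximalProlongation}:
\begin{enumerate}[label = (\roman*)]
    \item $U_q(\mathfrak{l}_S)$ is a Hopf subalgebra;
    \item the antipode of $\mathcal{O}_q(SL_{r+1})$ is bijective, as discussed in Section \ref{SecQuantizedFlagManifolds};
    \item $B=\mathcal{O}_q(\C P^r)$ separates the elements of $C=U_q^c(\C P^r)$, since $B$ is the full finitary dual and every element of $C$ survives in some quotient lying in $\mathcal{C}$;
    \item $T^{(0,1)}$ is finite dimensional and lies in ${}_{U_q(\mathfrak{l}_S)}\mathcal{M}^0$ by Proposition \ref{PropTangentspaceCPr}.
\end{enumerate}
For (iii) we would argue via residual finiteness of the induced modules, or more simply take this as the standing assumption already used in the text. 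Corollary \ref{PorMaxProlongationQuadraticAlgebras} then gives
\[
\Omega_\mathrm{max}^\bullet\cong \mathcal{O}_q(SL_{r+1})\cotimes{\pi_B}\mathfrak{A}(V^{(0,1)},\check{R}^\perp),
\]
where $\check{R}$ is the space from Proposition \ref{PropCPrCorelation}. Lemma \ref{LemTangentSpaceFromIdeal} says the pairing $V^{(0,1)}\times T^{(0,1)}\to\C$ is non-degenerate, so the dual basis $e^1,\dots,e^r$ exists and the pairing on $V\otimes V\times T\otimes T$ is the product pairing.

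\emph{Computing $\check{R}^\perp$.} The space $\check{R}^\perp$ has dimension $r^2-\binom r2=\binom{r+1}{2}$. It contains $e^i\otimes e^i$ for every $i$. For $i<j$ it contains $e^i\otimes e^j+q^{-1}e^j\otimes e^i$, because pairing this with $e_i\otimes e_j-qe_j\otimes e_i$ gives $1-q\cdot q^{-1}=0$, and pairing with the other generators of $\check{R}$ gives $0$ trivially. These $\binom{r+1}2$ elements are linearly independent, so they span $\check{R}^\perp$, and we obtain the stated presentation of $V^{(0,\bullet)}$.

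\emph{The basis of $V^{(0,\bullet)}$ (the main work).} Choose the order $e^1<\dots<e^r$ and use the rewriting rules $e^je^i\mapsto -q\,e^ie^j$ for $i<j$ and $e^ie^i\mapsto 0$. By Bergman's diamond lemma \cite{Ber77}, the irreducible words, namely the strictly increasing monomials, form a basis once all overlap ambiguities resolve. These are the words $e^ke^je^i$ with $k\ge j\ge i$.
\begin{itemize}
    \item If $k>j>i$, both reduction paths produce $(-q)^3e^ie^je^k$.
    \item If two or three of the indices coincide, both paths produce $0$. For example, $e^je^je^i$ reduces to $0$ directly, and along the other path it becomes $(-q)^2e^ie^je^j=0$.
\end{itemize}
Checking these overlaps is routine, so the increasing monomials $e^{i_1}\wedge\dots\wedge e^{i_n}$ form a basis, and $\dim V^{(0,n)}=\binom rn$. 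Alternatively, we could observe that $V^{(0,\bullet)}$ is a standard quantum exterior algebra whose Hilbert series is known.

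Finally, under the monoidal Takeuchi equivalence of Proposition \ref{PropMonoidalTakeuchi}, the fibre of $\Omega^{(0,n)}_q$ is $V^{(0,n)}$, so its dimension is $\binom rn$, as in the classical case.
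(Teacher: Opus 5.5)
Your proposal is correct and follows the same route as the paper: you apply Corollary \ref{PorMaxProlongationQuadraticAlgebras} with $\check{R}$ from Proposition \ref{PropCPrCorelation}, compute $\check{R}^\perp$, and conclude with Bergman's diamond lemma. Along the way you supply details the paper leaves implicit, namely the dimension count for $\check{R}^\perp$, the explicit overlap checks, the hypothesis list, and the observation that the diagonal generators $(1+q^{-1})e^i\otimes e^i$ reduce to $e^i\otimes e^i$ because $q\neq -1$; the separation hypothesis (iii) is likewise asserted rather than proved in the paper.
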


    \begin{proof}
        By Corollary \ref{PorMaxProlongationQuadraticAlgebras}, the maximal prolongation of $\Omega_q^{(0,1)}(\C P^r)$ can be written as
        \[
            \mathcal{O}_q(SL_{r+1})\cotimes{\pi_{\mathcal{O}_q(\C P^r)}} \mathfrak{A}(V^{(0,1)},\check{R}^\perp).
        \]
        The orthogonal complement of $\check{R}$ given in Proposition \ref{PropCPrCorelation} is computed to be
        \[
            \{e^i\otimes e^j + q^{-1}e^j\otimes e^i\mid 1\le i\le j \le r\}.
        \]
        Thus, $\mathfrak{A}(V^{(0,1)},\check{R}^\perp) = V^{(0,\bullet)}$. Since the relations of $V^{(0,\bullet)}$ are given by anti-commutation up to the scalar $q^{-1}$, the statement about the basis of $V^{(0,\bullet)}$ follows from a standard application of Bergman's diamond lemma \cite{Ber77}.
    \end{proof}

    \appendix

    \section{Quadratic algebras and coalgebras} \label{SecQuadraticCoalgebras}

    We start by defining quadratic algebras and coalgebras. On that note recall that the tensor algebra $T(V)$ of a vector space $V$ admits a coalgebra structure given by deconcatenation of tensors. Observe that this is a special case of the cotensor coalgebra \ref{DefRelTensorCoalg} where we take $C = k$. We denote this coalgebra by $T^c(V)$.

    \begin{definition}
        \label{DefQuadraticAlgebras}
        Let $V$ be a finite dimensional vector space, and $R\subseteq V\otimes V$ a subspace.
        \begin{enumerate}[label = (\arabic*)]
            \item The \textit{quadratic algebra} determined by $V$ and $R$ is
            \[
                \mathfrak{A}(V,R) := T(V)/\langle R\rangle.
            \]
            \item The \textit{quadratic coalgebra} determined by $V$ and $R$ is the graded subcoalgebra $\mathfrak{C}(V,R)$ of $T^c(V)$ with homogeneous components given by
            \[
                \mathfrak{C}^{(0)}(V,R) = k1, \quad \mathfrak{C}^{(1)}(V,R) = V, \quad \mathfrak{C}^{(n)}(V,R) = \bigcap_{i+j+2=n}V^{\otimes i}\otimes R\otimes V^{\otimes j}, \ (n\ge 2).
            \]
        \end{enumerate}
    \end{definition}

    Note that also quadratic algebras are graded. Explicitly, the homogeneous components of a quadratic algebra $\mathfrak{A}(C,R)$ are given by
    \[
        \mathfrak{A}^{(0)}(C,R) = k1, \quad \mathfrak{A}^{(1)}(C,R) = V, \quad \mathfrak{A}^{(n)} = V^{\otimes n}/\left(\sum_{i+j+2}V^{\otimes i}\otimes R\otimes V^{\otimes j}\right).
    \]

    Quadratic algebras and coalgebras are dual to each other, as made precise by the following.
        
    \begin{definition}
        \label{DefQuadraticDual}
        The quadratic dual algebra of a quadratic coalgebra $\mathfrak{C}(V,R)$ is the quadratic algebra $\mathfrak{A}(V^\ast,R^\perp)$, where $V^\ast$ is the dual space of $V$ and $R^\perp$ is the orthogonal complement of $R$ with respect to the pairing
        \[
            (V^\ast)^{\otimes 2}\times V^{\otimes 2} \rightarrow k, \quad (v^\ast\otimes w^\ast, v,w) = v^\ast(v)w^\ast(w).
        \]
    \end{definition}

    The dual quadratic algebra is chosen in such a way, that one obtains an induced non-degenerate algebra-coalgebra pairing
    \[
        \langle-,-\rangle\colon \mathfrak{A}(V^\ast,R^\perp)\times \mathfrak{C}(V,R) \rightarrow k.
    \]
    Here, algebra-coalgebra pairing means that
    \[
        \langle ab,c\rangle = \langle a,c_{(1)}\rangle\langle b,c_{(2)}\rangle, \quad \langle 1,c\rangle = \varepsilon(c)
    \]
    for all $a,b\in \mathfrak{A}(V^\ast,R^\perp)$ and $c\in \mathfrak{C}(V,R)$.

    \begin{lemma}
        \label{LemDualQuadraticAlgebra}
        Let $\mathfrak{C}(V,R)$ be a quadratic coalgebra and let $\mathfrak{A}(V^\ast,R^\perp)$ be its quadratic dual algebra. Then for each $n\ge 0$, the pairing
        \[
            \mathfrak{A}^{(n)}(V^\ast,R^\perp)\times \mathfrak{C}^{(n)}(V,R)\rightarrow k, \quad \langle [v_1^\ast\otimes \dots\otimes v_n^\ast],v_1\otimes \dots\otimes v_n\rangle = v_1^\ast(v_1)\dots v_n^\ast(v_n)
        \]
        is well-defined and non-degenerate. In particular, the extension of those pairings to a pairing between $\mathfrak{A}(V^\ast,R^\perp)$ and $\mathfrak{C}(V,R)$ is a non-degenerate algebra-coalgebra pairing, and moreover $\dim_k \mathfrak{A}^{(n)}(V^\ast, R^\perp) = \dim_k \mathfrak{C}^{(n)}(V,R)$ for all $n\ge 0$.
    \end{lemma}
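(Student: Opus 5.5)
The plan is to reduce everything to finite-dimensional linear algebra in each degree. Fix $n\ge 0$. For $n=0,1$ the claims are immediate: $\mathfrak{A}^{(0)}=k=\mathfrak{C}^{(0)}$, and $\mathfrak{A}^{(1)}=V^\ast$ and $\mathfrak{C}^{(1)}=V$ pair via evaluation, which is non-degenerate because $V$ is finite dimensional. For $n\ge 2$ write
\[
K_n := \sum_{i+j+2=n}(V^\ast)^{\otimes i}\otimes R^\perp\otimes (V^\ast)^{\otimes j}\subseteq (V^\ast)^{\otimes n}, \qquad \mathfrak{C}^{(n)} = \bigcap_{i+j+2=n} V^{\otimes i}\otimes R\otimes V^{\otimes j}.
\]
The basic tool is the canonical perfect pairing $(V^\ast)^{\otimes n}\times V^{\otimes n}\to k$, $\langle v_1^\ast\otimes\dots\otimes v_n^\ast, v_1\otimes\dots\otimes v_n\rangle=\prod v_l^\ast(v_l)$, which is non-degenerate since it identifies $(V^\ast)^{\otimes n}\cong (V^{\otimes n})^\ast$ for finite dimensional $V$.

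The key steps are as follows. First, I will establish the elementary linear algebra fact that for subspaces $X\subseteq V^{\otimes a}$ and the full spaces in the other factors,
\[
\bigl(V^{\otimes i}\otimes R\otimes V^{\otimes j}\bigr)^\perp = (V^\ast)^{\otimes i}\otimes R^\perp\otimes (V^\ast)^{\otimes j}.
\]
To do this, choose a basis of $V^{\otimes 2}$ adapted to $R$ together with the dual basis; then both sides are spanned by the corresponding dual basis tensors, so they coincide. Second, using that in a perfect pairing of finite-dimensional spaces $(\bigcap_\alpha W_\alpha)^\perp=\sum_\alpha W_\alpha^\perp$ (dualize $W^{\perp\perp}=W$ and $(\sum W_\alpha)^\perp=\bigcap W_\alpha^\perp$), I will conclude $K_n=(\mathfrak{C}^{(n)})^\perp$, and also $K_n^\perp=\mathfrak{C}^{(n)}$. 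Third, the pairing descends to $\mathfrak{A}^{(n)}=(V^\ast)^{\otimes n}/K_n$ against $\mathfrak{C}^{(n)}$; it is well defined precisely because $K_n\perp\mathfrak{C}^{(n)}$. It is non-degenerate on the left since the left kernel is $(\mathfrak{C}^{(n)})^\perp/K_n=0$, and non-degenerate on the right since the right kernel is $\mathfrak{C}^{(n)}\cap K_n^\perp$ modulo the zero space, which is again handled by $K_n^\perp=\mathfrak{C}^{(n)}$ together with the perfectness of the ambient pairing: an element of $\mathfrak{C}^{(n)}$ pairing to zero with all of $(V^\ast)^{\otimes n}$ is zero. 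The dimension equality then follows at once, as $\dim\mathfrak{A}^{(n)}=\dim V^{\otimes n}-\dim(\mathfrak{C}^{(n)})^\perp=\dim\mathfrak{C}^{(n)}$.

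Finally, I will extend the degreewise pairings to the direct sums by declaring different degrees orthogonal, which gives non-degeneracy degreewise. For the algebra–coalgebra property, it suffices to check homogeneous $a\in\mathfrak{A}^{(m)}$, $b\in\mathfrak{A}^{(p)}$ and $c\in\mathfrak{C}^{(l)}$. Since the coproduct of $\mathfrak{C}(V,R)$ is the restriction of deconcatenation in $T^c(V)$, only the $(m,p)$-component of $\Delta(c)$ contributes, and this requires $l=m+p$. The identity then reduces to multiplicativity of the product pairing on pure tensors, mirroring the computation in the proof of Theorem \ref{ThmCodifferentialProlongation}. The counit condition $\langle 1,c\rangle=\varepsilon(c)$ holds because $\varepsilon$ is the projection to degree $0$.

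I expect the main obstacle to be the perp-of-intersection step, specifically keeping track of which side each orthogonal complement is taken on. This is where finite dimensionality is essential, and I will make sure it is invoked explicitly there rather than left implicit.
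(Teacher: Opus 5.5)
Your proposal is correct and follows essentially the same route as the paper: both reduce everything to the identity $(\mathfrak{C}^{(n)}(V,R))^\perp=\sum_{i+j+2=n}(V^\ast)^{\otimes i}\otimes R^\perp\otimes(V^\ast)^{\otimes j}$ inside the perfect pairing $(V^\ast)^{\otimes n}\times V^{\otimes n}\to k$, obtained by finite-dimensional double-orthogonal duality, after which well-definedness, non-degeneracy and the dimension count follow at once. One small point: non-degeneracy on the $\mathfrak{C}^{(n)}$ side needs only the perfectness of that ambient pairing, so mentioning $K_n^\perp$ in that step is unnecessary.
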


    \begin{proof}
        Non-degeneracy is clear for $n = 0,1,2$. For $n\ge 3$, this directly follows by observing that the orthogonal complement of $\mathfrak{C}^{(n)}(V,R)$ with respect to the pairing $(V^\ast)^{\otimes n}\times V^{\otimes n}\rightarrow k$ is precisely 
        \[
            \tilde{R} := \sum_{i+j+2 =n} (V^\ast)^{\otimes i}\otimes R^\perp\otimes (V^\ast)^{\otimes j}.
        \]
        The inclusion $\tilde{R} \subseteq \mathfrak{C}^{(n)}(V,R)^\perp$ is immediate, and the reverse inclusion follows by showing that $\tilde{R}^\perp \subseteq \mathfrak{C}^{(n)}(V,R)$. Indeed, since $V$ is finite dimensional, and the pairing between $(V^\ast)^{\otimes n}$ and $V^{\otimes n}$ is non-degenerate, the last inclusion implies $\mathfrak{C}^{(n)}(V,R)^\perp \subseteq \tilde{R}^{\perp\perp} = \tilde{R}$.
    \end{proof}

    \section{Finitary duals}\label{SecFinDuals}

    For convenience, we collect some background material on finitary duals of modules over $k$-algebras. Fix an arbitrary $k$-algebra $U$. Recall that if $W$ is a (left) $U$-module, then its linear dual space $W^\ast$ is canonically a right $U$-module with action given by
    \[
        (f\triangleleft u)(w) := f(u\triangleright w).
    \]
    We are interested in a universal submodule of $W^\ast$, given by functionals with a certain finiteness condition.

    \begin{proposition}
        \label{PropFinitaryDual}
        Let $W$ be a $U$-module. For a linear map $f: W \rightarrow k$, the following are equivalent:
        \begin{enumerate}[label = (\roman*)]
            \item There exists a $U$-submodule $K$ of $W$ with $\dim_k(W/K) < \infty$ and $f\vert_K = 0$.
            \item The right $U$-submodule $fU$ of $W^\ast$ generated by $f$ is finite dimensional.
        \end{enumerate}
    \end{proposition}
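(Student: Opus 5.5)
We prove both implications directly, using the right $U$-action $(f\triangleleft u)(w) = f(u\triangleright w)$ on $W^\ast$.

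For (i) $\Rightarrow$ (ii), let $K\subseteq W$ be a $U$-submodule of finite codimension with $f\vert_K = 0$. Since $K$ is $U$-stable, for every $u\in U$ and $w\in K$ we get $(f\triangleleft u)(w) = f(u\triangleright w) = 0$, because $u\triangleright w\in K$. Hence $fU$ lies in the annihilator $K^\perp = \{g\in W^\ast \mid g\vert_K = 0\}$. This annihilator is canonically isomorphic to $(W/K)^\ast$, which is finite dimensional. So $fU$ is finite dimensional.

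For (ii) $\Rightarrow$ (i), put
\[
    K := \bigcap_{g\in fU}\ker(g) = \{w\in W\mid f(u\triangleright w) = 0 \text{ for all } u\in U\}.
\]
We check three things.
\begin{enumerate}[label = (\alph*)]
    \item $K$ is a $U$-submodule. If $w\in K$ and $x\in U$, then $f(u\triangleright(x\triangleright w)) = f((ux)\triangleright w) = 0$ for all $u$, so $x\triangleright w\in K$.
    \item $f\vert_K = 0$. Take $u = 1$.
    \item $K$ has finite codimension. Choose a basis $g_1,\dots,g_m$ of the finite dimensional space $fU$. Then $K = \bigcap_{i=1}^m \ker(g_i)$, since every element of $fU$ is a linear combination of the $g_i$. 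Thus $K$ is the kernel of the linear map $W\rightarrow k^m$, $w\mapsto (g_1(w),\dots,g_m(w))$, so $\dim_k(W/K)\le m < \infty$.
\end{enumerate}

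The only point needing care is the last one in the (ii) $\Rightarrow$ (i) direction. One must observe that an intersection of kernels over an infinite family reduces to finitely many functionals because the family spans a finite dimensional space. Everything else is immediate from the definitions.
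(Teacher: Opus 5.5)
Your proposal is correct and follows essentially the same route as the paper. In both directions you use the same ingredients: $fU$ sits inside $(W/K)^\ast$, and $K$ is taken to be the common kernel of $fU$, which has finite codimension via a basis of $fU$. The only difference is cosmetic: you use the map $W\rightarrow k^m$ where the paper uses the injection $W/K\hookrightarrow\bigoplus_i W/\ker(f_i)$, and you make explicit the checks that $K$ is a submodule and that $f\vert_K=0$, which the paper leaves implicit.
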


    \begin{proof}
        $(i)\implies (ii)$: Let $K$ be a submodule of $W$ with $f\vert_K = 0$ such that $\dim_k(W/K) < \infty$. It follows immediately from the definition of the right $U$-module structure on $W^\ast$ that also every $g\in fU$ vanishes on $K$. Thus, the submodule $fU$ can be identified with a subspace of $(W/K)^\ast$, which is finite dimensional by assumption.

        $(ii) \implies (i)$: Suppose $fU$ is finite dimensional. Let
        \[
            K := \{w\in W\mid \forall g \in fU : g(w) = 0\}
        \]
        and note that this is a $U$ submodule of $W$. By choosing a basis $f_1, \dots, f_n$ of $fU$, it follows that
        \[
            K = \bigcap_{i=1}^n\ker(f_i).
        \]
        This description of $K$ gives us an injective map
        \[
            W/K = W/\bigcap_{i=1}^n\ker(f_i) \rightarrow \bigoplus_{i=1}^nW/\ker(f_i)
        \]
        showing that $W/K$ is finite dimensional.
    \end{proof}

    \begin{definition}
        \label{DefFinitaryDual}
        We call the set $W^\circ$ consisting of all $f\in W^\ast$ that satisfy either condition $(i)$ or condition $(ii)$ of \ref{PropFinitaryDual} the \textit{finitary dual} of $W$.
    \end{definition}

    From condition $(ii)$ of \ref{PropFinitaryDual} it follows that $W^\circ$ is a right $U$-submodule of $W^\ast$.

    If we consider $W = U$ with $U$-module structure given by left multiplication, then there is a third characterization of $U^\circ$.

    \begin{definition}
        \label{DefMatrixCoefficient}
        Let $M$ be a $U$-module, $m\in M$ and $\varphi \in M^\ast$. We define
        \[
            c_{\varphi,m}^M\colon U \rightarrow k, \quad u \mapsto \varphi(u\triangleright m).
        \]
        The functional $c_{f,m}^M$ is called a \textit{matrix coefficient} of $M$.
    \end{definition}

    \begin{proposition}
        For a functional $f \in U^\ast$ the following are equivalent:
        \begin{enumerate}[label = (\roman*)]
            \item $f \in U^\circ$.
            \item The exists a finite dimensional $U$-module $M$, an element $m \in M$ and $\varphi\in M^\ast$ with $f = c_{\varphi,m}^M$.
        \end{enumerate}
    \end{proposition}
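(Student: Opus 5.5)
The plan is to prove the two implications separately, using the characterization of $U^\circ$ from Proposition \ref{PropFinitaryDual} with $W = U$ under left multiplication. Recall that the right $U$-action on $U^\ast$ is then $(f\triangleleft u)(x) = f(ux)$.

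For (ii) $\Rightarrow$ (i), suppose $f = c_{\varphi,m}^M$ with $\dim_k M<\infty$. I will use condition (i) of Proposition \ref{PropFinitaryDual} and take
\[
K := \mathrm{Ann}_U(m) = \{u\in U \mid u\triangleright m = 0\}.
\]
This is a left ideal, hence a $U$-submodule of $U$. The map $u\mapsto u\triangleright m$ induces an injection $U/K\hookrightarrow M$, so $\dim_k(U/K)\le \dim_k M<\infty$. Clearly $f\vert_K = 0$, so $f\in U^\circ$. Alternatively, one could check condition (ii) directly: $f\triangleleft u = c^M_{\varphi, u\triangleright m}$ lies in the image of the finite dimensional space $M$ under $m'\mapsto c^M_{\varphi,m'}$.

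For (i) $\Rightarrow$ (ii), let $K\subseteq U$ be a left ideal of finite codimension with $f\vert_K = 0$; this exists by condition (i) of Proposition \ref{PropFinitaryDual}. Set $M := U/K$, which is a finite dimensional left $U$-module, and let $m := 1+K$. Since $f$ vanishes on $K$, it descends to a functional $\varphi\colon M\to k$ with $\varphi(u+K) = f(u)$. Then
\[
c_{\varphi,m}^M(u) = \varphi(u\triangleright(1+K)) = \varphi(u+K) = f(u)
\]
for all $u\in U$, so $f = c^M_{\varphi,m}$.

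Both directions are routine. The only point needing care is that submodules of $U$ under left multiplication are exactly left ideals, which is what makes both $\mathrm{Ann}_U(m)$ and the quotient $U/K$ legitimate choices. I expect no real obstacle beyond this bookkeeping.
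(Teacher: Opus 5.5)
Your proof is correct. The direction (i) $\Rightarrow$ (ii) is the same as the paper's: take a finite-codimensional left ideal $K$ on which $f$ vanishes and write $f=c^{U/K}_{\overline{f},1+K}$. You also state the condition $f\vert_K=0$ explicitly, which the paper leaves implicit.

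For (ii) $\Rightarrow$ (i) the routes differ. The paper checks condition (ii) of Proposition \ref{PropFinitaryDual}. It uses $f\triangleleft u=c^M_{\varphi\triangleleft u,m}$ with $(\varphi\triangleleft u)(m')=\varphi(u\triangleright m')$, so $fU$ lies in the image of the finite dimensional space $M^\ast$ under $\psi\mapsto c^M_{\psi,m}$. Your main argument checks condition (i) directly with $K=\mathrm{Ann}_U(m)$. This is just as short, and it shows that $M$ can always be taken cyclic, namely $M=U\triangleright m\cong U/\mathrm{Ann}_U(m)$.

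Your ``alternative'' remark has the translation on the wrong side. We have $c^M_{\varphi,u\triangleright m}(x)=\varphi(xu\triangleright m)=f(xu)$, whereas $(f\triangleleft u)(x)=f(ux)$. So for noncommutative $U$ the identity $f\triangleleft u=c^M_{\varphi,u\triangleright m}$ is false. With the paper's formula $f\triangleleft u=c^M_{\varphi\triangleleft u,m}$ in its place, the finite-dimensionality conclusion goes through.
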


    \begin{proof}
        $(i)\implies (ii)$: Let $f \in U^\circ$. Choose a $U$-submodule $K$ of $U$ with $\dim_k(U/K) < \infty$. Then $f$ is equal to the matrix coefficient $c_{\overline{f},1+K}^{U/K}$, where $\overline{f}(u + K) = f(u)$.

        $(ii)\implies (i)$: If $f = c_{\varphi, m}^M$ is a matrix coefficient of some finite dimensional $U$-module $M$, then for any $u \in U$, we have $fu = c_{\varphi u, m}^M$. Thus, $fU$ is finite dimensional.
    \end{proof}

    As it is well-known (see for instance \cite[Section 2.5]{Rad12}), the finitary dual of an algebra is a coalgebra.

    \begin{proposition}
        Let $c_{\varphi, m}^M$ be a matrix coefficient of a finite dimensional $U$-module $M$ and let $\{e_1, \dots, e_n\}$ be a basis of $M$ with dual basis $\{e^1, \dots, e^n\}$. Then for all $u, v \in U$
        \[
            c_{\varphi, m}^M(uv) = \sum_{i=1}^nc_{\varphi, e_i}^M(u)c_{e^i,m}^M(v).
        \]
        In particular, the dual map of the multiplication $\mu\colon U\otimes U \rightarrow U$, restricts to a map
        \[
            \Delta^\circ \colon U^\circ \rightarrow U^\circ\otimes U^\circ
        \]
        which endows $U^\circ$ with the structure of a coalgebra, where the counit is given by $f \mapsto f(1)$.
    \end{proposition}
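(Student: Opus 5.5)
The plan is to verify the matrix-coefficient identity directly and then deduce the coalgebra structure from it. For the identity I will expand $vm = \sum_i e^i(vm)e_i$ in the basis $\{e_i\}$. Applying $\varphi(u\triangleright -)$ to this expansion gives
\[
    c_{\varphi,m}^M(uv) = \varphi(u\triangleright(v\triangleright m)) = \sum_{i=1}^n \varphi(u\triangleright e_i)\, e^i(v\triangleright m) = \sum_{i=1}^n c_{\varphi,e_i}^M(u)\,c_{e^i,m}^M(v),
\]
which is exactly the claimed formula.

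Next I will show that $\mu^\ast$ maps $U^\circ$ into $U^\circ\otimes U^\circ$. By the proposition characterizing $U^\circ$ via matrix coefficients, every $f\in U^\circ$ has the form $c_{\varphi,m}^M$ for some finite dimensional $M$. The identity above then says that $\mu^\ast(f)\in U^\ast\otimes U^\ast\subseteq (U\otimes U)^\ast$ equals $\sum_i c_{\varphi,e_i}^M\otimes c_{e^i,m}^M$. Each factor is again a matrix coefficient of a finite dimensional module, so it lies in $U^\circ$. Since $U^\ast\otimes U^\ast\to(U\otimes U)^\ast$ is injective, $\mu^\ast(f)$ is a well-defined element of $U^\circ\otimes U^\circ$, and this defines $\Delta^\circ$. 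The only step here that needs care is this independence of the chosen representation; injectivity of the canonical map takes care of it, and linearity of $\Delta^\circ$ follows in the same way.

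Finally I will check the coalgebra axioms by transporting them from the algebra structure of $U$, using that the map $(U^\ast)^{\otimes 3}\to (U^{\otimes 3})^\ast$ is also injective. Coassociativity amounts to the statement that both $(\Delta^\circ\otimes\id)\Delta^\circ(f)$ and $(\id\otimes\Delta^\circ)\Delta^\circ(f)$ evaluate on $u\otimes v\otimes w$ to $f(uvw)$; this is associativity of $\mu$. For the counit $f\mapsto f(1)$, I will evaluate $\sum_i c_{\varphi,e_i}^M(1)\,c_{e^i,m}^M(v) = \sum_i\varphi(e_i)e^i(vm) = f(v)$, and the symmetric computation handles the other side; together these are the unit axioms for $U$.
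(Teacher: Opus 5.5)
Your proof is correct. The paper gives no proof of its own here and simply cites Radford, and your argument is the standard one the statement's ``in particular'' points to: you expand $vm$ in the basis to get the identity, then use the matrix-coefficient characterization of $U^\circ$ and injectivity of $U^\ast\otimes U^\ast\to(U\otimes U)^\ast$ to land in $U^\circ\otimes U^\circ$, and finally transport associativity and unitality of $U$ through the injective map $(U^\ast)^{\otimes 3}\to(U^{\otimes 3})^\ast$.
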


    If $U$ is a Hopf algebra, then we can work out the addition and multiplication of two elements of $U^\ast$ in terms of matrix coefficients. It turns out that
    \[
        c_{\varphi,m}^M\cdot c_{\psi, n}^N = c_{\varphi\otimes \psi, m\otimes n}^{M\otimes N}, \quad c_{\varphi,m}^M + c_{\psi, n}^N = c_{\varphi + \psi, m + n}^{M\oplus N}.
    \]
    Consequently, the finitary dual of $U$ is a bialgebra. Moreover, the dual map of the antipode of $U$ restricts to $U^\circ$, and turns the latter into a Hopf algebra.
    
    It is occasionally useful to only consider matrix coefficients of modules that belong to a tensor subcategory $\mathcal{C}\subseteq {}_U\mathcal{M}$ (see Section \ref{SecInducedQHS}). 

    \begin{definition}
        \label{DefFinitaryDualSubcat}
        Let $\mathcal{C}$ be a tensor subcategory of ${}_U\mathcal{M}$. We let $U_\mathcal{C}^\circ$ be the subset of $U^\circ$ consisting of all matrix coefficients $c_{\varphi, m}^M$, with $M \in \mathcal{C}$.
    \end{definition}

    The subspace $U_\mathcal{C}^\circ$ is in fact a Hopf subalgebra of $U^\circ$. If the antipode of $U$ has an inverse $S^{-1}$, then the dual of a left $U$-module $M$ is also a left $U$-module with respect to
    \[
        (x\triangleright f)(m) = f(S^{-1}(x)\triangleright m).
    \]
    From this observation, it follows that the antipode of $U^\circ$ is invertible if the antipode of $U$ is. Moreover, if we consider $U_\mathcal{C}^\circ$ for a tensor category $\mathcal{C}$, then it admits an invertible antipode if $\mathcal{C}$ is closed under taking duals with respect to the action given by $S^{-1}$.

    \section{Adjoint equivalences}

    Here we collect some results about adjunctions and categorical equivalences. If $\mathcal{C}, \mathcal{D}$ and $\mathcal{E}$ are categories, $F, F' \colon \mathcal{C}\rightarrow \mathcal{D}, G, G'\colon \mathcal{D}\rightarrow \mathcal{E}$ functors, and $\varphi\colon F\rightarrow F', \psi\colon G\rightarrow G'$ natural transformations, we denote by $G\varphi \colon GF\rightarrow GF'$ the natural transformation with components $G(\varphi_c)\colon GFc \rightarrow GF'c$, and by $\psi F\colon GF\rightarrow G'F$ the natural transformation with components $\varphi_{Fc}\colon GFc \rightarrow G'Fc$.

    \begin{definition}
        \label{DefAdjointEquivalence}
        Let $\mathcal{C}, \mathcal{D}$ be categories.
        \begin{enumerate}[label = (\arabic*)]
            \item An \textit{adjunction} between $\mathcal{C}$ and $\mathcal{D}$ consists of functors $F\colon \mathcal{C}\rightarrow \mathcal{D}, U\rightarrow \mathcal{D}\rightarrow \mathcal{C}$ and natural transformations $\eta\colon \id_{\mathcal{C}}\rightarrow UF, \varepsilon\colon FU\rightarrow \id_{\mathcal{D}}$ called \textit{unit} and \textit{counit} respectively such That
            \[
                \varepsilon F \circ F\eta = \id_F, \quad U\varepsilon \circ \eta U = \id_U.
            \]
            In this case we say that $F$ is \textit{left adjoint} to $U$, respectively $U$ is \textit{right adjoint} to $F$ and write $F \dashv U$.
            \item We call an adjunction as above an \textit{adjoint equivalence} if the unit $\eta\colon \id \rightarrow UF$ and counit $\varepsilon\colon FU\rightarrow \id$ are isomorphisms.
        \end{enumerate}
    \end{definition}

    \begin{theorem}
        \label{ThmAdjunctionAndEquivalence}
        Let $F, U$ be a pair of adjoint functors, with unit $\eta\colon \id\rightarrow UF$ and counit $\varepsilon \colon FU\rightarrow \id$. If either $U$ or $F$ is an equivalence, then $\eta$ and $\varepsilon$ are isomorphisms. In other words the adjunction is an adjoint equivalence.
    \end{theorem}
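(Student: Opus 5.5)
We prove the statement for the case where $F$ is an equivalence; the case of $U$ is symmetric, obtained by passing to opposite categories. The plan is to compare the given adjunction with an adjoint equivalence built from $F$, and then use uniqueness of adjoints up to the canonical isomorphism.

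\emph{Step 1 (an adjoint equivalence from $F$).} Since $F$ is an equivalence, there is a functor $G\colon\mathcal{D}\to\mathcal{C}$ with natural isomorphisms $\alpha\colon\id_{\mathcal{C}}\to GF$ and $\beta\colon FG\to\id_{\mathcal{D}}$. By the standard correction, we keep $\alpha$ and replace $\beta$ by
\[
\beta' := \beta\circ FG\beta\circ F\alpha^{-1}G\circ (FG\beta)^{-1}\ \text{(or its standard variant)}.
\]
This makes $(F,G,\alpha,\beta')$ satisfy the triangle identities, so $F\dashv G$ with invertible unit and counit.

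\emph{Step 2 (compare with the given adjunction).} Both $U$ and $G$ are right adjoints of $F$. Write $\theta\colon U\to G$ for the canonical comparison, with components
\[
\theta := G\varepsilon\circ\alpha U,
\]
and let $\theta^{-1} := U\beta'\circ\eta G$. Using the triangle identities of both adjunctions together with naturality, we check that these two maps are mutually inverse, so $\theta$ is a natural isomorphism. We then check the compatibilities
\[
\theta F\circ\eta=\alpha,\qquad \beta'\circ F\theta=\varepsilon.
\]
Each follows directly from naturality and one triangle identity. From these we get $\eta=(\theta F)^{-1}\circ\alpha$, which is a composite of isomorphisms, and $\varepsilon=\beta'\circ F\theta$, likewise an isomorphism.

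Alternatively, there is a shortcut avoiding Step 2's bookkeeping. Since $F$ is fully faithful, the unit is an isomorphism: $\eta_c$ corresponds under the hom-bijection to $\id_{Fc}$, and the map $\mathcal{C}(c',c)\to\mathcal{D}(Fc',Fc)\cong\mathcal{C}(c',UFc)$ is composition with $\eta_c$; as a bijection for all $c'$, Yoneda gives that $\eta_c$ is invertible. Next, the triangle identity $\varepsilon F\circ F\eta=\id$ shows that $\varepsilon_{Fc}$ is invertible. Since $F$ is essentially surjective and $\varepsilon$ is natural, $\varepsilon_d$ is invertible for every $d\cong Fc$.

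I expect the only delicate step to be the Yoneda argument identifying the composition map with $F$'s action on morphisms. This means verifying that the hom-bijection $\varphi\mapsto U\varphi\circ\eta$ composed with $F$ equals $\eta_c\circ-$, which is exactly naturality of $\eta$. The case where $U$ is an equivalence is dual: $U$ fully faithful makes $\varepsilon$ invertible, and the other triangle identity together with essential surjectivity of $U$ gives $\eta$.
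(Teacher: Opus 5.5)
Your Steps 1--2 are the paper's argument with the roles of $F$ and $U$ exchanged. The paper assumes $U$ is the equivalence and takes an adjoint equivalence $(F',U,\eta',\varepsilon')$ from Mac Lane IV.4, Thm.~1. It then uses uniqueness of adjoints (IV.1, Cor.~1) to get $\phi\colon F\cong F'$ with $\eta'=U\phi\circ\eta$, and deduces that $\varepsilon$ is invertible from $\varepsilon F\circ F\eta=\id$ and essential surjectivity of $F\cong F'$. You instead assume $F$ is the equivalence, compare the two right adjoints $U$ and $G$ by hand, and read off both $\eta=(\theta F)^{-1}\circ\alpha$ and $\varepsilon=\beta'\circ F\theta$ directly. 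Your comparison maps $\theta$, $\theta^{-1}$ and the identities $\theta F\circ\eta=\alpha$ and $\beta'\circ F\theta=\varepsilon$ are correct.

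The one slip is the displayed formula for $\beta'$, which is ill-typed: $F\alpha^{-1}G$ lands in $FG$, while $FG\beta$ has domain $FGFG$. Drop that factor and use $\beta'=\beta\circ F\alpha^{-1}G\circ(FG\beta)^{-1}$; here $FG\beta=\beta FG$ because $\beta$ is invertible. Naturality then gives $\beta'F\circ F\alpha=\id$. For invertible unit and counit, the second triangle identity follows from the first.

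Your ``shortcut'' is a genuinely different and more elementary proof. It avoids both rectifying an equivalence into an adjoint equivalence and the uniqueness of adjoints. In their place it uses the Yoneda argument that full faithfulness of $F$ alone forces $\eta$ to be invertible. Its second half is exactly the final step of the paper's proof: the triangle identity gives invertibility of $\varepsilon_{Fc}$, and essential surjectivity plus naturality extends this to all $\varepsilon_d$.

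What your route buys is transparency about which hypothesis does what. Full faithfulness gives $\eta$, and essential surjectivity is needed only to pass from $\varepsilon F$ to $\varepsilon$. The paper's version is shorter on the page because it outsources this work to Mac Lane.
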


    \begin{proof}
        Assume first that $U$ is an equivalence. Then by \cite[IV.4 Theorem 1]{Mac98}, $U$ is part of an adjoint equivalence $(F',U,\eta',\varepsilon')$. Furthermore, by \cite[IV.1 Corollary 1]{Mac98} there is a natural isomorphism $\phi\colon F\rightarrow F'$ such that $\eta' = U\phi\circ \eta$.  Thus, the natural transformation $\eta$ is an isomorphism. By the triangle identity we have $\varepsilon F \circ \eta = \id$. Thus, since $\eta$ is an isomorphism and $F$ is an equivalence of categories, also $\varepsilon$ is an isomorphism.

        If $F$ is an equivalence, then we can reduce to the first case by viewing $U$ and $F$ as functors between $\mathcal{C}^\mathrm{op}$ and $\mathcal{D}^\mathrm{op}$.
    \end{proof}

    \section{Quotients of Hopf algebras induced by quantum homogeneous spaces}

    In the following we let $A$ be a Hopf algebra, $B \subseteq A$ a left coideal subalgebra and $\pi_B\colon A\rightarrow A/B^+A$ the canonical projections. If one wants the quotient $\pi_B(A)$ to be a Hopf algebra, then there are different assumptions on $B$ one can make. While in the literature it is often assumed that $AB^+ = B^+A$ (for instance in \cite[Section 4.1]{BS25}), in Section \ref{SecTakeuchi} we only assume $AB^+\subseteq B^+A$, which is a priory weaker. In the context of a quantum homogeneous space, they are however equivalent, as the following lemma shows.

    \begin{lemma}
        \label{LemABvsBA}
        Suppose that the antipode of $A$ is bijective. Then the following are equivalent.
        \begin{enumerate}[label = (\arabic*)]
            \item $AB^+\subseteq B^+A$.
            \item $\pi_B(A)$ is a quotient bialgebra.
            \item $\pi_B(A)$ is a quotient Hopf algebra.
        \end{enumerate}
        If $A$ is faithfully flat as a right $B$-module, then the above are equivalent to
        \begin{itemize}[label = (4)]
            \item $AB^+ = B^+A$.
        \end{itemize}
    \end{lemma}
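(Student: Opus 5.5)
I will prove the cycle (1)$\Rightarrow$(2)$\Rightarrow$(3)$\Rightarrow$(1). Then, under faithful flatness, I will add (4). Recall that $B^+A$ is always a coideal, because $B$ is a left coideal subalgebra. Indeed, for $b\in B^+$ we have $\Delta(b)=b_{(1)}\otimes b_{(2)}$ with $b_{(2)}\in B$, so $\Delta(b)=b_{(1)}\otimes b_{(2)}^+ + b\otimes 1$ with $b_{(2)}^+\in B^+$. Hence $\Delta(B^+A)\subseteq A\otimes B^+A+B^+A\otimes A$ and $\varepsilon(B^+A)=0$, so $\pi_B(A)$ is always a quotient coalgebra.

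\emph{(1)$\Rightarrow$(2).} If $AB^+\subseteq B^+A$, then $A\,B^+A\subseteq B^+AA=B^+A$. Thus $B^+A$ is a two-sided ideal as well as a coideal, i.e.\ a biideal, and $\pi_B(A)$ is a quotient bialgebra.

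\emph{(2)$\Rightarrow$(1).} If $\pi_B(A)$ is a bialgebra quotient, the kernel $B^+A$ is a two-sided ideal, so $AB^+\subseteq AB^+A\subseteq B^+A$.

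\emph{(3)$\Rightarrow$(2).} This is trivial.

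\emph{(2)$\Rightarrow$(3).} We must show that $S(B^+A)\subseteq B^+A$. We have $S(B^+A)=S(A)S(B^+)=AS(B^+)$, using that $S$ is bijective. So it suffices to show $S(B^+)\subseteq B^+A$. For $b\in B^+$,
\[
S(b)=S(b_{(1)})b_{(2)}^+ + S(b)\cdot 1\text{-term}\ldots
\]
More cleanly, use $0=\varepsilon(b)=S(b_{(1)})b_{(2)}$ and write $b_{(2)}=b_{(2)}^++\varepsilon(b_{(2)})$. This gives
\[
S(b)=-S(b_{(1)})b_{(2)}^+\in AB^+\subseteq B^+A,
\]
where the final inclusion uses (1), which is equivalent to (2). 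Hence $B^+A$ is a Hopf ideal.

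The bijectivity of $S$ only enters through $S(A)=A$. Alternatively, one may invoke the standard fact that a bialgebra quotient of a Hopf algebra by a Hopf-ideal candidate inherits an antipode.

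\emph{(4).} Clearly (4)$\Rightarrow$(1). For the converse, assume faithful flatness and (1). By Corollary~\ref{CorQHS},
\[
B=A^{\mathrm{co}\,\pi_B(A)},
\]
and $\pi_B(A)$ is now a Hopf algebra quotient. The main obstacle is the inclusion $B^+A\subseteq AB^+$. I would attack it by using that $B$ is stable under the right adjoint action $a\mapsto S(a_{(1)})ba_{(2)}$. To see this, check coinvariance: applying $(\mathrm{id}\otimes\pi_B)\Delta$ and using that $\pi_B$ is now an algebra map with $\pi_B(b_{(2)})=\varepsilon(b_{(2)})\pi_B(1)$, the $\pi_B(S(a)\,a)$ terms collapse. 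Granting this stability, for $b\in B^+$ we get
\[
ba=a_{(1)}S(a_{(2)})ba_{(3)}\in AB^+,
\]
since $S(a_{(2)})ba_{(3)}\in B$ and has counit $0$. This yields $B^+A\subseteq AB^+$, so equality holds. The delicate point is verifying the adjoint-stability computation carefully, with the correct left/right conventions for the coinvariants.
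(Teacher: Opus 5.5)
Your treatment of (1)$\Leftrightarrow$(2)$\Leftrightarrow$(3) is correct once the garbled first display is deleted. The identity $S(b)=-S(b_{(1)})b_{(2)}^+\in AB^+$ for $b\in B^+$ gives $S(ba)=S(a)S(b)\in AB^+\subseteq B^+A$ directly. This is a clean replacement for the paper's appeal to \cite[Lemma 1.4]{MS99}, and it does not use bijectivity of $S$ at all.

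\textbf{The gap is in (1)$\Rightarrow$(4).} The claim that $B$ is stable under $b\mapsto S(a_{(1)})ba_{(2)}$ is false. If you carry out the check you propose, coinvariance of $b$ and multiplicativity of $\pi_B$ give
\[
(\id\otimes\pi_B)\Delta\bigl(S(a_{(1)})ba_{(2)}\bigr)=S(a_{(2)})ba_{(3)}\otimes\pi_B\bigl(S(a_{(1)})\bigr)\pi_B\bigl(a_{(4)}\bigr).
\]
Because $S$ sits on the outermost leg $a_{(1)}$, nothing cancels. Right coinvariants are stable under $a_{(1)}bS(a_{(2)})$, but that only reproves (1).

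A counterexample satisfying every hypothesis of the lemma is the Podle\'s sphere. Let $A=\mathcal{O}_q(SL_2)$ with generators $t_{ij}$, coproduct $\Delta(t_{ij})=\sum_k t_{ik}\otimes t_{kj}$, and antipode $S(t_{11})=t_{22}$, $S(t_{12})=-q^{-1}t_{12}$. Let $B$ be the right coinvariants of $\pi\colon A\to\C[z^{\pm1}]$, $t_{11}\mapsto z$, $t_{22}\mapsto z^{-1}$, $t_{12},t_{21}\mapsto 0$. Equivalently, $B$ is the degree-zero part for $\deg t_{i1}=1$, $\deg t_{i2}=-1$.

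Take $y=t_{12}t_{21}\in B$. The relations $t_{12}t_{21}=t_{21}t_{12}$, $t_{12}t_{22}=qt_{22}t_{12}$, $t_{21}t_{22}=qt_{22}t_{21}$ give
\[
S(t_{11})\,y\,t_{12}+S(t_{12})\,y\,t_{22}=(1-q^2)\,t_{22}t_{12}^2t_{21}.
\]
This is nonzero, since $\mathcal{O}_q(SL_2)$ is a domain and $q^2\neq 1$. It is homogeneous of degree $-2$, so it is not in $B$.

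\textbf{The repair is to use $S^{-1}$ with the legs reversed.} One computes
\[
(\id\otimes\pi_B)\Delta\bigl(S^{-1}(a_{(2)})ba_{(1)}\bigr)=S^{-1}(a_{(4)})ba_{(1)}\otimes\pi_B\bigl(S^{-1}(a_{(3)})a_{(2)}\bigr)=S^{-1}(a_{(2)})ba_{(1)}\otimes\pi_B(1).
\]
Hence $S^{-1}(a_{(2)})ba_{(1)}\in B$ by Corollary \ref{CorQHS}, and for $b\in B^+$ we get $ba=a_{(3)}S^{-1}(a_{(2)})ba_{(1)}\in AB^+$. This is exactly the paper's argument. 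It is the one place where bijectivity of the antipode is genuinely needed, so your remark that bijectivity enters only through $S(A)=A$ is inaccurate.
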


    \begin{proof}
        The assertion that $\pi_B(A)$ is a quotient bialgebra is equivalent to the assertion that $B^+A$ is a two-sided ideal, which is intern equivalent to $AB^+ \subseteq B^+A$. This shows the equivalence of \textit{(1)} and \textit{(2)}. If $\pi_B(A)$ is a Hopf algebra quotient, then it is in particular a bialgebra quotient. If $AB^+ \subseteq B^+A$, then by \cite[Lemma 1.4]{MS99}
        \[
            S(B^+A) = AB^+ \subseteq AB^+A = B^+A.
        \]
        Thus, the antipode of $A$ descends to $\pi_B(A)$, and assertions \textit{(1)}, \textit{(2)} and \textit{(3)} are all equivalent. Clearly, assertion \textit{(4)} implies \textit{(1)}. Now suppose that $A$ is faithfully flat as a right $B$-module. To show the last implication, first note that for all $a \in A$ and $b \in B$
        \begin{align*}
            &(S^{-1}(a_{(2)})ba_{(1)})_{(1)}\otimes \pi_B((S^{-1}(a_{(2)}ba_{(1)}))_{(2)})\\
            &\phantom{===}= S^{-1}(a_{(4)})b_{(1)}a_{(1)} \otimes \pi_B(S^{-1}(a_{(3)})b_{(2)}a_{(2)})\\
            &\phantom{===}= S^{-1}(a_{(4)})b_{(1)}a_{(1)}\otimes \pi_B(S^{-1}(a_{(3)}))\pi_B(b_{(2)})\pi_B(a_{(2)})\\
            &\phantom{===}= S^{-1}(a_{(4)})ba_{(1)}\otimes \pi_B(S^{-1}(a_{(3)})a_{(2)})\\
            &\phantom{===}= S^{-1}(a_{(2)})ba_{(1)}\otimes \pi_B(1).
        \end{align*}
        This implies that $S^{-1}(a_{(2)})ba_{(1)} \in B$, using Corollary \ref{CorQHS}. In particular, if $b\in B^+$, then
        \[
            ba = a_{(3)}S^{-1}(a_{(2)})ba_{(1)}  \in AB^+.
        \]
    \end{proof}

    \bibliographystyle{plain}
    \bibliography{references}

\end{document}